\newtheorem{thm}{Theorem}[section]
\newtheorem{cor}[thm]{Corollary}
\newtheorem{lem}[thm]{Lemma}
\newtheorem{notation}[thm]{Notation}
\newtheorem{prop}[thm]{Proposition}
\theoremstyle{definition}
\newtheorem{defn}[thm]{Definition}
\newtheorem{example}{Example}[section]
\newtheorem{remark}[thm]{Remark}
\numberwithin{equation}{section}
\title[A subelliptic Taylor isomorphism on infinite Heisenberg groups]
	{A subelliptic Taylor isomorphism on infinite-dimensional Heisenberg groups}
\author[Gordina]{Maria Gordina{$^{*}$}}
\address{Department of Mathematics\\
University of Connecticut\\
Storrs, CT 06269, USA }
\email{maria.gordina@uconn.edu}
\thanks{\footnotemark {$^{*}$} This research was supported in part by NSF
Grant  DMS-1007496, Michler fellowship}
\author[Melcher]{Tai Melcher{$^{\dagger }$}}
\thanks{\footnotemark {$^\dagger$} This research was supported in part by NSF
Grant DMS-0907293}
\address{Department of Mathematics\\
University of Virginia\\ Charlottesville, VA 22903 USA}
\email{melcher@virginia.edu}
\keywords{Heisenberg group, heat kernel, subelliptic operator, Taylor map, holomorphic function}
\subjclass[2000]{Primary 35H10 43A15; Secondary 58J65 22E65}
\begin{document}
\begin{abstract}
Let $G$ denote an infinite-dimensional Heisenberg-like group, which
is a class of infinite-dimensional step 2 stratified Lie groups.
We consider holomorphic functions on $G$ that are square integrable
with respect to a heat kernel measure which is formally subelliptic,
in the sense that all appropriate finite-dimensional projections
are smooth measures.  We prove a unitary equivalence between a
subclass of these square integrable holomorphic functions and a
certain completion of the universal enveloping algebra of the
``Cameron-Martin'' Lie subalgebra.  The isomorphism defining the
equivalence is given as a composition of restriction and Taylor
maps.  \end{abstract}

\maketitle
\tableofcontents

\section{Introduction\label{s.intro}}

We study spaces of holomorphic functions on infinite-dimensional
Heisenberg-like groups based on an abstract Wiener space as constructed
in \cite{DG08-2}.  In particular, we consider holomorphic functions
which are square integrable with respect to a subelliptic heat
kernel measure and prove a unitary equivalence between a subclass
of these functions and a certain completion of the universal
enveloping algebra of the Cameron-Martin Lie subalgebra.  These
results may be viewed as an analogue of the results in \cite{DG08-3}
for degenerate heat kernel measures, or as an extension of the
finite-dimensional results in \cite{DGSC09-2} to a special
infinite-dimensional case.  Perhaps more particularly, it is an
infinite-dimensional extension of \cite{DGSC09-1} in a special case,
as the Heisenberg-like groups considered here are nilpotent. There
are considerable differences from both cases in techniques, as
analytically our setting is very different from the elliptic case
in \cite{DG08-3}, and there are numerous subtle issues when dealing
with infinite dimensions versus the finite-dimensional nilpotent
case in \cite{DGSC09-1}.  In particular, in the infinite-dimensional
setting, it is necessary to consider two different norms on the Lie
algebra, one which defines the space on which the functions live
and one which controls the analysis.  This is directly analogous
to the abstract Wiener space construction.

\subsection{Background}
We give a brief (incomplete) background of the development of the Taylor
isomorphism to put our results into context.  See the papers cited
here and their bibliographies for more complete references.  Also,
the paper \cite{GM96} gives a very nice discussion and extensive
history of the theory.

Let us first recall the classical result.
Let $f:\mathbb{C}\rightarrow\mathbb{C}$ be a holomorphic function.  Then it is
well known that $f$ is everywhere determined by the values of its
derivatives at the origin and in particular
\[ f(z) = \sum_{k=0}^\infty \frac{f^{(k)}(0)}{k!}z^k. \]
Moreover, if $d\mu_t(z) = p_t(z)\,dz$ where $p_t(z) = \frac{1}{\pi t}
e^{-|z|^2/t}$
is the standard Gaussian density on $\mathbb{C}$, then $\langle z^k,
z^\ell\rangle_{L^2(\mu_t)}=\delta_{k\ell}t^k k!$, which implies that
\begin{equation}
\label{e.c}
\|f\|_{L^2(\mu_t)}^2 = \sum_{k=0}^\infty \frac{t^k}{k!}|f^{(k)}(0)|^2.
\end{equation}
Thus, one may consider the Taylor expansion as an isometric isomorphism
from the space of square integrable, holomorphic functions
onto the sequence space of derivatives at 0 endowed with an appropriate norm.

This isomorphism first appeared in the paper of Fock \cite{Fock28}
(actually for $\mathbb{C}^n$),
but was not made explicit until the work of Segal \cite{Segal56,Segal62}
and Bargmann \cite{Bargmann61}.  Multiple authors contributed to various extensions
of this theory, all of which culminated in the paper \cite{DG97}.  In this
paper, Driver and Gross considered the case of a connected complex
(finite-dimensional) Lie group $G$ with Lie algebra $\mathfrak{g}$.
Equip $\mathfrak{g}$ with any inner product, and suppose that
$\{V_i\}_{i=1}^n$ is an orthonormal basis of $\mathfrak{g}$.  Consider
$L=\sum_{i=1}^n \tilde{V}_i^2$, where $\tilde{V}$ is the left
invariant vector on $G$ field associated to $V\in\mathfrak{g}$.
Then $L$ is an elliptic second order differential operator, and we
let $\{g_t\}_{t\ge0}$ denote a Brownian motion on $G$ with generator
$L$.  For $t>0$, let $\mathcal{H}L^2(G,\mu_t)$ denote the space of
holomorphic functions on $G$ which are square integrable with respect
to the heat kernel measure $\mu_t=\mathrm{Law}(g_t)$ on $G$.  Then
it was proved in \cite{DG97} that the analogous Taylor map in this setting
is an isometric isomorphism from $\mathcal{H}L^2(G,\mu_t)$ to the space
of derivatives at the identity equipped with a norm inspired by the
expression in (\ref{e.c}).

Recently, in \cite{DGSC09-2}, Driver, Gross, and
Saloff-Coste have further extended this theory to the case of subelliptic
(or hypoelliptic) heat kernel measures on a connected complex Lie group.  That is, suppose in
the previous setting that
$\{V_i\}_{i=1}^k\subset\mathfrak{g}$ is not itself a full basis of
$\mathfrak{g}$, but does satisfy the
H\"ormander (or bracket generating) condition
\[ \mathrm{span}\{V_i,[V_i,V_j],[V_i,[V_j,V_k]],\ldots\} = \mathfrak{g}. \]
Then due to the classical result of H\"ormander \cite{Hormander67}, it is
well known that, for the process $\{g_t\}_{t\ge0}$ generated by $L=\sum_{i=1}^k
\tilde{V}_i^2$, $\mu_t=\mathrm{Law}(g_t)$ is a smooth measure for all $t>0$.  In
\cite{DGSC09-2}, it is proved that the Taylor map is an isometric
isomorphism, this time from $\mathcal{H}L^2(G,\mu_t)$ onto the space of
derivatives at the identity with an appropriately modified norm.

There have also been several infinite-dimensional settings in which
Taylor isomorphisms have been shown to hold.  In particular, in
\cite{DG08-3} Driver and the first named author proved a Taylor
isomorphism theorem for nondegenerate heat kernel measure on the
same infinite-dimensional Heisenberg-like groups considered in the
present paper.  The first named author has proved analogues on the
infinite-dimensional complex Hilbert-Schmidt groups
\cite{Gordina00-2,Gordina00-1} and for the group of invertible
operators in a factor of type II$_1$ \cite{Gordina02}.  Also, in
\cite{Cecil08}, Cecil proved an analogue for path groups over stratified
nilpotent Lie groups.  To our knowledge, the present paper represents
the first analogous result for an infinite-dimensional subelliptic
setting.

\subsection{Statement of Results}
\subsubsection{Heisenberg-like groups and subelliptic heat kernel measures}
Let $(W,H,\mu)$ be a complex abstract Wiener space and let $\mathbf{C}$
be a finite-dimensional complex inner product space.  Let
$\mathfrak{g}=W\times \mathbf{C}$ be an infinite-dimensional
Heisenberg-like Lie algebra, which is constructed as an infinite-dimensional step 2 nilpotent Lie algebra with Lie bracket satisfying
the following condition:
\begin{equation}
\label{e.h}
[W,W]=\mathbf{C}.
\end{equation}
Let $G$ denote $W\times\mathbf{C}$ thought of as a group with
operation \[ g_1\cdot g_2 = g_1 + g_2 + \frac{1}{2}[g_1,g_2]. \]
Then $G$ is a Lie group with Lie algebra $\mathfrak{g}$, and $G$
contains the subgroup $G_{CM} = H\times\mathbf{C}$ which has Lie
algebra $\mathfrak{g}_{CM}$.  See Section \ref{s.semi-inf} for
definitions and details.

Now let $\{B_t\}_{t\ge0}$ be a Brownian motion on $W$.  The
solution to the stochastic differential equation
\begin{equation}
\label{e.1}
dg_t = g_t\circ dB_t\qquad \text{ with } g_0=e
\end{equation}
is a Brownian motion on $G$, which is given explicitly in Proposition
\ref{p.Mn} and Definition \ref{d.bm}.  For all $t>0$, let
$\nu_t=\mathrm{Law}(g_t)$ denote the heat kernel measure at time
$t$.  If $W$ is finite dimensional, then (\ref{e.h}) implies that
$\mathrm{span}\{(\xi_i,0),[(\xi_i,0),(\xi_j,0)]\}=\mathfrak{g}$,
where $\{\xi_i\}_{i=1}^{\mathrm{dim}(W)}$ is some orthonormal basis
of $W$, and thus we would have satisfaction of H\"ormander's condition
implying that $\nu_t$ is absolutely continuous with respect to Haar
measure on $G=W\times\mathbf{C}$ and its density is a smooth function
on $G$.  If $W$ is infinite-dimensional, then the notion of
subellipticity is not so well defined as there is no canonical
reference measure. But we say that $\nu_t$ is formally subelliptic
(or hypoelliptic) in the sense that all appropriate finite-dimensional
projections (which will be discussed subsequently) are subelliptic.
Similar ``definitions'' of subellipticity in infinite dimensions
have been taken in \cite{BM07,FLT08,MP06}, for example.

Let $\mathrm{Proj}(W)$ denote the collection of finite rank continuous
linear maps $P:W\rightarrow H$ so that $P|_H$ is orthogonal projection.
Further, let $G_P:=PW\times\mathbf{C}$ which is a subgroup of
$G_{CM}$.  For each $P\in\mathrm{Proj}(W)$, $G_P$ is a finite-dimensional Lie group and Brownian motion on $G_P$ is defined
analogously to how it is defined on $G$.  The finite-dimensional
heat kernel measures $\nu_t^P$ will play an important role in the
sequel.  In particular, under the assumption that $[PW,PW]=\mathbf{C}$,
H\"ormander's theorem implies that $d\nu_t^P(x)=p^P_t(x)\,dx$, where
$p^P_t$ is a smooth density and $dx$ is finite-dimensional Haar
measure.

As has been the case in previous infinite-dimensional contexts
\cite{Cecil08,DG08-3,Gordina00-2,Gordina00-1,Gordina02}, our results
actually take the form of two unitary isomorphisms: the ``skeleton''
or ``restriction'' map and the Taylor map on ``square integrable
holomorphic functions'' on $G_{CM}$.

\subsubsection{The restriction isomorphism theorem}
We must first define the Hilbert spaces involved.
Let $\mathcal{H}(G)$ and $\mathcal{H}(G_{CM})$ denote the holomorphic
functions on $G$ and $G_{CM}$ respectively.  Let $\mathcal{P}$ be the space of
holomorphic cylinder polynomials on $G$.  Then Proposition \ref{p.rho2} implies that
$\mathcal{P}\subset L^2(\nu_t)$, and so for $t>0$ define $\mathcal{H}_t^2(G) :=
L^2(\nu_t)$-closure of $\mathcal{P}$.  For $f\in\mathcal{H}(G_{CM})$, let
\[ \|f\|_{\mathcal{H}_t^2(G_{CM})}
	:= \sup_{P\in\mathrm{Proj}(W)} \|f\left|_{G_P}\right.\|_{L^2(\nu_t^P)}
\]
and $\mathcal{H}_t^2(G_{CM}) := \{f\in\mathcal{H}(G_{CM}):
\|f\|_{\mathcal{H}_t^2(G_{CM})}<\infty \}$.  It is proved in Proposition
\ref{p.CM0} that as usual $\nu_t(G_{CM})=0$; however, $\mathcal{H}_t^2(G_{CM})$ should
still be roughly thought of as  $\nu_t$-square integrable holomorphic
functions on $G_{CM}$.  Having made these definitions, we can state our first
theorem.

\begin{thm}
\label{t.1}
For all $t>0$, there is a map $R_t:\mathcal{H}_t^2(G)\rightarrow\mathcal{H}_t^2(G_{CM})$ such
that $R_t$ is an isometric isomorphism, $R_tp=p|_{G_{CM}}$ for any $p\in\mathcal{P}$,
and
\[ |(R_tf)(g)|\le \|f\|_{L^2(\nu_t)} e^{d_h(e,g)^2/2t},
	\text{ for all } g\in G_{CM}, \]
where $d_h$ is the horizontal distance on $G_{CM}$ (see Notation
\ref{n.length}).
\end{thm}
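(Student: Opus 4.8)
The plan is to prove Theorem \ref{t.1} by exploiting the finite-dimensional projections $P \in \mathrm{Proj}(W)$ to transfer the known finite-dimensional isometry results to the infinite-dimensional limit. The map $R_t$ should be thought of as a ``restriction to $G_{CM}$'' map, but since $\nu_t(G_{CM}) = 0$ (Proposition \ref{p.CM0}), it cannot literally be restriction of an $L^2(\nu_t)$ representative; instead it must be defined first on the dense subspace $\mathcal{P}$ of holomorphic cylinder polynomials and then extended by continuity. So the first step is to define $R_t p := p|_{G_{CM}}$ for $p \in \mathcal{P}$ and to show this is isometric on $\mathcal{P}$, i.e. that $\|p\|_{L^2(\nu_t)} = \|p|_{G_{CM}}\|_{\mathcal{H}_t^2(G_{CM})}$. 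Since $p$ is a cylinder polynomial, it factors through some finite-rank projection, so both sides can be computed on a finite-dimensional subgroup $G_P$ where the measure $\nu_t^P$ is the genuine subelliptic heat kernel.

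\emph{Key steps.} First I would establish, for a fixed cylinder polynomial $p$ that is a function on $G_P$, the compatibility/projection relation $\|p\|_{L^2(\nu_t)} = \|p\|_{L^2(\nu_t^P)}$, using that $\nu_t^P$ is the pushforward of $\nu_t$ under the projection $G \to G_P$ (this should follow from the explicit construction of Brownian motion in Proposition \ref{p.Mn} together with the definition of the finite-dimensional heat kernel measures). Second, I would show a monotonicity and supremum statement: as $P$ increases through $\mathrm{Proj}(W)$, the quantities $\|p|_{G_Q}\|_{L^2(\nu_t^Q)}$ for $Q \supseteq P$ stabilize or increase to $\|p\|_{L^2(\nu_t)}$, so that the supremum defining $\|\cdot\|_{\mathcal{H}_t^2(G_{CM})}$ is in fact achieved and equals $\|p\|_{L^2(\nu_t)}$. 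This identifies $R_t$ as an isometry on the dense set $\mathcal{P}$, which extends uniquely to an isometric embedding $R_t : \mathcal{H}_t^2(G) \to \mathcal{H}_t^2(G_{CM})$. Third, for surjectivity, I would argue that any $f \in \mathcal{H}_t^2(G_{CM})$ with finite $\mathcal{H}_t^2$-norm can be approximated by the restrictions of cylinder polynomials: its finite-dimensional projections $f|_{G_P}$ live in the finite-dimensional holomorphic $L^2$ spaces, and a compactness/uniform-integrability argument (using the uniform bound on the family $\{f|_{G_P}\}$) produces a limit in $\mathcal{H}_t^2(G)$ whose image is $f$.

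\emph{Pointwise bound.} The final estimate $|(R_t f)(g)| \le \|f\|_{L^2(\nu_t)} e^{d_h(e,g)^2/2t}$ should come from a reproducing-kernel or Gaussian-type pointwise bound valid already at the finite-dimensional level. For $f \in \mathcal{P}$ and $g \in G_P$, one expects an inequality of the form $|f(g)| \le \|f\|_{L^2(\nu_t^P)}\, e^{d_h^P(e,g)^2/2t}$, where $d_h^P$ is the horizontal (sub-Riemannian) distance on $G_P$; this is a standard consequence of the Gaussian upper bound on subelliptic heat kernels together with holomorphy (Cauchy-type estimates). Since the horizontal distances $d_h^P$ decrease to (or are dominated by) $d_h$ on $G_{CM}$ as $P$ exhausts $W$, and $R_t$ is isometric, the bound passes to the limit by taking $P \uparrow$ and using continuity of evaluation on $\mathcal{H}_t^2(G_{CM})$.

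\emph{Main obstacle.} The hard part will be the surjectivity of $R_t$, and more precisely controlling the infinite-dimensional limit: one must show that the uniform bound $\sup_P \|f|_{G_P}\|_{L^2(\nu_t^P)} < \infty$ is strong enough to extract an honest $L^2(\nu_t)$ limit of the approximating cylinder polynomials, rather than merely a weak or pointwise limit. This requires a uniform integrability or martingale-convergence argument adapted to the subelliptic (degenerate) heat kernel measures, where the absence of a reference Haar measure and the non-Gaussian nature of the $\mathbf{C}$-component make the usual elliptic estimates from \cite{DG08-3} unavailable. Establishing that the pointwise bound involving $d_h$ is both finite and sharp enough to guarantee membership in $\mathcal{H}_t^2(G)$ is where the genuinely subelliptic analysis (Gaussian heat kernel bounds in terms of the Carnot-Carathéodory distance) will have to do the real work.
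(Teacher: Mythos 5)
Your overall architecture matches the paper's (define $R_t$ as restriction on $\mathcal{P}$, prove isometry there via finite-dimensional heat kernels, extend by density, get surjectivity from density of $\mathcal{P}_{CM}$, and obtain the pointwise bound from the finite-dimensional estimate in Theorem \ref{t.fdt}), but two of your key steps are wrong or missing the essential idea. First, your ``compatibility/projection relation'' $\|p\|_{L^2(\nu_t)} = \|p\|_{L^2(\nu_t^P)}$ for a cylinder polynomial $p$ based on $G_P$ is false: $\nu_t^P$ is \emph{not} the pushforward of $\nu_t$ under $\pi_P$, precisely because $\pi_P$ is not a group homomorphism (see equation (\ref{e.GammaP})). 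Concretely, the central component of $g_t$ is $\frac{1}{2}\int_0^t\omega(B_s,dB_s)$ while that of $g_t^P$ is $\frac{1}{2}\int_0^t\omega(PB_s,dPB_s)$; already for $p(w,c)=\langle c,v\rangle_{\mathbf{C}}$ one has $\|p\|_{L^2(\nu_t^P)} < \|p\|_{L^2(\nu_t)}$ strictly. Consequently the supremum defining $\|\cdot\|_{\mathcal{H}_t^2(G_{CM})}$ is \emph{not} achieved at any finite $P$; the correct statement is the limiting one, $\|p\|_{L^2(\nu_t^n)}\uparrow\|p\|_{L^2(\nu_t)}$, whose proof in the paper is itself nontrivial: monotonicity (Lemma \ref{l.inc}) is proved via the finite-dimensional Taylor isomorphism, and convergence (Corollary \ref{c.siso}) via Fernique-type integrability and $L^p$-convergence of $g_t^n$ to $g_t$ (Propositions \ref{p.rho}, \ref{p.rho2}). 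A related sign error: on $G_n$ one has $d_h \le d_n$ (more paths are available in $G_{CM}$), not ``$d_h^P$ dominated by $d_h$''; the paper's Lemma \ref{l.dP} must build projected horizontal paths $\sigma_n$ and control the central defect $\varepsilon_n$, with constants uniform in $n$, to get $\limsup_n d_n(e,g)\le d_h(e,g)$.

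The more serious gap is surjectivity. Your compactness/uniform-integrability plan cannot get started: for a general $f\in\mathcal{H}_t^2(G_{CM})$, the natural lifts $f\circ\pi_n$ are holomorphic cylinder \emph{functions} but not polynomials, and it is not known --- the paper flags exactly this as open in Remark \ref{r.cyl} --- whether such functions lie in $\mathcal{H}_t^2(G)$, which is by definition the $L^2(\nu_t)$-closure of $\mathcal{P}$. Moreover, a weak or subsequential limit extracted from a uniform bound would not give convergence in the sup-over-projections norm of $\mathcal{H}_t^2(G_{CM})$. The paper's actual proof of density of $\mathcal{P}_{CM}$ (Theorem \ref{t.pcm}) runs through the Taylor isomorphism of Theorem \ref{t.2}: one first shows finite-rank tensors are dense in $J_t^0$ via a dilation/Fej\'er-kernel argument (Lemma \ref{l.h7.3}), hence functions with finite-rank Taylor series are dense in $\mathcal{H}_t^2(G_{CM})$ (Corollary \ref{c.h7.4}); for such $f$ the lift $f\circ\pi_n$ \emph{is} a holomorphic cylinder polynomial, and the convergence $f\circ\pi_n|_{G_{CM}}\rightarrow f$ is proved coefficient-by-coefficient on the Fock-space side using the Taylor isometry (Propositions \ref{p.h7.11} and \ref{p.h7.12}). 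This dependence of the restriction theorem on the Taylor machinery is the central idea your proposal is missing, and without it the surjectivity step fails.
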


The proof of the pointwise bound and that $R_t$ is actually restriction on
$\mathcal{P}$ are in Theorem \ref{t.Rlin}.  The proof of the isometry
and surjectivity are in Theorem \ref{t.Rsurj}.

\subsubsection{The Taylor isomorphism theorem}
Now let $T(\mathfrak{g}_{CM})$ be the algebraic tensor algebra over
$\mathfrak{g}_{CM}$, $T(\mathfrak{g}_{CM})'$ be its algebraic dual,
$J=J(\mathfrak{g}_{CM})$ be the two-sided ideal in $T(\mathfrak{g}_{CM})$
generated by
\[ \{h\otimes k - k\otimes - [h,k]: h,k\in\mathfrak{g}_{CM} \}, \]
and $J^0=\{\alpha\in T(\mathfrak{g}_{CM})':\alpha(J)=0\}$ be the backwards annihilator of
$J$.  For $t>0$, define
\begin{equation}
\label{e.2}
\|\alpha\|_t^{2}
	:= \sum_{k=0}^\infty \frac{t^k}{k!}\sum_{\xi_1,\ldots,\xi_k\in\Gamma}
		|\langle \alpha, (\xi_1,0)\otimes\cdots\otimes (\xi_k,0)\rangle|^2,
\end{equation}
where $\Gamma$ is an orthonormal basis of $H$, and let $J_t^0 := \{\alpha\in J^0:\|\alpha\|_t<\infty \}$.
Given $f\in\mathcal{H}(G)$, let $\hat{f}\left(e\right)$ denote the
element of $J^0$ defined by $\langle \hat{f}\left(e\right),1\rangle=f\left(e\right)$ and
\[ \langle \hat{f}\left(e\right), h_1\otimes\cdots\otimes h_n\rangle
	= \left(\tilde{h}_1\cdots\tilde{h}_nf\right)(e), \text{ for all }
		h_1,\ldots,h_k\in\mathfrak{g}_{CM} \]
where $\tilde{h}_i$ is the left invariant vector field on $G_{CM}$ such that
$\tilde{h}_i(e)=h_i$.  For $f\in\mathcal{H}_t^2(G_{CM})$, let
$\mathcal{T}_tf=\hat{f}\left(e\right)$.

\begin{thm}
\label{t.2}
For all $t>0$, the map $\mathcal{T}_t:\mathcal{H}_t^2(G_{CM})\rightarrow
J_t^0(\mathfrak{g}_{CM})$ is an isometric isomorphism.
\end{thm}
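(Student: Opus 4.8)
The plan is to prove the Taylor isomorphism by combining the finite-dimensional results with a careful limiting argument, exploiting the restriction isomorphism from Theorem \ref{t.1}. The overall strategy is to first establish the theorem at the level of finite-dimensional projections $G_P$ (where the subelliptic results of \cite{DGSC09-2} apply directly), and then pass to the limit over $P \in \mathrm{Proj}(W)$.

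\medskip

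First I would verify that $\mathcal{T}_t$ is isometric, which is the heart of the matter. The key computation is that for a holomorphic cylinder polynomial $p \in \mathcal{P}$, one has
\begin{equation*}
\|p\|_{L^2(\nu_t^P)}^2 = \sum_{k=0}^\infty \frac{t^k}{k!} \sum_{\xi_1,\ldots,\xi_k \in \Gamma_P} \left| \left( \tilde{\xi}_1 \cdots \tilde{\xi}_k \, p \right)(e) \right|^2,
\end{equation*}
where $\Gamma_P$ is an orthonormal basis of $PW$. This finite-dimensional identity should follow from the subelliptic Taylor isometry of \cite{DGSC09-2} applied to the group $G_P$, together with the fact that the relevant derivatives only involve horizontal directions in $PW$ (the directions in $\mathbf{C}$ being recovered from brackets, which is precisely where the ideal $J$ and its annihilator $J^0$ enter). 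I would then take the supremum (equivalently, an increasing limit) over $P$ and invoke the definition of $\|\cdot\|_{\mathcal{H}_t^2(G_{CM})}$ together with the monotone convergence of the sums in \eqref{e.2} to conclude that $\|\mathcal{T}_t f\|_t = \|f\|_{\mathcal{H}_t^2(G_{CM})}$ for $f$ in a dense subspace, hence by the restriction isomorphism for all $f \in \mathcal{H}_t^2(G_{CM})$. The fact that $\mathcal{T}_t f \in J^0$, rather than merely in $T(\mathfrak{g}_{CM})'$, is the statement that the derivatives of a holomorphic function automatically respect the bracket relations, which follows from left-invariance and the identity $[\tilde h, \tilde k] = \widetilde{[h,k]}$ for left-invariant vector fields.

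\medskip

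Next I would establish surjectivity. Given $\alpha \in J_t^0$, the goal is to produce $f \in \mathcal{H}_t^2(G_{CM})$ with $\mathcal{T}_t f = \alpha$. The natural approach is to truncate: for each $P$, the restriction of $\alpha$ to tensors built from $\Gamma_P$ defines an element $\alpha_P$ of the finite-dimensional annihilator, and by \cite{DGSC09-2} there is a corresponding holomorphic function $f_P$ on $G_P$ with $\widehat{f_P}(e) = \alpha_P$ and $\|f_P\|_{L^2(\nu_t^P)} = \|\alpha_P\|_t \le \|\alpha\|_t$. These functions are consistent under further projection, and the uniform bound $\|\alpha\|_t < \infty$ together with the completeness coming from Theorem \ref{t.1} should let me assemble the $f_P$ into a genuine holomorphic function $f$ on $G_{CM}$ lying in $\mathcal{H}_t^2(G_{CM})$ with $\mathcal{T}_t f = \alpha$. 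Concretely, I would realize $f$ as $R_t$ applied to the $L^2(\nu_t)$-limit of the natural lifts of the $f_P$, using that $\{f_P\}$ is Cauchy because the tail sums in \eqref{e.2} vanish.

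\medskip

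The main obstacle I anticipate is the infinite-dimensional limiting argument: controlling the convergence of the finite-dimensional Taylor data uniformly in $P$ and ensuring that the limiting object is genuinely holomorphic on all of $G_{CM}$ rather than merely defined on each $G_P$. Unlike the elliptic case of \cite{DG08-3}, here the norm \eqref{e.2} sums only over the horizontal basis $\Gamma$, so one must carefully track how the central directions $\mathbf{C}$ are encoded through $J^0$ and verify that no information is lost or double-counted in the limit; this is exactly the subtlety flagged in the introduction about needing two norms on $\mathfrak{g}_{CM}$. The pointwise bound in Theorem \ref{t.1}, namely $|(R_t f)(g)| \le \|f\|_{L^2(\nu_t)} e^{d_h(e,g)^2/2t}$, will be the essential tool for upgrading $L^2$-convergence to the locally uniform convergence needed to preserve holomorphy and to identify the limit's derivatives with $\alpha$.
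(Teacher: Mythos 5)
Your isometry argument is essentially the paper's own (Proposition \ref{p.Tiso}), but with an unnecessary and, as it turns out, damaging detour: the finite-dimensional subelliptic isometry of \cite{DGSC09-2} applies directly to $f|_{G_P}$ for an \emph{arbitrary} $f\in\mathcal{H}(G_{CM})$, so one can take the supremum over $P\in\mathrm{Proj}(W)$ and use monotone convergence to get $\|\hat f(e)\|_t=\|f\|_{\mathcal{H}_t^2(G_{CM})}$ for every $f$ at once; there is no need to first prove it on cylinder polynomials and then extend by density. The detour matters because neither the density of $\mathcal{P}_{CM}$ in $\mathcal{H}_t^2(G_{CM})$ nor the restriction isomorphism is available at this stage: in the paper, Theorem \ref{t.Rsurj} rests on Theorem \ref{t.pcm}, which rests on Corollary \ref{c.h7.4}, which is an immediate consequence of the surjectivity of the Taylor map (Theorem \ref{t.surj}). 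So both places where you invoke Theorem \ref{t.1} --- to extend the isometry from a dense subspace, and to ``assemble'' the surjectivity preimage --- make the argument circular as the theory is actually built.

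The surjectivity half has a second, independent gap. Your consistency claim for the family $\{f_P\}$ is in fact fine (restriction to $G_Q\subset G_P$ preserves Taylor coefficients, and the finite-dimensional Taylor map is injective), but the passage from this consistent family to an element of $\mathcal{H}_t^2(G)$ via ``natural lifts'' fails: the lift $f_P\circ\pi_P$ is a holomorphic cylinder function but not in general a cylinder \emph{polynomial}, and whether such a function belongs to $\mathcal{H}_t^2(G)$ --- by Definition \ref{d.htp} the $L^2(\nu_t)$-closure of $\mathcal{P}$ --- is precisely the open problem recorded in Remark \ref{r.cyl}, so $R_t$ cannot be applied to it. Moreover, since $\pi_P$ is not a group homomorphism (equation (\ref{e.GammaP})), the Taylor coefficients of $f_P\circ\pi_P$ are not $\alpha_P$ extended by zero but acquire bracket correction terms (Proposition \ref{p.hder}), so your Cauchy claim ``because the tail sums in \eqref{e.2} vanish'' does not follow as stated. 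The paper avoids all of this by working entirely inside $G_{CM}$ and never touching $G$ or the restriction map: finite rank tensors are dense in $J_t^0$ (Lemma \ref{l.h7.3}, proved via the dilations $\varphi_\theta$ and a Fej\'er kernel argument), and for a finite rank $\alpha$ the preimage is the explicit polynomial $f_\alpha(g)=\sum_{k}\frac{1}{k!}\langle\alpha_k,g^{\otimes k}\rangle$, with $\hat f_\alpha(e)=\alpha$ verified through the Poincar\'e--Birkhoff--Witt decomposition of $T(\mathfrak{g}_{CM})$ into symmetric tensors plus $J$; combined with the isometry already in hand, density of the image gives surjectivity.
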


The isometry in Theorem \ref{t.2} is proved in Proposition \ref{p.Tiso}
and the surjectivity is proved in Theorem \ref{t.surj}.  The
combination of Theorems \ref{t.1} and \ref{t.2} implies that the
mapping $f\mapsto (\mathcal{T}_t\circ R_t)f = \widehat{R_tf}\left(e\right)$, where
\[ \left\langle \widehat{R_tf}\left(e\right), h_1\otimes\cdots\otimes h_k\right\rangle
	= \left(\tilde{h}_1\cdots\tilde{h}_k R_tf\right)(e),
		\text{ for all } h_1,\ldots,h_k\in\mathfrak{g}_{CM},
		\]
is a unitary equivalence between $\mathcal{H}_t^2(G)$ and $J_t^0$.

The organization of the paper is as follows.  In Section \ref{s.1},
the definitions of infinite-dimensional Heisenberg-like Lie algebras
and groups are revisited.  This includes a brief review of complex
abstract Wiener spaces in Section \ref{s.h2}.  In Section \ref{s.deriv}
we explore the relationship between linear and left invariant
derivatives on $G$ which will later be useful in several limiting
arguments.  In Section \ref{s.length}, we prove that the homogeneous
norm and horizontal distance topologies are equivalent.  This fact
is necessary to make use of the finite-dimensional projection groups
introduced in Section \ref{s.gpproj} as approximations to $G$.  In
Section \ref{s.BM}, we define the subelliptic diffusion $\{g_t\}_{t\ge0}$
and its heat kernel measure $\nu_t$ and review various properties
that will be necessary for the sequel.  Most of these properties
follow directly from properties for the nondegenerate heat kernel
measures treated in \cite{DG08-2} and \cite{DG08-3}.  Also, in
Section \ref{s.h5.2} we review the notion of holomorphic functions
in this infinite-dimensional setting.

Section \ref{s.Taylor} gives the proof of the Taylor
isomorphism theorem, including a proof in Section \ref{s.NCF} that
the semi-norm defined in (\ref{e.2}) is in fact a norm.  The proofs in this
section are mostly standard.

In Section \ref{s.restriction}, the restriction map is constructed
and we prove its isometry and surjectivity properties.  Here the
proofs are complicated by several factors, including the use of the
horizontal distance and the fact that the norm defining $J_t^0$ is
not the full Hilbert-Schmidt norm as is used in the nondegenerate
case.  Ultimately, the overall steps here are analogous to
those in the nondegenerate setting, but the proofs are necessarily
adjusted to account for these complications.

\subsection{Discussion of open questions}

Recall that \cite{DG08-3} treated the case of nondegenerate heat
kernel measures on the same infinite-dimensional Heisenberg-like
groups considered here.  One of the main ingredients used there was
the quasi-invariance of the heat kernel measure under shifts by
elements of the Cameron-Martin subgroup. In particular, this allowed
the skeleton or restriction map from $\mathcal{H}_t^2(G)$ to
$\mathcal{H}_t^2(G_{CM})$ to be defined via quasi-invariance.  At
the time of the writing of the present paper, quasi-invariance
results for the subelliptic heat kernel measure were unknown.  Thus,
the construction of the restriction map given here does not rely
on quasi-invariance.  This construction is analogous to that in
\cite{Cecil08}, which treats the case of nondegenerate heat kernel
measures on complex path groups, a case in particular where
quasi-invariance results are not known. After the present paper was accepted,
a quasi-invariance result for the subelliptic heat kernel measure
in this setting was proved in \cite{BGM11}.  Thus, it may now be
possible to give a different proof of our results including the
skeleton map defined via quasi-invariance.

One should also comment that the assumption that
$\mathrm{dim}(\mathbf{C})<\infty$ is necessary at several points.
For example, it is used in an essential way for the proof that the
homogeneous norm topology is equivalent to that of the horizontal
distance. Some readers might be concerned that this restriction on
the dimension of the center means that this subelliptic example is
in some sense only finitely many steps from being elliptic. This
concern would be justified if the Lie bracket is non-trivial on
only a finite-dimensional subspace of $W$, as then the solution to
(\ref{e.1}) is somehow only a finite-dimensional subelliptic diffusion
coupled with an infinite-dimensional flat Brownian motion. However,
if the Lie bracket is in fact non-trivial on an infinite-dimensional
subspace of $W$, then this does introduce several non-trivial
complications, for example, in the proof of equivalence of topologies
and more generally in working with the horizontal distance and
``projections'' of horizontal paths.

Another interesting question is to try to generate holomorphic
functions similar to how it was done in \cite[Section 4]{DGSC09-1}.
Even though one of the techniques of that section, the Fourier-Wigner
transform, has been studied in infinite dimensions (for example,
\cite{Gross1974a}), it is still not clear how  this question can
be approached for infinite-dimensional Heisenberg groups.

{\bf Acknowledgements.} The authors thank Bruce Driver for several
helpful conversations during the writing of this paper.  We would
also like to thank the Mathematics Department at Cornell University,
where much of this research was completed.

\section{Infinite-dimensional complex Heisenberg-like groups}
\label{s.1}

\subsection{Complex abstract Wiener spaces}
\label{s.h2}

Let us first briefly recall the definition of a complex abstract
Wiener space.  We record here only the basic construction and some
standard facts that will be useful for the sequel.  For more
details, see for example Section 2 of \cite{DG08-3} and
its references.

Suppose that $W$ is a complex separable Banach space and
$\mathcal{B}_W$ is the Borel $\sigma$-algebra on $W$. Let
$W_{\operatorname{Re}}$ denote $W$ thought of as a real Banach
space. For $\lambda\in\mathbb{C}$,  let $M_{\lambda}:W\rightarrow W$
be the operation of multiplication by $\lambda$.
\begin{defn}
\label{d.h2.1} A measure $\mu$ on $(W,\mathcal{B}_W)$ is called a
(mean zero, non-degenerate) {\em Gaussian measure} provided that
its characteristic functional is given by
\[
\hat{\mu}(u) := \int_W e^{iu(w)} d\mu(w)
=e^{-\frac{1}{2}q(u,u)}, \text{ for all }u\in W_{\operatorname{Re}}^*,
\]
where $q=q_{\mu}:W_{\operatorname{Re}}^*\times W_{\operatorname{Re}
}^*\rightarrow\mathbb{R}$ is an inner product on $W_{\operatorname{Re}
}^*$. If in addition, $\mu$ is invariant under multiplication
by $i$, that is, $\mu\circ M_{i}^{-1}=\mu$,  we say that $\mu$ is a
{\em complex Gaussian measure }on $W$.
\end{defn}

\begin{thm}
\label{t.h2.3}
Let $\mu$ be a complex Gaussian measure on a complex separable Banach space
$W$.  For $1\le p <\infty$, let
\begin{equation}
\label{e.Cp}
C_p := \int_W \|w\|_W^p \,d\mu(w) < \infty
\end{equation}
For $w\in W$, let
\[ \|w\|_H := \sup _{u\in W^*\setminus\{0\}} \frac{|u(w)|}{\sqrt{q(u,u)}}, \]
and define the {\em Cameron-Martin subspace} $H\subset W$ by
\[ H := \{ h\in W : \|h\|_H<\infty \}. \]
\begin{enumerate}
\item For all $1\le p<\infty$, $C_p<\infty$.

\item $H$ is a dense complex subspace of $W$.

\item There exists a unique inner product, $\langle \cdot,\cdot
\rangle_H$,  on $H$ such that $\|h\|_H^2 = \langle h,h\rangle_H$
for all $h\in H$. Moreover,
with this inner product $H$ is a separable complex Hilbert
space.

\item For any $h\in H$,
\begin{equation}
\label{e.wh}
\| h\|_W \le \sqrt{C_2}\|h\|_H.
\end{equation}
\end{enumerate}
\end{thm}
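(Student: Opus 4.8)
The plan is to treat the four assertions in the order (1), (4), and then (3) and (2) together, since the Hilbert structure on $H$ is the engine that drives the last two. For part (1) the essential input is Fernique's theorem: the moment bound $C_p < \infty$ will follow from the stronger fact that $\int_W e^{\varepsilon\|w\|_W^2}\,d\mu(w) < \infty$ for some $\varepsilon > 0$. First I would record that the $M_i$-invariance from Definition \ref{d.h2.1}, together with the Gaussian form of $\hat\mu$, makes $\mu$ rotation invariant on $W_{\operatorname{Re}}$; concretely, if $w_1,w_2$ are independent $W$-valued variables with law $\mu$, then $(w_1+w_2)/\sqrt2$ and $(w_1-w_2)/\sqrt2$ are again independent with law $\mu$, which one checks at the level of characteristic functionals. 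Feeding this into the standard Fernique argument yields a recursive inequality for the tails $\mu(\|w\|_W > r)$ that forces Gaussian decay, and finiteness of every $C_p$ is then immediate by integrating the tail. This step is classical and I would cite it from the references for Section \ref{s.h2}.

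For part (4) I would start from the observation that for $u \in W_{\operatorname{Re}}^*$ the random variable $w \mapsto u(w)$ is centered Gaussian with $\int_W |u(w)|^2\,d\mu(w) = q(u,u)$, which is exactly the content of $\hat\mu(u) = e^{-q(u,u)/2}$. Hence $\sqrt{q(u,u)} = \|u\|_{L^2(\mu)} \le \|u\|_{W^*}\sqrt{C_2}$ by H\"older, using part (1) to know $C_2 < \infty$. Combining this with the defining inequality $|u(h)| \le \|h\|_H\sqrt{q(u,u)}$ gives $|u(h)| \le \sqrt{C_2}\,\|h\|_H\,\|u\|_{W^*}$, and taking the supremum over the unit ball of $W^*$ produces $\|h\|_W \le \sqrt{C_2}\,\|h\|_H$, so that $H$ embeds continuously into $W$.

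The core of the theorem is part (3), which I would establish through the Paley--Wiener identification. Let $\mathcal{H}_1 \subset L^2(\mu)$ be the $L^2$-closure of $\{u(\cdot) : u \in W^*\}$; this is a Hilbert space (separable, since $L^2(\mu)$ is separable for $W$ separable) whose inner product restricts to $q$ on $W^*$. For $\xi \in \mathcal{H}_1$, use part (1) to define the Bochner integral $h_\xi := \int_W w\,\xi(w)\,d\mu(w) \in W$, and verify the identity $u(h_\xi) = \langle u(\cdot),\xi\rangle_{L^2(\mu)}$ for all $u \in W^*$. This shows on the one hand that $\|h_\xi\|_H = \|\xi\|_{\mathcal{H}_1}$, so $\xi \mapsto h_\xi$ is an isometry into $H$, and on the other hand that the map is injective by non-degeneracy of $q$. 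The inner product on $H$ is then defined by transporting that of $\mathcal{H}_1$, and the $M_i$-invariance of $\mu$ makes $\xi \mapsto \xi\circ M_i$ an isometry of $\mathcal{H}_1$ that induces the complex structure, giving $H$ the structure of a separable complex Hilbert space with $\|h\|_H^2 = \langle h,h\rangle_H$. Finally, density (part (2)) follows by Hahn--Banach: if $u \in W^*$ annihilated the closure of $H$, then in particular $q(u,u) = u\big(h_{u(\cdot)}\big) = 0$, forcing $u = 0$.

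The step I expect to require the most care is the surjectivity claim inside part (3), namely that every $h$ with $\|h\|_H < \infty$ is of the form $h_\xi$ for some $\xi \in \mathcal{H}_1$, rather than merely that the $h_\xi$ exhaust a dense subspace. The argument is to note that $\|h\|_H < \infty$ means $u \mapsto u(h)$ extends to a bounded linear functional of norm $\|h\|_H$ on $(W^*,\sqrt{q})$, hence on $\mathcal{H}_1$; Riesz representation then supplies $\xi_h \in \mathcal{H}_1$ with $u(h) = \langle u(\cdot),\xi_h\rangle$ for all $u$, and comparing this with the identity for $h_{\xi_h}$ together with the non-degeneracy of $\mu$ forces $h = h_{\xi_h}$. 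A secondary subtlety is bookkeeping the complex, as opposed to merely real, linearity throughout, since $W$ is a complex space while $q$ is defined on $W_{\operatorname{Re}}^*$; the $M_i$-invariance is precisely what reconciles the two and must be invoked at each stage where the complex inner product is needed.
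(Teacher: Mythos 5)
The paper never proves Theorem \ref{t.h2.3}: it is recorded as background on complex abstract Wiener spaces, with the reader referred to Section 2 of \cite{DG08-3} and the references therein. So there is no in-paper argument to compare against line by line; what you have done is reconstruct the classical proof, and your reconstruction is essentially correct and follows the same route as the cited literature. Part (1) via Fernique's theorem, with rotation invariance of $\mu$ (the identity in law for $(w_1\pm w_2)/\sqrt{2}$, checked on characteristic functionals using the parallelogram law for $q$), is the standard argument; part (4) via $q(u,u)=\int_W|u(w)|^2\,d\mu(w)\le C_2\|u\|_{W^*}^2$, the definition of $\|\cdot\|_H$, and Hahn--Banach is exactly right; and the Paley--Wiener identification $\xi\mapsto h_\xi=\int_W w\,\xi(w)\,d\mu(w)$ from the $L^2(\mu)$-closure $\mathcal{H}_1$ of $W^*$, with Riesz representation supplying surjectivity onto all of $H$, is the standard construction of the Cameron--Martin Hilbert structure. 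You correctly identify surjectivity (rather than mere density of the image) as the step needing care, and the Hahn--Banach density argument for part (2) closes the loop.

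Two small points. First, the uniqueness assertion in part (3) deserves an explicit word: it is immediate from the complex polarization identity, which determines any inner product from its norm; your construction only addresses existence. Second, the conjugation bookkeeping you flag at the end is a genuine trap rather than pedantry: as literally written, $u\bigl(h_{u(\cdot)}\bigr)=\int_W u(w)^2\,d\mu(w)$, and for a complex Gaussian measure this vanishes identically, since $M_i$-invariance gives $\mathbb{E}[Z^2]=\mathbb{E}[(iZ)^2]=-\mathbb{E}[Z^2]$ for $Z=u(w)$. The density argument in part (2) must therefore be run with $\xi=\overline{u(\cdot)}$ (equivalently, define $h_\xi$ with a conjugate inside the Bochner integral), which yields $0=\int_W|u(w)|^2\,d\mu(w)$, hence $u=0$ $\mu$-a.e.\ and then $u=0$ by positive-definiteness of $q$. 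With that convention fixed once and for all, and carried consistently through the isometry $\|h_\xi\|_H=\|\xi\|_{\mathcal{H}_1}$ and the Riesz step, your proof goes through.
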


\begin{notation}
The triple $(W,H,\mu)$ appearing in Theorem \ref{t.h2.3} will be called a {\em
complex abstract Wiener space}.
\end{notation}

We will also need the following facts about linear maps from $W$ into a complex
Hilbert space $K$.  The proof of
the next lemma may be found as part of Lemma 2.6 in \cite{DG08-3}.

\begin{lem}
\label{l.lin}
If $\varphi:W\rightarrow K$ is a linear map, then
\[ \int_W \|\varphi(w)\|_K^2\,d\mu(w)
	= 2\|\varphi\|_{H^*\otimes K}^2. \]
\end{lem}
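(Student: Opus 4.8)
The plan is to reduce the statement to a computation for scalar-valued complex-linear functionals and then reassemble the answer as a Hilbert--Schmidt sum. Fix a complex orthonormal basis $\{e_j\}$ of $K$ and a complex orthonormal basis $\{h_i\}$ of the Cameron--Martin space $H$, which is a separable complex Hilbert space by Theorem \ref{t.h2.3}; I interpret $\|\varphi\|_{H^*\otimes K}^2 = \sum_i \|\varphi(h_i)\|_K^2$ as the Hilbert--Schmidt norm of $\varphi|_H$. Writing $u_j := \langle \varphi(\cdot), e_j\rangle_K$, which lies in $W^*$ and is complex-linear because $\varphi$ is, Parseval's identity gives $\|\varphi(w)\|_K^2 = \sum_j |u_j(w)|^2$, and Tonelli's theorem permits interchanging the sum and the integral, so that
\[ \int_W \|\varphi(w)\|_K^2\,d\mu(w) = \sum_j \int_W |u_j(w)|^2\,d\mu(w). \]
It therefore suffices to establish the scalar identity $\int_W |u(w)|^2\,d\mu(w) = 2\|u|_H\|_{H^*}^2$ for every complex-linear $u\in W^*$: summing over $j$ and expanding $\|u_j|_H\|_{H^*}^2 = \sum_i |u_j(h_i)|^2 = \sum_i |\langle \varphi(h_i), e_j\rangle_K|^2$ recovers $2\sum_i \|\varphi(h_i)\|_K^2 = 2\|\varphi\|_{H^*\otimes K}^2$ after a second use of Parseval and Tonelli.

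For the scalar identity I would decompose $u = a + ib$ into its real and imaginary parts $a := \operatorname{Re} u$ and $b := \operatorname{Im} u$, both elements of $W_{\operatorname{Re}}^*$. By the characteristic functional in Definition \ref{d.h2.1}, each real functional $\gamma\in W_{\operatorname{Re}}^*$ is a centered real Gaussian under $\mu$ with variance $q(\gamma,\gamma)$, whence $\int_W |u(w)|^2\,d\mu = \int_W a(w)^2\,d\mu + \int_W b(w)^2\,d\mu = q(a,a) + q(b,b)$. The factor of $2$ is where the complex structure enters: complex-linearity of $u$, i.e. $u(iw) = iu(w)$, forces $a\circ M_i = -b$, and then the invariance $\mu\circ M_i^{-1} = \mu$ yields $q(b,b) = \int b^2\,d\mu = \int (a\circ M_i)^2\,d\mu = \int a^2\,d\mu = q(a,a)$. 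Hence $\int_W |u(w)|^2\,d\mu = 2q(a,a)$.

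It remains to identify $q(a,a)$ with $\|u|_H\|_{H^*}^2$, and this bookkeeping between the real and complex structures is the crux. By the complex Riesz representation theorem there is $h_u\in H$ with $u(h) = \langle h, h_u\rangle_H$ for all $h\in H$ and $\|u|_H\|_{H^*} = \|h_u\|_H$; taking real parts gives $a(h) = \operatorname{Re}\langle h, h_u\rangle_H$, which exhibits $h_u$ as the representative of $a|_H$ relative to the real inner product $\operatorname{Re}\langle\cdot,\cdot\rangle_H$ on $H$. Combining this with the defining formula for $\|\cdot\|_H$ in Theorem \ref{t.h2.3}, which realizes $\sqrt{q(\cdot,\cdot)}$ as the norm on $W_{\operatorname{Re}}^*$ dual to $\|\cdot\|_H$ and hence gives $q(a,a) = \|a|_H\|^2$ in the dual norm, identifies $q(a,a) = \|h_u\|_H^2 = \|u|_H\|_{H^*}^2$ and finishes the proof. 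I expect the main obstacle to be exactly this last step: one must check that the variance $q(a,a)$ of the real part, computed through the real Gaussian structure, coincides with the \emph{complex} Cameron--Martin dual norm of $u$, so that the two invocations of the complex structure---once to produce the factor of $2$ and once to match the real and complex Riesz representatives---are mutually consistent. The reduction to the scalar case and the Tonelli interchanges are routine by comparison.
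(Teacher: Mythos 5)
The paper itself contains no proof of this lemma: it is quoted verbatim from Lemma 2.6 of \cite{DG08-3}, so your proposal can only be compared against that cited argument. Your proof is correct and follows essentially the same route as the reference: expand $\|\varphi(w)\|_K^2$ along an orthonormal basis of $K$, interchange sum and integral by Tonelli, and reduce to the scalar identity $\int_W |u(w)|^2\,d\mu(w) = 2\|u|_H\|_{H^*}^2$ for complex-linear $u\in W^*$, with the factor $2$ produced exactly as you say by combining $u(iw)=iu(w)$ with the invariance $\mu\circ M_i^{-1}=\mu$, which forces $\operatorname{Re}u$ and $\operatorname{Im}u$ to have equal variance. (Two small reading choices you made silently are the right ones: the lemma must be understood for \emph{continuous} linear $\varphi$, which you use when placing $u_j$ in $W^*$, and the supremum in Theorem \ref{t.h2.3} must be taken over $W_{\operatorname{Re}}^*$, since $q$ is only defined there.)

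The one step where your justification is thinner than the argument requires is the identification $q(a,a)=\|a|_H\|^2$ for $a=\operatorname{Re}u$, which you yourself flag as the crux. The defining formula in Theorem \ref{t.h2.3} exhibits $\|\cdot\|_H$ as the norm dual to $\sqrt{q}$, and this by itself yields only the inequality $\sup_{h\ne 0}|a(h)|/\|h\|_H \le \sqrt{q(a,a)}$; the reverse inequality (the biduality you invoke) does not follow formally from the definition but is a genuine, though standard, fact of real abstract Wiener space theory. It is proved by introducing the Paley--Wiener representative $h_a := \int_W w\, a(w)\,d\mu(w)$, a Bochner integral which converges because $C_1<\infty$ by Theorem \ref{t.h2.3}(1); then $\gamma(h_a)=q(\gamma,a)$ for every $\gamma\in W_{\operatorname{Re}}^*$, so Cauchy--Schwarz for $q$ gives $\|h_a\|_H\le\sqrt{q(a,a)}$ (in particular $h_a\in H$), while $a(h_a)=q(a,a)$, and testing $a$ against $h_a$ supplies the missing inequality. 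With that point made precise, your observation that the complex Riesz representative $h_u$ is simultaneously the real Riesz representative of $a|_H$ with respect to $\operatorname{Re}\langle\cdot,\cdot\rangle_H$ — so that $q(a,a)=\|h_u\|_H^2=\|u|_H\|_{H^*}^2$ — is exactly right, and the remaining Parseval and Tonelli manipulations are routine, as you say.
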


Now suppose that $\rho:W\times W\rightarrow K$ is a continuous bilinear map
so that
\[ \|\rho\|_0 := \sup\{\rho(w,w')\|_K : \|w\|_W=\|w'\|_W=1\} <\infty. \]
The continuity of $\rho$ and Lemma \ref{l.lin} give the following
proposition which is analogous to Proposition
3.14 in \cite{DG08-2}.

\begin{prop}
\label{p.norm}
The bilinear form $\rho:H\times H\rightarrow K$ is Hilbert-Schmidt;
that is, for any orthonormal basis $\{\xi_j\}_{j=1}^\infty$ of $H$,
\[ \|\rho\|_{HS}^2 := \sum_{j,k=1}^\infty \|\rho(\xi_j,\xi_k)\|_K^2
%	\le C_2^2\|\rho\|_0^2
	<\infty \]
(where $\|\cdot\|_{HS}^2$ is independent of basis).
\end{prop}

\begin{proof}
By Lemma \ref{l.lin},
\begin{align*}
\|\rho(w,\cdot)\|^2_{H^*\otimes K}
	&= \frac{1}{2}\int_W \|\rho(w,w')\|_K^2\,d\mu(w') \\
	&\le \frac{1}{2}\|\rho\|_0^2\|w\|_W^2 \int_W \|w'\|_W^2 \,d\mu(w')
	= \frac{1}{2}C_2\|\rho\|_0^2\|w\|_W^2,
\end{align*}
where $C_2<\infty$ is as defined in (\ref{e.Cp}).
Similarly, viewing $w\mapsto\rho(w,\cdot)$ as a continuous linear map from
$W$ to $H^*\otimes K$,
\begin{align*}
\|\rho\|_{HS}^2
	&= \|h\mapsto\rho(h,\cdot)\|_{H^*\otimes(H^*\otimes K)}^2
	= \frac{1}{2}\int_W \|\rho(w,\cdot)\|_{H^*\otimes K}^2\,d\mu(w) \\
	&\le \frac{1}{4}\int_W C_2\|\rho\|_0^2\|w\|_W^2\,d\mu(w)
	= \frac{1}{4}C_2^2\|\rho\|_0^2.
\end{align*}
\end{proof}

\subsection{Infinite-dimensional complex Heisenberg-like groups}
\label{s.semi-inf}

In this section, we revisit the definition of the infinite-dimensional complex
Heisenberg-like groups constructed in \cite{DG08-3}.  Note that since we are
interested in subelliptic heat kernel measures on these groups, there are some
necessary modifications to the topology.  First we set the following notation
which will hold for the entirety of this paper.

\begin{notation}
Let $(W,H,\mu)$ be a complex abstract Wiener space.  Let $\mathbf{C}$
be a complex Hilbert space with inner product
$\langle\cdot,\cdot\rangle_\mathbf{C}$ and
$\mathrm{dim}(\mathbf{C})=N<\infty$.  Let $\omega:W\times
W\rightarrow\mathbf{C}$  be a continuous skew-symmetric bilinear
form on $W$.  We will also trivially assume that $\omega$
is surjective (otherwise, we just restrict to a linear subspace of
$\mathbf{C}$).

\end{notation}

\begin{defn}
Let $\mathfrak{g}$ denote $W\times\mathbf{C}$ when thought of as a Lie algebra
with the Lie bracket given by
\[ [(X_1,V_1), (X_2,V_2)] := (0, \omega(X_1,X_2)). \]
Let $G$ denote $W\times\mathbf{C}$ when thought of as a group with
multiplication given by
\begin{equation}
\label{e.3.1}
 g_1 g_2 := g_1 + g_2 + \frac{1}{2}[g_1,g_2],
\end{equation}
where $g_1$ and $g_2$ are viewed as elements of $\mathfrak{g}$. For $g_i=(w_i,c_i)$, this may be written equivalently as
\begin{equation}
\label{e.3.2}
(w_1,c_1)\cdot(w_2,c_2) = \left( w_1 + w_2, c_1 + c_2 +
    \frac{1}{2}\omega(w_1,w_2)\right).
\end{equation}
We will call $G$ constructed in this way a {\em Heisenberg-like group}.
\end{defn}
It is easy to verify that, given this bracket and multiplication,
$\mathfrak{g}$ is indeed a Lie algebra and $G$ is a group.
Note that $g^{-1}=-g$ and the identity $e=(0,0)$.

\begin{notation}
Let $\mathfrak{g}_{CM}$ denote $H\times\mathbf{C}$ when thought of as a Lie
subalgebra of $\mathfrak{g}$, and we will refer to $\mathfrak{g}_{CM}$ as the
{\em Cameron-Martin subalgebra} of $\mathfrak{g}$. Similarly, let $G_{CM}$
denote $H\times\mathbf{C}$ when thought of as a subgroup of $G$, and we will
refer to $G_{CM}$ as the {\em Cameron-Martin subgroup} of $G$.
\end{notation}
We will equip $\mathfrak{g}=G$ with the homogeneous norm
\[ \|(w,c)\|_{\mathfrak{g}} := \sqrt{\|w\|_W^2 + \|c\|_\mathbf{C}}, \]
and analogously on $\mathfrak{g}_{CM}=G_{CM}$ we define
\[ \|(A,a)\|_{\mathfrak{g}_{CM}} := \sqrt{\|A\|_H^2 + \|a\|_\mathbf{C}}. \]

\iffalse Trivially: For $a,b>0$,
\[ a + \sqrt{b} \le \sqrt{2}\sqrt{a^2 + b} \]
\[ (a+\sqrt{b})^2 = a^2 + 2a\sqrt{b} + b \ge a^2 + b \]
Thus,
\[ \sqrt{a^2 + b} \le a+\sqrt{b} \le \sqrt{2}\sqrt{a^2+b} \]
\fi

\begin{lem}
\label{l.top}
$G$ and $G_{CM}$ are topological groups with respect to the
topologies induced by the homogeneous norms.
\end{lem}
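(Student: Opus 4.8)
The plan is to identify the topology induced by the homogeneous norm with the product topology on $W\times\mathbf{C}$ (and likewise on $H\times\mathbf{C}$), and then to verify continuity of multiplication and inversion directly, using only that $\omega$ is continuous and skew-symmetric. Since $W$ is a separable Banach space and $\mathbf{C}$ is finite-dimensional, the product topology is metrizable, so it suffices to argue with sequences.

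For the identification, I would first record the elementary comparison
\[ \sqrt{\|w\|_W^2 + \|c\|_\mathbf{C}} \le \|w\|_W + \sqrt{\|c\|_\mathbf{C}} \le \sqrt{2}\,\sqrt{\|w\|_W^2 + \|c\|_\mathbf{C}}, \]
so that $\|\cdot\|_\mathfrak{g}$ is comparable to $\|w\|_W + \sqrt{\|c\|_\mathbf{C}}$; in particular $\|g_n\|_\mathfrak{g}\to 0$ if and only if $w_n\to 0$ in $W$ and $c_n\to 0$ in $\mathbf{C}$. To see that the left-invariant balls give the product topology about an arbitrary $g=(w,c)$, I would compute from (\ref{e.3.2}) that $g^{-1}g_n = \left(w_n - w,\ c_n - c - \tfrac12\omega(w,w_n)\right)$. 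Because $\omega$ is continuous and skew-symmetric, $w_n\to w$ forces $\omega(w,w_n)\to\omega(w,w)=0$, and hence $\|g^{-1}g_n\|_\mathfrak{g}\to 0$ precisely when $w_n\to w$ and $c_n\to c$. Thus the homogeneous-norm topology coincides with the product topology.

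With the topology identified, continuity of the operations is routine. Writing multiplication in the coordinates of (\ref{e.3.2}) as
\[ (w_1,c_1,w_2,c_2)\longmapsto\left(w_1+w_2,\ c_1+c_2+\tfrac12\omega(w_1,w_2)\right), \]
the first coordinate is continuous because addition on $W$ is continuous, and the second is continuous because addition on $\mathbf{C}$ is continuous and $\omega$ is a continuous bilinear map; hence multiplication $G\times G\to G$ is continuous. Inversion is $g\mapsto g^{-1}=-g$, i.e.\ $(w,c)\mapsto(-w,-c)$, which is clearly continuous, so $G$ is a topological group. The argument for $G_{CM}=H\times\mathbf{C}$ is identical once one checks that $\omega$ remains continuous for the Hilbert topology on $H$; this follows from (\ref{e.wh}), since $\|\omega(h_1,h_2)\|_\mathbf{C}\le M\|h_1\|_W\|h_2\|_W\le MC_2\|h_1\|_H\|h_2\|_H$, where $M$ is the continuity constant of $\omega$ (indeed $\omega|_{H\times H}$ is Hilbert-Schmidt by Proposition \ref{p.norm}).

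I expect the only real subtlety, rather than a genuine obstacle, to lie in the first step: the homogeneous norm is homogeneous with respect to the group dilations, not scalar multiplication, and need not satisfy the triangle inequality, so one must confirm that it nevertheless determines the product topology. The mechanism making this work is the skew-symmetry of $\omega$, which kills the quadratic correction term in the limit and renders left translation continuous; once this is in hand, continuity of the group operations follows immediately from the continuity of the bilinear form $\omega$.
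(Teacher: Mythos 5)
Your proof is correct and follows essentially the same route as the paper: the paper's (very terse) argument is exactly that inversion is negation, that vector addition and the bracket $(g_1,g_2)\mapsto[g_1,g_2]$ are continuous in the homogeneous-norm topologies, and hence that multiplication $g_1\cdot g_2 = g_1+g_2+\tfrac12[g_1,g_2]$ is continuous. Your additional step identifying the homogeneous-norm topology with the product topology on $W\times\mathbf{C}$ (via the comparison $\sqrt{\|w\|_W^2+\|c\|_\mathbf{C}}\le \|w\|_W+\sqrt{\|c\|_\mathbf{C}}\le\sqrt{2}\sqrt{\|w\|_W^2+\|c\|_\mathbf{C}}$) is left implicit in the paper, so you have simply made explicit what the paper takes for granted.
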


\begin{proof} This is proved similarly to \cite[Lemma 3.3]{DG08-2}.
Since $g^{-1}=-g$, the map $g\mapsto g^{-1}$
is continuous in the $\mathfrak{g}$ and $\mathfrak{g}_{CM}$ topologies.
Also $(g_{1},g_{2})\mapsto\left[g_{1},g_{2}\right]$ and $(g_{1},g_{2})\mapsto
g_{1}+g_{2}$ are continuous in both the $\mathfrak{g}$ and $\mathfrak{g}_{CM}$
topologies.  Thus, it follows from Equation \eqref{e.3.1} that $\left(
g_{1},g_{2}\right)  \mapsto g_{1}\cdot g_{2}$ is continuous as well.
\end{proof}

Before proceeding,
let us give the basic motivating examples for the construction of
these infinite-dimensional Heisenberg-like groups.
\begin{example} [Finite-dimensional complex Heisenberg group]
\label{ex.Heis}
Let $W=H=\mathbb{C}^n\times\mathbb{C}^n$,
$\mathbf{C}=\mathbb{C}$, and
\[ \omega( (w_1,w_2),(z_1,z_2)):=w_1\cdot z_2-w_2\cdot z_1. \]
Then $G=\mathbb{C}^{2n}\times\mathbb{C}$ equipped with a group operation as
defined in (\ref{e.3.2}) is a finite-dimensional complex
Heisenberg group.
\end{example}

\begin{example}
[Heisenberg group of a symplectic vector space]
\label{ex.infHeis}
Let $(K,\langle\cdot,\cdot\rangle)$ be a complex Hilbert space and $Q$ be a
strictly positive trace class operator on $K$.  For $h,k\in K$, let $\langle
h,k\rangle_Q:= \langle h,Qk\rangle$ and $\|h\|_Q:= \sqrt{\langle
h,h\rangle_Q}$, and let $(K_Q,\langle\cdot,\cdot\rangle_Q)$ denote the Hilbert
space completion of $(K,\|\cdot\|_Q)$.  Further assume that $K$ is equipped
with a conjugation $k\mapsto\bar{k}$ which is isometric and commutes with $Q$.
Let $W=K_Q\times K_Q$, $H=K\times K$, and $\omega:W\times
W\rightarrow\mathbb{C}$ be defined by
\[ \omega( (w_1,w_2),(z_1,z_2) ) = \langle w_1,\bar{z}_2\rangle_Q - \langle
	w_2,\bar{z}_1\rangle_Q. \]
Then $G=(K_Q\times K_Q)\times\mathbb{C}$ equipped with a group operation as
defined in (\ref{e.3.2}) is an infinite-dimensional complex
Heisenberg-like group.
\end{example}

\subsection{Derivatives on $G$}
\label{s.deriv}

For $g\in G$, let $L_g:G\rightarrow G$ and $R_g:G\rightarrow G$
denote left and right multiplication by $g$, respectively.  As $G$
is a vector space, to each $g\in G$ we can associate the tangent
space $T_g G$ to $G$ at $g$, which is naturally isomorphic to $G$.

\begin{notation}[Linear and group derivatives]
For $f:G\rightarrow\mathbb{C}$,
$x\in G$, and $h\in\mathfrak{g}$, let
\[ f'(x)h := \partial_h f(x) = \frac{d}{dt}\bigg|_0f(x+th),\]
whenever this derivative exists. More generally, for $h_1,\ldots,h_n\in\mathfrak{g}$, let
\[ f^{(n)}(x)(h_1\otimes\cdots\otimes h_n)
	:= \partial_{h_1}\cdots\partial_{h_n}f(x). \]
For $v,x\in G$, let $v_x \in T_x G$ denote the tangent vector
satisfying $v_xf=f'(x)v$.  If $x(t)$ is any smooth curve in
$G$ such that $x(0) = x$ and $\dot{x}(0)=v$ (for example,
$x(t) = x+tv$), then
\[ L_{g*} v_x = \frac{d}{dt}\bigg|_0 g\cdot x(t). \]
In particular, for $x=e$ and $v_e=h\in\mathfrak{g}$, let
$\tilde{h}(g):=L_{g*}h$, so that $\tilde{h}$ is the unique left invariant
vector field on $G$ such that $\tilde{h}(e)=h$.  We view
$\tilde{h}$ as a first order differential operator acting on smooth
functions by
\[ (\tilde{h}f)(g) = \frac{d}{dt}\bigg|_0 f(g\cdot \sigma(t)), \]
where $\sigma(t)$ is a smooth curve in $G$ such that $\sigma(0)=e$ and
$\dot{\sigma}(0)=h$ (for example, $\sigma(t)=th$).
\end{notation}

The following proposition is Proposition 3.7 of \cite{DG08-2} and
a special case of Proposition 3.16 of \cite{M09}.  The proof is a
simple computation and is included here for the reader's convenience.

\begin{prop}
\label{p.hg}
For $g,x\in G$ and $v_x\in T_x G$,
\[
L_{g*}v_x = v + \frac{1}{2}[g,v],
\]
and this expression does not depend on $x$.
In particular, taking $x=e$, $g=(w,c)$, and $v_e=h=(A,a)\in\mathfrak{g}$ gives
\[ \tilde{h}(g) = \left(A, a + \frac{1}{2}\omega(w,A)\right). \]
\end{prop}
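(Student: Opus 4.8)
The plan is to compute $L_{g*}v_x$ directly from the definition of the pushforward together with the explicit group law \eqref{e.3.1}. Fix $g\in G$, a point $x\in G$, and a tangent vector $v_x\in T_xG$. Choosing the smooth curve $x(t)=x+tv$, which satisfies $x(0)=x$ and $\dot{x}(0)=v$, I would write
\[
L_{g*}v_x = \frac{d}{dt}\bigg|_0 g\cdot x(t)
	= \frac{d}{dt}\bigg|_0\left(g + x(t) + \tfrac{1}{2}[g,x(t)]\right).
\]
Since $G$ is a vector space, every tangent space is canonically identified with $G\cong\mathfrak{g}$, so the right-hand side is again to be read as an element of $\mathfrak{g}$.

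The key step is to differentiate the three summands at $t=0$. The constant term $g$ contributes nothing, and the linear term $x(t)$ contributes $\dot{x}(0)=v$. For the bracket term, because $g$ is held fixed and the bracket (equivalently $\omega$) is bilinear and continuous, one has $\frac{d}{dt}\big|_0[g,x(t)]=[g,\dot{x}(0)]=[g,v]$. Assembling these gives
\[
L_{g*}v_x = v + \tfrac{1}{2}[g,v].
\]
Since the right-hand side involves only $g$ and $v$, and not the base point $x$, the asserted independence of $x$ is immediate.

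For the special case, I would set $x=e$, $g=(w,c)$, and $v_e=h=(A,a)$, so that $\tilde{h}(g)=L_{g*}h = h + \tfrac{1}{2}[g,h]$. Computing the bracket from its definition gives $[(w,c),(A,a)]=(0,\omega(w,A))$, whence
\[
\tilde{h}(g) = (A,a) + \tfrac{1}{2}(0,\omega(w,A))
	= \left(A,\, a + \tfrac{1}{2}\omega(w,A)\right),
\]
as claimed.

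There is no genuine obstacle here: the computation is elementary, and the only point requiring (minor) care is the justification that one may differentiate the bracket term through the curve, which follows from the bilinearity of $\omega$ together with the fact that $g$ is independent of $t$. The linear vector-space structure of $G$ is precisely what forces the final expression to depend on $g$ and $v$ alone, yielding the independence of the base point.
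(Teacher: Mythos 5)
Your proposal is correct and follows the paper's proof essentially verbatim: both use the curve $x(t)=x+tv$, differentiate the group law term by term using bilinearity of the bracket, and read off the base-point independence from the resulting formula. The added detail on the special case $\tilde{h}(g)$ is a routine computation the paper leaves implicit.
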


\begin{proof}
Let $x(t)= x+tv$.  Then
\[
L_{g*}v_x = \frac{d}{dt}\bigg|_0 g\cdot x(t)
	= \frac{d}{dt}\bigg|_0 g + x(t) + \frac{1}{2}[g, x(t)]
	= v + \frac{1}{2}[g,v].
\]
\end{proof}

In the sequel, it will be useful to have an expression for the left
invariant derivatives of a smooth function on $G$ in terms of its linear
derivatives.  To do this, we first set the following notation.
\begin{notation}
\label{n.part}
For $k\in\mathbb{N}$, let
\[ \Lambda^k := \{\text{partitions } \theta \text{ of } \{1,\ldots,k\}: \text{
for all } A\in\theta, \#A\le 2\}. \]
If $\{i,j\}\in\theta\in\Lambda^k$, we will always assume without loss of
generality that $i>j$.  For $\ell=0,\ldots,\lfloor k/2\rfloor$, let
\[ \Lambda_\ell^k := \{\theta\in\Lambda^k : \#\{A\in\theta:\#A=2\}=\ell\}.  \]
\end{notation}

\begin{prop}
\label{p.lder}
For $g\in G$, $h\in\mathfrak{g}$, and $f:G\rightarrow\mathbb{C}$ a
smooth function,
\begin{equation}
\label{e.lder1}
\tilde{h}f(g) = f'(g)\tilde{h}(g).
\end{equation}
More generally, for $k\in\mathbb{N}$ and
$h_1,\ldots,h_k\in\mathfrak{g}$,
\begin{equation}
\label{e.lder}
\tilde{h}_k\cdots\tilde{h}_1f(g)
	= \sum_{j=\lceil k/2\rceil}^k f^{(j)}(g)
		\left(\sum_{\theta\in\Lambda_{k-j}^k}
		(h_k,\ldots,h_1)^{\otimes\theta}(g)\right),
\end{equation}
where, for $\theta = \{ \{i_1,i_2\},\ldots,\{i_{2\ell-1},
i_{2\ell}\},\{i_{2\ell+1}\},\ldots,\{i_k\}\}\in\Lambda^k_\ell$,
\[
(h_k,\ldots,h_1)^{\otimes\theta}(g)
	:= \frac{1}{2^\ell}[h_{i_1},h_{i_2}]\otimes \cdots\otimes
		[h_{i_{2\ell-1}},h_{i_{2\ell}}]\otimes
		\tilde{h}_{i_{2\ell+1}}(g) \otimes\cdots \otimes\tilde{h}_{i_k}(g).
\]
\end{prop}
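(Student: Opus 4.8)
The plan is to prove \eqref{e.lder1} first as a direct consequence of Proposition \ref{p.hg} and then obtain \eqref{e.lder} by induction on $k$. For the base case, observe that $\tilde{h}f(g) = \frac{d}{dt}\big|_0 f(g\cdot\sigma(t))$ with $\dot\sigma(0)=h$, and by the chain rule this equals $f'(g)\frac{d}{dt}\big|_0(g\cdot\sigma(t)) = f'(g)L_{g*}h = f'(g)\tilde{h}(g)$, which is exactly \eqref{e.lder1}.

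\begin{proof}[Proof sketch]
Equation \eqref{e.lder1} follows immediately from the chain rule and the definition of $\tilde h$, since $\tilde h f(g) = \frac{d}{dt}\big|_0 f(g\cdot\sigma(t)) = f'(g)\,\tilde h(g)$ by Proposition \ref{p.hg}. The general formula \eqref{e.lder} is proved by induction on $k$. The key computational input is that $\tilde h(g) = (A, a+\tfrac12\omega(w,A))$ depends \emph{linearly} on $g=(w,c)$, so that the linear derivative of the vector field $\tilde h_i(\cdot)$ in any direction $h_j = (A_j,a_j)$ is the constant $\frac{d}{ds}\big|_0 \tilde h_i(g+sh_j) = (0,\tfrac12\omega(A_j,A_i)) = \tfrac12[h_j,h_i]$. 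This is the mechanism by which applying a further left-invariant field either raises the order of the linear derivative or contracts two slots into a bracket.
\end{proof}

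For the inductive step, I would apply $\tilde h_{k+1}$ to \eqref{e.lder} and use the Leibniz/product rule, writing $\tilde h_{k+1}\big(\tilde h_k\cdots\tilde h_1 f\big)(g)$ as a sum over terms in which $\tilde h_{k+1}$ acts, via \eqref{e.lder1}, as $f'(g+\cdots)$ differentiating in the direction $\tilde h_{k+1}(g)$. Each term of the form $f^{(j)}(g)\big((h_k,\ldots,h_1)^{\otimes\theta}(g)\big)$ produces two kinds of contributions: one in which $\tilde h_{k+1}$ differentiates the function factor $f^{(j)}$, raising it to $f^{(j+1)}$ and appending the new slot $\tilde h_{k+1}(g)$ (giving a partition of $\{1,\ldots,k+1\}$ in which $\{k+1\}$ is a singleton); and one in which $\tilde h_{k+1}$ differentiates one of the surviving singleton tangent-vector factors $\tilde h_{i}(g)$, which by the linearity computation above replaces $\tilde h_i(g)$ by $\tfrac12[h_{k+1},h_i]$ (giving a partition in which $\{k+1,i\}$ is a pair). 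The bookkeeping consists in checking that these two operations generate exactly the set $\Lambda^{k+1}_{(k+1)-j}$ with the correct $2^{-\ell}$ prefactor, and that the range of the outer sum shifts correctly from $\lceil k/2\rceil$ to $\lceil (k+1)/2\rceil$.

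The main obstacle is purely combinatorial: verifying that the recursion on partitions matches the claimed indexing of $\Lambda^{k+1}_\ell$, keeping careful track of the normalization $\frac{1}{2^\ell}$ and of which indices remain as singleton slots eligible to be paired with the newly introduced index $k+1$. One must confirm that differentiating a singleton $\{i\}$ to form the pair $\{k+1,i\}$ accounts for each two-element block $\{i,i'\}$ with largest element $k+1$ exactly once, and that no bracket factor $[h_{i},h_{i'}]$ (which is \emph{constant} in $g$) contributes when $\tilde h_{k+1}$ is applied, since its $g$-derivative vanishes. Since $\dim(\mathbf C)<\infty$ and the brackets land in the center $\mathbf C$, there are no higher-order bracket terms to track, so once the partition recursion is confirmed the proof is complete; the analytic content is entirely carried by \eqref{e.lder1} and the explicit form of $\tilde h(g)$ from Proposition \ref{p.hg}.
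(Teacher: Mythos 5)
Your proposal is correct and follows essentially the same route as the paper: the base case via Proposition \ref{p.hg} and the chain rule, then induction on $k$ using the product rule, with the key computation that differentiating a slot $\tilde{h}_i(\cdot)$ in the direction of $h_{k+1}$ produces the constant $\tfrac12[h_{k+1},h_i]$ (the paper computes this along the group curve $g\cdot th_{k+1}$ rather than the linear curve $g+sh_{k+1}$, but the two agree since $\tilde{h}_i$ depends only on the $W$-component, which is the same for both curves). The partition bookkeeping you outline, including the $2^{-\ell}$ prefactor and the vanishing of derivatives of the constant bracket factors, is exactly how the paper concludes.
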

%\marginpar{TAI: note (maybe just to self) that listing of the partition
%doesn't matter since these are symmetric tensors}
\begin{proof}
The first assertion holds by Proposition \ref{p.hg} and an application of the
chain rule.  Equation (\ref{e.lder}) may be then proved by induction.  So
assume the formula holds for $k$ and consider $k+1$.
\begin{align*}
\tilde{h}_{k+1}\tilde{h}_k\cdots&\tilde{h}_1f(g)
	= \frac{d}{dt}\bigg|_0 \tilde{h}_k\cdots\tilde{h}_1f(g\cdot th_{k+1}) \\
	&= \frac{d}{dt}\bigg|_0
		\sum_{j=\lceil k/2\rceil}^k f^{(j)}(g\cdot th_{k+1})
		\sum_{\theta\in\Lambda_{k-j}^k} (h_k,\ldots,h_1)^{\otimes\theta}
		(g\cdot th_{k+1}) \\
	&= \sum_{j=\lceil k/2\rceil}^k f^{(j+1)}(g)
		\sum_{\theta\in\Lambda_{k-j}^k} \tilde{h}_{k+1}(g)\otimes
		(h_k,\ldots,h_1)^{\otimes\theta}(g) \\
	&\qquad + \sum_{j=\lceil k/2\rceil}^k f^{(j)}(g)
		\sum_{\theta\in\Lambda_{k-j}^k} \frac{d}{dt}\bigg|_0
		(h_k,\ldots,h_1)^{\otimes\theta}(g\cdot th_{k+1}).
\end{align*}
For $g=(w,c)$, $h=(A,a)$, and $k=(B,b)$,
\[ \frac{d}{dt}\bigg|_0 \tilde{h}(g\cdot tk)
	= \left( A, a + \frac{1}{2}\omega(w+tB,A)\right)
	= \left(0,\frac{1}{2}\omega(B,A)\right)
	= \frac{1}{2}[k,h], \]
which is independent of $g$.  (Note that $\widetilde{[k,h]}(g)=[k,h]$.)
Thus, for $\theta = \{ \{i_1,i_2\},\ldots,\{i_{2\ell-1},
i_{2\ell}\},\{i_{2\ell+1}\},\ldots,\{i_k\}\}\in\Lambda^k_\ell$,
\begin{align*}
&\frac{d}{dt}\bigg|_0(h_k,\ldots,h_1)^{\otimes\theta}(g\cdot th_{k+1}) \\
	&= \frac{d}{dt}\bigg|_0 \frac{1}{2^\ell}\bigg\{
		[h_{i_1},h_{i_2}]\otimes \cdots\otimes
		[h_{i_{2\ell-1}},h_{i_{2\ell}}] \otimes\tilde{h}_{i_{2\ell+1}}(g\cdot th_{k+1}) \otimes\cdots
		\otimes\tilde{h}_{i_k}(g\cdot th_{k+1}) \bigg\}\\
	&= \sum_{j=2\ell+1}^k \frac{1}{2^{\ell+1}}
		\bigg([h_{i_1},h_{i_2}]
		\otimes \cdots\otimes[h_{i_{2\ell-1}},h_{i_{2\ell}}] \\
	&\qquad \qquad \otimes \tilde{h}_{i_{2k+1}}(g) \otimes\cdots\otimes
		\tilde{h}_{j-1}(g)\otimes [h_{k+1},h_j]\otimes
		\tilde{h}_{j+1}(g)\otimes \cdots\otimes \tilde{h}_{i_k}(g) \bigg).
\end{align*}
Rearranging terms and indices gives the desired formula.
\end{proof}

\iffalse
\[ \tilde{h}u(g)
	= f'(g) \left(A, a+\frac{1}{2}\omega(w,A)\right)
\]
\begin{align*}
\tilde{h}_2\tilde{h}_1f(g)	
	&= f''(g) \left( \left(A_2, a_2+\frac{1}{2}\omega(w,A_2)\right)\otimes
		\left(A_1, a_1+\frac{1}{2}\omega(w,A_1)\right)\right) \\
	&\qquad + f'(g) \left(0, \frac{1}{2}\omega(A_2,A_1)\right)
\end{align*}
\begin{align*}
\tilde{h}_3&\tilde{h}_2\tilde{h}_1(f\circ \pi)(g)
	=f'''(g) \left(\left(A_3, a_3+\frac{1}{2}\omega(w,A_3)\right)
		\otimes \left(A_2, a_2+\frac{1}{2}\omega(w,A_2)\right)\otimes
		\left(A_1, a_1+\frac{1}{2}\omega(w,A_1)\right)\right) \\
	&+ f''(g) \bigg( \left(0, \frac{1}{2}\omega(A_3,A_2)\right)
		\otimes \left(A_1, a_1+\frac{1}{2}\omega(w,A_1)\right)
		+  \left(0, \frac{1}{2}\omega(A_3,A_1)\right)
		\otimes \left(A_2, a_2+\frac{1}{2}\omega(w,A_2)\right) \\
	&\qquad + \left(0, \frac{1}{2}\omega(A_2,A_1)\right)
		\otimes \left(A_3,a_3+\frac{1}{2}\omega(w,A_3)\right)\bigg)
\end{align*}
\fi
Let us write out (\ref{e.lder}) for the first few $n$.  The expression for
$n=1$ is already given in equation (\ref{e.lder1}).  For $n=2$ and $n=3$,
we have
\begin{align}
\label{e.lder2}
\tilde{h}_2\tilde{h}_1f(g)
	&= f''(g)\left(\tilde{h}_2(g)\otimes\tilde{h}_1(g)\right)
		+ \frac{1}{2}f'(g)[h_2,h_1] \\
\label{e.lder3}
\tilde{h}_3\tilde{h}_2\tilde{h}_1f(g)
	&= f'''(g)\left(\tilde{h}_3(g) \otimes \tilde{h}_2(g)
			\otimes \tilde{h}_1(g)\right) \\
	&\notag \hspace*{-2ex}
		+ \frac{1}{2}f''(g)\left([h_3,h_2]\otimes \tilde{h}_1(g)
		+ [h_3,h_1]\otimes \tilde{h}_2(g)
		+ [h_2,h_1]\otimes \tilde{h}_3(g)\right).
\end{align}
In particular, (\ref{e.lder2}) implies that, for $h,k\in\mathfrak{g}$,
\begin{equation}
\label{e.hk}
\left(\tilde{h}\tilde{k}-\tilde{k}\tilde{h}\right)f = \widetilde{[h,k]}f.
\end{equation}

\subsection{Distances on $G_{CM}$}
\label{s.length}

We define here the sub-Riemannian distance on $G_{CM}$ and show
that the topology induced by this metric is equivalent to the
topology induced by the homogeneous norm $\|\cdot\|_{\mathfrak{g}_{CM}}$.
Note that in finite dimensions this result is standard and is usually
proved via compactness arguments (see for example Chapter 5 of
\cite{BLU07}).  Of course, these arguments are invalid in infinite
dimensions and so we resort to more direct methods of proof.  Note
that the results in this subsection rely directly on the fact that
$N=\mathrm{dim}(\mathbf{C})<\infty$.

\begin{notation}
(Riemannian and horizontal distances)
\label{n.length}

\begin{enumerate}
\item Let $C^1_{CM}$ denote the set of $C^1$-paths $\sigma:[0,1]\rightarrow
G_{CM}$.

\item For $x=(A,a)\in G_{CM}$, let
\[ |x|_{\mathfrak{g}_{CM}}^2 : = \|A\|_H^2 + \|a\|_\mathbf{C}^2. \]
The {\em length} of a $C^1$-path $\sigma:[a,b]\rightarrow
G_{CM}$ is defined as
\[ \ell(\sigma)
	:= \int_a^b |L_{\sigma^{-1}(s)*}\dot{\sigma}(s)|_{\mathfrak{g}_{CM}} \,ds.
\]

\item The {\em Riemannian distance} between $x,y\in G_{CM}$ is defined by
\[ d_{CM}(x,y) := \inf\{\ell(\sigma): \sigma\in C^1_{CM} \text{ such
    that } \sigma(0)=x \text{ and } \sigma(1)=y \}. \]

\item \label{i.2}
A $C^1$-path $\sigma:[a,b]\rightarrow G_{CM}$ is {\em horizontal} if
$L_{\sigma(t)^{-1}*}\dot{\sigma}(t)\in H\times\{0\}$
for a.e.~$t$.  Let $C^{1,h}_{CM}$ denote the set of horizontal paths
$\sigma:[0,1]\rightarrow G_{CM}$.

\item The {\em horizontal distance} between $x,y\in G_{CM}$ is defined by
\[ d_h(x,y) := \inf\{\ell(\sigma): \sigma\in C^{1,h}_{CM} \text{ such
    that } \sigma(0)=x \text{ and } \sigma(1)=y \}. \]
\end{enumerate}
\end{notation}

\begin{remark}
\label{r.horiz}
Note that if $\sigma(t)=(A(t),a(t))\in C_{CM}^{1,h}$, then
\[ L_{\sigma(t)^{-1}*}\dot{\sigma}(t)
	= \left(\dot{A}(t), \dot{a}(t) - \frac{1}{2}\omega(A(t),\dot{A}(t))\right)
		\in H\times\{0\}
\]
implies that $\sigma$ must satisfy
\[ a(t) = a(0) + \frac{1}{2}\int_0^t \omega(A(s),\dot{A}(s))\,ds, \]
and the length of $\sigma$ is given by
\begin{align*}
\ell(\sigma)
	= \int_0^1 |L_{\sigma^{-1}(s)*}\dot{\sigma}(s)|_{\mathfrak{g}_{CM}}\,ds
	= \int_0^1 \|\dot{A}(s)\|_H\,ds .
%	= \int_0^1 \|L_{\sigma^{-1}(s)*}\dot{\sigma}(s)\|_{\mathfrak{g}_{CM}}\,ds
\end{align*}
\end{remark}

Proposition 3.10 of \cite{DG08-2} gives the following comparison of the
$|\cdot|_{\mathfrak{g}_{CM}}$ and Riemannian metrics.

\begin{prop}
\label{p.3.10}
There exists $\delta=\delta(\omega)>0$ such that, for all $x,y\in G_{CM}$,
\[ d_{CM}(x,y)
	\le \left(1 + \frac{1}{4\delta}|x|_{\mathfrak{g}_{CM}} \wedge
		|y|_{\mathfrak{g}_{CM}}\right)|y-x|_{\mathfrak{g}_{CM}}, \]
and, in particular, $d_{CM}(e,x)\le |x|_{\mathfrak{g}_{CM}}$ for any $x\in G_{CM}$.
Also, there exists $k=k(\omega)<\infty$ such that,
if $x,y\in G_{CM}$ satisfy $d_{CM}(x,y)\le \delta$, then
\[ |y-x|_{\mathfrak{g}_{CM}}
	\le  k(1 + |x|_{\mathfrak{g}_{CM}} \wedge |y|_{\mathfrak{g}_{CM}})
		d_{CM}(x,y). \]
\end{prop}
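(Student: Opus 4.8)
The plan is to prove both inequalities by working directly with the explicit formula for the left-invariant velocity of a path. By Proposition \ref{p.hg}, for a $C^1$ path $\sigma(s) = (A(s),a(s))$ in $G_{CM}$ one has $L_{\sigma(s)^{-1}*}\dot\sigma(s) = \left(\dot A(s),\, \dot a(s) - \tfrac12\omega(A(s),\dot A(s))\right)$ (this is the expression recorded in Remark \ref{r.horiz}), so that
\[ \ell(\sigma) = \int_0^1 \left|\left(\dot A(s),\, \dot a(s) - \tfrac12\omega(A(s),\dot A(s))\right)\right|_{\mathfrak{g}_{CM}}\,ds. \]
The one structural input needed is that $\omega$ is bounded on $H\times H$: by Proposition \ref{p.norm} (applied with $\rho=\omega$, $K=\mathbf{C}$) the restriction of $\omega$ to $H\times H$ is Hilbert--Schmidt, hence there is a finite constant $M_\omega := \sup\{\|\omega(h,k)\|_{\mathbf{C}} : \|h\|_H=\|k\|_H=1\}$. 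I would take $\delta := 1/(2M_\omega)$, so that $1/(4\delta) = M_\omega/2$.

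For the upper bound I would simply estimate the length of the straight-line segment $\sigma(s):= x + s(y-x)$, $s\in[0,1]$, joining $x=(A_0,a_0)$ to $y=(A_1,a_1)$. Since $\dot A\equiv A_1-A_0$ and $\dot a\equiv a_1-a_0$ are constant and skew-symmetry gives $\omega(A(s),\dot A(s)) = \omega(A_0,A_1-A_0)$ independent of $s$, the integrand is constant and
\[ d_{CM}(x,y) \le \ell(\sigma) = \left|\left(A_1-A_0,\ (a_1-a_0) - \tfrac12\omega(A_0,A_1-A_0)\right)\right|_{\mathfrak{g}_{CM}}. \]
The triangle inequality in the Hilbert space $(\mathfrak{g}_{CM},|\cdot|_{\mathfrak{g}_{CM}})$ together with $\|\omega(A_0,A_1-A_0)\|_{\mathbf{C}} \le M_\omega\|A_0\|_H\|A_1-A_0\|_H \le M_\omega|x|_{\mathfrak{g}_{CM}}|y-x|_{\mathfrak{g}_{CM}}$ then yields $d_{CM}(x,y)\le (1+\tfrac{M_\omega}{2}|x|_{\mathfrak{g}_{CM}})|y-x|_{\mathfrak{g}_{CM}}$. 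Running the same segment from $y$ instead replaces $\|A_0\|_H$ by $\|A_1\|_H$ in the correction term (the two $\omega$-terms differ only by a sign, again by skew-symmetry), so I may base the estimate at whichever endpoint has smaller norm, producing the $|x|_{\mathfrak{g}_{CM}}\wedge|y|_{\mathfrak{g}_{CM}}$ in the claim. The special case $d_{CM}(e,x)\le|x|_{\mathfrak{g}_{CM}}$ is then immediate since $|e|_{\mathfrak{g}_{CM}}=0$.

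For the lower bound I would take an arbitrary $\sigma\in C^1_{CM}$ from $x$ to $y$ and set $u(s):= L_{\sigma(s)^{-1}*}\dot\sigma(s) = (u_H(s),u_{\mathbf{C}}(s))$, so that $\dot A = u_H$ and $\dot a = u_{\mathbf{C}} + \tfrac12\omega(A,u_H)$. Integrating, $\|A_1-A_0\|_H \le \int_0^1\|u_H\|_H\,ds \le \ell(\sigma)$ directly, and the key point is to control $\|A(s)\|_H$ uniformly: from $A(s) = A_0+\int_0^s u_H$ (or $A(s) = A_1-\int_s^1 u_H$) one gets $\|A(s)\|_H \le (|x|_{\mathfrak{g}_{CM}}\wedge|y|_{\mathfrak{g}_{CM}}) + \ell(\sigma)$. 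Feeding this into $a_1-a_0 = \int_0^1(u_{\mathbf{C}} + \tfrac12\omega(A,u_H))\,ds$ gives $\|a_1-a_0\|_{\mathbf{C}} \le \ell(\sigma) + \tfrac{M_\omega}{2}\big((|x|_{\mathfrak{g}_{CM}}\wedge|y|_{\mathfrak{g}_{CM}}) + \ell(\sigma)\big)\ell(\sigma)$. Combining the two component bounds via $|y-x|_{\mathfrak{g}_{CM}}\le \|A_1-A_0\|_H + \|a_1-a_0\|_{\mathbf{C}}$, taking the infimum over paths so that $\ell(\sigma)\to d:=d_{CM}(x,y)$, and using the hypothesis $d\le\delta$ to absorb the quadratic term via $\tfrac{M_\omega}{2}d^2\le\tfrac{M_\omega\delta}{2}d = \tfrac14 d$, yields $|y-x|_{\mathfrak{g}_{CM}}\le k\,(1+|x|_{\mathfrak{g}_{CM}}\wedge|y|_{\mathfrak{g}_{CM}})\,d$ with, e.g., $k=\tfrac94+\tfrac{M_\omega}{2}$.

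The main obstacle is the lower bound, and specifically the term that is quadratic in the path length. The estimate $\|A(s)\|_H \le (|x|_{\mathfrak{g}_{CM}}\wedge|y|_{\mathfrak{g}_{CM}}) + \ell(\sigma)$ forces the center displacement $\|a_1-a_0\|_{\mathbf{C}}$ to depend on $\ell(\sigma)$ not only linearly but through $\ell(\sigma)^2$; this quadratic contribution cannot be controlled globally by $d$, which is precisely why the reverse estimate is only local, valid under $d_{CM}(x,y)\le\delta$. Making the passage $\ell(\sigma)\downarrow d$ rigorous (working with near-minimizing paths) and checking that the two endpoint-based bounds for $\|A(s)\|_H$ genuinely deliver the minimum $|x|_{\mathfrak{g}_{CM}}\wedge|y|_{\mathfrak{g}_{CM}}$ are the points requiring the most care.
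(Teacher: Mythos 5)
Your proof is correct. Note that the paper itself gives no proof of this proposition --- it is imported verbatim from \cite{DG08-2} --- and your argument is essentially the one in that reference: the upper bound via the straight-line competitor path $x+s(y-x)$ (whose length is computable exactly because skew-symmetry makes $\omega(A(s),\dot A(s))$ constant along the segment), and the local lower bound by integrating the left-invariant velocity of a near-minimizing path and absorbing the term quadratic in $\ell(\sigma)$ using the hypothesis $d_{CM}(x,y)\le\delta$; your choices $\delta=1/(2M_\omega)$ and $k=\tfrac94+\tfrac{M_\omega}{2}$ check out, with the finiteness of $M_\omega$ on $H\times H$ legitimately supplied by Proposition \ref{p.norm} (or directly from the continuity of $\omega$ on $W\times W$ together with the embedding estimate \eqref{e.wh}).
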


Proposition \ref{p.3.10} implies for example that the topology
induced by $|\cdot|_{\mathfrak{g}_{CM}}$ is equivalent
to that induced by the Riemannian distance.  For the subelliptic
case, these are of course not the relevant topologies.  However, this
result may be used to prove that the homogenous norm on $\mathfrak{g}_{CM}$
and the horizontal distance are comparable at the identity.  The
following proposition is Theorem C.2 of \cite{DG08-2}.  We record
the proof here for the reader's convenience and to emphasize the
dependence of the upper bound constant $K_2$ on $N={\rm dim}(\mathbf{C})$.

\begin{prop}
\label{p.length}
If $\{\omega(A,B): A,B\in H\}=\mathbf{C}$,
then there exist finite constants $K_1=K_1(\omega)$ and $K_2=K_2(N,\omega)$
such that, for all $(A,a)\in\mathfrak{g}_{CM}$,
\[ K_1\|(A,a)\|_{\mathfrak{g}_{CM}} \le d_h(e,(A,a))
	\le K_2\|(A,a)\|_{\mathfrak{g}_{CM}}. \]
\end{prop}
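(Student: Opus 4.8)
The plan is to prove the two inequalities separately, working throughout with the explicit description of horizontal paths from Remark \ref{r.horiz}: a horizontal path $\sigma(t)=(A(t),a(t))$ starting at $e$ satisfies $A(0)=0$, $a(t)=\frac{1}{2}\int_0^t\omega(A(s),\dot A(s))\,ds$, and $\ell(\sigma)=\int_0^1\|\dot A(s)\|_H\,ds$. I will use that $\omega$ is bounded on $H\times H$: by Proposition \ref{p.norm} it is Hilbert--Schmidt, so there is $C=C(\omega)<\infty$ with $\|\omega(X,Y)\|_\mathbf{C}\le C\|X\|_H\|Y\|_H$. I will also use repeatedly that $\ell$ is left-invariant, so that if $\sigma_1$ runs from $e$ to $g_1$ and $\sigma_2$ from $e$ to $g_2$, then $\sigma_1$ followed by the left-translate $L_{g_1}\sigma_2$ is a horizontal path from $e$ to $g_1g_2$ of length $\ell(\sigma_1)+\ell(\sigma_2)$.

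For the lower bound, fix any horizontal $\sigma$ from $e$ to $(A,a)$. The fundamental theorem of calculus gives $\|A\|_H=\|\int_0^1\dot A\,ds\|_H\le\ell(\sigma)$ and, for every $s$, $\|A(s)\|_H\le\ell(\sigma)$. Inserting these into the central displacement formula yields $\|a\|_\mathbf{C}\le\frac{C}{2}\int_0^1\|A(s)\|_H\|\dot A(s)\|_H\,ds\le\frac{C}{2}\ell(\sigma)^2$. Hence $\|(A,a)\|_{\mathfrak{g}_{CM}}^2=\|A\|_H^2+\|a\|_\mathbf{C}\le(1+C/2)\ell(\sigma)^2$, and taking the infimum over $\sigma$ gives the left inequality with $K_1=(1+C/2)^{-1/2}$, which depends only on $\omega$.

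For the upper bound I construct an explicit competitor. Since $\omega$ is skew-symmetric, $\omega(A,A)=0$, so the straight segment $t\mapsto(tA,0)$ is horizontal and reaches $(A,0)$ with length $\|A\|_H$; because $(A,0)\cdot(0,a)=(A,a)$, left-invariance reduces matters to reaching the pure central element $(0,a)$ cheaply. To generate central displacements I use rectangular loops: for $X,Y\in H$ the closed four-segment path through $0,X,X+Y,Y$ is horizontal, returns to $H$-position $0$, and a segment-wise evaluation of $\int\omega(A,\dot A)$ produces central displacement $\omega(X,Y)$ with length $2(\|X\|_H+\|Y\|_H)$; rescaling $X,Y$ by $\lambda$ multiplies the displacement by $\lambda^2$ and the length by $\lambda$, which is precisely the step-two homogeneity. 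Using the surjectivity hypothesis $\{\omega(A,B):A,B\in H\}=\mathbf{C}$, I fix pairs $(X_j,Y_j)$, $j=1,\ldots,2N$, so that $\{\omega(X_j,Y_j)\}$ is a real basis of $\mathbf{C}$. Writing $a=\sum_j s_j\,\omega(X_j,Y_j)$ and choosing loop $j$ with scale $\lambda_j=\sqrt{|s_j|}$ and orientation $\mathrm{sign}(s_j)$, concatenating the loops (pure central elements commute and add) yields a horizontal path from $e$ to $(0,a)$ of length $\sum_j 2\sqrt{|s_j|}(\|X_j\|_H+\|Y_j\|_H)$.

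Bounding this by $\sqrt{\|a\|_\mathbf{C}}$ is where the dependence on $N$ enters. Two applications of Cauchy--Schwarz give $\sum_{j=1}^{2N}\sqrt{|s_j|}\le(2N)^{3/4}\big(\sum_j|s_j|^2\big)^{1/4}$, and the $\ell^2$-norm of the coordinates $(s_j)$ is comparable to $\|a\|_\mathbf{C}$ with a constant fixed by the chosen basis (absorbed into the $\omega$-constants). Combining with the straight segment and the estimates $\|A\|_H,\sqrt{\|a\|_\mathbf{C}}\le\|(A,a)\|_{\mathfrak{g}_{CM}}$ produces the right inequality with $K_2$ of the form $1+c(\omega)N^{3/4}$, explaining why $K_2=K_2(N,\omega)$ must grow with $N$: one needs on the order of $N$ independent loops to generate the $2N$-real-dimensional center. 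The main obstacle is the upper bound: one must verify that the loop construction and its left-translated concatenations remain horizontal with additive length, and carefully track the coordinate decomposition of $a$ and the Cauchy--Schwarz loss to extract the sharp $N$-dependence. Infinite-dimensionality causes no trouble here, since only finitely many $H$-directions are used and $\mathbf{C}$ is finite-dimensional; the point is merely that the usual compactness argument is unavailable, so the competitor path must be exhibited explicitly.
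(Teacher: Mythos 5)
Your proof is correct; your upper bound follows essentially the paper's route, but your lower bound is genuinely different and more elementary. For the upper bound the paper does exactly what you do: split $(A,a)=(A,0)\cdot(0,a)$, use the straight segment for $(A,0)$, and reach $(0,a)$ by loops rescaled like $\sqrt{|\cdot|}$ over a basis of $\mathbf{C}$ supplied by the surjectivity hypothesis, finishing with H\"older/Cauchy--Schwarz and finite-dimensional norm equivalence. The only differences are cosmetic: the paper uses smooth circular loops $\gamma(t)=A\cos 2\pi t+B\sin 2\pi t$, which produce $(0,\pi\omega(A,B))$ with length at most $2\pi(\|A\|_H+\|B\|_H)$, and a complex basis $\{\pi\omega(A_\ell,B_\ell)\}_{\ell=1}^N$ with complex rescaling by $\mathrm{sgn}(\varepsilon^\ell(a))\sqrt{|\varepsilon^\ell(a)|}$, where you use rectangular loops and a real basis of $2N$ elements. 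For the lower bound the paper argues through the dilations $\varphi_\lambda(w,c)=(\lambda w,\lambda^2 c)$, the scaling identity $d_h(\varphi_\lambda x,\varphi_\lambda y)=\lambda d_h(x,y)$, the inequality $d_h\ge d_{CM}$, and the comparison of $d_{CM}$ with $|\cdot|_{\mathfrak{g}_{CM}}$ from Proposition \ref{p.3.10}; you instead estimate directly along an arbitrary horizontal path, using $\|A\|_H\le\ell(\sigma)$, $\sup_s\|A(s)\|_H\le\ell(\sigma)$, and $\|a\|_{\mathbf{C}}\le\frac{1}{2}\|\omega\|_{op}\,\ell(\sigma)^2$, giving $K_1=(1+\|\omega\|_{op}/2)^{-1/2}$. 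Your version is self-contained---it needs only that $\omega$ is bounded on $H\times H$, which follows from continuity on $W$ and (\ref{e.wh})---and it makes transparent why $K_1$ depends only on $\omega$ while $K_2$ must pick up $N$; what the paper's route buys is reuse of machinery (the dilations and Proposition \ref{p.3.10}) already established for other purposes. Two minor caveats: your concatenated competitor paths are only piecewise $C^1$, so strictly one should reparametrize to make the velocity vanish at the junctions before invoking the definition of $d_h$ (the paper's triangle inequality (\ref{e.12.2}) hides the same technicality), and the coordinate-comparability constant you absorb ``into the $\omega$-constants'' depends on the chosen basis and hence implicitly on $N$ as well---harmless, since the proposition only asserts $K_2=K_2(N,\omega)$.
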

\begin{proof}
For any left-invariant metric $d$ on $G_{CM}$ (for example $d_{CM}$ or $d_h$),
we have
\begin{equation}
\label{e.12.2}
d(e,xy) \le d(e,x) + d(x,xy) = d(e,x) + d(e,y),
\end{equation}
for all $x,y\in G_{CM}$.  Given any horizontal path $\sigma=(w,c)$ joining
$e$ to $(A,a)$, we have from Remark \ref{r.horiz} that
\[
\ell(\sigma)
	= \int_0^1 \|\dot{w}(s)\|_H \, ds\ge\| A\| _H.
\]
Taking the infimum over all horizontal paths connecting $e$ to $(A,a)$,
it then follows that
\[ d_h(e,(A,a)) \ge \|A\|_H.
\]
Since the path $\sigma(t) = (tA,0)$ is horizontal and
\[ \|A\|_H = \ell(\sigma) \ge d_h(e,(A,0))  \ge \|A\|_H, \]
it follows that
\begin{equation}
\label{e.12.4}
d_h(e,(A,0))
%	= d_{CM}(e,(A,0))
	= \|A\|_H \text{ for all } A\in H.
\end{equation}

Given $A,B\in H$, let $\gamma(t) = A\cos 2\pi t + B\sin 2\pi t$ for $0\le t\le
1$, and consider the path
\[ \sigma(t) = \left(\gamma(t) - A, \frac{1}{2}\int_0^t \omega(\gamma(s) - A,
	\dot{\gamma}(s)) \,ds\right). \]
Note that $\sigma$ is a horizontal curve with
$L_{\sigma(t)^{-1}_*}\dot{\sigma}(t) =
(\dot{\gamma}(t),0)$, $\sigma(0) = e$, and
\[ \sigma(1) = \left(0,\frac{1}{2} \int_0^1
		\omega(\gamma(s),\dot{\gamma}(s)) \, ds \right)
	= \left(0,\pi \int_0^1 \omega(A,B)\, ds \right)
	= (0,\pi\omega(A,B)).
\]
Thus, we may conclude that
\begin{align}
d_h(e,(0,\pi\omega(A,B)))
	\notag
	\le \ell(\sigma)
	&= 2\pi \int_0^1 \|-A\sin 2\pi s + B\cos 2 \pi s\|_H \, ds \\
	&\label{e.12.5}
	\le 2 \pi(\|A\|_H + \|B\|_H). \end{align}

Now choose $\{A_\ell,B_\ell\}_{\ell=1}^{N}\subset H$ such that
% $\|A_\ell\|_H=\|B_\ell\|_H=1$, and
$\{\pi\omega(A_\ell,B_\ell)\}_{\ell=1}^{N}$ is a basis for $\mathbf{C}$.
Let $\{\varepsilon^\ell\} _{\ell=1}^{N}$ be the corresponding dual
basis.  Hence, for any $a\in\mathbf{C}$, we have
\begin{align*}
d_h(e,(0,a))
	&= d_h\left(e,\prod_{\ell=1}^N
		(0,\varepsilon^\ell(a)\pi\omega(A_\ell,B_\ell))\right) \\
	&\le \sum_{\ell=1}^N d_h(e,(0,\varepsilon^\ell(a)
		\pi \omega(A_\ell,B_\ell))) \\
	&= \sum_{\ell=1}^N d_h\left(e,\left(0,\pi\omega\left(\mathrm{sgn}
		(\varepsilon^\ell(a)) \sqrt{|\varepsilon^\ell(a)| }A_\ell,
		\sqrt{|\varepsilon^\ell(a)| }B_\ell\right)\right)\right) \\
	&\le 2\pi \sum_{\ell=1}^N \left(
		\left\|\sqrt{|\varepsilon^\ell(a)|}A_\ell\right\|_H
		+ \left\| \sqrt{|\varepsilon^\ell(a)|}B_\ell \right\|_H \right),
\end{align*}
wherein we have used (\ref{e.12.2}) for the first inequality
and (\ref{e.12.5}) for the second inequality.  Then H\"older's inequality
implies that
\begin{equation}
\label{e.12.6}
d_h(e,(0,a))
	\le 4\pi\sum_{\ell=1}^N \sqrt{|\varepsilon^\ell(a)|}
	\le 4\pi C \sqrt{\|a\|_\mathbf{C}},
\end{equation}
for a finite constant $C=C(N,\omega)$.
Combining equations (\ref{e.12.2}), (\ref{e.12.4}), and (\ref{e.12.6})
gives,
\begin{align*}
d_h(e,(A,a)) &= d_h(e,(A,0)(0,a)) \\
	&\le d_h(e,(A,0)) + d_h(e,(0,a)  ) \\
	&\le \|A\|_H + C(N,\omega) \sqrt{\|a\|_\mathbf{C}}
	\le \sqrt{2}\left(1\wedge
		C(N,\omega)\right)\|(A,a)\|_{\mathfrak{g}_{CM}},
\end{align*}
which completes the proof of the upper bound.

To prove the lower bound, consider first the dilations defined by
\[ \varphi_\lambda(w,c) := (\lambda w,\lambda^2c), \quad \text{ for } \lambda>0
\text{ and } (w,c)\in \mathfrak{g}_{CM} = G_{CM}. \]
One easily verifies that $\varphi_\lambda$ is both a Lie
algebra homomorphism on $\mathfrak{g}_{CM}$ and a group homomorphism
on $G_{CM}$.  Using the homomorphism property, it follows that, for any
$C^1$-path $\sigma$,
\[
L_{\varphi_\lambda(\sigma(t))^{-1}*}\frac{d}{dt} \varphi_\lambda(\sigma(t))
	= \varphi_\lambda(L_{\sigma(t)^{-1}*}\dot{\sigma}(t)).
\]
Consequently, if $\sigma$ is a horizontal curve, then $\varphi_\lambda\circ
\sigma$ is again horizontal and $\ell(\varphi_\lambda\circ \sigma) =
\lambda\ell(\sigma)$.  Thus, we may conclude that
\begin{equation}
\label{e.12.8}
d_h(\varphi_\lambda(x),\varphi_\lambda(y))
	= \lambda d_h(x,y),
\end{equation}
for all $x,y \in G_{CM}$.

Now, by the first part of Proposition \ref{p.3.10},
$d_{CM}(e,x)\le|x|_{\mathfrak{g}_{CM}}$, for all $x\in G_{CM}$.
Combining this with the second part of the same proposition implies
that there exist $\delta>0$ and $k<\infty$ such that, if
$|x|_{\mathfrak{g}_{CM}}\le \delta$, then $|x|_{\mathfrak{g}_{CM}}
\le  k d_{CM}(x,y)$.  So, for arbitrary $x=(A,a) \in G_{CM}$, choose
$\lambda=\lambda(x)>0$ so that
\[
\delta^2 = |\varphi_\lambda(x)|_{\mathfrak{g}_{CM}}^2
	= \lambda^2 \|A\|_H^2 + \lambda^4 \|a\|_\mathbf{C}^2;
\]
that is, take
\[
\lambda^2 = \frac{\sqrt{\|A\|_H^4 + 4\|a\| _\mathbf{C}^2\delta^2}
	- \|A\|_H^2}{2\|a\|_\mathbf{C}^2}.
\]
Equation (\ref{e.12.8}) and Proposition \ref{p.3.10} then imply that
\[ \lambda k d_h(e,x)
	= k d_h(e,\varphi_\lambda(x))
	\ge k d_{CM}(e,\varphi_\lambda(x))
	\ge |\varphi_\lambda(x)|_{\mathfrak{g}_{CM}} =  \delta\]
Thus,
\begin{align}
d_h(e,x)^2
	&\nonumber
	\ge \frac{\delta^2}{k^2\lambda^2}
	= \frac{\delta^2}{k^2} \frac{2\|a\|_\mathbf{C}^2}
		{\sqrt{\|A\|_H^4 + 4\delta^2\|a\|_\mathbf{C}^2} - \| A\|_H^2}\\
	&\label{e.12.10}
	= \frac{2\delta^2 \|a\|_\mathbf{C}^2}{k^2\|A\|_H^2}
		\frac{1}{\sqrt{1+\frac{4\delta^2\|a\|_\mathbf{C}^2}
		{\|A\|_H^4}}-1}.
\end{align}
Since $\sqrt{1+x}-1\le\min(x/2,\sqrt{x})$, we have
\[
\frac{1}{\sqrt{1+x}-1}
	\ge \max\left(\frac{2}{x},\frac{1}{\sqrt{x}}\right)
	\ge \frac{1}{x}+\frac{1}{2\sqrt{x}}.
\]
Using this estimate with $x = 4\delta^2\|a\|_\mathbf{C}^2\|A\|_H^{-4}$
in equation (\ref{e.12.10}) shows that
\[
d_h(e,x)^2
	\ge \frac{2\delta^2\|a\|_\mathbf{C}^2}{k^2\|A\| _H^2}
		\left(\frac{\|A\| _H^4}{4\delta^2\|a\|_\mathbf{C}^2} +
		\frac{\|A\|_H^2}{4\delta\| a\| _\mathbf{C}} \right)
	= \frac{1}{2k^2}(\|A\| _H^2+\delta\|a\|_\mathbf{C}),
\]
which implies the lower bound.
\end{proof}

Since $G_{CM}$ is stratified, it turns out that comparability of the metrics
at $e$ is sufficient to imply the equivalence of their respective topologies.

\begin{prop}
\label{p.equivtop}
The topologies generated by $d_h$ and $\|\cdot\|_{\mathfrak{g}_{CM}}$ are
equivalent.
\end{prop}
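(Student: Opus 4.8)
The plan is to exploit that \emph{both} topologies are invariant under left translations, and thereby reduce the global statement to the comparison of neighborhood bases at the identity, where Proposition \ref{p.length} already supplies the required two-sided estimate.

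First I would record that left translations are homeomorphisms in each topology. For the homogeneous-norm topology this is immediate from Lemma \ref{l.top}, since $(G_{CM},\|\cdot\|_{\mathfrak{g}_{CM}})$ is a topological group, so $L_x$ and $L_{x^{-1}}$ are continuous and $L_x$ is a homeomorphism. For the horizontal distance I would check directly that $d_h$ is left-invariant: for any $C^1$-path $\sigma$ one has $L_{(g\sigma(t))^{-1}*}\tfrac{d}{dt}(g\sigma(t)) = L_{\sigma(t)^{-1}*}\dot{\sigma}(t)$, so $L_g\circ\sigma$ is horizontal precisely when $\sigma$ is, with $\ell(L_g\circ\sigma)=\ell(\sigma)$; taking infima over horizontal paths gives $d_h(gx,gy)=d_h(x,y)$. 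Hence each $L_g$ is a $d_h$-isometry, in particular a homeomorphism of the $d_h$-topology. (The finiteness, positivity, and symmetry needed for $d_h$ to be a genuine metric whose open balls form a base follow from the two-sided bound of Proposition \ref{p.length} combined with this left-invariance.)

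Next, since left translations are homeomorphisms in both topologies and $L_x(e)=x$, the neighborhood filter of an arbitrary point $x$ in either topology is the $L_x$-image of the neighborhood filter of $e$. It therefore suffices to prove that the two topologies have the same neighborhoods of $e$, and this is exactly where Proposition \ref{p.length} enters. Writing $B_{d_h}(e,r)=\{y:d_h(e,y)<r\}$ and $B_{\|\cdot\|}(e,s)=\{y:\|y\|_{\mathfrak{g}_{CM}}<s\}$ (the latter being a base of homogeneous-norm neighborhoods of $e$ by definition of that topology), the inequalities $K_1\|y\|_{\mathfrak{g}_{CM}}\le d_h(e,y)\le K_2\|y\|_{\mathfrak{g}_{CM}}$ give $B_{\|\cdot\|}(e,r/K_2)\subseteq B_{d_h}(e,r)$ and $B_{d_h}(e,K_1 s)\subseteq B_{\|\cdot\|}(e,s)$. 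Thus the two families of balls about $e$ are mutually cofinal and generate the same neighborhood filter at $e$; transporting this identity by $L_x$ shows the neighborhood filters agree at every point, so a set is open in one topology if and only if it is open in the other, i.e.\ $d_h$ and $\|\cdot\|_{\mathfrak{g}_{CM}}$ induce the same topology.

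The main obstacle — indeed essentially the only nontrivial point — is the globalization: Proposition \ref{p.length} is stated only at the base point $e$, so the argument genuinely hinges on left-invariance of both topologies (the topological-group property from Lemma \ref{l.top} for the homogeneous norm, and the left-invariance of the length functional for $d_h$) to carry comparability from $e$ to every point. This is precisely the role played by the stratified structure. A secondary, more routine, check is that $d_h$ really is a finite, positive-definite metric whose open balls form a base for its topology, which again rests on the estimates of Proposition \ref{p.length}.
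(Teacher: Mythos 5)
Your proof is correct, but it takes a genuinely different route from the paper's. The paper fixes an arbitrary point $x=(A,a)$ and proves mutual continuity of the identity map there by explicit estimates: left-invariance of $d_h$ together with Proposition \ref{p.length} controls the group difference $\|x^{-1}y\|_{\mathfrak{g}_{CM}}$ in terms of $d_h(x,y)$ (and conversely), and then the discrepancy between the group difference $x^{-1}y$ and the linear difference $y-x$ --- the bracket correction $\tfrac{1}{2}\omega(A,B)$ --- is handled by hand via $\|\omega(A,B-A)\|_{\mathbf{C}}\le\|\omega\|_{op}\|A\|_H\|B-A\|_H$, yielding point-dependent constants $c(x,\omega)$. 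You avoid this computation entirely by citing Lemma \ref{l.top}: since the homogeneous-norm topology makes $G_{CM}$ a topological group, left translations are homeomorphisms for it, and they are isometries for $d_h$; hence both neighborhood filters at any $x$ are the $L_x$-images of those at $e$, where Proposition \ref{p.length} finishes the comparison. The two arguments really perform the same work in different places: the continuity of group translation in the norm topology, which you take as a black box from Lemma \ref{l.top}, is precisely what the paper's $\omega$-estimates re-derive quantitatively. Your version is shorter and more structural, cleanly isolating the three ingredients (translation invariance of each topology, plus comparability at $e$); the paper's version is self-contained at the level of metric estimates and produces explicit local moduli of continuity (how $\delta$ depends on $R$, $\|A\|_H$, and $\|\omega\|_{op}$), which is information your qualitative argument does not give. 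One small correction: your parenthetical attributing the symmetry of $d_h$ to Proposition \ref{p.length} plus left-invariance is not quite right --- symmetry comes from reversing a horizontal path and left-translating (so that $d_h(e,g)=d_h(e,g^{-1})$), while the comparison inequalities give finiteness and positivity; this is immaterial to the topological conclusion.
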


\begin{proof}
Fix $x=(A,a)\in G_{CM}$.
\iffalse
We will show that there exists $\delta=\delta(x,R)$ such
that $d_h(x,y)<\delta$ implies that $\|x-y\|_{\mathfrak{g}_{CM}}<R$, and,
similarly, there exists $\delta'=\delta'(x,R)$ such that
$\|x-y\|_{\mathfrak{g}_{CM}}<\delta'$ implies $d_h(x,y)<R$.
\fi
First note that, by
Proposition \ref{p.length} and the left invariance of the horizontal distance,
there exists $K_1=K_1(\omega)<\infty$ such that, for any $y=(B,b)\in G_{CM}$,
\[ \sqrt{\|B-A\|_H^2+\left\|b-a-\frac{1}{2}\omega(A,B)\right\|_\mathbf{C}}
	= \|x^{-1}y\|_{\mathfrak{g}_{CM}} \le K_1 d_h(e,x^{-1}y) = K_1d_h(x,y). \]
So if $d_h(x,y)<\delta$ for some $\delta>0$, then
\[ \|B-A\|_H \le K_1d_h(x,y) < K_1\delta, \]
and
\begin{align*}
\|b-a\|_\mathbf{C}
	&\le \left\|b-a-\frac{1}{2}\omega(A,B)\right\|_\mathbf{C} +
		\frac{1}{2}\|\omega(A,B)\|_\mathbf{C} \\
	&\le K_1^2d_h(x,y)^2 + \frac{1}{2}\|\omega(A,B-A)\|_\mathbf{C} \\
	&< K_1^2\delta^2 + \frac{1}{2}\|\omega\|_{op}\|A\|_H\|B-A\|_H
	< K_1^2\delta^2 + \frac{1}{2}\|\omega\|_{op}\|A\|_H\delta,
\end{align*}
where
\[ \|\omega\|_{op} := \sup\{\|\omega(h,k)\|_\mathbf{C}:\|h\|_H=\|k\|_H=1\}
	<\infty, \]
by the continuity of $\omega$ and (\ref{e.wh}).  Thus, given any $R\in(0,1)$,
one may clearly choose $c=c(x,\omega)$ sufficiently large (for example,
$c=2(\sqrt{2}K_1+\frac{1}{2}\|\omega\|_{op}\|A\|)$) so that $d_h(x,y)<\delta = R^2/c$
implies that
\begin{align*}
\|y-x\|_{\mathfrak{g}_{CM}}
	&= \sqrt{\|B-A\|_H^2+\|b-a\|_\mathbf{C}} \\
	&< \sqrt{K_1^2\delta^2 + K_1^2\delta^2+\frac{1}{2}\|\omega\|_{op}\|A\|_H\delta} \\
	&= \sqrt{2K_1^2\frac{R^4}{c^2} +
		\frac{1}{2}\|\omega\|_{op}\|A\|_H\frac{R^2}{c}}
	<\sqrt{R^2}
	=R.
\end{align*}

Similarly, the left invariance of $d_h$ and Proposition \ref{p.length} imply
that there exists $K_2=K_2(N,\omega)<\infty$ such that
\[ d_h(x,y) \le K_2\|x^{-1}y\|_{\mathfrak{g}_{CM}}= K_2\sqrt{\|B-A\|_H^2+\left\|b-a
		-\frac{1}{2}\omega(A,B)\right\|_\mathbf{C}}. \]
So if we suppose that
$\|y-x\|_{\mathfrak{g}_{CM}}=\sqrt{\|B-A\|_H^2+\|b-a\|_\mathbf{C}}<\delta'$,
then
\begin{align*}
d_h(x,y) &\le K_2\sqrt{\|B-A\|_H^2+\|b-a\|_\mathbf{C}
		+\frac{1}{2}\|\omega(A,B-A)\|_\mathbf{C}} \\
	&\le K_2\left(\|y-x\|_{\mathfrak{g}_{CM}} +
		\sqrt{\frac{1}{2}\|\omega\|_{op}\|A\|_H\|B-A\|_H}\right) \\
	&\le K_2\left(\delta' + \sqrt{\frac{1}{2}\|\omega\|_{op}\|A\|_H\delta'}\right).
\end{align*}
Again, given any $R\in(0,1)$, one may find $c'=c'(x,N,\omega)$ such
that $\|y-x\|_{\mathfrak{g}_{CM}}<\delta'=R^2/c'$ implies that $d_h(x,y)<R$.
\end{proof}

\subsection{Finite-dimensional projection groups}
\label{s.gpproj}
The finite-dimensional projections of $G$ defined in this section
will be important in the sequel.  Note that the construction of these
projections is quite natural in the sense that they come from the usual projections of the
abstract Wiener space; however, the projections defined here are not group
homomorphisms, which is a complicating factor in some of the following proofs.

As usual, let $(W,H,\mu)$ denote a complex abstract Wiener space.
Let $i:H\rightarrow W$ be the inclusion map, and $i^*:W^*\rightarrow H^*$ be
its transpose so that $i^*\ell:=\ell\circ i$ for all $\ell\in W^*$.  Also,
let
\[ H_* := \{h\in H: \langle\cdot,h\rangle_H\in \mathrm{Range}(i^*)\subset H^*\}.
\]
That is, for $h\in H$, $h\in H_*$ if and only if $\langle\cdot,h\rangle_H\in
H^*$ extends to a continuous linear functional on $W$, which we will continue
to denote by $\langle\cdot,h\rangle_H$.  Because $H$ is a dense subspace of
$W$, $i^*$ is injective and thus has a dense range.  Since
$H\ni h\mapsto\langle\cdot,h\rangle_H\in H^*$ is a
linear isometric isomorphism, it follows that $H_*\ni
h\mapsto\langle\cdot,h\rangle_H\in W^*$ is a linear isomorphism
also, and so $H_*$ is a dense subspace of $H$.

Suppose that $P:H\rightarrow H$ is a finite rank orthogonal projection
such that $PH\subset H_*$.  Let $\{\xi_j\}_{j=1}^m$ be an orthonormal basis for
$PH$.  Then we may extend $P$ to a (unique) continuous operator
from $W\rightarrow H$ (still denoted by $P$) by letting
\begin{equation}
\label{e.proj}
Pw := \sum_{j=1}^m \langle w,\xi_j\rangle_H \xi_j
\end{equation}
for all $w\in W$.

\begin{notation}
\label{n.proj}
Let $\mathrm{Proj}(W)$ denote the collection of finite rank projections
on $W$ such that
\begin{enumerate}
\item $PW\subset H_*$,
\item $P|_H:H\rightarrow H$ is an orthogonal projection (that is, $P$ has the form given in equation
\eqref{e.proj}), and
\item $PW$ is sufficiently large to satisfy
H\"ormander's condition (that is, $\{\omega(A,B):A,B\in PW\}=\mathbf{C}$).
\end{enumerate}
\end{notation}

For each $P\in\mathrm{Proj}(W)$, we may define $G_P:=
PW\times\mathbf{C}\subset H_*\times\mathbf{C}$ and a corresponding
projection $\pi_P:G\rightarrow G_P$ \[ \pi_P(w,x):= (Pw,x). \] We
will also let $\mathfrak{g}_P=\mathrm{Lie}(G_P) = PW\times\mathbf{C}$.

For any $\{P_n\}_{n=1}^\infty\subset\mathrm{Proj}(W)$ such that
$P_n|_H\uparrow I_H$, we may choose a sequence of complex orthonormal bases
$\Gamma_n$ for each
$P_nH$ so that $\Gamma_n\uparrow\Gamma$ a complex orthonormal basis for
$H$.  Thus, for the sequel, we will often consider a sequence of
projections with respect to a fixed orthonormal basis.

\begin{notation}
\label{n.pn}
Let $\{\xi_j\}_{j=1}^\infty\subset H_*$ be a fixed orthonormal basis of
$H$.  We will let $P_n$ denote the corresponding projections onto
$P_nW$, that is,
\[ P_nw = \sum_{j=1}^n \langle w,\xi_j\rangle_H \xi_j. \]
Let $G_n=G_{P_n}$,
$\mathfrak{g}_n=\mathrm{Lie}(G_n)$, and $\pi_n=\pi_{P_n}:G\rightarrow G_n$.
So $\{\pi_n\}_{n=1}^\infty$
is an increasing sequence of projections so that
$\pi_n|_{G_{CM}}\uparrow I|_{G_{CM}}$.
In the sequel, it will also be convenient to let
$\Gamma=\{\eta_j\}_{j=1}^\infty = \{(\xi_j,0)\}_{j=1}^\infty$ denote
a basis of $H\times\{0\}$.

(It is clear that, in order for $P_n\in\mathrm{Proj}(W)$, it will
be necessary to have a minimal $n$ so that
$\mathrm{span}\{\omega(\xi_i,\xi_j):i,j=1,\ldots,n\}=\mathbf{C}$.
However, since these projections will be primarily used for large
$n$ as approximations to $G$, we will ignore this issue in the
sequel and always assume we have a large enough projection.)
\end{notation}

\subsection{Brownian motion on $G$}
\label{s.BM}
Here we define a ``subelliptic'' Brownian motion $\{g_t\}_{t\ge0}$
on $G$ and collect various of its properties that are necessary for
the sequel.  The primary references for this section are \cite{DG08-2,DG08-3}.

Let $\{B_t\}_{t\ge0}$ be a Brownian motion on $W$ with variance determined by
\[
\mathbb{E}\left[\langle B_s,h\rangle_H \langle B_t,k\rangle_H\right]
    = \langle h,k \rangle_H \min(s,t),
\]
for all $s,t\ge0$ and $h,k\in H_*$.  The following is Proposition 4.1 of
\cite{DG08-2} and this result implicitly relies on the fact that Proposition
\ref{p.norm} implies that the bilinear form $\omega$ is a Hilbert-Schmidt.
\begin{prop}
\label{p.Mn}
For $P\in\mathrm{Proj}(W)$, let $M_t^P$ be the continuous $L^2$-martingale on
$\mathbf{C}$ defined by
\[ M_t^P = \int_0^t\omega(PB_s,dPB_s). \]
In particular, if $\{P_n\}_{n=1}^\infty\subset\mathrm{Proj}(W)$
is an increasing sequence of projections as in
Notation \ref{n.pn} and $M_t^n:=M_t^{P_n}$, then there exists an $L^2$-martingale
$\{M_t\}_{t\ge0}$ in $\mathbf{C}$ such that, for all $p\in[1,\infty)$
and $t>0$,
\[ \lim_{n\rightarrow\infty} \mathbb{E}\left[\sup_{\tau\le t}
    \|M_\tau^n-M_\tau\|_\mathbf{C}^p\right]=0, \]
and $M_t$ is independent of the sequence of projections.
\end{prop}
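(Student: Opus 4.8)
The plan is to show that $\{M_t^n\}_{n=1}^\infty$ is a Cauchy sequence in an appropriate Banach space of martingales and then identify the limit as the desired process $M_t$. First I would observe that by Proposition \ref{p.norm}, the skew-symmetric bilinear form $\omega:H\times H\rightarrow\mathbf{C}$ is Hilbert-Schmidt, so that $\|\omega\|_{HS}^2 = \sum_{j,k}\|\omega(\xi_j,\xi_k)\|_\mathbf{C}^2 < \infty$ for the fixed orthonormal basis $\{\xi_j\}$. This summability is precisely what lets the infinite-dimensional stochastic integral make sense as an $L^2$-limit. The key computation is to estimate the martingale increment $M_t^m - M_t^n$ for $m > n$. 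Writing $M_t^n = \int_0^t \omega(P_nB_s, dP_nB_s)$ and using the bilinearity of $\omega$, the difference decomposes into integrals over the ``new'' basis directions $\xi_{n+1},\ldots,\xi_m$.

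Next I would apply Doob's maximal inequality to reduce the supremum estimate $\mathbb{E}[\sup_{\tau\le t}\|M_\tau^m - M_\tau^n\|_\mathbf{C}^p]$ to a bound on the terminal value $\mathbb{E}[\|M_t^m - M_t^n\|_\mathbf{C}^p]$. For $p=2$, the Itô isometry (applied componentwise in $\mathbf{C}$) converts this into an integral of the predictable quadratic variation, which in turn is controlled by a tail sum of $\|\omega\|_{HS}^2$, namely a quantity like $\sum_{j,k : j\vee k > n}\|\omega(\xi_j,\xi_k)\|_\mathbf{C}^2$. Since $\omega$ is Hilbert-Schmidt, this tail tends to $0$ as $n\rightarrow\infty$, giving Cauchyness in $L^2$. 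For general $p\in[1,\infty)$, I would upgrade from $p=2$ using the Burkholder-Davis-Gundy inequalities together with Brownian scaling (the quadratic variation of the stochastic integral against Brownian motion is again estimated by the same Hilbert-Schmidt tail), so that all $L^p$-norms of the maximal increment are comparable to the $L^2$ estimate up to constants depending only on $p$ and $t$. Completeness of the space of continuous $L^2$-martingales then yields a limit martingale $\{M_t\}_{t\ge0}$ in $\mathbf{C}$ with the stated convergence.

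Finally, to see that $M_t$ is independent of the sequence of projections, I would take two sequences $\{P_n\}$ and $\{P_n'\}$ satisfying Notation \ref{n.pn} and interlace or compare them: the difference $M_t^{P_n} - M_t^{P_n'}$ again decomposes into stochastic integrals supported on basis directions outside a common large projection, and the same Hilbert-Schmidt tail bound forces this difference to $0$ in $L^2$. Hence the two limits agree almost surely, and $M_t$ depends only on $\omega$ and $\{B_t\}$. The main obstacle I anticipate is the bookkeeping in the increment estimate: because the projections $P_n$ act inside the bilinear form $\omega(P_nB_s, dP_nB_s)$, the difference $M_t^m - M_t^n$ is not simply an integral over new directions but also includes cross terms between old and new directions, so the decomposition must be organized carefully to ensure every resulting term is dominated by a genuine tail of the convergent series $\|\omega\|_{HS}^2$. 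Once that decomposition is clean, the analytic estimates are routine applications of Doob, Itô, and Burkholder-Davis-Gundy.
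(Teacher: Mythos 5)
Your approach is essentially the right one, and in fact it reconstructs the proof that the paper itself never writes out: Proposition \ref{p.Mn} is imported verbatim from \cite[Proposition 4.1]{DG08-2}, accompanied only by the remark that it ``implicitly relies on'' Proposition \ref{p.norm}, i.e.\ on the restriction of $\omega$ to $H\times H$ being Hilbert--Schmidt. Your chain of reductions --- Doob (or Burkholder--Davis--Gundy, or equivalence of moments on a fixed Wiener chaos, for general $p\in[1,\infty)$) to pass from the maximal function to the terminal value, then the It\^o isometry to convert the terminal $L^2$ increment into sums dominated by tails of the convergent series $\sum_{j,k}\|\omega(\xi_j,\xi_k)\|_{\mathbf{C}}^2$ --- is exactly the mechanism that remark points to, so the Cauchy part of your plan is sound.

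The one step whose wording would not survive being made precise is the independence of the limit from the sequence of projections. Two admissible sequences $\{P_n\}$ and $\{P_n'\}$ generally come from \emph{different} orthonormal bases of $H$: projections onto non-nested subspaces cannot be interlaced into a single increasing sequence, and there is no common basis in which $M_t^{P_n}-M_t^{P_n'}$ is ``supported on directions outside a common large projection.'' The correct implementation of your idea is operator-theoretic rather than basis-wise. Write
\[
M_t^{P_n}-M_t^{P_n'}
=\int_0^t\omega\left((P_n-P_n')B_s,\,dP_nB_s\right)
+\int_0^t\omega\left(P_n'B_s,\,d(P_n-P_n')B_s\right),
\]
so that the It\^o isometry (together with Lemma \ref{l.lin}) bounds $\mathbb{E}\|M_t^{P_n}-M_t^{P_n'}\|_{\mathbf{C}}^2$ by a constant times $t^2$ times
$\|\omega((P_n-P_n')\cdot,P_n\cdot)\|_{HS}^2+\|\omega(P_n'\cdot,(P_n-P_n')\cdot)\|_{HS}^2$.
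These quantities vanish as $n\rightarrow\infty$ because $P_n-P_n'\rightarrow0$ strongly on $H$, these operators are self-adjoint and uniformly bounded, and $\omega$ is Hilbert--Schmidt: approximate $\omega$ in $\|\cdot\|_{HS}$ by a finite-rank bilinear form, for which strong convergence of $P_n-P_n'$ applied to finitely many vectors suffices, and control the remainder by $\|T\|\,\|S\|\,\|\omega-\omega_F\|_{HS}$. Note that this same decomposition, specialized to $P_n'=P_m$ with a common basis, is also the cleanest way to organize the ``cross terms'' you flagged in the Cauchy estimate, so it is worth adopting throughout rather than splitting sums over new versus old indices.
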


As $M_t$ is independent of the defining sequence of projections, we will
denote the limiting process by
\[
M_t = \int_0^t \omega(B_s,dB_s).
\]

\begin{defn}
\label{d.bm}
The continuous $G$-valued process given by
\[g_t = \left( B_t, \frac{1}{2}M_t\right)
	= \left( B_t, \frac{1}{2}\int_0^t \omega(B_s,dB_s)\right).
\]
is a {\em Brownian motion} on $G$.
For $t>0$, let $\nu_t=\mathrm{Law}(g_t)$ denote the {\em heat kernel measure
at time $t$} on $G$.
\end{defn}

\begin{defn}
\label{d.cyl}
A function $f:G\rightarrow\mathbb{C}$ is a {\em cylinder function}
if it may be written as $f=F\circ\pi_P$, for some $P\in\mathrm{Proj}(W)$
and $F:G_P\rightarrow\mathbb{C}$.  We say that $f$ is a {\em smooth
(holomorphic) cylinder function} if $F$ is smooth (holomorphic).
\end{defn}

\begin{prop}
\label{p.L}
If $f:G\rightarrow\mathbb{C}$ is a smooth cylinder function, let
\[ Lf := \sum_{j=1}^\infty \left[\tilde{\eta}_j^2 +
    \widetilde{i\eta}_j^2\right] f, \]
where $\{\eta_j\}_{j=1}^\infty$ is a basis for
$H\times\{0\}$ as in Notation \ref{n.pn}.  Then $Lf$ is well defined, that is, the
above sum is convergent and independent of basis.  Moreover,
$\frac{1}{4}L$ is the generator for $\{g_t\}_{t\ge0}$, so that
\[ f(g_t) - \frac{1}{4}\int_0^t Lf(g_s)\,ds \]
is a local martingale for any smooth cylinder function $f$.
\end{prop}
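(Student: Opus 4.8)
The plan is to reduce the assertion to a finite-dimensional It\^o computation by exploiting the cylinder structure of $f$, after first rewriting $L$ in terms of linear (rather than left-invariant) derivatives. Enumerate the combined family $\{(\xi_j,0),(i\xi_j,0)\}_{j}$ as $\{\zeta_k\}_{k=1}^\infty$, so that $\{\zeta_k\}$ is the image in $H\times\{0\}$ of a real orthonormal basis of $H$ and $L=\sum_k\tilde\zeta_k^2$. Since $[\zeta_k,\zeta_k]=0$, equation \eqref{e.lder2} collapses to $\tilde\zeta_k^2 f(g)=f''(g)\big(\tilde\zeta_k(g)\otimes\tilde\zeta_k(g)\big)$, and by Proposition \ref{p.hg}, writing $\zeta_k=(\zeta_k^W,0)$ and $g=(w,c)$, one has $\tilde\zeta_k(g)=\big(\zeta_k^W,\tfrac12\omega(w,\zeta_k^W)\big)$. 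Thus
\[ Lf(g) = \sum_k f''(g)\big(\tilde\zeta_k(g)\otimes\tilde\zeta_k(g)\big). \]

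Next I would settle convergence and basis-independence simultaneously by a trace argument. Since $f=F\circ\pi_P$ for some $P\in\mathrm{Proj}(W)$, one has $f''(g)\big((X_1,V_1)\otimes(X_2,V_2)\big)=F''(\pi_Pg)\big((PX_1,V_1)\otimes(PX_2,V_2)\big)$, so each summand equals $F''(\pi_Pg)\big(\Phi_w\zeta_k^W\otimes\Phi_w\zeta_k^W\big)$, where $\Phi_w:H\to PH\times\mathbf{C}$ is the linear map $\Phi_w(h):=\big(Ph,\tfrac12\omega(w,h)\big)$. The estimate in the proof of Proposition \ref{p.norm} gives $\sum_k\|\omega(w,\zeta_k^W)\|_\mathbf{C}^2\le C_2\|\omega\|_0^2\|w\|_W^2<\infty$, and $\sum_k\|P\zeta_k^W\|_H^2=\dim_\mathbb{R}(PH)<\infty$, so $\Phi_w$ is Hilbert--Schmidt; as $F$ is smooth and $\pi_Pg$ ranges in a fixed finite-dimensional space, the symmetric form $F''(\pi_Pg)$ has a bounded self-adjoint operator $T_g$. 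Hence the sum equals $\mathrm{tr}\big(\Phi_w^*T_g\Phi_w\big)$, the trace of a trace-class operator, which is therefore absolutely convergent and independent of the chosen orthonormal basis (equivalently of $\Gamma$). This proves the first assertion.

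Finally, for the generator property I would apply the finite-dimensional It\^o formula to $Z_t:=\pi_P(g_t)=(PB_t,\tfrac12 M_t)$, a continuous $PW\times\mathbf{C}$-valued semimartingale, so that $f(g_t)=F(Z_t)$. Writing $B_t=\sum_k\zeta_k^W\,b^k_t$ with $\{b^k\}$ independent real Brownian motions normalized so that $d[b^k]_t=\tfrac12\,dt$ (as dictated by the complex Gaussian covariance of $B$), both components of $dZ_t=\big(P\,dB_t,\tfrac12\,\omega(B_t,dB_t)\big)$ are driven by the same noise, giving $dZ_t=\sum_k\Phi_{B_t}(\zeta_k^W)\,db^k_t$ and hence $d[Z]_t=\tfrac12\sum_k \Phi_{B_t}(\zeta_k^W)\otimes\Phi_{B_t}(\zeta_k^W)\,dt$; the convergence and correctness of the $\mathbf{C}$-valued and mixed terms here is exactly Proposition \ref{p.Mn}. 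The first-order term $F'(Z_t)\,dZ_t$ is a local martingale differential, while the It\^o correction is
\[ \tfrac12 F''(Z_t)\,d[Z]_t = \tfrac14\sum_k F''(Z_t)\big(\Phi_{B_t}(\zeta_k^W)\otimes\Phi_{B_t}(\zeta_k^W)\big)\,dt = \tfrac14\, Lf(g_t)\,dt, \]
the last equality by the cylinder identity of the second step together with the first step. This is precisely the assertion that $f(g_t)-\tfrac14\int_0^t Lf(g_s)\,ds$ is a local martingale.

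I expect the main obstacle to be this third step: justifying that $Z_t$ is a genuine finite-dimensional semimartingale whose quadratic variation is $\tfrac12\sum_k\Phi_{B_t}(\zeta_k^W)^{\otimes2}\,dt$, since the $\mathbf{C}$-component $M_t$ is built from the full infinite-dimensional $B_t$ rather than from its projection. This is where Proposition \ref{p.Mn}, namely the $L^2$-convergence $M^n\to M$ and the independence of the limit from the defining projection sequence, does the essential work, and it is also the step where the constant $\tfrac14$ must be tracked carefully through the complex normalization.
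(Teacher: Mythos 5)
Your proposal is correct in substance, but there is nothing in the paper to compare it against: Proposition \ref{p.L} is stated without proof, being one of the properties the authors import from \cite{DG08-2,DG08-3} (see the opening of Section \ref{s.BM}), so what you have written is in effect a reconstruction of the cited argument. The route you take is the natural one and it works: the cylinder structure $f=F\circ\pi_P$ reduces everything to finite dimensions; equation \eqref{e.lder2} with $[\zeta_k,\zeta_k]=0$ together with Proposition \ref{p.hg} expresses each $\tilde{\zeta}_k^2f(g)$ through $F''(\pi_Pg)$ and the map $\Phi_w$; the estimate inside the proof of Proposition \ref{p.norm} gives absolute convergence; and It\^o's formula applied to $\pi_P(g_t)$ produces the factor $\tfrac14$ from the normalization $d[b^k]_t=\tfrac12\,dt$, which is indeed the reading forced by the statement (the covariance displayed in Section \ref{s.BM}, taken literally with complex pairings and no conjugation, is incompatible with $i$-invariance, so your ``tracking of the constant'' is warranted and comes out right). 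Two details deserve more care than you give them, though neither is a structural gap. First, $F''(\pi_Pg)$ is a symmetric complex \emph{bilinear} form, so a ``bounded self-adjoint operator $T_g$'' is not literally available against the Hermitian inner product; you should realify, applying the trace argument to $\operatorname{Re}F''$ and $\operatorname{Im}F''$ separately as bounded symmetric real forms, after which $\Phi_w^*T_g\Phi_w$ is genuinely trace class and basis independence over all real orthonormal bases --- in particular those of the form $\{\xi_j,i\xi_j\}$ demanded by Notation \ref{n.pn} --- follows. Second, the identification $d[Z]_t=\tfrac12\sum_k\Phi_{B_t}(\zeta_k^W)^{\otimes2}\,dt$ is not an immediate quotation of Proposition \ref{p.Mn}: the approximants there are built from $\omega(P_nB_s,dP_nB_s)$, with $P_nB_s$ rather than $B_s$ in the first slot, so one must pass quadratic variations through the $L^2$-limit using the It\^o isometry, the bound $\sum_k\|\omega(w,\zeta_k^W)\|_{\mathbf{C}}^2\le C_2\|\omega\|_0^2\|w\|_W^2$, and $\mathbb{E}\|B_s-P_nB_s\|_W^2\rightarrow0$; you correctly flag this as the crux, but it is the one step that must actually be written out for the proof to be complete.
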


Proposition \ref{p.Mn} along with the fact that, for all $p\in[1,\infty)$ and
$t>0$,
\[ \lim_{n\rightarrow\infty} \mathbb{E}\left[\sup_{\tau\le
	t}\|B_\tau-P_nB_\tau\|_W^p \right] = 0 \]
(see for example Proposition 4.6 of \cite{DG08-2})
makes the following proposition clear.

\begin{prop}
\label{p.bmapprox}
For $P\in\mathrm{Proj}(W)$, let $g_t^P$ be the continuous process on $G_P$
defined by
\[ g_t^P = \left(PB_t, \frac{1}{2}\int_0^t\omega(PB_s,dPB_s)\right). \]
Then $g_t^P$ is a Brownian motion on $G_P$ .  In particular, let
$\{P_n\}_{n=1}^\infty\subset\mathrm{Proj}(W)$ be increasing
projections as in
Notation \ref{n.pn} and $g_t^n:=g_t^{P_n}$.  Then, for all $p\in[1,\infty)$
and $t>0$,
\[ \lim_{n\rightarrow\infty} \mathbb{E}\left[\sup_{\tau\le t}
    \|g_\tau^n-g_\tau\|_\mathfrak{g}^p\right]=0. \]
\end{prop}

\begin{notation}
For all $P\in\operatorname*{Proj}\left(  W\right)$ and $t>0$, let $\nu_t^P :=
\mathrm{Law}(g_t^P)$, and for all $n\in\mathbb{N}$ let $\nu_t^n := \mathrm{Law}(g_t^n)
=\mathrm{Law}(g_t^{P_n})$.
\end{notation}

For all projections satisfying H\"ormander's condition,
the Brownian motions on $G_P$ are
true subelliptic diffusions in the sense that their laws are absolutely
continuous with respect to the finite-dimensional reference measure and their
transition kernels are smooth.

\begin{lem}
\label{l.4.8} For all $P\in\operatorname*{Proj}(W)$ and $t>0$, we have
$\nu_{t}^{P}(dx) = p_t^P(e,x)dx$,
where $dx$ is the Riemannian volume measure (equal to Haar measure) and
$p_{t}^{P}(x,y)$ is the heat kernel on $G_P.$
\end{lem}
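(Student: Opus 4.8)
The plan is to reduce this to the classical Hörmander theorem on the finite-dimensional Lie group $G_P$. Since $P\in\mathrm{Proj}(W)$, condition (3) of Notation \ref{n.proj} guarantees $\{\omega(A,B):A,B\in PW\}=\mathbf{C}$, which is precisely the statement that Hörmander's bracket-generating condition holds for the generator of Brownian motion on $G_P$. So the entire task is to identify the generator of $\{g_t^P\}_{t\ge0}$ explicitly, verify that it is a sum of squares of left-invariant vector fields spanning a bracket-generating family, and then invoke the classical hypoellipticity result to conclude smoothness of the transition kernel with respect to Haar measure.

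First I would record that $G_P$ is a genuine finite-dimensional connected simply-connected nilpotent (step 2) Lie group, so that Haar measure coincides with the Riemannian volume measure coming from the left-invariant metric induced by $|\cdot|_{\mathfrak{g}_{CM}}$ on $\mathfrak{g}_P$. Next I would compute the generator. By Proposition \ref{p.bmapprox}, $g_t^P=(PB_t,\tfrac12\int_0^t\omega(PB_s,dPB_s))$ is a Brownian motion on $G_P$; choosing an orthonormal basis $\{\xi_1,\ldots,\xi_m\}$ of $PH$ and applying the same computation as in Proposition \ref{p.L} (now a genuine finite sum, so there are no convergence subtleties), the generator is
\[
L_P=\sum_{j=1}^m\left[\widetilde{(\xi_j,0)}^{\,2}+\widetilde{(i\xi_j,0)}^{\,2}\right],
\]
a sum of squares of the left-invariant vector fields on $G_P$ associated to the horizontal directions $(\xi_j,0),(i\xi_j,0)$, with $\tfrac14 L_P$ generating $g_t^P$.

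The key step is then to check the Hörmander condition for this operator. The horizontal vector fields are the $\widetilde{(\xi_j,0)}$ and $\widetilde{(i\xi_j,0)}$; by the bracket relation \eqref{e.hk}, their commutators produce exactly the central directions $\widetilde{[(\xi_j,0),(\xi_k,0)]}=(0,\omega(\xi_j,\xi_k))$ and similarly with the $i\xi_j$. Since condition (3) in Notation \ref{n.proj} says these $\omega$-values span $\mathbf{C}$, the horizontal fields together with their first-order brackets span all of $\mathfrak{g}_P=PW\times\mathbf{C}$ at every point; hence the Hörmander condition holds and $\tfrac14 L_P$ is hypoelliptic. I would cite Hörmander's theorem \cite{Hormander67} (as invoked in the Introduction) to conclude that the transition kernel $p_t^P(x,y)$ of the diffusion is smooth and that $\nu_t^P(dx)=p_t^P(e,x)\,dx$ with respect to Haar (= Riemannian volume) measure.

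The main obstacle is not analytic but bookkeeping: one must be careful that the projected process $g_t^P$ really is the diffusion generated by $\tfrac14 L_P$ and not merely an image under the non-homomorphic map $\pi_P$. Because $g_t^P$ is \emph{defined} directly as Brownian motion on $G_P$ via the intrinsic SDE (Proposition \ref{p.bmapprox}), rather than as $\pi_P(g_t)$, this identification is immediate and the failure of $\pi_P$ to be a homomorphism does not enter here. Thus the only genuine point to verify is the spanning computation above, after which the classical finite-dimensional theory applies verbatim.
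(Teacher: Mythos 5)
Your proposal is correct and follows essentially the same route as the paper's own proof: identify the generator $L^P=\sum_{j=1}^m\bigl[\widetilde{(\xi_j,0)}^2+\widetilde{(i\xi_j,0)}^2\bigr]$ by applying Proposition \ref{p.L} with $G$ replaced by $G_P$, note that $[PW,PW]=\mathbf{C}$ gives H\"ormander's bracket-generating condition for $\{(\xi_j,0),(i\xi_j,0)\}_{j=1}^m$, and invoke \cite{Hormander67} for hypoellipticity. Your additional remarks (the explicit bracket computation via \eqref{e.hk} and the observation that $g_t^P$ is defined intrinsically rather than as $\pi_P(g_t)$) are correct refinements of the same argument.
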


\begin{proof}
An application of Proposition \ref{p.L} with $G$ replaced by $G_{P}$ implies
that $\nu_t^P=\operatorname*{Law}(g^P_t)$
is a weak solution to the heat equation on $G_P$ with generator
\[ L^Pf := \sum_{j=1}^m \left[\widetilde{(\xi_j,0)}^2 +
    \widetilde{(i\xi_j,0)}^2\right] f\]
for smooth  functions $f:G_P\rightarrow\mathbb{C}$,
where $\{\xi_j\}_{j=1}^m$ is a complex orthonormal basis of $PH$.
The result now follows from the fact that $[PW,PW]=\mathbf{C}$, as this
implies $\left\{(\xi_j,0),(i\xi_j,0)\right\}_{j=1}^m$
satisfies H\"ormander's condition,
and thus $L^P$ is a hypoelliptic operator \cite{Hormander67}.
\end{proof}

The next proposition is a version of Fernique's theorem for the subelliptic heat
kernel measures and follows directly from the proof in the elliptic case (see Theorem
4.16 of \cite{DG08-2}).  In particular, this kind of exponential integrability
result is required to have a nontrivial class of holomorphic square integrable functions.

\begin{prop}[Subelliptic Fernique's theorem]
\label{p.rho}
There exists $\delta>0$ such that, for all $\varepsilon\in(0,\delta)$ and
$t>0$,
\[ \sup_{P\in\mathrm{Proj}(W)} \int_{G_P}
	    e^{\varepsilon\|g\|_\mathfrak{g}^2/t}\,d\nu_t^P(g)
	= \sup_{P\in\mathrm{Proj}(W)} \mathbb{E}\left[
		e^{\varepsilon\|g_t^P\|_\mathfrak{g}^2/t}\right] <\infty
\]
and
\[ \int_G e^{\varepsilon\|g\|_\mathfrak{g}^2/t}\,d\nu_t(g)
	=\mathbb{E}\left[
    e^{\varepsilon\|g_t\|_{\mathfrak{g}}^2/t}\right] <\infty. \]
\end{prop}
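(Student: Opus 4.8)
The plan is to establish the uniform finite-dimensional bound first and then deduce the statement on all of $G$ by a Fatou argument, mirroring Theorem 4.16 of \cite{DG08-2}. The stated equalities between integrals against $\nu_t^P$, $\nu_t$ and the corresponding expectations are immediate from $\nu_t^P=\mathrm{Law}(g_t^P)$ and $\nu_t=\mathrm{Law}(g_t)$ (Definition \ref{d.bm}), so the only content is finiteness. Writing $g_t^P=(PB_t,\tfrac12 M_t^P)$ and recalling $\|(w,c)\|_{\mathfrak g}^2=\|w\|_W^2+\|c\|_{\mathbf C}$, I would first split the exponent and apply H\"older's inequality,
\[
\mathbb{E}\Big[e^{\varepsilon\|g_t^P\|_{\mathfrak g}^2/t}\Big]
=\mathbb{E}\Big[e^{\varepsilon\|PB_t\|_W^2/t}\,e^{\varepsilon\|M_t^P\|_{\mathbf C}/(2t)}\Big]
\le \mathbb{E}\Big[e^{2\varepsilon\|PB_t\|_W^2/t}\Big]^{1/2}\,
\mathbb{E}\Big[e^{\varepsilon\|M_t^P\|_{\mathbf C}/t}\Big]^{1/2},
\]
reducing matters to two separate exponential moment bounds that must be uniform in $P\in\mathrm{Proj}(W)$.

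For the $W$-component, $PB_t$ is a centered Gaussian random variable in $W$ whose law is dominated, in the sense of covariances, by that of $B_t=\sqrt{t}\,B_1$ (in distribution). Classical Fernique's theorem supplies a threshold $\alpha_0>0$ with $\mathbb{E}[e^{\alpha\|B_1\|_W^2}]<\infty$ for $\alpha<\alpha_0$, and since the Fernique exponent depends only on controllable tail data of the Gaussian law, this bound passes to the family $\{\mathrm{Law}(PB_t)\}_{P}$ uniformly; I would simply invoke the uniform Gaussian estimate already contained in the proof of Theorem 4.16 of \cite{DG08-2}. Thus $\sup_P\mathbb{E}[e^{2\varepsilon\|PB_t\|_W^2/t}]<\infty$ for all $\varepsilon$ below some threshold and all $t>0$ (the $1/t$ scaling absorbing the $t$-dependence).

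The crux, and the step I expect to be the main obstacle, is the uniform exponential integrability of the center martingale $M_t^P=\int_0^t\omega(PB_s,dPB_s)$, which is a L\'evy-area-type, second-chaos quantity: since it is quadratic in a Gaussian, $\|M_t^P\|_{\mathbf C}$ has tails like the square of a Gaussian, so only a first power appears in the exponent and one expects integrability of $e^{\lambda\|M_t^P\|_{\mathbf C}}$ for small $\lambda$. To make this uniform in $P$ I would use the Burkholder--Davis--Gundy inequality componentwise (legitimate because $\dim\mathbf C=N<\infty$) to bound $\mathbb{E}[\|M_t^P\|_{\mathbf C}^{2p}]$ by moments of its quadratic variation, and then control the latter using the Hilbert--Schmidt bound on $\omega$ from Proposition \ref{p.norm} together with $C_2<\infty$ from \eqref{e.Cp}; because orthogonal projection only decreases the Hilbert--Schmidt norm, the resulting estimate $\mathbb{E}[\|M_t^P\|_{\mathbf C}^{p}]\le (Ct\|\omega\|_{HS}^2)^{p}\,p!$ is uniform in $P$. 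Summing the Taylor series then yields $\sup_P\mathbb{E}[e^{\varepsilon\|M_t^P\|_{\mathbf C}/t}]<\infty$ whenever $\varepsilon$ is small enough. (Alternatively, after diagonalizing the skew form $\omega$ on the finite-dimensional space $PH$ one may write each coordinate of $M_t^P$ as a superposition of scalar L\'evy areas, whose exponential moments are known explicitly via analytic continuation of L\'evy's area formula, the Hilbert--Schmidt bound again providing control of the infinitely many modes uniformly in $P$.)

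Combining the two factors gives a finite $\delta>0$ with $\sup_{P\in\mathrm{Proj}(W)}\mathbb{E}[e^{\varepsilon\|g_t^P\|_{\mathfrak g}^2/t}]<\infty$ for all $\varepsilon\in(0,\delta)$ and $t>0$, which is the first assertion. For the statement on $G$, I would invoke Proposition \ref{p.bmapprox}, which gives $g_t^n\to g_t$ in $L^p(\mathfrak g)$ and hence, along a subsequence, $\|g_t^n\|_{\mathfrak g}\to\|g_t\|_{\mathfrak g}$ almost surely; Fatou's lemma then yields
\[
\mathbb{E}\Big[e^{\varepsilon\|g_t\|_{\mathfrak g}^2/t}\Big]
\le \liminf_{n\to\infty}\mathbb{E}\Big[e^{\varepsilon\|g_t^n\|_{\mathfrak g}^2/t}\Big]
\le \sup_{P\in\mathrm{Proj}(W)}\mathbb{E}\Big[e^{\varepsilon\|g_t^P\|_{\mathfrak g}^2/t}\Big]<\infty,
\]
completing the proof.
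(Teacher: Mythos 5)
Your proposal is correct and takes essentially the same route as the paper: the paper offers no argument of its own, stating only that the result ``follows directly from the proof in the elliptic case'' (Theorem 4.16 of \cite{DG08-2}), and your plan --- Cauchy--Schwarz splitting of $e^{\varepsilon\|g_t^P\|_{\mathfrak g}^2/t}$ into a Gaussian Fernique factor and a L\'evy-area factor, uniform-in-$P$ exponential moment bounds for each, then Fatou via Proposition \ref{p.bmapprox} to pass to $G$ --- is precisely a reconstruction of that cited argument. One point you compress too much: monotonicity of the Hilbert--Schmidt norm under projections does not by itself make the quadratic-variation moments uniform in $P$, since the relevant bound (via the estimate $\|\omega(w,\cdot)\|_{H^*\otimes\mathbf C}^2\le\tfrac12 C_2\|\omega\|_0^2\|w\|_W^2$ from the proof of Proposition \ref{p.norm}) is by the random quantity $C_2\|\omega\|_0^2\int_0^t\|PB_s\|_W^2\,ds$, so you also need there the same Gaussian convex-domination estimate $\mathbb{E}\,\phi(PB_s)\le\mathbb{E}\,\phi(B_s)$ (valid since $PB_s$ and $(I-P)B_s$ are independent) that you invoke for the $\|PB_t\|_W^2$ factor.
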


The next proposition follows from Propositions \ref{p.bmapprox} and \ref{p.rho} and the proof
of Proposition 4.12 in \cite{DG08-3}.
\begin{prop}
\label{p.rho2}
Let
$\delta>0$ be as in Proposition \ref{p.rho}, and suppose that
$f:G\rightarrow\mathbb{C}$ is a continuous function such
that, for some $\varepsilon\in(0,\delta)$ and $p\in[1,\infty)$,
\[ |f(g)| \le C e^{\varepsilon \|g\|_\mathfrak{g}^2/pt}, \]
for all $g\in G$.  Then $f\in L^p(\nu_t)$, and, for all $h\in G$,
\begin{equation}
\label{e.h1}
\lim_{n\rightarrow\infty} \mathbb{E}|f(hg^n_t) - f(hg_t)|^p = 0
\end{equation}
and
\begin{equation}
\label{e.h2}
\lim_{n\rightarrow\infty} \mathbb{E}|f(g^n_th) - f(g_th)|^p = 0.
\end{equation}
\end{prop}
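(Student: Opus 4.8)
The plan is to read off the $L^p$ membership directly from the subelliptic Fernique theorem and then to promote convergence in probability to $L^p$ convergence by a uniform integrability argument, the only delicate point being the behaviour of the homogeneous norm under the group law. Since $\nu_t=\mathrm{Law}(g_t)$, the hypothesis gives
\[ \int_G |f|^p\,d\nu_t = \mathbb{E}|f(g_t)|^p \le C^p\,\mathbb{E}\left[e^{\varepsilon\|g_t\|_\mathfrak{g}^2/t}\right], \]
which is finite by Proposition \ref{p.rho}; hence $f\in L^p(\nu_t)$.

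The heart of the matter is a norm estimate with a multiplicative constant arbitrarily close to $1$. For $h=(w_0,c_0)$ and $g=(w,c)$ we have $hg=\left(w_0+w,\,c_0+c+\tfrac12\omega(w_0,w)\right)$, so that $\|hg\|_\mathfrak{g}^2=\|w_0+w\|_W^2+\|c_0+c+\tfrac12\omega(w_0,w)\|_\mathbf{C}$. I would estimate $\|w_0+w\|_W^2\le(1+\eta)\|w\|_W^2+(1+\eta^{-1})\|w_0\|_W^2$, bound the skew term by continuity of $\omega$ via $\|\omega(w_0,w)\|_\mathbf{C}\le\|\omega\|_0\|w_0\|_W\|w\|_W$, and absorb the resulting cross term into $\eta\|w\|_W^2+C_\eta(h)$ by Young's inequality. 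Collecting terms yields, for every $\eta>0$, a finite constant $C_\eta(h)$ with $\|hg\|_\mathfrak{g}^2\le(1+2\eta)\|g\|_\mathfrak{g}^2+C_\eta(h)$, and the symmetric estimate holds for $gh$. The point is that the coefficient tends to $1$ as $\eta\downarrow0$; the naive bound $\|w_0+w\|_W^2\le 2\|w_0\|_W^2+2\|w\|_W^2$ would produce a factor $2$ and destroy the required integrability.

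With this estimate in hand I would establish uniform integrability of the shifted families. For small $\theta,\eta>0$,
\[ |f(hg_t^n)|^{p(1+\theta)}\le C^{p(1+\theta)}e^{\varepsilon(1+\theta)C_\eta(h)/t}\,e^{\varepsilon(1+\theta)(1+2\eta)\|g_t^n\|_\mathfrak{g}^2/t}. \]
Because $\varepsilon<\delta$ strictly, I can choose $\theta,\eta$ so small that $\varepsilon(1+\theta)(1+2\eta)<\delta$; then, since $g_t^n=g_t^{P_n}$ with $P_n\in\mathrm{Proj}(W)$, Proposition \ref{p.rho} gives $\sup_n\mathbb{E}|f(hg_t^n)|^{p(1+\theta)}<\infty$. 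Thus $\{|f(hg_t^n)|^p\}_n$ is bounded in $L^{1+\theta}$ and hence uniformly integrable, and likewise for $\{|f(g_t^n h)|^p\}_n$.

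Finally I would combine this with convergence in probability. Proposition \ref{p.bmapprox} gives $g_t^n\to g_t$ in $L^p$ with respect to $\|\cdot\|_\mathfrak{g}$, hence in probability in the homogeneous norm topology, which is precisely the topology of $G$. Since left and right multiplication by $h$ are continuous (Lemma \ref{l.top}) and $f$ is continuous, it follows that $f(hg_t^n)\to f(hg_t)$ and $f(g_t^n h)\to f(g_t h)$ in probability. Writing $|f(hg_t^n)-f(hg_t)|^p\le 2^{p-1}\left(|f(hg_t^n)|^p+|f(hg_t)|^p\right)$, the right-hand side is uniformly integrable, so Vitali's convergence theorem yields \eqref{e.h1} and, identically, \eqref{e.h2}. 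I expect the second-paragraph norm estimate — simultaneously keeping the exponential coefficient strictly below $\delta$ and leaving room for the $L^{1+\theta}$ bound, which is possible exactly because $\varepsilon<\delta$ is strict — to be the step demanding the most care.
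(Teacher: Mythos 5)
Your proof is correct and takes essentially the same route as the paper: the paper disposes of this proposition by citing Propositions \ref{p.bmapprox} and \ref{p.rho} together with the proof of Proposition 4.12 of \cite{DG08-3}, and that delegated argument is precisely what you reconstruct --- $L^p$ membership from the subelliptic Fernique bound, a group-translation estimate $\|hg\|_\mathfrak{g}^2\le(1+2\eta)\|g\|_\mathfrak{g}^2+C_\eta(h)$ whose coefficient can be kept close enough to $1$ that the exponent stays below $\delta$, uniform integrability via an $L^{1+\theta}$ bound uniform over the projections, and then convergence in probability (from Proposition \ref{p.bmapprox} plus continuity) combined with Vitali's theorem. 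Your write-up simply supplies the details the paper leaves to the reference, including the key observation that the naive factor-of-$2$ bound would fail.
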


Finally, we include the following proposition, which states that, as the name
suggests, the Cameron-Martin subgroup is a subspace of heat kernel measure 0.
The proof is identical to Proposition 4.6 of \cite{DG08-3}.

\begin{prop}
\label{p.CM0}
For all $t>0$, $\nu_t(G_{CM})=0$.
\end{prop}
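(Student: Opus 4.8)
The plan is to reduce Proposition \ref{p.CM0} to the classical fact that the Cameron--Martin subspace of an infinite-dimensional abstract Wiener space is a null set for the ambient Gaussian measure; this is precisely the content of Proposition 4.6 of \cite{DG08-3}. First I would use the explicit form of the Brownian motion from Definition \ref{d.bm}. Since $g_t = (B_t, \frac{1}{2}M_t)$ and the central component $\frac{1}{2}M_t$ always lies in $\mathbf{C}$, the event $\{g_t \in G_{CM}\} = \{g_t \in H \times \mathbf{C}\}$ coincides with $\{B_t \in H\}$. Hence
\[ \nu_t(G_{CM}) = \mathbb{P}(g_t \in H \times \mathbf{C}) = \mathbb{P}(B_t \in H), \]
and it suffices to prove that, for each fixed $t > 0$, $B_t \notin H$ almost surely.

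Next I would fix the orthonormal basis $\{\xi_j\}_{j=1}^\infty \subset H_*$ of $H$ from Notation \ref{n.pn} and introduce the coordinates $Z_j := \langle B_t, \xi_j\rangle_H$, which are well defined because each $\langle \cdot, \xi_j\rangle_H$ extends to a continuous linear functional on $W$. The covariance prescription $\mathbb{E}[\langle B_t, h\rangle_H \langle B_t, k\rangle_H] = t\langle h, k\rangle_H$ together with the orthonormality of $\{\xi_j\}$ shows that the $Z_j$ are independent, identically distributed, nondegenerate Gaussians, so that the $|Z_j|^2$ are i.i.d.\ nonnegative variables with a common mean $c = \mathbb{E}|Z_1|^2 > 0$. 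I would then invoke the standard characterization of the Cameron--Martin space: a point $w \in W$ belongs to $H$ if and only if $\sum_{j=1}^\infty |\langle w, \xi_j\rangle_H|^2 < \infty$, and in that case $\|w\|_H^2$ equals this sum.

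Finally, the strong law of large numbers applied to the i.i.d.\ variables $|Z_j|^2$ gives $\frac{1}{N}\sum_{j=1}^N |Z_j|^2 \to c > 0$ almost surely, so the partial sums diverge and $\sum_{j=1}^\infty |Z_j|^2 = +\infty$ a.s. By the characterization above this means $B_t \notin H$ almost surely, whence $\nu_t(G_{CM}) = 0$. I expect the only delicate points to be bookkeeping ones rather than genuine obstacles: verifying that the one-time marginal $\mathrm{Law}(B_t)$ is a genuinely nondegenerate Gaussian measure on the full infinite-dimensional space $H$ (so that the zero-one dichotomy applies and does not degenerate), and confirming that membership in $H$ is correctly captured by convergence of the $H_*$-coordinate series, which is the measurable event to which the strong law is applied.
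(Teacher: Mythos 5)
Your proof is correct, and its key reduction is exactly the paper's: since $g_t=(B_t,\tfrac12 M_t)$ and the central component automatically lies in $\mathbf{C}$, one has $\nu_t(G_{CM})=\mathbb{P}(B_t\in H)=\mu_t(H)$, where $\mu_t=\mathrm{Law}(B_t)$ is Wiener measure on $W$; the paper phrases this same step via the pushforward $\pi_*\nu_t=\mu_t$ under the projection $\pi(w,x)=w$. The difference lies in what comes next: the paper simply invokes the classical fact that the Cameron--Martin subspace is a $\mu_t$-null set (declaring the proof identical to Proposition 4.6 of \cite{DG08-3}), whereas you prove that fact from scratch, using that the coordinates $Z_j=\langle B_t,\xi_j\rangle_H$ along a basis $\{\xi_j\}\subset H_*$ are i.i.d.\ nondegenerate Gaussians and applying the strong law of large numbers to $|Z_j|^2$. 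Your route is thus self-contained where the paper's is citation-based; it is the standard textbook argument for the null-set property, so nothing is lost and the reader gains a complete proof. One caution: you assert the equivalence ``$w\in H$ if and only if $\sum_j|\langle w,\xi_j\rangle_H|^2<\infty$.'' Only the forward direction is needed: if $B_t\in H$, then the extended functionals $\langle\cdot,\xi_j\rangle_H$ agree with the $H$-inner products on $H$, so Parseval gives $\sum_j|Z_j|^2=\|B_t\|_H^2<\infty$; this direction is immediate. The reverse direction --- that square-summability of the coordinates of a general $w\in W$ forces $w\in H$ --- amounts to the functionals $\{\langle\cdot,\xi_j\rangle_H\}$ separating points of $W$, which is not obvious and would require justification. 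Since your argument only uses the contrapositive of the easy direction ($\sum_j|Z_j|^2=\infty\Rightarrow B_t\notin H$), the over-claim is harmless, but it should not be stated as known without proof.
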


\begin{proof}
Let $\mu_t$ denote Wiener measure on $W$ with variance $t$.  Then for a
bounded measurable function $f:G=W\times\mathbf{C}\rightarrow\mathbb{C}$ such that
$f(w,x)=f(w)$,
\[ \int_G f(w)\,d\nu_t(w,x) = \mathbb{E}[f(B_t)] = \int_W f(w)\,d\mu_t(w). \]
Let $\pi:W\times\mathbf{C}\rightarrow W$ be the projection $\pi(w,x)=w$.  Then
$\pi_*\nu_t=\mu_t$, and thus
\[ \nu_t(G_{CM}) = \nu_t(\pi^{-1}(H)) = \pi_*\nu_t(H) = \mu_t(H)=0.\]
\end{proof}

\subsection{Holomorphic functions on $G$ and $G_{CM}$}
\label{s.h5.2}
We recall here the basic facts for holomorphic functions on
infinite-dimensional spaces required for the sequel.
For complete proofs of any of these results, see Section 5 of \cite{DG08-3}.

\subsubsection{Holomorphic functions on Banach spaces}
\label{s.h5.1}

The material in this subsection is based on the theory in \cite{HP74}.
Let $X$ and $Y$ be two complex Banach spaces, and for $a\in X$ and $\delta>0$
let
\[
B_{X}(a,\delta)
	:= \left\{  x\in X:\left\| x-a\right\|_X < \delta \right\}
\]
be the open ball in $X$ with center $a$ and radius $\delta$.  The following
is Definition 3.17.2 of Hille and Phillips \cite{HP74}.
\begin{defn}
\label{d.h5.1}
Let $\mathcal{D}$ be an open subset of $X$. A function $f:\mathcal{D}
\rightarrow Y\ $is said to be {\em holomorphic} or {\em analytic} if the
following two conditions hold.

\begin{enumerate}
\item $f$ is locally bounded, namely, for all $a\in\mathcal{D}$ there exists
$r_a>0$ such that
\[
M_a:=\sup\left\{ \|f(x)\|_Y: x\in B_{X}(a,r_a)  \right\}  <\infty.
\]

\item The function $f$ is complex G\^{a}teaux differentiable on
$\mathcal{D}$,  that is, for each $a\in\mathcal{D}$ and $h\in X$,  the function
$\lambda\mapsto f(a+\lambda h)$ is complex
differentiable at $\lambda=0 \in\mathbb{C}$.
\end{enumerate}
\begin{remark}
Holomorphic and analytic will be considered to be synonymous for the
purposes of this paper.  We will use ``holomorphic.''
\end{remark}
\end{defn}

The next proposition gathers together a number of basic properties of
holomorphic functions which may be found in \cite{HP74}, see also
\cite{Herve89}. One of the key ingredients to all of these results
is Hartog's theorem, see \cite[Theorem 3.15.1]{HP74}.

%\marginpar{Add a reference to H\"{o}rmander's book
%\cite{Hormander90} here. \textbf{Do you mean \cite{Hormander90}?}.
%Other possible references: http://eom.springer.de/B/b120050.htm,
%http://eom.springer.de/A/a010460.htm \textbf{Should we include these
%refs?}}

\begin{prop}
\label{t.h5.2}
If $f:\mathcal{D}\rightarrow Y$ is holomorphic, then
there exists a function
$f':\mathcal{D}\rightarrow\operatorname{Hom}\left(
X,Y\right)$, the space of bounded complex linear operators
from $X$ to $Y$, satisfying the following:

\begin{enumerate}
\item If $a\in\mathcal{D}$,  $x\in B_{X}\left(  a,r_{a}/2\right)$,  and $h\in
B_{X}\left(  0,r_{a}/2\right)$,  then
\[
\| f(x+h) - f(x) - f'(x)h \|_Y
	\le \frac{4M_a}{r_a(r_a - 2\|h\|_X)} \|h\|_X^2.
\]
In particular, $f$ is continuous and Frech\'{e}t differentiable on
$\mathcal{D}$.
\item The function $f':\mathcal{D}\rightarrow\operatorname{Hom}\left(
X,Y\right)$ is holomorphic.
\end{enumerate}
\end{prop}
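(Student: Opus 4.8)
The plan is to reduce the entire statement to the one–variable theory of $Y$-valued holomorphic functions by restricting $f$ to complex lines through the base point, i.e.\ to the slice $\psi(\zeta):=f(x+\zeta\hat h)$ with $\hat h:=h/\|h\|_X$. First I would take $f'(x)$ to be the complex G\^ateaux derivative guaranteed by condition (2) of Definition \ref{d.h5.1}, namely $f'(x)h:=\frac{d}{d\lambda}\big|_0 f(x+\lambda h)$, and then the real work is to show that this (\emph{a priori} only homogeneous) expression is a bounded $\mathbb C$-linear operator and to extract the quantitative remainder bound. The crucial point is that local boundedness (condition (1)) is exactly what turns $\psi$ into a \emph{bounded} holomorphic function of one complex variable, so that the classical vector-valued Cauchy estimates become available.

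For part (1), fix $a$ with its constants $M_a,r_a$ (so $B_X(a,r_a)\subseteq\mathcal D$), and take $x\in B_X(a,r_a/2)$ and $0\neq h\in B_X(0,r_a/2)$. Since $\|x-a\|_X<r_a/2$, the slice $\psi(\zeta)=f(x+\zeta\hat h)$ is holomorphic on the disc $\{|\zeta|<R\}$ with $R:=r_a-\|x-a\|_X>r_a/2$ (complex differentiability at each interior $\zeta_0$ is the G\^ateaux differentiability of $f$ at $x+\zeta_0\hat h$ in direction $\hat h$), and there $\|\psi(\zeta)\|_Y\le M_a$. Writing the Taylor expansion $\psi(\zeta)=\sum_{n\ge0}c_n\zeta^n$ with the Cauchy estimates $\|c_n\|_Y\le M_a r^{-n}$ for any $r<R$, I identify $c_0=f(x)$ and $c_1=f'(x)\hat h$, so that $f'(x)h=\|h\|_X c_1$ and $f(x+h)-f(x)-f'(x)h=\psi(\|h\|_X)-c_0-\|h\|_X c_1=\sum_{n\ge2}c_n\|h\|_X^n$. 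Taking the admissible radius $r=r_a/2$ and summing the geometric tail gives precisely
\[
\|f(x+h)-f(x)-f'(x)h\|_Y\le\sum_{n\ge2}\frac{M_a}{(r_a/2)^n}\|h\|_X^n=\frac{4M_a}{r_a\left(r_a-2\|h\|_X\right)}\|h\|_X^2 ,
\]
which is the asserted estimate; since the right-hand side is $O(\|h\|_X^2)=o(\|h\|_X)$, it yields continuity of $f$ and, once linearity of $f'(x)$ is known, Fr\'echet differentiability.

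It remains to see that $f'(x)\in\operatorname{Hom}(X,Y)$. Homogeneity $f'(x)(ch)=cf'(x)h$ is immediate from the definition, and boundedness follows from the first Cauchy estimate, $\|f'(x)h\|_Y=\|h\|_X\|c_1\|_Y\le (2M_a/r_a)\|h\|_X$. The one genuinely delicate point is \emph{additivity}, which I would establish by testing against an arbitrary $\phi\in Y^*$ and restricting to the finite-dimensional complex subspace $\operatorname{span}_{\mathbb C}\{h,k\}$: the scalar function $g(z,w):=\phi\big(f(x+zh+wk)\big)$ is locally bounded and separately complex differentiable, hence holomorphic in two variables by Hartogs' theorem \cite[Theorem 3.15.1]{HP74}, and in particular smooth. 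Applying the chain rule to $t\mapsto g(t,t)$ then gives $\phi(f'(x)(h+k))=\partial_z g(0)+\partial_w g(0)=\phi(f'(x)h)+\phi(f'(x)k)$; since $Y^*$ separates points, additivity follows and part (1) is complete.

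For part (2), I would verify the two conditions of Definition \ref{d.h5.1} for $f'$ with values in the Banach space $\operatorname{Hom}(X,Y)$. Local boundedness is immediate and uniform: the bound $\|f'(x)\|_{\operatorname{op}}\le 2M_a/r_a$ holds for \emph{all} $x\in B_X(a,r_a/2)$. For complex G\^ateaux differentiability, I fix $x,v,h$ and apply the one-variable theory in $\zeta$ to the two-variable slice $(\lambda,\zeta)\mapsto f(x+\lambda v+\zeta h)$: this exhibits $\lambda\mapsto f'(x+\lambda v)h=\partial_\zeta|_0 f(x+\lambda v+\zeta h)$ as a Cauchy integral (in $\zeta$) of a function that is itself holomorphic in $\lambda$, whence $\lambda\mapsto f'(x+\lambda v)h$ is holomorphic for each fixed $h$. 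Combining this pointwise-in-$h$ holomorphy with the uniform operator-norm bound upgrades $\lambda\mapsto f'(x+\lambda v)$ to a complex-differentiable $\operatorname{Hom}(X,Y)$-valued map, giving condition (2). I expect the main obstacle to be precisely this final upgrade, together with the additivity argument above; the quantitative estimate and all the rest drop out of the standard vector-valued Cauchy machinery of \cite{HP74} once the slices are seen to be bounded and holomorphic.
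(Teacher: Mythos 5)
Your proof is correct, and it is essentially the argument the paper itself relies on: the paper does not prove Proposition \ref{t.h5.2} but refers to Hille--Phillips \cite{HP74} (flagging Hartogs' theorem, \cite[Theorem 3.15.1]{HP74}, as the key ingredient), and your derivation---one-dimensional slices with vector-valued Cauchy estimates for the quadratic remainder bound, Hartogs' theorem applied to $\phi\bigl(f(x+zh+wk)\bigr)$ for additivity of the G\^{a}teaux derivative, and the uniform-bound-plus-pointwise-holomorphy upgrade for part (2)---is precisely the standard proof from that reference. The one step you leave as an assertion, namely that local boundedness in operator norm together with holomorphy of $\lambda\mapsto f'(x+\lambda v)h$ for each fixed $h$ yields holomorphy in the norm of $\operatorname{Hom}(X,Y)$, is indeed valid: the Cauchy-integral remainder estimate for each slice is uniform over $\|h\|_X\le 1$, so the operator-valued difference quotients converge in operator norm, and no gap remains.
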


By applying Proposition \ref{t.h5.2} repeatedly, it
follows that any holomorphic function $f:\mathcal{D}\rightarrow Y$
is Frech\'{e}t differentiable to all orders and each of the
Frech\'{e}t differentials is again a holomorphic function on
$\mathcal{D}$.

\subsubsection{Holomorphic functions on $G$ and $G_{CM}$}
Now we describe results for holomorphic functions on $G$ and $G_{CM}$.
For the next proposition, take $G_0=G$ and $\mathfrak{g}_0=\mathfrak{g}$
or $G_0=G_{CM}$ and $\mathfrak{g}_0=\mathfrak{g}_{CM}$. Note that
as usual we treat group elements as Lie algebra elements when we
write the group multiplication below. This linearization explains
why the proof is identical to \cite{DG08-3}, and why we omit it.

\begin{prop}
\label{l.h5.4}
For each $g\in G_0$, the left translation map $L_g:G_0\rightarrow
G_0$ is holomorphic in the $\|\cdot\|_{\mathfrak{g}_0} $-topology.
Moreover, a function $f:G_0\rightarrow\mathbb{C}$ defined in a
neighborhood of $g\in G_0$ is G\^{a}teaux (Frech\'{e}t) differentiable
at $g$ if and only if $f\circ L_g$ is G\^{a}teaux (Frech\'{e}t)
differentiable at $e$.  If $f$ is Frech\'{e}t differentiable at
$g$,  then
\[
(f\circ\,L_g)'(e) h
	= f'(g) \left(  h+\frac{1}{2}[g,h]  \right).
\]

Thus, a function $f:G_0\rightarrow\mathbb{C}$ is
holomorphic if and only if $f$ is locally bounded and $h\mapsto
f(g\cdot e^h)=f(g\cdot h)$ is G\^{a}teaux
(Frech\'{e}t) differentiable at $0$ for all $g\in G_0$.
If $f$ is holomorphic and $h\in\mathfrak{g}_0$,  then
\[
(\tilde{h}f)(g)
	= \frac{d}{d\lambda}\bigg|_0 f(g\cdot e^{\lambda h})
	= f'(g)\left(  h+\frac{1}{2}[g,h]  \right)
\]
is holomorphic as well.
\end{prop}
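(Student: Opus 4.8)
The plan is to exploit the fact that, because the group law \eqref{e.3.1} has only a step-$2$ bracket correction, left translation is an \emph{affine} map with bounded linear part. Writing $L_g(x) = g\cdot x = g + x + \tfrac{1}{2}[g,x]$ and setting $\mathrm{ad}_g x := [g,x]$, for fixed $g=(w,c)$ the map $x=(A,a)\mapsto \tfrac{1}{2}[g,x] = (0,\tfrac{1}{2}\omega(w,A))$ is complex linear in $x$, and it is continuous in the $\|\cdot\|_{\mathfrak{g}_0}$-topology: on $G$ directly by continuity of $\omega$, and on $G_{CM}$ because $\sup\{\|\omega(h,k)\|_{\mathbf C}:\|h\|_H=\|k\|_H=1\}<\infty$ by continuity of $\omega$ together with \eqref{e.wh}. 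Hence $L_g = g + (I+\tfrac{1}{2}\mathrm{ad}_g)$ is a continuous affine map, so it is locally bounded and complex G\^ateaux differentiable with the constant derivative $(L_g)'(x) = I + \tfrac{1}{2}\mathrm{ad}_g$ (this is exactly Proposition \ref{p.hg}), and therefore holomorphic by Definition \ref{d.h5.1}. Moreover $L_g$ is a homeomorphism with inverse $L_{g^{-1}}=L_{-g}$, whose linear part $I-\tfrac{1}{2}\mathrm{ad}_g$ inverts $I+\tfrac{1}{2}\mathrm{ad}_g$ since $\mathrm{ad}_g^2=0$ in this step-$2$ algebra.

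For the differentiability equivalence and the formula for $(f\circ L_g)'(e)$, I would apply the chain rule to $f\circ L_g$. Because $L_g$ is affine with constant derivative and $L_g(e)=g$ (here $e=(0,0)$ is the zero vector), $f\circ L_g$ is G\^ateaux (Frech\'et) differentiable at $e$ if and only if $f$ is G\^ateaux (Frech\'et) differentiable at $g$, and in that case
\[ (f\circ L_g)'(e)h = f'(g)(L_g)'(e)h = f'(g)\left(h + \tfrac{1}{2}[g,h]\right), \]
which is the stated formula. Invertibility of $L_g$ supplies the reverse implication, and local boundedness is preserved under the homeomorphism $L_g$. The characterization of holomorphy then follows once one observes that $\exp=\mathrm{id}$ on this group: the curve $t\mapsto th$ is a one-parameter subgroup since $[h,h]=0$, so $e^{h}=h$ and $g\cdot e^{h}=L_g(h)$. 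A function $f$ is holomorphic on $G_0$ iff it is locally bounded and G\^ateaux differentiable at every point, and applying the previous paragraph at each $g$ converts G\^ateaux differentiability of $f$ at $g$ into that of $h\mapsto f(g\cdot e^h)=(f\circ L_g)(h)$ at $0=e$, with local boundedness unaffected.

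Finally, for $h\in\mathfrak{g}_0$ the chain rule gives $(\tilde h f)(g) = \frac{d}{d\lambda}\big|_0 f(L_g(\lambda h)) = f'(g)\left(h+\tfrac{1}{2}[g,h]\right)$, reproducing the displayed expression. The step requiring the most care, and the one I expect to be the main obstacle, is showing that $g\mapsto (\tilde h f)(g)$ is \emph{itself} holomorphic. For this I would invoke Proposition \ref{t.h5.2}, which guarantees that $f'\colon G_0\to\operatorname{Hom}(\mathfrak{g}_0,\mathbb{C})$ is holomorphic, together with the fact that $g\mapsto h+\tfrac{1}{2}[g,h]$ is affine, hence holomorphic, in $g$. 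Then $(\tilde h f)(g)$ is the image of $g$ under the composition of the holomorphic map $g\mapsto\bigl(f'(g),\,h+\tfrac{1}{2}[g,h]\bigr)$ into $\operatorname{Hom}(\mathfrak{g}_0,\mathbb{C})\times\mathfrak{g}_0$ with the evaluation pairing $(\phi,v)\mapsto\phi(v)$, which is continuous and bilinear and therefore holomorphic; the composite inherits local boundedness and G\^ateaux differentiability. This mirrors the argument in \cite{DG08-3}, the linearization of the group law being precisely what makes the chain-rule bookkeeping go through.
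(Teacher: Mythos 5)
Your proof is correct and takes essentially the same approach as the paper: the paper omits the proof, remarking that the linearization of the group law makes the argument identical to that of \cite{DG08-3}, and that argument is precisely yours --- left translation is a continuous affine map with linear part $I+\tfrac{1}{2}\mathrm{ad}_g$ (invertible since $\mathrm{ad}_g^2=0$), so the differentiability equivalence, the formula for $(f\circ L_g)'(e)$, and the identity $e^h=h$ all follow from the chain rule, and the holomorphy of $\tilde{h}f$ follows from Proposition \ref{t.h5.2} composed with the evaluation pairing.
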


A simple induction argument using Proposition \ref{l.h5.4}
allows us to conclude that $\tilde{h}_{1}\dots\tilde{h}_{n}
f\in\mathcal{H}\left(  G_0\right)$ for all $f\in\mathcal{H}\left(
G_0\right)$ and $h_{1},\dots,h_{n}\in\mathfrak{g}_0$.

\begin{notation}
The space of globally defined holomorphic functions on a group $U$ will
be denoted by $\mathcal{H}(U)$.
\end{notation}

Finally, we also record the following result, which is completely analogous to
Proposition 5.7 and Corollary 5.8 of \cite{DG08-3}.

\begin{prop}
\label{p.h5.7}
If $f\in\mathcal{H}(G)$ and
$h\in\mathfrak{g}$,  then $\widetilde{ih}f=i\tilde{h}f$,
$\widetilde{ih}\bar{f}=-i\tilde{h}\bar {f}$,
\begin{align*}
\left(  \widetilde{ih}^2+\tilde{h}^2\right) f
	&= 0, \text{ and} \\
\left( \widetilde{ih}^2 + \tilde{h}^2\right) |f|^2
	&= 4 |\tilde{h}f|^2.
\end{align*}
Thus, for $L$ as in Proposition \ref{p.L} and $f:G\rightarrow\mathbb{C}$
a holomorphic cylinder function, $Lf=0$ and
\[
L|f|^2 = \sum_{j=1}^\infty \left| \tilde{\eta}_jf\right|^2
\]
for any $\{\eta_j\}_{j=1}^\infty$
a basis of $H\times\{0\}$ as in Notation \ref{n.pn}.
\end{prop}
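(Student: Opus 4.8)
The plan is to reduce every assertion to the differentiation formula of Proposition \ref{l.h5.4}, namely that for holomorphic $f$ one has $(\tilde{h}f)(g) = f'(g)(h+\frac{1}{2}[g,h])$ with $f'(g)$ complex linear, together with the complex bilinearity of the bracket, which gives $[g,ih]=i[g,h]$ for all $g\in G$ and $h\in\mathfrak{g}$. For the first identity I would substitute $ih$ for $h$: since $\widetilde{ih}f(g)=f'(g)(ih+\frac{1}{2}[g,ih])=f'(g)\big(i(h+\frac{1}{2}[g,h])\big)$ and $f'(g)$ is complex linear, the scalar $i$ pulls out to yield $\widetilde{ih}f=i\tilde{h}f$.

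The second identity follows from the first by conjugation. The key observation is that, since $\tilde{h}$ and $\widetilde{ih}$ are directional derivatives along curves parametrized by a real variable, they commute with complex conjugation, that is, $\tilde{h}\bar{f}=\overline{\tilde{h}f}$ and $\widetilde{ih}\bar{f}=\overline{\widetilde{ih}f}$ for any smooth $f$. Applying this and the first identity gives $\widetilde{ih}\bar{f}=\overline{\widetilde{ih}f}=\overline{i\tilde{h}f}=-i\overline{\tilde{h}f}=-i\tilde{h}\bar{f}$. For the third identity I would use that $\tilde{h}f\in\mathcal{H}(G)$ by the induction remark following Proposition \ref{l.h5.4}, so the first identity applies to $\tilde{h}f$ as well: $\widetilde{ih}^2f=\widetilde{ih}(i\tilde{h}f)=i\,\widetilde{ih}(\tilde{h}f)=i\cdot i\tilde{h}^2f=-\tilde{h}^2f$, whence $(\widetilde{ih}^2+\tilde{h}^2)f=0$.

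For the fourth identity I would expand $|f|^2=f\bar{f}$ by the Leibniz rule for the second-order operators $\tilde{h}^2$ and $\widetilde{ih}^2$. In the $\tilde{h}^2$ expansion the diagonal terms $(\tilde{h}^2f)\bar{f}$ and $f(\tilde{h}^2\bar{f})$ appear with a positive cross term $2(\tilde{h}f)(\tilde{h}\bar{f})$; in the $\widetilde{ih}^2$ expansion the same diagonal terms appear, but the cross term changes sign, because $\tilde{h}f$ is holomorphic while $\tilde{h}\bar{f}=\overline{\tilde{h}f}$ is antiholomorphic, so that $\widetilde{ih}(\tilde{h}f)=i\tilde{h}^2f$ and $\widetilde{ih}(\tilde{h}\bar{f})=-i\tilde{h}^2\bar{f}$ by the first two identities. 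Adding the two expansions cancels the diagonal terms and doubles the cross term, leaving $(\widetilde{ih}^2+\tilde{h}^2)|f|^2=4(\tilde{h}f)(\tilde{h}\bar{f})=4|\tilde{h}f|^2$, where the last step again uses $\tilde{h}\bar{f}=\overline{\tilde{h}f}$.

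Finally, for a holomorphic cylinder function $f$, which is in particular a smooth cylinder function, $Lf$ and $L|f|^2$ are well defined by Proposition \ref{p.L}. Applying the third identity to each pair $(\eta_j,i\eta_j)$ shows that every summand of $Lf=\sum_j(\tilde{\eta}_j^2+\widetilde{i\eta}_j^2)f$ vanishes, so $Lf=0$; applying the fourth identity termwise then yields the stated expression for $L|f|^2$ in terms of $\sum_j|\tilde{\eta}_jf|^2$. The only genuine bookkeeping occurs in the fourth identity, where the sign flip of the cross term—traceable entirely to the holomorphic/antiholomorphic split of $\tilde{h}f$ and $\tilde{h}\bar{f}$—is what produces the nonzero right-hand side; I expect this to be the one place demanding care, while the remaining steps are direct substitutions into the formula of Proposition \ref{l.h5.4}.
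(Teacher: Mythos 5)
Your route is the same as the paper's: the paper records this proposition without proof, citing Proposition 5.7 and Corollary 5.8 of \cite{DG08-3}, and the argument there is precisely the direct verification you give --- everything reduces to the formula $(\tilde{h}f)(g)=f'(g)\bigl(h+\frac{1}{2}[g,h]\bigr)$ of Proposition \ref{l.h5.4}, the complex linearity of $f'(g)$ and of the bracket, the stability of $\mathcal{H}(G)$ under $\tilde{h}$, and the fact that derivatives along real-parameter curves commute with conjugation. Your proofs of the first three identities are correct as written.

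In the fourth identity, however, your sign bookkeeping is stated backwards, and the middle sentence contradicts your own (correct) conclusion. From the identities you establish, $\widetilde{ih}^{2}f=\widetilde{ih}(i\tilde{h}f)=i\cdot i\,\tilde{h}^{2}f=-\tilde{h}^{2}f$ and $\widetilde{ih}^{2}\bar{f}=\widetilde{ih}(-i\tilde{h}\bar{f})=(-i)(-i)\,\tilde{h}^{2}\bar{f}=-\tilde{h}^{2}\bar{f}$, while the cross term is \emph{unchanged}: $2(\widetilde{ih}f)(\widetilde{ih}\bar{f})=2(i\tilde{h}f)(-i\tilde{h}\bar{f})=2(\tilde{h}f)(\tilde{h}\bar{f})$. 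So it is the diagonal terms that flip sign and the cross term that is preserved --- the opposite of your claim that ``the same diagonal terms appear, but the cross term changes sign''; were that claim true, adding the two expansions would double the diagonal terms and cancel the cross term, not the reverse. With the signs corrected, your final sentence stands: the diagonals cancel, the cross terms add, and $(\widetilde{ih}^{2}+\tilde{h}^{2})|f|^{2}=4(\tilde{h}f)\overline{\tilde{h}f}=4|\tilde{h}f|^{2}$. One further point: applying this termwise gives $L|f|^{2}=4\sum_{j}|\tilde{\eta}_{j}f|^{2}$, and the factor $4$ is genuine (test $f(z)=z$ on $G=\mathbb{C}$, where $(\partial_{x}^{2}+\partial_{y}^{2})|z|^{2}=4$ while $|\tilde{h}f|^{2}=1$). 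So your computation does not literally ``yield the stated expression'': the displayed formula in the proposition, which omits the $4$, is inconsistent with the fourth identity in the same statement and appears to be a typo; what your argument actually proves is the version with the factor $4$, and you should say so rather than silently eliding the constant.
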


\section{The Taylor isomorphism}
\label{s.Taylor}

Before we define the Taylor map, we must first define the relevant Hilbert
spaces.  First of these is the noncommutative Fock space, which plays the role
of the derivative space of holomorphic functions.

\subsection{Noncommutative Fock space}
\label{s.NCF}

We set the now standard notation for the noncommutative Fock
space, making the appropriate changes in the definition of the norm to
accomodate the subelliptic setting.

\begin{notation}
Let $V$ be a complex vector space.  We will denote the algebraic
dual to $V$ by $V'$.  For $k\in\mathbb{N}$, let $V^{\otimes k}$
denote the $k$-fold algebraic tensor product of $V$ with itself.
For any tensors $a,b$, we write $a\wedge b$ for $a\otimes b -
b\otimes a$.  Let $T(V)$ denote the algebraic tensor algebra over
$V$, so that $a\in T(V)$ is a finite sum
\[ a= \sum_{k=0}^n a_k, \qquad a_k\in V^{\otimes k}, \]
where $V^{\otimes0}=\mathbb{C}$.  For $\alpha\in T(V)'$ and
$k\in\{0\}\cup\mathbb{N}$, let $\alpha_k :=
    \alpha|_{V^{\otimes k}} \in \left(V^{\otimes k}\right)'$, so that
\[ \alpha = \sum_{k=0}^\infty \alpha_k, \qquad \alpha_k\in (V^{\otimes k})'. \]
When $V$ is a Lie algebra, let $J(V)$ be the two-sided ideal in $T(V)$ generated by
$\{a\wedge b -[a,b]:a,b\in V\}$ and let $J^0(V)$ be the backward
annihilator of $J(V)$, that is,
\[ J^0(V) = \{\alpha\in T(V)' : \langle\alpha,J(V)\rangle=0 \}. \]
\end{notation}

In particular, we will be concerned with the vector spaces $\mathfrak{g}_{CM}$
and $\mathfrak{g}_P=PW\times\mathbf{C}$.  We will let
$J^0(\mathfrak{g}_{CM})=J^0$.
Now we will define norms on $J^0$ and $J^0(\mathfrak{g}_P)$.

In order to put a norm on
$J^0$, let $\{\xi_j\}_{j=1}^\infty\subset H_*$ be a fixed complex orthonormal basis
of $H$ and $\{\eta_j\}_{j=1}^\infty=\{(\xi_j,0)\}_{j=1}^\infty$ be a
complex basis of $H\times\{0\}$ as in Notation \ref{n.pn}.
For $k\in\{0\}\cup\mathbb{N}$, we define a non-negative sesqui-linear form on
$(\mathfrak{g}_{CM}^{\otimes k})'$ by
\[ (\alpha,\beta)_k := \sum_{j_1,\dots,j_k=1}^\infty
	\langle \alpha, \eta_{j_1}\otimes\cdots\otimes \eta_{j_k}\rangle
	\overline{\langle \beta, \eta_{j_1}\otimes\cdots\otimes
	\eta_{j_k}\rangle},
\text{ for all } \alpha, \beta\in (\mathfrak{g}_{CM}^{\otimes k})'. \]
For $\alpha\in(\mathfrak{g}_{CM}^{\otimes k})'$, we will write
\[ \|\alpha\|_k^2 := (\alpha,\alpha)_k = \sum_{j_1,\dots,j_k=1}^\infty
    |\langle \alpha, \eta_{j_1}\otimes\cdots\otimes \eta_{j_k}\rangle|^2. \]
%begin omitted text
\iffalse
\begin{equation*}
(\alpha,\beta)_1
    := \sum_{j=1}^\infty \langle \alpha,\eta_j\rangle
    \overline{\langle \beta,\eta_j\rangle}, \qquad \text{ for all }
    \alpha,\beta\in\mathfrak{g}_{CM}^*.
\end{equation*}
This induces a (degenerate) form $(\cdot,\cdot)_k$ on
$(\mathfrak{g}_{CM}^{\otimes k})^*$ determined by
\[
(\alpha_1\otimes\cdots\otimes\alpha_k,
        \beta_1\otimes\cdots\otimes\beta_k)_k
    := \prod_{\ell=1}^k (\alpha_\ell,\beta_\ell)_1, \quad
	\text{ for all } \alpha_j,\beta_j\in\mathfrak{g}_{CM}^*. \]
For $\alpha\in(\mathfrak{g}_{CM}^{\otimes k})^*$, we will write
\[ \|\alpha\|_k^2 := (\alpha,\alpha)_k = \sum_{j_1,\dots,j_k=1}^\infty
    |\langle \alpha, \eta_{j_1}\otimes\cdots\otimes \eta_{j_k}\rangle|^2. \]
We extend $\|\cdot\|_k$ to all of $(\mathfrak{g}_{CM}^{\otimes k})'$ by
setting $\|\alpha\|_k=\infty$ for any $\alpha\in (\mathfrak{g}_{CM}^{\otimes k})'\setminus
(\mathfrak{g}_{CM}^{\otimes k})^*$.
\fi
% end omitted text
The following lemma is clear from the definition of $\|\cdot\|_k$.
\begin{lem}
Let $\alpha\in(\mathfrak{g}_{CM}^{\otimes k})'$ for some $k\in\mathbb{N}$.
Then $\|\alpha\|_k>0$ if and only if there exist some $\xi_1,\ldots,\xi_k\in H$
such that $\langle \alpha,(\xi_1,0)\otimes \cdots \otimes(\xi_k,0)\rangle\ne
0$.
\end{lem}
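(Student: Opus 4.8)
The plan is to route both conditions through the single $\mathbb{C}$-multilinear form $\Phi:H^{k}\to\mathbb{C}$ defined by $\Phi(h_1,\dots,h_k):=\langle\alpha,(h_1,0)\otimes\cdots\otimes(h_k,0)\rangle$; this is multilinear because $\alpha$ is linear and $h\mapsto(h,0)$ is linear. With $\eta_{j}=(\xi_j,0)$ as in the definition, $\|\alpha\|_k^2=\sum_{j_1,\dots,j_k}|\Phi(\xi_{j_1},\dots,\xi_{j_k})|^2$, so the condition $\|\alpha\|_k>0$ says precisely that $\Phi$ fails to vanish on some tuple of basis vectors, whereas the right-hand condition says $\Phi$ fails to vanish on some tuple in $H^{k}$.

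The forward implication is then immediate, and is the sense in which the claim is ``clear from the definition'': if $\|\alpha\|_k>0$ some summand is nonzero, i.e. $\Phi(\xi_{j_1},\dots,\xi_{j_k})\neq0$ for some indices, and setting $\xi_i:=\xi_{j_i}\in H$ furnishes the required tuple.

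The reverse implication carries the actual content. Suppose $\Phi(\xi_1,\dots,\xi_k)\neq0$ for some $\xi_1,\dots,\xi_k\in H$. Expanding each entry in the orthonormal basis as $\xi_i=\sum_j\langle\xi_i,\xi_j\rangle_H\,\xi_j$ and distributing over $\Phi$ gives $\Phi(\xi_1,\dots,\xi_k)=\sum_{j_1,\dots,j_k}\big(\prod_{i=1}^k\langle\xi_i,\xi_{j_i}\rangle_H\big)\,\Phi(\xi_{j_1},\dots,\xi_{j_k})$, so if every $\Phi(\xi_{j_1},\dots,\xi_{j_k})$ vanished the left side would vanish, a contradiction; hence $\|\alpha\|_k>0$. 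When $H$ is finite dimensional this is a finite sum and the argument is purely algebraic. The one delicate point, and the main obstacle, is the infinite-dimensional case, where the expansion of each $\xi_i$ is an infinite series: one approximates by the partial sums $\xi_i^{(N)}=\sum_{j\le N}\langle\xi_i,\xi_j\rangle_H\,\xi_j\to\xi_i$ in $H$ and passes to the limit in the identity above. This limiting step requires continuity of $\Phi$ in each slot; for a fully general algebraic $\alpha$ this need not hold, but for the functionals relevant in the sequel it is guaranteed by the continuity and Fr\'echet differentiability of holomorphic functions (Proposition \ref{t.h5.2}), which gives $\Phi(\xi_1^{(N)},\dots,\xi_k^{(N)})\to\Phi(\xi_1,\dots,\xi_k)$.
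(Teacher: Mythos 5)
You have read the lemma as quantifying over \emph{arbitrary} vectors $\xi_1,\ldots,\xi_k\in H$, and under that reading your reverse implication is where the problem lies. The paper offers no argument beyond the sentence preceding the lemma (``clear from the definition of $\|\cdot\|_k$''), and that one-line justification is valid exactly because the intended witnesses are the basis vectors themselves (the statement recycles the symbols $\xi_j$ used for the fixed orthonormal basis): $\|\alpha\|_k>0$ if and only if some term $|\langle\alpha,\eta_{j_1}\otimes\cdots\otimes\eta_{j_k}\rangle|^2$ of the defining series is nonzero. Read this way, both implications are immediate, and by multilinearity (finite sums only, no limits) the witnesses may equally be taken in the algebraic span of the $\xi_j$. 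Your forward direction coincides with this. Your reverse direction, however, must pass from the algebraic span to its norm closure $H$, and, as you yourself note, this requires continuity of the $k$-linear form $\Phi$ in each slot --- something an arbitrary element $\alpha$ of the algebraic dual $(\mathfrak{g}_{CM}^{\otimes k})'$ simply does not have. Your patch, citing Proposition \ref{t.h5.2}, imports a hypothesis that is not in the lemma: that proposition concerns holomorphic functions and their Fr\'{e}chet derivatives, while the lemma's $\alpha$ is an arbitrary algebraic functional. What you have proved is therefore a different statement (the lemma with a continuity hypothesis added on $\alpha$), not the lemma as stated.

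The gap is, moreover, not closable: under your reading the ``if'' direction is false. Take $k=1$, let $\xi:=\sum_{j\ge1}2^{-j}\xi_j\in H$, which is not a finite linear combination of the $\xi_j$; hence $\{\xi\}\cup\{\xi_j\}_{j\ge1}$ is linearly independent and extends to a Hamel basis of $H$. Let $\alpha\in\mathfrak{g}_{CM}'$ be the algebraic functional with $\langle\alpha,(\xi,0)\rangle=1$, $\langle\alpha,(b,0)\rangle=0$ for every other Hamel basis vector $b$, and $\alpha\equiv0$ on $\{0\}\times\mathbf{C}$. Then $\|\alpha\|_1=0$ while $\langle\alpha,(\xi,0)\rangle\ne0$; tensor powers $\alpha^{\otimes k}$ give counterexamples for every $k$. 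So the correct resolution is not to strengthen the limiting argument but to read the statement as the paper's one-line justification presumes --- witnesses among the basis vectors, equivalently in their finite linear span --- under which reading your expansion-and-limit machinery is unnecessary.
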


For any projection $P\in\mathrm{Proj}(W)$,
we define an analogous norm for the finite-dimensional
Lie algebras $\mathfrak{g}_P= PW\times\mathbf{C}$.
Let $\{\xi_j\}_{j=1}^n$ be a complex orthonormal basis for $PH$, and let
$\{\eta_j\}_{j=1}^n=\{(\xi_j,0)\}_{j=1}^n$.
Define the non-negative sesqui-linear form
\[ (\alpha,\beta)_P := \sum_{j=1}^n \langle \alpha,\eta_j\rangle
    \overline{\langle \beta,\eta_j\rangle} \qquad \text{ for all }
    \alpha,\beta\in\mathfrak{g}_P'. \]
This induces a form on $(\mathfrak{g}_P^{\otimes k})'$ determined by
\[
(\alpha_1\otimes\cdots\otimes\alpha_k,\beta_1\otimes\cdots\otimes\beta_k)_{P,k}
    := \prod_{\ell=1}^k (\alpha_\ell,\beta_\ell)_P \qquad \text{ for all }
    \alpha_j,\beta_j\in\mathfrak{g}_P'\]
For $\alpha\in(\mathfrak{g}_P^{\otimes k})'$, we will write
\[ \|\alpha\|_{P,k}^2 := (\alpha,\alpha)_{P,k}
	= \sum_{j_1,\dots,j_k=1}^n |\langle \alpha,\eta_{j_1}\otimes\cdots\otimes
        \eta_{j_k}\rangle|^2. \]
One may easily verify that $\|\cdot\|_k$ and $\|\cdot\|_{P,k}$ are
independent of the choice of orthonormal basis.

\begin{defn}
[Noncommutative Fock spaces]
\label{d.NCF}
For $t>0$ and $\alpha=\sum_{k=1}^\infty\alpha_k\in J^0$, let
\[
\|\alpha\|_t^2 := \sum_{k=0}^\infty \frac{t^k}{k!}\|\alpha_k\|_k^2,
\]
and
\[ J_t^0 := \{ \alpha\in J^0: \|\alpha\|_t<\infty \}. \]
Similarly, for $t>0$, $P\in\mathrm{Proj}(W)$, and $\alpha\in
J^0(\mathfrak{g}_P)$, let
\[  \|\alpha\|_{P,t}^2 := \sum_{k=0}^\infty \frac{t^k}{k!}
	\|\alpha_k\|_{P,k}^2, \]
and
\[ J_{P,t}^0 := \{ \alpha\in J^0(\mathfrak{g}_P): \|\alpha\|_{P,t}<\infty \}. \]
For $\{P_n\}_{n=1}^\infty$ an increasing sequence of projections in
$\mathrm{Proj}(W)$,
let $\|\cdot\|_{n,k}:=\|\cdot\|_{P_n,k}$,
$\|\alpha\|_{n,t}:=\|\alpha\|_{P_n,t}$, $J_{n,t}^0 := J_{P_n,t}^0$.
\end{defn}

The functions $\|\cdot\|_t$ and $\|\cdot\|_{P,t}$ are clearly semi-norms on
$J_t^0$ and $J_{P,t}^0$, respectively.  It is proved in Theorem 2.7 of \cite{DGSC09-2} that, for any $t>0$ and
$P\in\mathrm{Proj}(W)$ ,
the semi-norm $\|\cdot\|_{P,t}$ is a norm on $J_{P,t}^0$ (using the fact that
$[PW,PW]=\mathbf{C}$).  In fact, $J_{P,t}^0$ is a
Hilbert space when equipped with the inner product
\[ \langle \alpha, \beta\rangle_{P,t}
    := \sum_{k=0}^\infty \frac{t^k}{k!} (\alpha_k,\beta_k)_{P,k}
    \qquad \text{ for all } \alpha, \beta\in J_{P,t}^0. \]

To compare our notation with that used in \cite{DGSC09-2},
for each $P\in\mathrm{Proj}(W)$, let
\[ K_P := \left\{\alpha\in \mathfrak{g}_P' :
    (\alpha,\alpha)_P = \sum_{j=1}^n |\langle\alpha,\eta_j\rangle|^2
    = 0 \right\}. \]
Then clearly
\[ K_P^0 := \{a\in\mathfrak{g}_P : \langle \alpha,a\rangle = 0 \text{ for all
    } \alpha\in K_P \} = PH\times\{0\}. \]
If the Lie algebra generated by $PH$ is all of
$\mathfrak{g}_P$, then $(\cdot,\cdot)_P$ satisfies H\"ormander's condition
as defined in Definition 2.6 of \cite{DGSC09-2}.

Here we follow the proof in \cite{DGSC09-2} to show that, since
H\"ormander's condition $[H,H]=\mathbf{C}$ holds,
$\|\cdot\|_t$ is a norm on $J_t^0$.  (Indeed, it is shown in \cite{DGSC09-2}
that, at least in the finite-dimensional case, $\|\cdot\|_t$ is a norm on $J_t^0$
if and only if H\"ormander condition holds.)  First, we need the following lemma.

\begin{lem}
\label{l.P}
There exists an algebra homomorphism $\Psi: T(\mathfrak{g}_{CM})\rightarrow
T(H)$ such that $ T(\mathfrak{g}_{CM})= T(H) \oplus \mathrm{Nul}(\Psi)$, where
$\mathrm{Nul}(\Psi)\subset J(\mathfrak{g}_{CM})$.
\end{lem}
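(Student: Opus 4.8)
The plan is to build $\Psi$ as an idempotent algebra endomorphism of $T(\mathfrak{g}_{CM})$ whose range is the subalgebra $T(H)$ (here $H$ is identified with $H\times\{0\}\subset\mathfrak{g}_{CM}$, so that $T(H)\subset T(\mathfrak{g}_{CM})$), and then to recognize the desired decomposition $T(\mathfrak{g}_{CM})=T(H)\oplus\mathrm{Nul}(\Psi)$ as the range--kernel splitting of this idempotent.

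First I would use the H\"ormander condition $[H,H]=\mathbf{C}$ (equivalently, the surjectivity of $\omega$) to choose a linear map $\varphi:\mathbf{C}\to H^{\otimes 2}\subset T(H)$ with the property that $(0,V)-\varphi(V)\in J$ for every $V\in\mathbf{C}$. Concretely, pick a basis $\{V_\ell\}_{\ell=1}^N$ of $\mathbf{C}$ and, by surjectivity of $\omega$, write each $V_\ell=\omega(A_\ell,B_\ell)$ (or a finite sum of such terms) with $A_\ell,B_\ell\in H$; then set $\varphi(V_\ell):=(A_\ell,0)\wedge(B_\ell,0)$ and extend linearly. Since $[(A_\ell,0),(B_\ell,0)]=(0,\omega(A_\ell,B_\ell))$, the element $(0,V_\ell)-\varphi(V_\ell)=[(A_\ell,0),(B_\ell,0)]-(A_\ell,0)\wedge(B_\ell,0)$ is the negative of a generator of $J$, so indeed $(0,V)-\varphi(V)\in J$ for all $V\in\mathbf{C}$.

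Next I would define $\Psi$ on the generators $\mathfrak{g}_{CM}$ of the free algebra $T(\mathfrak{g}_{CM})$ by $\Psi(X,V):=X+\varphi(V)\in T(H)$ and extend it to an algebra homomorphism $\Psi:T(\mathfrak{g}_{CM})\to T(H)$ by the universal property of the tensor algebra. Because $\Psi(X,0)=X$, the map $\Psi$ restricts to the identity on the generators of $T(H)$, hence $\Psi|_{T(H)}=\mathrm{id}_{T(H)}$; since $\mathrm{Range}(\Psi)\subseteq T(H)$ this gives $\Psi^2=\Psi$. The range--kernel decomposition of an idempotent linear map then yields $T(\mathfrak{g}_{CM})=\mathrm{Range}(\Psi)\oplus\mathrm{Nul}(\Psi)=T(H)\oplus\mathrm{Nul}(\Psi)$, with $T(H)\cap\mathrm{Nul}(\Psi)=\{0\}$ because $\Psi$ is the identity on $T(H)$.

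The main point, and the step requiring care, is the inclusion $\mathrm{Nul}(\Psi)\subset J$. I would prove the stronger statement that $a\equiv\Psi(a)\pmod{J}$ for every $a\in T(\mathfrak{g}_{CM})$; the inclusion follows at once, since $\Psi(a)=0$ then forces $a\in J$. To establish the congruence, let $q:T(\mathfrak{g}_{CM})\to T(\mathfrak{g}_{CM})/J$ be the quotient map and compare the two algebra homomorphisms $q$ and $q\circ\Psi$ into $T(\mathfrak{g}_{CM})/J$. On a generator $(X,V)$ one has $q\circ\Psi(X,V)=q(X+\varphi(V))$ and $q(X,V)=q(X)+q(0,V)$; these agree precisely because $(0,V)-\varphi(V)\in J$ by the construction of $\varphi$. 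Two algebra homomorphisms that agree on a generating set agree everywhere, so $q=q\circ\Psi$ on all of $T(\mathfrak{g}_{CM})$, which is exactly the desired congruence. The only genuine input beyond formal manipulation is the choice of $\varphi$, which is where the bracket-generating hypothesis $[H,H]=\mathbf{C}$ enters; everything else is a consequence of the universal property of the tensor algebra together with the idempotency of $\Psi$.
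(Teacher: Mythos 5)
Your proposal is correct and is essentially the same as the paper's proof: both use $[H,H]=\mathbf{C}$ to choose pairs $A_\ell,B_\ell\in H$ whose brackets form a basis of $\mathbf{C}$, send $(0,V)$ to the corresponding wedge $\sum_\ell \varepsilon^\ell(V)\,A_\ell\wedge B_\ell\in H^{\otimes 2}$, extend to $\Psi$ by the universal property of the tensor algebra, and deduce $\mathrm{Nul}(\Psi)\subset J$ from the congruence $\Psi(a)\equiv a \pmod{J}$. Your packaging of that congruence as the equality of the two homomorphisms $q$ and $q\circ\Psi$ into $T(\mathfrak{g}_{CM})/J$, and your explicit idempotency argument for the direct sum, are just cleaner formulations of the paper's direct computation with elementary tensors (which uses that $J$ is an ideal), not a different route.
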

\begin{proof}
Let $\{\xi_j\}_{j=1}^\infty$ be an orthonormal basis of $H$.  Since
$[H,H]=\mathbf{C}$, we may also choose
$\{A_\ell,B_\ell\}_{\ell=1}^N\subset H$ such that
$\{\omega(A_\ell,B_\ell)\}_{\ell=1}^N$ is a basis of $\mathbf{C}$ with dual
basis $\{\varepsilon^\ell\}_{\ell=1}^N$.
Define $\psi:\mathfrak{g}_{CM}\rightarrow H\oplus H^{\otimes2}$
for
\[ (A,a) = \sum_{j=1}^\infty \langle A,\xi_j\rangle_H (\xi_j,0)
		+ \sum_{\ell=1}^N \varepsilon^\ell(a) (0,\omega(A_\ell,B_\ell))
		\in\mathfrak{g}_{CM} \]
by
\[ \psi(A,a) := \sum_{j=1}^\infty  \langle A,\xi_j\rangle_H (\xi_j,0)
		+ \sum_{\ell=1}^N \varepsilon^\ell(a) (A_\ell\wedge B_\ell,0),
\]
where again $u\wedge v = u\otimes v -v\otimes u$ for any $u,v\in H$.
Then $\psi$ is a linear operator such that $\psi (A,0)=(A,0)$ for any $A\in H$, and, as
\[ (A\wedge B,0) - (0,\omega(A,B)) = (A,0)\wedge (B,0)-(0,\omega(A,B))
		\in J(\mathfrak{g}_{CM}),
\]
for any $A,B\in H$, we have
$\psi h-h\in J(\mathfrak{g}_{CM})$ for all $h\in\mathfrak{g}_{CM}$.  One may also show that
$\psi$
is bounded as an operator into $T(H)$: for any $x=(A,a)\in G_{CM}$ such that
$\|x\|_{\mathfrak{g}_{CM}}^2=\|A\|_H^2+\|a\|_\mathbf{C}\le 1$,
\begin{align*}
\|\psi(A,a)\|_{H\oplus H^{\otimes 2}}^2
%	&= \|A\|_H^2 + \left\|
%		\sum_{\ell=1}^N \varepsilon^\ell(a) A_\ell\wedge
%		B_\ell \right\|_{H^{\otimes 2}}^2 \\
	&= \|A\|_H^2 + \sum_{j,k=1}^\infty
		\left|\left\langle \sum_{\ell=1}^N \varepsilon^\ell(a) A_\ell\wedge
		B_\ell, \xi_j\otimes \xi_k\right\rangle\right|^2 \\
%	&= \|A\|_H^2 + \sum_{j,k=1}^\infty
%		\left| \sum_{\ell=1}^N \varepsilon^\ell(a) \langle A_\ell\wedge
%		B_\ell, \xi_j\otimes \xi_k \rangle\right|^2 \\
	&\le \|A\|_H^2 + \sum_{j,k=1}^\infty
		\left(\sum_{\ell=1}^N \varepsilon^\ell(a)^2
		\sum_{\ell=1}^N|\langle A_\ell\wedge
		B_\ell, \xi_j\otimes \xi_k \rangle|^2 \right)\\
%	&\le \|A\|_H^2 + C(\omega)\|a\|_\mathbf{C}^2
%		\sum_{j,k=1}^\infty
%		\sum_{\ell=1}^N |\langle A_\ell\wedge
%		B_\ell, \xi_j\otimes \xi_k \rangle|^2 \\
	&\le \|A\|_H^2 + C\|a\|_\mathbf{C}^2
	\le C'(\|A\|_H^2+\|a\|_\mathbf{C}),
\end{align*}
where $C'=C'(N,\omega)<\infty$, and the final inequality follows from the fact
that $\|A\|_H^2+\|a\|_\mathbf{C}\le1$ implies that
$\|A\|_H^2+\|a\|_\mathbf{C}^2\le\|A\|_H^2+\|a\|_\mathbf{C}$.
%$\psi^2=\psi$ ($\psi$ is a projection)

By the universal property of the tensor algebra, there is a unique extension
of $\psi$ to an algebra homomorphism $\Psi: T(\mathfrak{g}_{CM})\rightarrow T(H)$,
such that $\Psi 1_{T(\mathfrak{g}_{CM})}=1_{T(H)}$.  Since for
$h_1,\dots,h_n\in\mathfrak{g}_{CM}$
\[ \Psi (h_1\otimes\cdots\otimes h_n) = \psi h_1\otimes\cdots\otimes \psi h_n
	\in (h_1+J(\mathfrak{g}_{CM}))\otimes \cdots \otimes (h_n+J(\mathfrak{g}_{CM}))\]
and $J(\mathfrak{g}_{CM})$ is an ideal, it follows that $\Psi (h_1\otimes\cdots\otimes
h_n)-h_1\otimes\cdots\otimes h_n\in J(\mathfrak{g}_{CM})$.
\end{proof}

This lemma immediately gives the following.

\begin{thm}
Let $t>0$.  The semi-norm $\|\cdot\|_t$ on $J_t^0$ is a norm.
\end{thm}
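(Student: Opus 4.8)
The plan is to prove positive-definiteness directly: I will show that if $\alpha = \sum_k \alpha_k \in J_t^0$ satisfies $\|\alpha\|_t = 0$, then $\alpha = 0$ as an element of $T(\mathfrak{g}_{CM})'$. Since $\|\alpha\|_t^2 = \sum_k \frac{t^k}{k!}\|\alpha_k\|_k^2$ is a sum of nonnegative terms, $\|\alpha\|_t = 0$ forces $\|\alpha_k\|_k = 0$ for every $k$; that is, $\langle \alpha_k, \eta_{j_1}\otimes\cdots\otimes\eta_{j_k}\rangle = 0$ for all choices of basis elements $\eta_{j_i} = (\xi_{j_i},0)$.

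First I would upgrade this to the statement that $\alpha$ annihilates the whole subalgebra $T(H) = T(H\times\{0\})\subset T(\mathfrak{g}_{CM})$. For each fixed $k$, the restriction of $\alpha_k$ to $(H\times\{0\})^{\otimes k}$ is a $k$-linear functional whose coordinate array is square-summable because $\|\alpha_k\|_k<\infty$; thus it is continuous with respect to the Hilbert tensor norm and is determined by its values on the orthonormal basis tensors $\eta_{j_1}\otimes\cdots\otimes\eta_{j_k}$. As all of these values vanish, $\alpha_k$ vanishes on $(H\times\{0\})^{\otimes k}$, and hence $\langle \alpha, w\rangle = 0$ for every $w\in T(H)$.

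The final step is where Lemma \ref{l.P} does the work. Given an arbitrary $x\in T(\mathfrak{g}_{CM})$, the decomposition $T(\mathfrak{g}_{CM}) = T(H)\oplus\mathrm{Nul}(\Psi)$ lets me write $x = y + z$ with $y\in T(H)$ and $z\in\mathrm{Nul}(\Psi)$. Because $\mathrm{Nul}(\Psi)\subset J(\mathfrak{g}_{CM})$ and $\alpha\in J^0$, we have $\langle\alpha,z\rangle = 0$, and by the previous paragraph $\langle\alpha,y\rangle = 0$ as well. Therefore $\langle\alpha,x\rangle = 0$ for all $x\in T(\mathfrak{g}_{CM})$, so $\alpha = 0$ and $\|\cdot\|_t$ is a norm.

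I expect the genuine obstacle to be the second step, the passage from vanishing on basis tensors to vanishing on all of $T(H)$: in infinite dimensions a general element of $H$ is only an $H$-limit, not a finite linear combination, of the $\xi_j$, so multilinearity alone does not suffice, and one must invoke the continuity afforded by the Hilbert--Schmidt control in the finiteness of $\|\alpha_k\|_k$. By contrast, the purely algebraic splitting supplied by Lemma \ref{l.P} makes the reduction to $T(H)$ essentially automatic, which is why the theorem can be presented as an immediate consequence of that lemma.
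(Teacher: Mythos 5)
Your proposal follows the same route as the paper's proof: from $\|\alpha\|_t=0$ conclude that $\alpha$ annihilates $T(H)$, then invoke Lemma \ref{l.P}. Indeed, your final step is the paper's argument in different clothing: since $\psi(A,0)=(A,0)$ for all $A\in H$, the homomorphism $\Psi$ is the identity on $T(H)$, so the splitting $T(\mathfrak{g}_{CM})=T(H)\oplus\mathrm{Nul}(\Psi)$ is realized as $x=\Psi x+(x-\Psi x)$ with $x-\Psi x\in J(\mathfrak{g}_{CM})$, and your decomposition $\langle\alpha,x\rangle=\langle\alpha,y\rangle+\langle\alpha,z\rangle=0$ is precisely the paper's $\alpha=\alpha\circ\Psi=\alpha|_{T(H)}\circ\Psi=0$.

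The gap is in your second step, which you correctly single out as the crux, but the justification you give for it is false: square-summability of the coordinate array of $\alpha_k$ does \emph{not} imply that $\alpha_k$ is continuous. Continuity is a property an element of the algebraic dual $(\mathfrak{g}_{CM}^{\otimes k})'$ can simply fail to have, regardless of its coordinates in one fixed basis. Concretely, for $k=1$ let $\lambda\in\mathfrak{g}_{CM}'$ be a Hamel-basis extension of the zero functional on the proper subspace $\mathrm{span}\{\eta_j : j\in\mathbb{N}\}+\left(\{0\}\times\mathbf{C}\right)$, chosen nonzero at some $(v,0)$ with $v\in H$ outside the algebraic span of $\{\xi_j\}$. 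Every coordinate $\langle\lambda,\eta_j\rangle$ vanishes, so the array is trivially square-summable, yet $\lambda\neq0$ and $\lambda$ is discontinuous. Worse, the tensor $\alpha$ with $\alpha_1=\lambda$ and $\alpha_k=0$ for $k\neq1$ lies in $J^0$ (the degree-one component of any element of $J(\mathfrak{g}_{CM})$ is a linear combination of brackets, hence lies in $\{0\}\times\mathbf{C}$, which $\lambda$ annihilates) and satisfies $\|\alpha\|_t=0$. This shows that the passage from vanishing coordinates to $\alpha|_{T(H)}=0$ cannot be extracted from $\|\alpha_k\|_k<\infty$ plus multilinearity by any argument; one must know \emph{a priori} that each $\alpha_k$ is continuous, i.e., continuity has to be built into the definition of the Fock norm (for instance by declaring $\|\alpha_k\|_k=\infty$ off the continuous dual $(\mathfrak{g}_{CM}^{\otimes k})^*$), after which vanishing on the basis tensors propagates to their closed span, which contains all of $(H\times\{0\})^{\otimes k}$, and both your argument and the paper's go through. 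To be fair, the paper's own proof asserts ``$\alpha|_{T(H)}=0$'' with no justification at all and is subject to exactly the same objection; you located the right pressure point, but the Hilbert--Schmidt control you appeal to does not relieve it.
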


\begin{proof}
Suppose that $\alpha=\sum_{k=0}^\infty\alpha_k\in J^0$
is such that
\[ 0 = \|\alpha\|_t^2
	= \sum_{k=0}^\infty \frac{t^k}{k!}
		\sum_{i_1,\ldots,i_k=1}^\infty |\langle \alpha_k,
		\eta_{i_1}\otimes\cdots\otimes \eta_{i_k}\rangle|^2. \]
Thus, $\alpha|_{T(H)}=0$ and, for $\Psi$ as in Lemma \ref{l.P},
$\alpha=\alpha\circ \Psi = \alpha|_{T(H)}\circ \Psi=0$.
\end{proof}

\begin{cor}
The space $J_t^0$ is a Hilbert space equipped with the inner product
\[ \langle\alpha,\beta\rangle_t
	:=\sum_{k=0}^\infty \frac{t^k}{k!}(\alpha_k,\beta_k)_k. \]
\end{cor}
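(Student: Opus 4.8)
The plan is to verify first that $\langle\cdot,\cdot\rangle_t$ is a genuine inner product inducing the norm $\|\cdot\|_t$, and then to obtain completeness by realizing $J_t^0$ as a closed subspace of the standard weighted Fock Hilbert space built on $H\times\{0\}$. For the first part, each $(\cdot,\cdot)_k$ is sesquilinear, so $\langle\cdot,\cdot\rangle_t$ is formally sesquilinear, and its defining series converges absolutely since, by Cauchy--Schwarz at each level and then over $k$,
\[ \sum_{k=0}^\infty \frac{t^k}{k!}\,|(\alpha_k,\beta_k)_k|
	\le \sum_{k=0}^\infty \frac{t^k}{k!}\,\|\alpha_k\|_k\|\beta_k\|_k
	\le \|\alpha\|_t\,\|\beta\|_t < \infty \]
for all $\alpha,\beta\in J_t^0$. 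Since $\langle\alpha,\alpha\rangle_t=\|\alpha\|_t^2$ and $\|\cdot\|_t$ is a norm on $J_t^0$ by the preceding theorem, $\langle\cdot,\cdot\rangle_t$ is positive definite, hence an inner product inducing $\|\cdot\|_t$.

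It remains to prove completeness. I would introduce the auxiliary space $\mathcal{K}_t$ of all $\beta$ in the algebraic dual $T(H\times\{0\})'$ with $\|\beta\|_t<\infty$, where $\|\cdot\|_t$ is defined exactly as in Definition \ref{d.NCF} but using the basis tensors $\eta_{j_1}\otimes\cdots\otimes\eta_{j_k}$ of $T(H\times\{0\})$. Because a functional on an algebraic tensor algebra is freely determined by its values on basis tensors, the coordinate map $\beta\mapsto(\langle\beta,\eta_{j_1}\otimes\cdots\otimes\eta_{j_k}\rangle)_{k,\,j_1,\ldots,j_k}$ identifies $\mathcal{K}_t$ isometrically with the weighted $\ell^2$-direct sum $\bigoplus_{k=0}^\infty \ell^2(\mathbb{N}^k)$ carrying the weights $t^k/k!$; this is a standard Hilbert space, so $\mathcal{K}_t$ is complete.

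Next I would transport the problem into $\mathcal{K}_t$ via the restriction map $r\colon J^0\to T(H\times\{0\})'$, $r\alpha:=\alpha|_{T(H\times\{0\})}$. Since $\|\cdot\|_t$ sees only the values of a functional on the tensors $\eta_{j_1}\otimes\cdots\otimes\eta_{j_k}$, the map $r$ sends $J_t^0$ isometrically into $\mathcal{K}_t$; moreover $r$ is injective on $J^0$, because Lemma \ref{l.P} gives $\alpha=\alpha\circ\Psi=(r\alpha)\circ\Psi$ for every $\alpha\in J^0$ (using that $\Psi$ is the identity on $T(H\times\{0\})$ and that $x-\Psi x\in\mathrm{Nul}(\Psi)\subset J$). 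Thus $J_t^0$ is isometric to $r(J_t^0)$, and it suffices to show the latter is closed. The same identity yields the description
\[ r(J_t^0)=\{\beta\in\mathcal{K}_t:\langle\beta,\Psi x\rangle=0\text{ for all }x\in J\}, \]
since if $\beta$ lies in the right-hand set then $\beta\circ\Psi\in J^0$ with $r(\beta\circ\Psi)=\beta$ and $\|\beta\circ\Psi\|_t=\|\beta\|_t<\infty$, while conversely $\langle r\alpha,\Psi x\rangle=\alpha(\Psi x)=\alpha(x)=0$ for $\alpha\in J_t^0$ and $x\in J$. For each fixed $x\in J$, the tensor $\Psi x\in T(H\times\{0\})$ is fixed, so $\beta\mapsto\langle\beta,\Psi x\rangle$ is a continuous linear functional on $\mathcal{K}_t$; hence $r(J_t^0)$ is an intersection of kernels of continuous functionals and is therefore closed. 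As a closed subspace of the complete space $\mathcal{K}_t$ it is complete, and transferring back through the isometry $r$ shows $J_t^0$ is complete.

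The hard part is the completeness step, and specifically the point that $\Psi(J)\neq\{0\}$ in general, so that $\mathcal{K}_t$ is strictly larger than the image of $J_t^0$; one cannot simply invoke completeness of the Fock space over $H\times\{0\}$ but must identify $r(J_t^0)$ as a genuinely closed subspace. This is exactly what Lemma \ref{l.P} makes possible, through the representation $\alpha=(r\alpha)\circ\Psi$, which simultaneously forces injectivity of $r$ and produces the intersection-of-kernels description that closes the argument.
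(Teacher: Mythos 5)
The paper itself offers no proof of this corollary: the inner-product assertion is treated as immediate from the preceding theorem, and completeness is left implicit (the finite-dimensional analogue being quoted from \cite{DGSC09-2}), so your proposal supplies an argument the paper omits. Your first paragraph is correct: absolute convergence follows from Cauchy--Schwarz level-by-level and then in the weighted sum over $k$, and positive definiteness is exactly the preceding theorem. The completeness half, however, contains a genuine gap, namely the sentence ``a functional on an algebraic tensor algebra is freely determined by its values on basis tensors.'' This is true only when the underlying space is finite dimensional, so that the monomials form a Hamel basis. Here the monomials $\eta_{j_1}\otimes\cdots\otimes\eta_{j_k}$ span only a proper \emph{algebraic} subspace of $(H\times\{0\})^{\otimes k}$, because a general $h\in H$ is not a finite linear combination of the $\xi_j$. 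By a Hamel-basis extension there exist nonzero $\beta\in T(H\times\{0\})'$ vanishing on every monomial; such $\beta$ lie in your $\mathcal{K}_t$ with $\|\beta\|_t=0$. Hence the coordinate map is not injective, $\|\cdot\|_t$ is merely a seminorm on $\mathcal{K}_t$, and $\mathcal{K}_t$ is not the weighted $\ell^2$-direct sum, in particular not a Hilbert space. The closedness step inherits the damage: for fixed $x\in J$, the tensor $\Psi x$ is built from the vectors $A_\ell,B_\ell$ of Lemma \ref{l.P} and the entries of $x$, which are generally not finite combinations of the $\xi_j$; thus $\beta\mapsto\langle\beta,\Psi x\rangle$ is not determined by the coordinates of $\beta$, is not constant on $\|\cdot\|_t$-null classes, and is not $\|\cdot\|_t$-continuous. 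Indeed one can choose a $\|\cdot\|_t$-null $\beta$ with $\langle\beta,\Psi x\rangle\neq 0$, so the subspace you exhibit as an intersection of kernels is in fact not closed in $\mathcal{K}_t$.

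The repair keeps your architecture but changes the ambient space. Take $\mathcal{K}_t$ to be the weighted $\ell^2$-direct sum of coefficient arrays itself, or equivalently admit only those $\beta$ whose degree-$k$ components are Hilbert--Schmidt, i.e.\ act by
\[
\langle\beta,h_1\otimes\cdots\otimes h_k\rangle
 = \sum_{j_1,\ldots,j_k=1}^\infty
   \Bigl(\prod_{i=1}^k \langle h_i,\xi_{j_i}\rangle_H\Bigr)
   \langle\beta,\eta_{j_1}\otimes\cdots\otimes\eta_{j_k}\rangle ,
\]
the series converging absolutely. This reading is forced in any case: it is the one under which the preceding theorem you invoke, and the identity $\alpha=(r\alpha)\circ\Psi$, are valid, since otherwise the same Hamel-basis functionals composed with $\Psi$ would produce nonzero elements of $J_t^0$ of zero norm. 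For such $\beta$ one has
\[
|\langle\beta,h_1\otimes\cdots\otimes h_k\rangle|
 \le \|\beta_k\|_{k}\prod_{i=1}^k\|h_i\|_H
 \le \sqrt{k!/t^k}\,\|\beta\|_t\prod_{i=1}^k\|h_i\|_H ,
\]
so evaluation at the fixed tensor $\Psi x\in T(H\times\{0\})$ \emph{is} a bounded linear functional on $\mathcal{K}_t$. With this single change, your two inclusions, the description of $r(J_t^0)$ as a common kernel, its closedness, and the transfer of completeness back through the isometry $r$ all go through as written.
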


\subsection{The Taylor map}

The other relevant space for the Taylor map should be thought of as the $\nu_t$-square
integrable holomorphic functions on $G_{CM}$.  For $t>0$, $f:G_{CM}\rightarrow\mathbb{C}$, and $P\in\mathrm{Proj}(W)$, let
\[ \|f\|^2_{L^2(\nu_t^P)} := \|f|_{G_P}\|^2_{L^2(\nu_t^P)}
	= \mathbb{E}|f(g_t^P)|^2, \]
where $\{g_t^P\}_{t\ge0}\subset G_P\subset G_{CM}$
is a Brownian motion on $G_P$ as in Proposition \ref{p.bmapprox}.

\begin{defn}
For $t>0$ and $f\in\mathcal{H}(G_{CM})$, let
\[ \|f\|_{\mathcal{H}^2_t(G_{CM})}
    := \sup _{P\in\mathrm{Proj}(W)} \|f \|_{L^2(\nu_t^P)},
\]
and define
\[ \mathcal{H}^2_t(G_{CM}) := \{ f\in \mathcal{H}(G_{CM}):
    \|f\|_{\mathcal{H}^2_t(G_{CM})} <\infty \}. \]
\end{defn}

We set one more piece of notation before defining the Taylor map.

\begin{notation}
\label{n.fhat}
Given $f\in\mathcal{H}(G_{CM})$, $g\in G_{CM}$, $k\in\{0\}\cup\mathbb{N}$, let
$\hat{f}_k(g):=(D^kf)(g)$ denote the
unique element of $(\mathfrak{g}_{CM}^{\otimes k})'$ given by
\begin{align*}
(D^0f)(g)&=f(g) \\
\langle (D^kf)(g),h_1\otimes\cdots\otimes h_k\rangle
    &= \left(\tilde{h}_1\cdots\tilde{h}_k f\right)(g)
\end{align*}
for all $h_1,\dots,h_k\in\mathfrak{g}_{CM}$. Let $\hat{f}(g)$ be the element
of $ T(\mathfrak{g}_{CM})'$ determined by
\[ \langle \hat{f}(g),\beta\rangle = \langle \hat{f}_k(g),\beta\rangle,
    \qquad \text{ for all } \beta\in\mathfrak{g}_{CM}^{\otimes k}. \]
\end{notation}

\begin{remark}
As a consequence of equation (\ref{e.hk}), $\hat{f}(g)\in J^0$ for
all $f\in\mathcal{H}(G_{CM})$ and $g\in G_{CM}$.
\end{remark}

\begin{defn}
For each $t>0$, the {\em Taylor map} is the linear map
$\mathcal{T}_t:\mathcal{H}_t^2(G_{CM})\rightarrow J_t^0$ defined by
$\mathcal{T}_tf=\hat{f}(e)$.
\end{defn}

\subsection{Proof of isometry}

We will prove that the Taylor map is an isometry by limiting arguments for the
finite-dimensional projections.  Let us first recall the finite-dimensional
theory.

\begin{notation}
\label{n.ffhat}
For any $P\in\mathrm{Proj}(W)$, we set derivative notation for
$f\in\mathcal{H}(G_P)$ similarly to how it was done in Notation \ref{n.fhat}.
That is, for $g\in G_P$ and $k\in\{0\}\cup\mathbb{N}$, let
$\hat{f}_k(g):=(D_P^kf)(g)$ denote the
element of $(\mathfrak{g}_P^{\otimes k})'$ given by
\[
\langle (D_P^kf)(g),h_1\otimes\cdots\otimes h_k\rangle
    = \left(\tilde{h}_1\cdots\tilde{h}_k f\right)(g),
\]
for all $h_1,\dots,h_k\in\mathfrak{g}_P$,
and let $\hat{f}(g)$ be the element of $ T(\mathfrak{g}_P)'$ determined by
\[ \langle \hat{f}(g),\beta\rangle = \langle \hat{f}_k(g),\beta\rangle,
    \qquad \text{ for all } \beta\in\mathfrak{g}_P^{\otimes k}. \]

Also, let $\mathcal{H}L^2(\nu_t^P) = \mathcal{H}(G_P) \cap
L^2(G_P,\nu_t^P)$.  If $\{P_n\}_{n=1}^\infty$ is an increasing sequence in
$\mathrm{Proj}(W)$, let $\mathcal{H}L^2(\nu_t^n) = \mathcal{H}L^2(\nu_t^{P_n})$.
The {\em finite-dimensional Taylor map} is the
linear map $f \mapsto \hat{f}(e)$ from $\mathcal{H}L^2(\nu_t^P)$ to
$J^0_{P,t}$, where the latter is as defined in Definition \ref{d.NCF}
\end{notation}

For each $P\in\mathrm{Proj}(W)$, $G_P$ is a finite-dimensional connected,
simply connected complex Lie group.  If $[PW,PW]=\mathbf{C}$, then
$(\cdot,\cdot)_P$ is a non-negative Hermitian form on $\mathfrak{g}_P'$
satisfying H\"ormander's condition.  Thus, we have the following theorem.
\begin{thm}
\label{t.fdt}
Suppose that $P\in\mathrm{Proj}(W)$ such that
$[PW,PW]=\mathbf{C}$.  Then the finite-dimensional Taylor map $f \mapsto \hat{f}(e)$ is a unitary map
from $\mathcal{H}L^2(\nu_t^P)$ onto $J^0_{P,t}$.
Moreover, for any $t>0$, $f\in\mathcal{H}L^2(\nu_t^P)$, and $g\in G_P$,
\begin{equation}
\label{e.ptwise}
|f(g)|\le \|\hat{f}(e)\|_{P,t} e^{d_h^2(e,g)/2t}
\end{equation}
where $d_h$ is the horizontal distance on $G_P$ (defined analogously on $G_P$
to the horizontal distance on $G_{CM}$ as in Notation \ref{n.length}).
\end{thm}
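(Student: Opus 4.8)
The plan is to obtain Theorem \ref{t.fdt} as a direct application of the finite-dimensional subelliptic Taylor isomorphism theorem of Driver, Gross, and Saloff-Coste \cite{DGSC09-2}, after checking that all of its hypotheses are satisfied by the data $(G_P, L^P, (\cdot,\cdot)_P)$. First I would record that $G_P = PW \times \mathbf{C}$ is a finite-dimensional, connected, simply connected, step $2$ nilpotent complex Lie group: since the bracket on $\mathfrak{g}_P$ is the restriction of that on $\mathfrak{g}$, the exponential map is a global diffeomorphism from $\mathfrak{g}_P$ onto $G_P$ (indeed $e^h = h$ under the paper's identification of $G_P$ with $\mathfrak{g}_P$, as in Proposition \ref{l.h5.4}), so connectivity and simple connectivity are automatic. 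This places us squarely in the setting of \cite{DGSC09-2}.

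Next I would identify the generator and the horizontal distribution. By Lemma \ref{l.4.8} the measure $\nu_t^P$ is the heat kernel measure generated by $\tfrac14 L^P$ with $L^P = \sum_{j=1}^m [\widetilde{(\xi_j,0)}^2 + \widetilde{(i\xi_j,0)}^2]$, that is, the real sum-of-squares operator built from the frame $\{(\xi_j,0),(i\xi_j,0)\}_{j=1}^m$ spanning the horizontal subspace $PH \times \{0\} = K_P^0$. The assumption $[PW,PW] = \mathbf{C}$ is precisely the statement that these vector fields bracket-generate $\mathfrak{g}_P$, so H\"ormander's condition holds in the sense of Definition 2.6 of \cite{DGSC09-2}; equivalently, as already noted before the theorem, $(\cdot,\cdot)_P$ is a non-negative Hermitian form on $\mathfrak{g}_P'$ whose annihilator $K_P^0 = PH\times\{0\}$ is bracket-generating. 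One must also check that the normalizations are consistent, namely that the scaling of the generator by $\tfrac14$ together with the weights $t^k/k!$ in Definition \ref{d.NCF} are arranged so that $\|\cdot\|_{P,t}$ coincides with the Taylor norm of \cite{DGSC09-2}.

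With the hypotheses verified, the main theorem of \cite{DGSC09-2} asserts that the Taylor map $f \mapsto \hat{f}(e)$ is a unitary isomorphism of $\mathcal{H}L^2(\nu_t^P)$ onto the noncommutative Fock space $J^0_{P,t}$ equipped with $\|\cdot\|_{P,t}$, which is the first assertion. For the pointwise bound \eqref{e.ptwise}, I would invoke the corresponding estimate in \cite{DGSC09-2}, in which the Gaussian exponent is governed by the Carnot--Carath\'eodory distance associated to the horizontal frame; under our conventions this Carnot--Carath\'eodory distance is exactly the horizontal distance $d_h$ on $G_P$ of Notation \ref{n.length}, giving $|f(g)| \le \|\hat f(e)\|_{P,t}\, e^{d_h^2(e,g)/2t}$.

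The only genuine work here is bookkeeping rather than new analysis, and the main obstacle is reconciling the notational and normalization conventions of \cite{DGSC09-2} with ours. The most delicate point is the passage between the complex frame $\{\xi_j\}$ and its underlying real frame $\{(\xi_j,0),(i\xi_j,0)\}$, so that the complex Fock-space form $(\cdot,\cdot)_P$ genuinely agrees with the real subelliptic form of \cite{DGSC09-2}, together with the verification that the Carnot--Carath\'eodory distance in their pointwise bound coincides with our $d_h$. None of these requires a new argument, which is why the theorem can be presented as an immediate consequence.
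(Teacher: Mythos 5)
Your proposal is correct and follows essentially the same route as the paper: the paper likewise verifies that $G_P$ is a finite-dimensional connected, simply connected complex Lie group on which $(\cdot,\cdot)_P$ satisfies H\"ormander's condition (via $[PW,PW]=\mathbf{C}$ and the identification $K_P^0 = PH\times\{0\}$), and then deduces the isometry and surjectivity from Theorem 6.1 of \cite{DGSC09-2} and the pointwise bound \eqref{e.ptwise} from Corollary 5.15 of the same reference. Your attention to the normalization bookkeeping and the identification of the Carnot--Carath\'eodory distance with $d_h$ matches the (implicit) content of the paper's citation-based proof.
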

The isometry and surjectivity follow from the finite-dimensional Taylor isomorphism
proved in Theorem 6.1 of \cite{DGSC09-2}, and the estimate in (\ref{e.ptwise})
is a consequence of Corollary 5.15 of that same reference.
The paper \cite{DGSC09-1} gives an alternate proof of the surjectivity,
as each $G_P$ is a nilpotent Lie group.  In Section \ref{s.Tsurj},
we will apply the methods used in \cite{DGSC09-1} to show that the
Taylor map is surjective in this infinite-dimensional setting as
well.  Here we use the finite-dimensional isometries to show that
$\mathcal{T}_t$ is an isometry for all $t>0$ as follows.

\begin{prop}
\label{p.Tiso}
Let $f\in \mathcal{H}(G_{CM})$ and $t>0$.  Then
\[ \|\hat{f}(e)\|_t
    = \|f\|_{\mathcal{H}^2_t(G_{CM})}. \]
\end{prop}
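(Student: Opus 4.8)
The plan is to establish the isometry by approximating the infinite-dimensional quantities by their finite-dimensional analogues, using the finite-dimensional Taylor isometry (Theorem \ref{t.fdt}) as the key input. First I would fix $f \in \mathcal{H}(G_{CM})$ and $t>0$, and take an increasing sequence of projections $\{P_n\}_{n=1}^\infty \subset \mathrm{Proj}(W)$ with $P_n|_H \uparrow I_H$ as in Notation \ref{n.pn}. For each $n$, the restriction $f|_{G_n}$ lies in $\mathcal{H}(G_n)$, and Theorem \ref{t.fdt} applied to $P_n$ gives the finite-dimensional isometry
\[
\|\widehat{f|_{G_n}}(e)\|_{n,t} = \|f|_{G_n}\|_{L^2(\nu_t^n)} = \|f\|_{L^2(\nu_t^n)}.
\]
Thus the whole problem reduces to showing that both sides of this finite-dimensional identity converge to the corresponding sides of the claimed equality as $n \to \infty$.

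The left-hand convergence is the cleaner of the two. I would observe that the left invariant derivatives $\tilde h_1 \cdots \tilde h_k f$ that define $\hat f(e)$ depend only on $h_1,\dots,h_k \in \mathfrak{g}_{CM}$, and that for $h_i \in \mathfrak{g}_n$ the finite-dimensional derivative agrees with the infinite-dimensional one; hence $\langle \widehat{f|_{G_n}}(e), \eta_{i_1}\otimes\cdots\otimes\eta_{i_k}\rangle = \langle \hat f(e), \eta_{i_1}\otimes\cdots\otimes\eta_{i_k}\rangle$ whenever the indices $i_1,\dots,i_k \le n$. Since the norm $\|\cdot\|_{n,k}$ sums $|\langle \cdot, \eta_{i_1}\otimes\cdots\otimes\eta_{i_k}\rangle|^2$ over $i_1,\dots,i_k \le n$ with respect to the fixed basis $\Gamma_n \subset \Gamma$, we get $\|\widehat{f|_{G_n}}(e)\|_{n,k}^2 \uparrow \|\hat f(e)\|_k^2$ by monotone convergence over the enlarging index set, and summing the series in $k$ against the weights $t^k/k!$ (again by monotone convergence) yields $\|\widehat{f|_{G_n}}(e)\|_{n,t} \uparrow \|\hat f(e)\|_t$.

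For the right-hand side, the definition $\|f\|_{\mathcal{H}^2_t(G_{CM})} = \sup_{P} \|f\|_{L^2(\nu_t^P)}$ is a supremum over \emph{all} of $\mathrm{Proj}(W)$, so I must connect $\lim_n \|f\|_{L^2(\nu_t^n)}$ to this supremum. The inequality $\lim_n \|f\|_{L^2(\nu_t^n)} \le \|f\|_{\mathcal{H}^2_t(G_{CM})}$ is immediate since each $\|f\|_{L^2(\nu_t^n)}$ is one of the terms in the sup. For the reverse, I would combine the two displays already proved: since $\|f\|_{L^2(\nu_t^n)} = \|\widehat{f|_{G_n}}(e)\|_{n,t} \le \|\hat f(e)\|_t$ for every $n$, and since by the finite-dimensional identity the same bound $\|f\|_{L^2(\nu_t^P)} = \|\widehat{f|_{G_P}}(e)\|_{P,t} \le \|\hat f(e)\|_t$ holds for \emph{arbitrary} $P \in \mathrm{Proj}(W)$ (as $\|\cdot\|_{P,k} \le \|\cdot\|_k$ by restricting the basis sum), taking the supremum over $P$ gives $\|f\|_{\mathcal{H}^2_t(G_{CM})} \le \|\hat f(e)\|_t$. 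Together with the monotone limit $\|\hat f(e)\|_t = \lim_n \|f\|_{L^2(\nu_t^n)} \le \|f\|_{\mathcal{H}^2_t(G_{CM})}$, this pins down the equality.

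\textbf{Main obstacle.} The delicate point I expect to work hardest on is justifying that the supremum over all $P \in \mathrm{Proj}(W)$ is realized in the limit along the single increasing sequence $\{P_n\}$, i.e.\ verifying the uniform comparison $\|\widehat{f|_{G_P}}(e)\|_{P,t} \le \|\hat f(e)\|_t$ carefully for \emph{every} projection (not just the chosen sequence), so that the supremum cannot exceed the limit. This rests on the fact that each finite-dimensional Fock norm $\|\cdot\|_{P,k}$ is a partial sum of the full series defining $\|\cdot\|_k$ and hence dominated by it, which must be checked against the basis-independence of both norms; once that monotone domination is in hand the two inequalities close up and the equality follows.
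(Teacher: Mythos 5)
Your proposal is correct and follows essentially the same route as the paper's proof: both invoke the finite-dimensional Taylor isometry of Theorem \ref{t.fdt} for every $P\in\mathrm{Proj}(W)$, bound $\|\hat f(e)\|_{P,t}\le\|\hat f(e)\|_t$ by viewing the finite sum as a partial sum of the full series, and recover $\|\hat f(e)\|_t$ as the monotone limit along an increasing sequence $\{P_n\}$. Your explicit identification of $\widehat{f|_{G_n}}(e)$ with the restriction of $\hat f(e)$ to $T(\mathfrak{g}_n)$ is a point the paper leaves implicit, but it is the same argument.
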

\begin{proof}
By the finite-dimensional Taylor isomorphism theorem, for all $P\in\mathrm{Proj}(W)$,
\[ \|\hat{f}(e)\|_{J^0_{P,t}}
    = \|f\|_{L^2(\nu_t^P)}. \]
Thus, by definition of $\|\cdot\|_{\mathcal{H}^2_t(G_{CM})}$,
\[ \|f\|_{\mathcal{H}^2_t(G_{CM})}
    = \sup_{P\in\mathrm{Proj}(W)} \|f\|_{L^2(\nu_t^P)}
    = \sup_{P\in\mathrm{Proj}(W)}
        \|\hat{f}(e)\|_{J^0_{P,t}}. \]
So showing that
\[ \sup_{P\in\mathrm{Proj}(W)}
        \|\hat{f}(e)\|_{J^0_{P,t}}
    = \|\hat{f}(e)\|_t \]
completes the proof.

Let $P\in\mathrm{Proj}(W)$ with $\{\xi_j\}_{j=1}^\infty$ an
orthonormal basis of $H$, such that $\{\xi_j\}_{j=1}^n$ is an
orthonormal basis of $PH$.  Let $\eta_j=(\xi_j,0)$.  Then
\begin{align*}
\|\hat{f}(e)\|_{J^0_t(\mathfrak{g}_P)}
    &= \sum_{k=0}^\infty \frac{t^k}{k!} \sum_{j_1,\ldots,j_k=1}^n
		|\langle \hat{f}(e),\eta_{j_1}\otimes\cdots\otimes
		\eta_{j_k}\rangle|^2 \\
    &\le \sum_{k=0}^\infty \frac{t^k}{k!} \sum_{j_1,\ldots,j_k=1}^\infty
		|\langle \hat{f}(e),\eta_{j_1}\otimes\cdots\otimes \eta_{j_k}\rangle|^2
    = \|\hat{f}(e)\|_t,
\end{align*}
and so $\sup_{P\in\mathrm{Proj}(W)} \|\hat{f}(e)\|_{J^0_t(\mathfrak{g}_P)}
\le \|\hat{f}(e)\|_t$.  On the other hand, if
$\{P_n\}_{n=1}^\infty\subset\mathrm{Proj}(W)$ is an
increasing sequence of projections, then
\begin{align*}
\sup_{P\in\mathrm{Proj}(W)}
        \|\hat{f}(e)\|_{J^0_t(\mathfrak{g}_P)}
    &\ge \lim_{n\rightarrow\infty} \|\hat{f}(e)\|_{n,t} \\
    &= \lim_{n\rightarrow\infty}
        \sum_{k=0}^\infty \frac{t^k}{k!} \sum_{j_1,\ldots,j_k=1}^n
        |\langle \hat{f}(e), \eta_{j_1}\otimes\cdots\otimes\eta_{j_k}\rangle|^2
        \\
    &= \sum_{k=0}^\infty \frac{t^k}{k!} \sum_{j_1,\ldots,j_k=1}^\infty
        |\langle \hat{f}(e), \eta_{j_1}\otimes\cdots\otimes\eta_{j_k}\rangle|^2
    = \|\hat{f}(e)\|_t.
\end{align*}
\end{proof}

The following corollary follows from Propositions \ref{p.Tiso} and \ref{p.rho2}.

\begin{cor}
\label{c.p}
Let $\delta>0$ be as in Proposition
\ref{p.rho}, and suppose that $f:G\rightarrow\mathbb{C}$ is a continuous
function such that $f|_{G_{CM}}\in\mathcal{H}(G_{CM})$ and,
for some $\varepsilon\in(0,\delta)$,
\[ |f(g)|\le Ce^{\varepsilon\|g\|_\mathfrak{g}^2/2t} \]
for all $g\in G$.  Then $f|_{G_{CM}}\in\mathcal{H}_t^2(G_{CM})$ and
$\widehat{f|}_{G_{CM}}(e)\in J_t^0$.
\end{cor}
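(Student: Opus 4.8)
The plan is to prove the two assertions in order: first that $f|_{G_{CM}}\in\mathcal{H}_t^2(G_{CM})$, which carries the analytic content, and then that $\widehat{f|_{G_{CM}}}(e)\in J_t^0$, which follows formally from the isometry of Proposition \ref{p.Tiso}. In fact, by that proposition the quantities $\|\widehat{f|_{G_{CM}}}(e)\|_t$ and $\|f|_{G_{CM}}\|_{\mathcal{H}_t^2(G_{CM})}$ are equal (as possibly-infinite numbers) whenever $f|_{G_{CM}}\in\mathcal{H}(G_{CM})$, so it suffices to prove that either one is finite.

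To handle the first assertion I would estimate the defining supremum $\|f|_{G_{CM}}\|_{\mathcal{H}_t^2(G_{CM})}=\sup_{P\in\mathrm{Proj}(W)}\|f\|_{L^2(\nu_t^P)}$ one projection at a time. Squaring the hypothesis gives $|f(g)|^2\le C^2e^{\varepsilon\|g\|_\mathfrak{g}^2/t}$ for all $g\in G$, hence in particular for all $g\in G_P\subset H_*\times\mathbf{C}\subset G$, where the homogeneous norm $\|\cdot\|_\mathfrak{g}$ is exactly the one appearing in Fernique's theorem. Integrating against $\nu_t^P$ yields
\[ \|f\|_{L^2(\nu_t^P)}^2=\int_{G_P}|f(g)|^2\,d\nu_t^P(g)\le C^2\int_{G_P}e^{\varepsilon\|g\|_\mathfrak{g}^2/t}\,d\nu_t^P(g). \]
Because $\varepsilon\in(0,\delta)$, the subelliptic Fernique theorem (Proposition \ref{p.rho}, which also underlies Proposition \ref{p.rho2}) bounds the right-hand side by a constant independent of $P$. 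Taking the supremum over $P\in\mathrm{Proj}(W)$ then gives $\|f|_{G_{CM}}\|_{\mathcal{H}_t^2(G_{CM})}^2\le C^2\sup_P\int_{G_P}e^{\varepsilon\|g\|_\mathfrak{g}^2/t}\,d\nu_t^P<\infty$, establishing $f|_{G_{CM}}\in\mathcal{H}_t^2(G_{CM})$.

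For the second assertion I would simply invoke Proposition \ref{p.Tiso}. Since $f|_{G_{CM}}\in\mathcal{H}(G_{CM})$ by hypothesis, the remark following Notation \ref{n.fhat} already places $\widehat{f|_{G_{CM}}}(e)$ in $J^0$, and Proposition \ref{p.Tiso} gives the identity $\|\widehat{f|_{G_{CM}}}(e)\|_t=\|f|_{G_{CM}}\|_{\mathcal{H}_t^2(G_{CM})}$. The right-hand side is finite by the first step, so $\|\widehat{f|_{G_{CM}}}(e)\|_t<\infty$, which is precisely the condition defining membership in $J_t^0$.

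The argument is short, so there is no serious obstacle; the one point demanding attention is that the integrability estimate must be uniform over the entire family $\mathrm{Proj}(W)$, not merely along a single increasing sequence of projections (along which one would only obtain convergence to $\|f\|_{L^2(\nu_t)}$ rather than control of the full supremum). This uniformity is exactly what the subelliptic Fernique theorem supplies. A minor routine check is that squaring the Gaussian-type bound correctly converts the exponent $\varepsilon\|g\|_\mathfrak{g}^2/2t$ into $\varepsilon\|g\|_\mathfrak{g}^2/t$ while keeping $\varepsilon$ inside the admissible range $(0,\delta)$, so that Fernique applies as stated.
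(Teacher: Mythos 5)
Your proof is correct and takes essentially the same approach as the paper, which derives the corollary from the Taylor isometry (Proposition \ref{p.Tiso}) together with the uniform-over-projections exponential integrability of the heat kernel measures. The only cosmetic difference is that you invoke the subelliptic Fernique bound (Proposition \ref{p.rho}) directly, whereas the paper cites Proposition \ref{p.rho2} (which itself rests on Proposition \ref{p.rho}); your phrasing is, if anything, the more transparent route to the uniform control of $\sup_{P\in\mathrm{Proj}(W)}\|f\|_{L^2(\nu_t^P)}$.
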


In particular, Corollary \ref{c.p} implies that, for all $t>0$,
$\mathcal{P}_{CM}\subset\mathcal{H}_t^2(G_{CM})$ and, for any
$p\in\mathcal{P}$, $\widehat{p|}_{G_{CM}}(e)\in J_t^0$.  Thus,
$\mathcal{H}_t^2(G_{CM})$ and $J_t^0$ are non-trivial spaces.

\begin{cor}
The Taylor map $\mathcal{T}_t:\mathcal{H}_t^2(G_{CM})\rightarrow J_t^0$
is injective, and $\|\cdot\|_{\mathcal{H}^2_t(G_{CM})}$ is a norm on
$\mathcal{H}^2_t(G_{CM})$ induced by the inner product
\[ \langle u,v\rangle_{\mathcal{H}_t^2(G_{CM})} :=
    \langle\hat{u}(e),\hat{v}(e)\rangle_t,
    \qquad \text{ for all } u,v\in\mathcal{H}_t^2(G_{CM}). \]
\end{cor}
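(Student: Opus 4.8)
The plan is to derive the entire corollary from the isometry of Proposition \ref{p.Tiso}, the fact (just proved) that $\|\cdot\|_t$ is a norm on $J_t^0$, and the finite-dimensional pointwise bound \eqref{e.ptwise}. The substantive content is injectivity of $\mathcal{T}_t$; the norm and inner-product assertions then follow formally.

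For injectivity, suppose $f\in\mathcal{H}_t^2(G_{CM})$ satisfies $\mathcal{T}_tf=\hat{f}(e)=0$; I want to conclude $f\equiv0$. The key observation is a reduction to finite dimensions. For each $P\in\mathrm{Proj}(W)$ we have $\mathfrak{g}_P=PW\times\mathbf{C}\subset H\times\mathbf{C}=\mathfrak{g}_{CM}$, and for $h_1,\dots,h_k\in\mathfrak{g}_P$ the iterated left-invariant derivative $(\tilde{h}_1\cdots\tilde{h}_kf)(e)$ is the same whether computed on the subgroup $G_P$ or on $G_{CM}$, since the left-invariant vector fields attached to elements of $\mathfrak{g}_P$ are tangent to $G_P$. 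Hence the finite-dimensional Taylor datum $\widehat{f|_{G_P}}(e)\in J^0(\mathfrak{g}_P)$ is exactly the restriction of $\hat{f}(e)$ to $T(\mathfrak{g}_P)$, and therefore vanishes. Because $f\in\mathcal{H}_t^2(G_{CM})$ forces $\|f|_{G_P}\|_{L^2(\nu_t^P)}<\infty$, we have $f|_{G_P}\in\mathcal{H}L^2(\nu_t^P)$, so Theorem \ref{t.fdt} applies and the bound \eqref{e.ptwise} gives
\[
|f(g)|\le\|\widehat{f|_{G_P}}(e)\|_{P,t}\,e^{d_h^2(e,g)/2t}=0
\]
for every $g\in G_P$; that is, $f$ vanishes identically on each $G_P$.

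Next I would pass to the limit. Fixing an increasing sequence $\{P_n\}$ as in Notation \ref{n.pn}, the union $\bigcup_n G_{P_n}$ contains $\left(\bigcup_n P_nH\right)\times\mathbf{C}$, which is dense in $G_{CM}$ in the $\|\cdot\|_{\mathfrak{g}_{CM}}$-topology (equivalently, by Proposition \ref{p.equivtop}, in the $d_h$-topology). Since $f$ is holomorphic it is in particular continuous (Proposition \ref{t.h5.2}), so its vanishing on this dense set forces $f\equiv0$ on $G_{CM}$. This proves that $\mathcal{T}_t$ is injective.

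The remaining claims are now immediate. The functional $\|\cdot\|_{\mathcal{H}^2_t(G_{CM})}$ is plainly a semi-norm, and by Proposition \ref{p.Tiso} it equals $\|\hat{f}(e)\|_t$; since $\|\cdot\|_t$ is a norm on $J_t^0$, $\|f\|_{\mathcal{H}^2_t(G_{CM})}=0$ implies $\hat{f}(e)=0$, whence $f=0$ by injectivity, so $\|\cdot\|_{\mathcal{H}^2_t(G_{CM})}$ is a norm. Finally, pulling back the Hilbert-space inner product of $J_t^0$ along the linear injection $f\mapsto\hat{f}(e)$ yields the sesquilinear form $\langle u,v\rangle_{\mathcal{H}_t^2(G_{CM})}:=\langle\hat{u}(e),\hat{v}(e)\rangle_t$; it is positive definite because $\langle u,u\rangle_{\mathcal{H}_t^2(G_{CM})}=\|\hat{u}(e)\|_t^2=\|u\|_{\mathcal{H}^2_t(G_{CM})}^2$ vanishes only for $u=0$, and by construction it induces $\|\cdot\|_{\mathcal{H}^2_t(G_{CM})}$. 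The one genuinely nontrivial step is the finite-dimensional reduction inside the injectivity argument — identifying $\widehat{f|_{G_P}}(e)$ with the restriction of $\hat{f}(e)$ and applying the pointwise bound — after which density and continuity finish the proof.
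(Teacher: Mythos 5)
Your proposal is correct and follows essentially the same route as the paper: reduce to the finite-dimensional subgroups $G_P$, show $f$ vanishes on each of them, conclude $f\equiv 0$ by continuity and density of $\bigcup_P G_P$ in $G_{CM}$ (via Proposition \ref{p.equivtop}), and then obtain the norm and inner-product statements formally from the isometry of Proposition \ref{p.Tiso}. The only cosmetic difference is the mechanism for vanishing on $G_P$: you apply the pointwise bound \eqref{e.ptwise} to $\widehat{f|_{G_P}}(e)$, identified as the restriction of $\hat{f}(e)$ to $T(\mathfrak{g}_P)$, whereas the paper deduces $\|f|_{G_P}\|_{L^2(\nu_t^P)}=0$ from Proposition \ref{p.Tiso} — both arguments rest on the same finite-dimensional Theorem \ref{t.fdt}, and yours in fact makes explicit the step (from $L^2$-vanishing to pointwise vanishing) that the paper leaves implicit.
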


\begin{proof}
If $\hat{f}(e)=0$, then Proposition \ref{p.Tiso} implies that
$\|f\|_{\mathcal{H}_t^2(G_{CM})}=0$ and thus $f|_{G_P}=0$ for all
$P\in\mathrm{Proj}(W)$.  As $f$ is continuous and $\cup_{P\in\mathrm{Proj}(W)}
G_P$ is dense in $G_{CM}$ by Proposition \ref{p.equivtop},
it follows that $f\equiv0$.  Thus, $\mathcal{T}_t$ is injective.

Since $\|\cdot\|_t$ is a Hilbert norm, Proposition \ref{p.Tiso} then also
implies that $\|\cdot\|_{\mathcal{H}^2_t(G_{CM})}$ is the norm on
$\mathcal{H}^2_t(G_{CM})$ given by the above inner product.
\end{proof}

\subsection{A density theorem and proof of surjectivity}
\label{s.Tsurj}
We will now apply the methods used in
\cite{DGSC09-1} to show that the Taylor map is surjective.  In fact, the
infinite-dimensional proof is directly analogous to the finite-dimensional
proof presented there, and no special considerations need to be made for the infinite-dimensional case.
Similar arguments were used in \cite{Cecil08} and \cite{DG08-3}.
Still, we collect the proofs here for completeness
and to stress the dimension independence of the arguments.  Additionally,
Corollary \ref{c.h7.4} will be critical in the proof of surjectivity of the
restriction map in Section \ref{s.restriction}, and this proof will require
some adaptation for the subelliptic construction.

\begin{defn}
\label{d.h7.2}
A tensor $\alpha = \sum_{k=0}^\infty \alpha_k \in T(\mathfrak{g}_{CM})'$
is said to have {\em finite rank} if $\alpha_k=0$ for all but
finitely many $k\in\mathbb{N}$.
\end{defn}

The next lemma is essentially a special case of \cite[Lemma 3.5]{DGSC09-2}.
See also \cite[Theorem 41]{Cecil08} and \cite[Lemma 7.3]{DG08-3}.

\begin{lem}
\label{l.h7.3}The finite rank tensors in $J_t^0$ are dense in $J_t^0$.
\end{lem}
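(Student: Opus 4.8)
The plan is to exploit the step-$2$ stratified structure of $\mathfrak{g}_{CM}$ to build finite-rank approximants that remain inside $J^0$ and converge in $\|\cdot\|_t$. First I would put the stratified grading on $\mathfrak{g}_{CM} = (H\times\{0\}) \oplus (\{0\}\times\mathbf{C})$ by declaring $H\times\{0\}$ to have weight $1$ and $\{0\}\times\mathbf{C}$ to have weight $2$, and extend it multiplicatively to a grading $T(\mathfrak{g}_{CM}) = \bigoplus_{m\ge 0} T_m$, where $T_m$ is spanned by the decomposable tensors whose factor weights sum to $m$. Because $[H,H]=\mathbf{C}$ while $[H,\mathbf{C}] = [\mathbf{C},\mathbf{C}] = 0$, each generator $a\wedge b - [a,b]$ of $J$ decomposes, upon splitting $a$ and $b$ into their weight-$1$ and weight-$2$ parts, into weight-homogeneous summands that are themselves of the generating form $a'\wedge b' - [a',b']$ with $a',b'$ homogeneous. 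Hence $J$ is generated by homogeneous elements, so it is a graded ideal: $J = \bigoplus_m (J\cap T_m)$.

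Next I would transfer this grading to the annihilator. Decomposing any $\alpha \in T(\mathfrak{g}_{CM})'$ into weight components $\alpha = \sum_m \alpha^{[m]}$ (with $\alpha^{[m]}$ the restriction of $\alpha$ to $T_m$, extended by zero elsewhere), the gradedness of $J$ gives $\alpha \in J^0$ if and only if $\alpha^{[m]}\in J^0$ for every $m$. Consequently the weight truncation
\[ \alpha^{\le M} := \sum_{m\le M}\alpha^{[m]} \]
again lies in $J^0$. Since each tensor factor carries weight at least $1$, a tensor of degree $k$ has weight at least $k$; thus $\alpha^{[m]}$ is supported in degrees $\le m$, and $\alpha^{\le M}$ has vanishing degree-$k$ component for every $k>M$. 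In other words, $\alpha^{\le M}$ is a finite-rank tensor in $J^0$.

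It then remains to show $\|\alpha - \alpha^{\le M}\|_t \to 0$. The crucial observation is that $\|\cdot\|_t$ only pairs tensors against the purely horizontal products $\eta_{j_1}\otimes\cdots\otimes\eta_{j_k}$ with $\eta_j = (\xi_j,0)$, each of which has weight exactly $k$; hence $\|\alpha_k\|_k$ depends only on the weight-$k$ component $\alpha^{[k]}$ of $\alpha$. Since $\alpha - \alpha^{\le M} = \sum_{m>M}\alpha^{[m]}$, its horizontal degree-$k$ part equals that of $\alpha_k$ when $k>M$ and vanishes when $k\le M$, giving
\[ \|\alpha - \alpha^{\le M}\|_t^2 = \sum_{k>M}\frac{t^k}{k!}\|\alpha_k\|_k^2. \]
This is the tail of the convergent series $\|\alpha\|_t^2 = \sum_k \frac{t^k}{k!}\|\alpha_k\|_k^2$, so it tends to $0$ as $M\to\infty$, completing the argument.

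I expect the main obstacle to be the verification that $J$ is a graded ideal: this is precisely the place where the step-$2$ bracket relation $[H,H]=\mathbf{C}$ (together with the vanishing of all other brackets) is essential, since it is what forces the inhomogeneous generators to split into admissible homogeneous ones. Once the grading is in hand, the fact that $\|\cdot\|_t$ registers only the weight-$k$ (horizontal) part of each degree-$k$ tensor is what makes the truncation error collapse to a harmless series tail.
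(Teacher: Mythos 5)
Your proof is correct, and it reaches the result by a more algebraic route than the paper does, although both arguments ultimately rest on the same structural fact about the step-$2$ stratification. The paper encodes the weight grading dynamically: it introduces the dilations $\varphi_\theta(A,a)=(e^{i\theta}A,e^{2i\theta}a)$, extends them to tensor-algebra automorphisms $\Phi_\theta$, checks that $\Phi_\theta(J)\subset J$ (which is exactly your statement that $J$ is a graded ideal, phrased as invariance under the circle action), shows that $\alpha\mapsto\alpha\circ\Phi_\theta$ is a strongly continuous unitary action on $J_t^0$, and then produces finite-rank approximants by averaging against Fej\'er kernels, $\alpha(n)=\int_{-\pi}^{\pi}\alpha\circ\Phi_\theta\,F_n(\theta)\,d\theta$; the finite-rank property comes from the same weight count you use (a degree-$m$ tensor carries only frequencies $\ell$ with $m\le\ell\le 2m$, so Fej\'er truncation in frequency kills all degrees $m>n$), and convergence comes from the approximate-identity property. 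Your version replaces this analytic machinery with the sharp algebraic truncation $\alpha^{\le M}=\sum_{m\le M}\alpha^{[m]}$: since $J$ is generated by weight-homogeneous elements (here is where $[H,H]=\mathbf{C}$ with all other brackets vanishing matters, exactly as you say), $J^0$ splits by weight, the truncation stays in $J^0$, and because the norm $\|\cdot\|_t$ tests only against the purely horizontal tensors $\eta_{j_1}\otimes\cdots\otimes\eta_{j_k}$ (weight exactly $k$), the error is precisely the series tail $\sum_{k>M}\frac{t^k}{k!}\|\alpha_k\|_k^2$. What your approach buys is elementarity and sharpness: no vector-valued integration, no continuity-in-$\theta$ argument, no Fej\'er kernel estimates, and an exact identity for the approximation error rather than a limsup bound. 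What the paper's approach buys is conformity with the template of \cite{DGSC09-2}, \cite{Cecil08}, and \cite{DG08-3}, and a method that still works when one has only a continuous unitary circle action rather than an explicit algebraic grading that is visibly compatible with the ideal $J$.
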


\begin{proof}
First note that $\mathfrak{g}_{CM}=H\times \mathbf{C}$ is a graded Lie algebra with
$[H,H]=\mathbf{C}$, $[H,\mathbf{C}]=0$, and $[\mathbf{C},\mathbf{C}]=0$.
Thus, for $\theta\in\mathbb{R}$, we may define the dilations
$\varphi_\theta:\mathfrak{g}_{CM}\rightarrow\mathfrak{g}_{CM}$ by
\[ \varphi_\theta(A,a) := (e^{i\theta}A, e^{2i\theta}a),
    \qquad \text{ for all } (A,a)\in\mathfrak{g}_{CM}, \]
and it is straightforward to verify that $\varphi_\theta$ is an automorphism of
$\mathfrak{g}_{CM}$.
% omitted text
\iffalse
Now let $\varphi_\theta':\mathfrak{g}_{CM}^*\rightarrow\mathfrak{g}_{CM}^*$
denote the transpose map defined by
\[ \langle \varphi_\theta'\alpha, a\rangle
    := \langle \alpha, \varphi_\theta a\rangle,
    \qquad \text{ for all } \alpha\in\mathfrak{g}_{CM}^* \text{ and }
    a\in\mathfrak{g}_{CM}. \]
Note that, for any $\alpha\in\mathfrak{g}_{CM}^*$,
\begin{align*}
\|\varphi_\theta'\alpha\|_{CM}^2
    = \sum_{j=1}^\infty |\langle \varphi_\theta'\alpha,\xi_j\rangle|^2
    &= \sum_{j=1}^\infty |\langle \alpha,\varphi_\theta\xi_j\rangle|^2 \\
    &= \sum_{j=1}^\infty |\langle \alpha,e^{i\theta}\xi_j\rangle|^2
    = \sum_{j=1}^\infty |\langle \alpha,\xi_j\rangle|^2
    = \|\alpha\|_{CM}^2.
\end{align*}
Thus, $\mathfrak{g}_{CM}$ is a complex graded Lie algebra and the form
$(\cdot,\cdot)$ on $\mathfrak{g}_{CM}^*$ is invariant under the action of the
transposed dilations $\varphi_\theta'$.
\fi
%Thus $(\mathfrak{g}_{CM},(\cdot,\cdot))$ satisfy the hypotheses of Lemma 3.6 of
%\cite{DGSC09-1}, and so $\mathcal{T}_t$ is unitary.
Let $\Phi_\theta :  T(\mathfrak{g}_{CM})\rightarrow T(\mathfrak{g}_{CM})$ be the
automorphism of the tensor algebra over $\mathfrak{g}_{CM}$ induced by
$\varphi_\theta$, that is,
\[ \Phi_\theta :=
    \overset{k \text{ times}}
    {\overbrace{\varphi_\theta\otimes\cdots\otimes\varphi_\theta}}
    \text{ on } \mathfrak{g}_{CM}^{\otimes k}. \]
%omitted text
\iffalse
Then if $\Phi_\theta': T(\mathfrak{g}_{CM})'\rightarrow T(\mathfrak{g}_{CM})'$
denotes the transpose map defined by
\[ \langle \Phi_\theta'\alpha, a\rangle
    := \langle \alpha, \Phi_\theta a\rangle, \]
note that that the form $(\cdot,\cdot)$ on $\mathfrak{g}_{CM}^*$ is invariant
under $\Phi_\theta'$
\fi
%Now let $\Phi_\theta : T(\mathfrak{g}_{CM})\rightarrow T(\mathfrak{g}_{CM})$
%be defined by $\Phi_\theta 1=1$ and
%\[
%\Phi_\theta (h_1\otimes\dots\otimes h_n)
%	= \varphi_\theta h_1\otimes\dots\otimes\varphi_\theta h_n
%	\text{ for all } h_i \in\mathfrak{g}_{CM}
%	\text{ and } n\in\mathbb{N}.
%\]
Then
\begin{align*}
\Phi_\theta (\xi\wedge\xi'-[\xi,\xi'])
	&=(\varphi_\theta\xi)\wedge(\varphi_\theta\xi')-\varphi_\theta[\xi,\xi']\\
	&=(\varphi_\theta\xi)\wedge(\varphi_\theta\xi')-[\varphi
_\theta\xi,\varphi_\theta\xi'].
\end{align*}
From this it follows that $\Phi_\theta(J)\subset J$
and therefore if $\alpha\in J^0$,
then $\alpha\circ\Phi_\theta \in J^0$. Letting $\{\xi_j\}_{j=1}^\infty$ be an
orthonormal basis of $H$ and $\Gamma=\{(\xi_j,0)\}_{j=1}^\infty$,
we have $\varphi_\theta \eta=e^{i\theta}\eta$
for all $\eta\in\Gamma$. Therefore,
\begin{align*}
|\langle\alpha\circ\Phi_\theta ,\eta_1\otimes\dots\otimes \eta_k\rangle| ^2
	&= |\langle \alpha,
		\varphi_\theta \eta_1\otimes\dots\otimes\varphi_\theta 	
		\eta_k\rangle|^2 \\
	&= |\langle\alpha, \eta_1\otimes\dots\otimes \eta_k\rangle|^2,
\end{align*}
and hence
\begin{align*}
\| \alpha\circ\Phi_\theta \|_t^2
	&= \sum_{k=0}^\infty\frac{t^k}{k!} \sum_{\eta_1,\dots,\eta_k\in\Gamma}
		|\langle\alpha\circ \Phi_\theta ,
		\eta_1\otimes\dots\otimes\eta_k\rangle|^2 \\
	&=\sum_{k=0}^\infty\frac{t^k}{k!}\sum_{\eta_1,\dots,\eta_k\in\Gamma}
		|\langle\alpha, \eta_1\otimes\dots\otimes\eta_k\rangle| ^2
	= \left\|\alpha\right\|_t^2.
\end{align*}
So the map $J_t^0\ni\alpha\mapsto
\alpha\circ\Phi_\theta \in J_t^0$ is unitary. Moreover, since
\[ | \langle\alpha,\varphi_\theta \eta_1\otimes
		\dots\otimes\varphi_\theta \eta_k\rangle
		- \langle\alpha, \eta_1\otimes\dots\otimes\eta_k\rangle|^2 \\
	\le 2 | \langle\alpha, \eta_1\otimes\dots\otimes\eta_k\rangle|^2,
\]
the dominated convergence theorem implies that
\begin{equation}
\label{e.Phi}
\begin{split}
\lim_{\theta\rightarrow0} &
		\left\| \alpha\circ\Phi_\theta -\alpha\right\|_t^2\\
	&= \sum_{k=0}^\infty\frac{t^k}{k!}\sum_{\eta_1,\dots,\eta_k\in\Gamma}
		\lim_{\theta\rightarrow0} \left|\langle \alpha, \varphi_\theta
		\eta_1\otimes \dots\otimes\varphi_\theta
		\eta_k\rangle-\langle\alpha, \eta_1\otimes \dots\otimes\eta_k
		\rangle\right| ^2\\
	&= 0,
\end{split}
\end{equation}
and $\alpha\mapsto\alpha\circ\Phi_\theta$ is continuous.
(Notice that $\Phi_\theta \circ\Phi_{\alpha}=\Phi_{\theta+\alpha}$,
so it suffices to check continuity at $\theta=0$.)

Now, for any $n\in\mathbb{N}$, let
\[
F_n(\theta)
	= \frac{1}{2\pi n} \sum_{j=0}^{n-1} \sum_{\ell=-j}^j
		e^{i\ell\theta}
	= \frac{1}{2\pi n} \frac{\sin^2(j\theta/2)}{\sin^2(\theta/2)}
\]
denote Fejer's kernel \cite[p. 143]{T}. Then one may show the following:
$\int_{-\pi}^{\pi}F_n(\theta)d\theta=1$ for all $n\in\mathbb{N}$;
\[
\lim_{n\rightarrow\infty}\int_{-\pi}^{\pi}F_n(\theta)u(\theta)d\theta
	= u(0),
\]
for all continuous functions $u:[-\pi,\pi]\rightarrow\mathbb{C}$; and
\[ \int_{-\pi}^\pi F_n(\theta) e^{im\theta}\,d\theta = 0 \]
whenever $m>n$.  Given $\alpha\in J_t^0$, we let
\[
\alpha(n)
	:= \int_{-\pi}^\pi \alpha\circ\Phi_\theta F_n(\theta) \,d\theta.
\]
If $\beta=h_1\otimes\cdots\otimes h_m\in\mathfrak{g}^{\otimes
m}_{CM}$,  then
there exist $\beta_\ell\in\mathfrak{g}_{CM}^{\otimes m}$ such that
\[
\Phi_\theta \beta=\sum_{\ell=m}^{2m}e^{i\ell\theta}\beta_\ell.
\]
So, if $m>n$,
\[
\left\langle \alpha(n), \beta\right\rangle
	= \int_{-\pi}^\pi \left\langle\alpha,\Phi_\theta \beta\right\rangle
		F_n(\theta) \, d\theta
	= \sum_{\ell=m}^{2m}\left\langle \alpha, \beta_\ell\right\rangle
		\int_{-\pi}^\pi e^{i\ell\theta}F_n(\theta)\,d\theta=0,
\]
from which it follows that $\alpha(n)_m\equiv0$ for
all $m>n$. Thus $\alpha(n)  $ is a finite rank tensor
for all $n\in\mathbb{N}$, and (\ref{e.Phi}) implies that
\begin{align*}
\limsup_{n\rightarrow\infty} \|\alpha-\alpha(n)\|_t^2
	&= \limsup_{n\rightarrow\infty}
		\left\| \int_{-\pi}^{\pi} [ \alpha-\alpha\circ\Phi_\theta ]
		F_n(\theta)\,d\theta\right\|_t \\
	&\le \limsup_{n\rightarrow\infty} \int_{-\pi}^\pi
		\|\alpha-\alpha\circ\Phi_\theta\|_t F_n(\theta)\,d\theta
	= 0.
\end{align*}

\iffalse
Now an elementary argument using ** and the strong continuity of
$\theta\mapsto\Phi_\theta'$ on each $(\mathfrak{g}_{CM}^*)^{\otimes k}$
shows that $\theta\mapsto\Phi_\theta'$ is strongly continuous on $J_t^0$ in
the norm (\ref{e.focknorm}).  Hence, if $\alpha\in J_t^0$, then
\[ \gamma_n := \int_{-\pi}^\pi F_n(\theta)\Phi_\theta'\alpha\,d\theta \]
is also in $J_t^0$ and is zero in all ranks $>n$.  Moreover,
\begin{align*}
\|\gamma_n-\alpha\|_t
	&= \left\| \int_{-\pi}^\pi F_n(\theta)
		(\Phi_\theta'\alpha-\alpha)\,d\theta\right\|_t \\
	&\le \int_{-\pi}^\pi F_n(\theta)
		\|\Phi_\theta'\alpha-\alpha\|_t\,d\theta
	\rightarrow 0, \quad \text{ as } n\rightarrow\infty.
\end{align*}
\fi
\end{proof}

The surjectivity of the Taylor map may now be proved by finding a preimage
in $\mathcal{H}_t^2(G_{CM})$ under $\mathcal{T}_t$ for any finite rank tensor
in $J_t^0$.  The following lemma is a special case of Proposition 5.1 in
\cite{D95} and motivates our construction of the inverse of the Taylor map.
This version of the result may also be found in Lemma 6.9 of \cite{DG08-3}.

\begin{lem}
\label{l.holo}
For every $f\in\mathcal{H}(G_{CM})$ and $g\in G_{CM}$,
\[ f(g) = \sum_{k=0}^\infty \frac{1}{k!}
	\langle \hat{f}_k(e),g^{\otimes k}\rangle, \]
where by convention $g^{\otimes 0}=1\in\mathbb{C}$ and
the above sum is absolutely convergent.
\end{lem}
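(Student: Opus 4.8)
The plan is to reduce the statement to the elementary Taylor expansion of an entire function of a single complex variable, by restricting $f$ to the one-parameter subgroup through $g$. The key structural observation is that, since $G_{CM}$ is step $2$ nilpotent, the curve $\lambda \mapsto \lambda g$ (scalar multiplication in the vector space $\mathfrak{g}_{CM} = G_{CM}$) is a one-parameter subgroup: using \eqref{e.3.1} and $[g,g]=0$ one checks directly that $(\lambda g)\cdot(\mu g) = \lambda g + \mu g + \frac{1}{2}[\lambda g,\mu g] = (\lambda+\mu)g$, and this curve passes through $e$ at $\lambda=0$ with velocity $g$. Thus it is exactly the kind of curve $\sigma(t)=tg$ appearing in the definition of the left-invariant derivative $\tilde{g}$. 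Setting $\psi(\lambda) := f(\lambda g)$, the function $\psi\colon\mathbb{C}\to\mathbb{C}$ is entire: it is the restriction of the globally holomorphic $f$ to the complex line $\{\lambda g:\lambda\in\mathbb{C}\}$, so complex G\^ateaux differentiability of $f$ (at each point $\lambda_0 g$ in the direction $g$) gives complex differentiability of $\psi$ at every $\lambda_0\in\mathbb{C}$.

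First I would establish the identity $\langle \hat{f}_k(e), g^{\otimes k}\rangle = \psi^{(k)}(0)$. By Notation \ref{n.fhat} we have $\langle \hat{f}_k(e), g^{\otimes k}\rangle = (\tilde{g}^k f)(e)$, so it suffices to prove $(\tilde{g}^k f)(\lambda g) = \psi^{(k)}(\lambda)$ for all $\lambda$, which I would do by induction on $k$. The base case is the definition of $\tilde{g}$ combined with the subgroup property: $(\tilde{g} f)(\lambda g) = \frac{d}{du}\big|_0 f(\lambda g \cdot u g) = \frac{d}{du}\big|_0 f((\lambda+u)g) = \frac{d}{du}\big|_0 \psi(\lambda+u) = \psi'(\lambda)$. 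The inductive step is identical: applying $\tilde{g}$ to $\tilde{g}^k f$ and again using $\lambda g \cdot u g = (\lambda+u)g$ gives $(\tilde{g}^{k+1} f)(\lambda g) = \frac{d}{du}\big|_0 \psi^{(k)}(\lambda+u) = \psi^{(k+1)}(\lambda)$. Evaluating at $\lambda=0$ yields the claim.

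Finally, since $\psi$ is entire, its Maclaurin series has infinite radius of convergence and converges absolutely to $\psi$ on all of $\mathbb{C}$; in particular at $\lambda=1$ this gives $f(g) = \psi(1) = \sum_{k=0}^\infty \frac{1}{k!}\psi^{(k)}(0) = \sum_{k=0}^\infty \frac{1}{k!}\langle \hat{f}_k(e), g^{\otimes k}\rangle$, with the absolute convergence asserted in the statement. I do not expect a serious obstacle, since this is a standard linearization argument (cf.\ \cite{D95}); the only points requiring genuine care are the verification that $\lambda\mapsto\lambda g$ is really a one-parameter subgroup (so that the one-variable derivatives of $\psi$ reproduce the iterated left-invariant derivatives) and the observation that \emph{global} holomorphy of $f$ makes $\psi$ entire, so that the one-variable Taylor expansion remains valid out to $\lambda=1$ rather than merely on a small disc about the origin.
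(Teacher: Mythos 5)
Your proposal is correct and follows essentially the same route as the paper: both restrict $f$ to the complex line $\lambda\mapsto\lambda g$ (which is the one-parameter group $e^{\lambda g}$ since $[g,g]=0$), identify the derivatives $\psi^{(k)}(0)$ with $(\tilde{g}^k f)(e)=\langle\hat{f}_k(e),g^{\otimes k}\rangle$, and invoke the absolutely convergent Maclaurin expansion of the entire function $\psi$ at $\lambda=1$. Your explicit induction on $k$ merely spells out what the paper compresses into the identity $\frac{d^k}{dt^k}\big|_{t=0}f(e^{tg})=(\tilde{g}^kf)(e)$.
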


\begin{proof}
The function $u(z):=f(zg)$ is a holomorphic function of $z\in\mathbb{C}$.
Therefore,
\[ f(g) = u(1) = \sum_{k=0}^\infty \frac{1}{k!}u^{(k)}(0), \]
and the sum is absolutely convergent.  In fact, for all $r>0$, there exists
$C(r)<\infty$ such that $\frac{1}{k!}|u^{(k)}(0)|\le C(r)r^{-k}$ for all
$k\in\mathbb{N}$.  Finally, note that
\begin{align*}
u^{(k)}(0) &= \frac{d^k}{dt^k}\bigg|_{t=0} u(t)
	= \frac{d^k}{dt^k}\bigg|_{t=0} f(tg) \\
	&= \frac{d^k}{dt^k}\bigg|_{t=0} f(e^{tg})
	= (\tilde{g}^kf)(e)
	= \langle \hat{f}_k(e),g^{\otimes k}\rangle.
\end{align*}
\end{proof}

The following proof of the surjectivity of the Taylor map
is directly analogous to the proof of Lemma 3.6 in \cite{DGSC09-1}.

\begin{thm}
\label{t.surj}
The Taylor map $\mathcal{T}_t:\mathcal{H}_t^2(G_{CM})\rightarrow J_t^0$
is surjective.
\end{thm}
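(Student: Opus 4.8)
The plan is to prove surjectivity by first exhibiting explicit preimages for the finite rank tensors, which are dense in $J_t^0$ by Lemma \ref{l.h7.3}, and then closing the range of $\mathcal{T}_t$ using the isometry of Proposition \ref{p.Tiso} together with a pointwise bound that is uniform over the finite-dimensional projections.

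First I would treat a finite rank tensor $\alpha = \sum_{k=0}^n \alpha_k \in J_t^0$ and define the candidate preimage as the holomorphic polynomial
\[ f(g) := \sum_{k=0}^n \frac{1}{k!}\langle \alpha_k, g^{\otimes k}\rangle, \qquad g\in G_{CM}. \]
To see that $f\in\mathcal{H}(G_{CM})$, I would check that each $\alpha_k$ is continuous on $\mathfrak{g}_{CM}^{\otimes k}$: writing $\alpha=\alpha|_{T(H)}\circ\Psi$ with $\Psi$ the bounded algebra homomorphism of Lemma \ref{l.P}, the action of $\alpha_k$ is controlled by $\|\alpha_k\|_k<\infty$ and $\|\Psi\|$, so $g\mapsto\langle\alpha_k,g^{\otimes k}\rangle$ is a continuous homogeneous polynomial and hence holomorphic. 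To identify $\hat{f}(e)$, I would compute $\frac{d^k}{dt^k}\big|_0 f(tg) = \langle\alpha_k, g^{\otimes k}\rangle$ and compare with Lemma \ref{l.holo}, which gives $\frac{d^k}{dt^k}\big|_0 f(tg) = \langle\hat{f}_k(e),g^{\otimes k}\rangle$; thus $\hat{f}(e)$ and $\alpha$ agree on every diagonal tensor $g^{\otimes k}$. Since both lie in $J^0=(T(\mathfrak{g}_{CM})/J)'$ and the images of the diagonal tensors span $T(\mathfrak{g}_{CM})/J$ (by polarization and the Poincar\'e--Birkhoff--Witt theorem), I conclude $\hat{f}(e)=\alpha$. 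Proposition \ref{p.Tiso} then yields $\|f\|_{\mathcal{H}_t^2(G_{CM})}=\|\alpha\|_t<\infty$, so $f\in\mathcal{H}_t^2(G_{CM})$ is a preimage of $\alpha$ under $\mathcal{T}_t$.

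For a general $\alpha\in J_t^0$, I would choose finite rank tensors $\alpha^{(n)}\to\alpha$ in $\|\cdot\|_t$ (Lemma \ref{l.h7.3}) with polynomial preimages $f_n$ as above. By the isometry of Proposition \ref{p.Tiso}, $\{f_n\}$ is Cauchy in $\mathcal{H}_t^2(G_{CM})$. The crucial estimate is a pointwise bound uniform in the projection: for $g\in G_P$ the finite-dimensional bound (\ref{e.ptwise}) together with the inequality $\|\widehat{f|_{G_P}}(e)\|_{P,t}\le\|\hat{f}(e)\|_t$ from the proof of Proposition \ref{p.Tiso} gives $|f_n(g)-f_m(g)|\le\|f_n-f_m\|_{\mathcal{H}_t^2(G_{CM})}\,e^{d_h^{G_P}(e,g)^2/2t}$, and since the constant $K_2$ in Proposition \ref{p.length} depends only on $N$ and $\omega$ (not on $P$), we have $d_h^{G_P}(e,g)\le K_2\|g\|_{\mathfrak{g}_{CM}}$ uniformly in $P$. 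Using continuity of the $f_n$ and density of $\bigcup_P G_P$ in $G_{CM}$ (Proposition \ref{p.equivtop}), this bound extends to all $g\in G_{CM}$, so $\{f_n\}$ converges uniformly on $\|\cdot\|_{\mathfrak{g}_{CM}}$-bounded sets to a function $f:G_{CM}\to\mathbb{C}$. A locally uniform limit of holomorphic functions is holomorphic by the Banach-space theory of Section \ref{s.h5.1} (Proposition \ref{t.h5.2}), and the accompanying Cauchy estimates force the left invariant derivatives at $e$ to converge, so $\hat{f}(e)=\lim_n\hat{f_n}(e)=\lim_n\alpha^{(n)}=\alpha$. Finally Proposition \ref{p.Tiso} gives $\|f\|_{\mathcal{H}_t^2(G_{CM})}=\|\alpha\|_t<\infty$ and hence $\mathcal{T}_t f=\alpha$.

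I expect the main obstacle to be the uniform-in-$P$ pointwise bound. Everything hinges on the fact, emphasized in Proposition \ref{p.length}, that the comparison constant $K_2$ between the horizontal distance and the homogeneous norm is independent of the projection $P$; without this one could not control $|f_n(g)|$ on the dense set $\bigcup_P G_P$ by a single quantity of the form $\|f_n\|_{\mathcal{H}_t^2(G_{CM})}\,e^{c\|g\|_{\mathfrak{g}_{CM}}^2}$, and the limiting function would fail to be defined on all of $G_{CM}$. The secondary technical points, namely the continuity of each $\alpha_k$ and the passage of holomorphy and of the left invariant derivatives through the locally uniform limit, are more routine given the results already assembled.
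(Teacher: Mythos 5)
Your treatment of finite rank tensors coincides with the paper's proof: the same polynomial $f_\alpha(g)=\sum_k\frac{1}{k!}\langle\alpha_k,g^{\otimes k}\rangle$, agreement with $\alpha$ on symmetric tensors, the decomposition $T(\mathfrak{g}_{CM})=\mathcal{S}\oplus J$ from the Poincar\'e--Birkhoff--Witt theorem, and Proposition \ref{p.Tiso} to conclude $f_\alpha\in\mathcal{H}_t^2(G_{CM})$. (Your use of $\Psi$ from Lemma \ref{l.P} to verify continuity of the forms $\alpha_k$ is a sensible justification of what the paper merely asserts, namely that $f_\alpha$ is a finite sum of \emph{continuous} multilinear forms.) The genuine difference is the closing step. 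The paper stops at ``the image of $\mathcal{T}_t$ contains the finite rank tensors, hence is dense, which suffices''; implicitly this invokes the fact that the isometric image of $\mathcal{H}_t^2(G_{CM})$ is closed in $J_t^0$, equivalently that $\mathcal{H}_t^2(G_{CM})$ is complete --- a fact not established at that point of the paper (it only follows a posteriori from the restriction isomorphism, which itself depends on Corollary \ref{c.h7.4}). You instead prove closedness of the range by hand: Cauchy sequences of polynomial preimages converge locally uniformly via a pointwise bound uniform in the projection, the limit is holomorphic, derivatives pass to the limit, and Proposition \ref{p.Tiso} finishes. Your route is longer but self-contained; it is essentially the completeness argument that the paper's one-line conclusion presupposes.

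One caveat on the step you yourself flag as the crux: the inequality $d_h^{G_P}(e,g)\le K_2\|g\|_{\mathfrak{g}_{CM}}$ does not follow from Proposition \ref{p.length} as stated. That proposition bounds the horizontal distance of $G_{CM}$, and since horizontal paths in $G_P$ form a subset of the horizontal paths in $G_{CM}$, one only gets $d_h^{G_P}\ge d_h$ for free; the upper bound must be re-derived with paths confined to $G_P$, which requires $PH$ to contain vectors $A_\ell,B_\ell$ with $\{\omega(A_\ell,B_\ell)\}_{\ell=1}^N$ a basis of $\mathbf{C}$, and for an arbitrary $P\in\mathrm{Proj}(W)$ the resulting constant can degenerate. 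The fix is exactly the one the paper makes later in Lemma \ref{l.dP} (see the remark following (\ref{e.bb})): work along a fixed increasing sequence $\{P_n\}$, for which the constant may be chosen independent of $n$ once $n$ is sufficiently large, and use that $\bigcup_n G_{P_n}$ is dense in $G_{CM}$. With that adjustment your argument is complete.
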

\begin{proof}
Consider first $\alpha$ a finite rank tensor in $J_t^0$.  By Lemma
\ref{l.holo}, if $f= \mathcal{T}_t^{-1}\alpha$ exists, then it must be given
by
\[ f_\alpha(g) := \sum_{k=0}^\infty \frac{1}{k!}
	\langle \alpha_k,g^{\otimes k}\rangle,  \]
for all $g\in G_{CM}$.  This is a finite sum since $\alpha$ is of finite rank,
and thus $f_\alpha$ is a finite sum of
%since, for each $k$, the function $\langle \alpha_k,g^\otimes k\rangle/k!$
%is a continuous complex $k$-linear form in $g\in G_{CM}$
continuous complex multilinear forms in $g\in G_{CM}$.  Thus, $f_\alpha$ is
holomorphic, and, in particular, for any $h\in\mathfrak{g}_{CM}$,
\[ \langle \hat{f}_\alpha(e),h^{\otimes k}\rangle
	= \frac{d^k}{dt^k}\bigg|_{t=0} f_\alpha(th)
	= \frac{d^k}{dt^k}\bigg|_{t=0} \sum_{n=0}^\infty \frac{1}{n!}
		\langle \alpha_n,(th)^{\otimes n}\rangle
	= \langle\alpha_k,h^{\otimes k}\rangle. \]
So $\hat{f}_\alpha(e)=\alpha$ on $\mathrm{span}\{h^{\otimes
k}:h\in\mathfrak{g}_{CM},k\in\{0\}\cup\mathbb{N}\} = \{\text{symmetric
} \mathbb{R}\text{-tensors}\}=:\mathcal{S}$.  By the Poincar\'e-Birkhoff-Witt
theorem (see \cite[Lemma 3.3.3]{Varadarajan84} or \cite[Corollary
E]{HumphreysBook}),
$T(\mathfrak{g}_{CM})=\mathcal{S}\oplus J$, and, since
$\hat{f}_\alpha(e)-\alpha$ annihilates $J$, this implies that
$\hat{f}_\alpha(e)=\alpha$ on $T(\mathfrak{g}_{CM})$.

Thus, for every finite rank tensor $\alpha\in J_t^0$, the function $f_\alpha$
is holomorphic and $\hat{f}_\alpha(e)=\alpha$, and so
Proposition \ref{p.Tiso} implies that
$f_\alpha\in\mathcal{H}_t^2(G_{CM})$.  Hence, the image of $f\mapsto\hat{f}(e)$
is dense in $J^0_t$, which suffices to prove surjectivity.
\end{proof}

The following is an immediate consequence of Lemma \ref{l.h7.3}
and Theorem \ref{t.surj}.

\begin{cor}
\label{c.h7.4}
The vector space,
\[
\mathcal{H}_{t,\rm{fin}}^2(G_{CM})
	:=\left\{  f\in \mathcal{H}_t^2(G_{CM}) :
		\hat{f}(e)\in J_t^0  \text{ is finite rank}\right\}
\]
is a dense subspace of $\mathcal{H}_t^2(G_{CM})$.
\end{cor}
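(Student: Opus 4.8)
The plan is to read off the result from the two facts already in hand: that the Taylor map $\mathcal{T}_t \colon \mathcal{H}_t^2(G_{CM}) \to J_t^0$ is an isometric isomorphism, and that the finite rank tensors are dense in $J_t^0$. Proposition \ref{p.Tiso} supplies the isometry $\|\hat{f}(e)\|_t = \|f\|_{\mathcal{H}_t^2(G_{CM})}$, and Theorem \ref{t.surj} supplies surjectivity onto $J_t^0$; together these make $\mathcal{T}_t$ a linear isometric bijection, hence a homeomorphism whose inverse is again a linear isometry.

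First I would record the set-theoretic identity
\[ \mathcal{H}_{t,\rm{fin}}^2(G_{CM}) = \mathcal{T}_t^{-1}\big(\{\alpha \in J_t^0 : \alpha \text{ is finite rank}\}\big), \]
which is immediate from the definition $\mathcal{T}_t f = \hat{f}(e)$: a function $f$ lies in $\mathcal{H}_{t,\rm{fin}}^2(G_{CM})$ exactly when its image $\hat{f}(e)$ is a finite rank tensor. The finite rank tensors in $J_t^0$ form a linear subspace, since the finite rank condition of Definition \ref{d.h7.2} is stable under sums and scalar multiples and $\|\cdot\|_t$ is a norm; because $\mathcal{T}_t$ is linear, the preimage $\mathcal{H}_{t,\rm{fin}}^2(G_{CM})$ is therefore a subspace of $\mathcal{H}_t^2(G_{CM})$.

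For density I would invoke Lemma \ref{l.h7.3}. Given any $f \in \mathcal{H}_t^2(G_{CM})$, that lemma produces finite rank tensors $\alpha^{(n)} \to \hat{f}(e)$ in the $\|\cdot\|_t$ norm. Setting $f_n := \mathcal{T}_t^{-1}\alpha^{(n)} \in \mathcal{H}_{t,\rm{fin}}^2(G_{CM})$ and using the isometry gives
\[ \|f - f_n\|_{\mathcal{H}_t^2(G_{CM})} = \|\hat{f}(e) - \alpha^{(n)}\|_t \longrightarrow 0, \]
so $f$ lies in the closure of $\mathcal{H}_{t,\rm{fin}}^2(G_{CM})$. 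This proves the claim.

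There is no genuine obstacle at this stage: the entire content has been front-loaded into Lemma \ref{l.h7.3} (the real analytic input, via the Fej\'er-kernel averaging over the dilation automorphisms $\Phi_\theta$) and into the isometric-isomorphism property of $\mathcal{T}_t$. The only point meriting a line of care is the identification of $\mathcal{H}_{t,\rm{fin}}^2(G_{CM})$ as the exact $\mathcal{T}_t$-preimage of the finite rank tensors, which amounts to nothing more than unwinding the definition of the Taylor map.
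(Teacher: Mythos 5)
Your proof is correct and takes essentially the same route as the paper, which states the corollary as an immediate consequence of Lemma \ref{l.h7.3} and Theorem \ref{t.surj}, that is, by transporting the density of the finite rank tensors in $J_t^0$ back through the isometric isomorphism $\mathcal{T}_t$. Your explicit identification of $\mathcal{H}_{t,\rm{fin}}^2(G_{CM})$ as the $\mathcal{T}_t$-preimage of the finite rank tensors merely spells out the details the paper leaves implicit.
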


\section{The restriction map}
\label{s.restriction}

In this section, we construct the ``skeleton'' or ``restriction''
map between a class of square integrable holomorphic functions on
$G$ and $\mathcal{H}_t^2(G_{CM})$, and we prove that this map is
an isometric isomorphism.  Before proceeding, we must first define
the appropriate class of holomorphic functions on $G$ we wish to
deal with.

Recall from Definition \ref{d.cyl} that a function
$f:G\rightarrow\mathbb{C}$ is a {\em cylinder function} if
$f=F\circ\pi_P$ for some $P\in\mathrm{Proj}(W)$ and
$F:G_P\rightarrow\mathbb{C}$.  We say that $f$ is a {\em holomorphic
cylinder polynomial} if $F$ is a holomorphic polynomial on $G_P$.
The space of holomorphic cylinder polynomials will be denoted by
$\mathcal{P}$.  Propositions \ref{p.rho} and \ref{p.rho2} imply
that $\mathcal{P}\subset L^p(\nu_t)$ for all $p\in[1,\infty)$, so we may make
the following definition.

\begin{defn}
\label{d.htp}
For $t>0$, let $\mathcal{H}^2_t(G)$ denote the $L^2(\nu_t)$-closure of
$\mathcal{P}$.
\end{defn}

\begin{remark}
\label{r.cyl}
Let $\mathcal{A}$ denote the class of holomorphic cylinder functions on $G$.
As remarked in \cite{DG08-3}, it is natural to expect that
$\mathcal{H}_t^2(G)$ coincides with the closure of $\mathcal{A}\cap L^2(\nu_t)$ in $L^2(\nu_t)$, however, this is currently not known even in much simpler settings.
But in a sense $\mathcal{H}_t^2(G)$ is the appropriate space
to consider, as the polynomials should constitute a dense subset
of the square integrable holomorphic functions, when one can make sense
of polynomials.
\end{remark}

In Section \ref{s.dense2}, we show that the restriction of holomorphic
cylinder polynomials to $G_{CM}$ constitutes a dense subspace of
$\mathcal{H}_t^2(G_{CM})$, and with this result in hand, in
Section \ref{s.res2} we construct the restriction map as a linear
map on $\mathcal{H}^2_t(G)$.

\subsection{Another density theorem}
\label{s.dense2}
Techniques similar to those used in this section were used in \cite{DG08-3},
as well as in
Cecil \cite{Cecil08} to prove an analogous result for path groups
over stratified Lie groups.

\begin{thm}
\label{t.pcm}
For all $t>0$,
\[ \mathcal{P}_{CM} := \{p|_{G_{CM}}:p\in\mathcal{P}\} \]
is a dense subspace of $\mathcal{H}_t^2(G_{CM})$.
\end{thm}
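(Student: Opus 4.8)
The plan is to combine the Taylor isometry with the finite-dimensional projections $\pi_{P_n}$. By Corollary \ref{c.h7.4}, the functions in $\mathcal{H}_t^2(G_{CM})$ whose Taylor tensor has finite rank are dense in $\mathcal{H}_t^2(G_{CM})$, and by the proof of Theorem \ref{t.surj} each such function equals $f_\alpha$ for its (finite rank) tensor $\alpha=\hat f(e)\in J_t^0$. Since $\mathcal{T}_t$ is an isometry (Proposition \ref{p.Tiso}), it therefore suffices to show that for every finite rank $\alpha\in J_t^0$ there is a sequence $p_n\in\mathcal{P}$ with $\|\widehat{p_n|_{G_{CM}}}(e)-\alpha\|_t\to0$.

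Fix such an $\alpha$ and an increasing sequence $\{P_n\}\subset\mathrm{Proj}(W)$ as in Notation \ref{n.pn}, and set $p_n:=f_\alpha\circ\pi_{P_n}$. Since $f_\alpha$ is a holomorphic polynomial on $G_{CM}$ and $\pi_{P_n}$ is continuous, $p_n$ is a holomorphic cylinder polynomial, so $p_n|_{G_{CM}}\in\mathcal{P}_{CM}$ (and lies in $\mathcal{H}_t^2(G_{CM})$ by Corollary \ref{c.p}). Writing $\beta^{(n)}:=\widehat{p_n|_{G_{CM}}}(e)$, I would compute the horizontal entries $\langle\beta^{(n)}-\alpha,\eta_{j_1}\otimes\cdots\otimes\eta_{j_k}\rangle$ from Proposition \ref{p.lder}. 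Because $\pi_{P_n}$ is linear and fixes $e$, the linear derivatives satisfy $(p_n|_{G_{CM}})^{(j)}(e)=f_\alpha^{(j)}(e)\circ\pi_{P_n}^{\otimes j}$; moreover $\pi_{P_n}$ fixes every central element (hence every bracket $[\eta_a,\eta_b]=(0,\omega(\xi_a,\xi_b))$) and, using Proposition \ref{p.hg} for the singleton slots $\tilde\eta_{j_i}(e)=\eta_{j_i}$, it fixes $\eta_{j_i}$ when $j_i\le n$ while annihilating $\eta_{j_i}$ when $j_i>n$. Substituting into the partition expansion of Proposition \ref{p.lder}, and recalling $\hat f_\alpha(e)=\alpha$, shows that the contributions of $\beta^{(n)}$ and of $\alpha$ agree term-by-term except on those partitions $\theta\in\Lambda^k$ having a singleton index $j_i>n$, where the $\beta^{(n)}$-term drops out. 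In particular the entry vanishes whenever all $j_i\le n$, and otherwise it equals minus the sum of the surviving linear-derivative terms of $f_\alpha$.

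It then remains to estimate the tail. For each rank $k$ I would bound $\sum_{j_1,\dots,j_k}|\langle\beta^{(n)}-\alpha,\eta_{j_1}\otimes\cdots\otimes\eta_{j_k}\rangle|^2$ by a finite sum, over the partitions $\theta\in\Lambda^k$, of expressions $\sum_{j_1,\dots,j_k}|f_\alpha^{(k-\ell)}(e)((\eta_{j_k},\dots,\eta_{j_1})^{\otimes\theta}(e))|^2$ restricted to multi-indices with at least one singleton index exceeding $n$. Each unrestricted sum is finite: the bracket factors contribute $\sum_{a,b}\|\omega(\xi_a,\xi_b)\|_\mathbf{C}^2=\|\omega\|_{HS}^2<\infty$ by Proposition \ref{p.norm}, while the singleton factors are governed by the finite horizontal Hilbert--Schmidt norms of the $f_\alpha^{(j)}(e)$, which are the symmetrizations of the $\alpha_j$ and hence controlled by $\|\alpha\|_t<\infty$. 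Thus each restricted sum is the tail of a convergent series and tends to $0$ as $n\to\infty$. Finally, since $\alpha$ is finite rank, say $\alpha_k=0$ for $k>m$, the expansion of Proposition \ref{p.lder} forces both $\beta^{(n)}_k$ and $\alpha_k$ to vanish for $k>2m$; so only finitely many ranks contribute to $\|\beta^{(n)}-\alpha\|_t^2$ and the whole norm tends to $0$.

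I expect the main obstacle to be precisely this last bookkeeping: organizing the partition expansion of Proposition \ref{p.lder} so that the difference $\langle\beta^{(n)}-\alpha,\eta_{j_1}\otimes\cdots\otimes\eta_{j_k}\rangle$ is cleanly identified with the terms carrying an unpaired large index, and verifying the Hilbert--Schmidt-type bounds (via $\|\omega\|_{HS}<\infty$ and $\|\alpha\|_t<\infty$) that turn the restricted sums into genuine tails. Everything else---the reduction to finite rank tensors, the construction of the approximants $p_n$, and the vanishing of the low-index entries---is routine given the isometry of Proposition \ref{p.Tiso} and the finite-dimensional theory.
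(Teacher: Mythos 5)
Your proposal is correct and follows essentially the same route as the paper's own proof: the paper likewise reduces to the finite-rank functions of Corollary \ref{c.h7.4}, takes the very same approximants $f\circ\pi_{P_n}$ (which equal your $f_\alpha\circ\pi_{P_n}$ by Lemma \ref{l.holo}), shows they are holomorphic cylinder polynomials whose Taylor coefficients differ from $\hat f(e)$ only in partition terms carrying an unpaired index exceeding $n$ (Propositions \ref{p.h7.11} and \ref{p.h7.12}, via Proposition \ref{p.hder}), and concludes by the isometry of Proposition \ref{p.Tiso}. The only cosmetic differences are that you re-derive the derivative formula for $f\circ\pi_n$ from Proposition \ref{p.lder}, the linearity of $\pi_n$, and the fact that $\pi_n$ fixes the center, rather than citing Proposition \ref{p.hder}, and that you organize the tail estimates partition by partition, whereas the paper groups them into tails of the full entries of $\hat f_k(e)$ plus Hilbert--Schmidt bracket corrections --- the same bookkeeping at the same level of detail.
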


This result is analogous to Theorem 7.1 of \cite{DG08-3}, and as
done in that paper, Theorem \ref{t.pcm} will be proved by showing
that $\mathcal{P}_{CM}$ is dense in yet another dense subspace of
$\mathcal{H}_t^2(G_{CM})$.  In particular, Corollary \ref{c.h7.4}
implies that it suffices to show that any element of
$\mathcal{H}_{t,\rm{fin}}^2(G_{CM})$ may be approximated by elements
of $\mathcal{P}_{CM}$.  However, the fact that in our case $J_t^0$
is defined not using the full Hilbert-Schmidt norm complicates some
limiting arguments that appear in \cite{DG08-3}.

Again we recall Notation \ref{n.pn}:
let $\{\xi_j\}_{j=1}^\infty\subset H_*$ be a complex orthonormal basis of $H$
and let $\{\eta_j\}_{j=1}^\infty=\{(\xi_j,0)\}_{j=1}^\infty$.
Define $P_n \in\operatorname*{Proj}(W)$ by
\[
P_n w = \sum_{j=1}^n
	\langle w,\xi_j\rangle_H \xi_j \text{ for all } w\in W,
\]
and $\pi_n:G\rightarrow G_n = P_nW\times\mathbf{C}$ defined by
$\pi_n(w,c)=(P_nw,c)$.

We will show that for all $f\in\mathcal{H}_{t,\rm{fin}}(G_{CM})$,
$f\circ\pi_n\in\mathcal{P}$ and $f\circ\pi_n|_{G_{CM}}\rightarrow f$ in
$\mathcal{H}_t^2(G_{CM})$.  The proof of this statement is complicated by
the fact that,
for general $\omega$ and $P\in\operatorname*{Proj}(W)$,  $\pi_P
:G\rightarrow G_P\subset G_{CM}$ is not a group homomorphism.
In fact, for $g=(w,c)$ and $g'=(w',c')$,
\[ \pi_P(gg') - \pi_Pg \cdot \pi_Pg'
	= \Gamma_P(w,w') \]
where
\begin{equation}
\label{e.GammaP}
\Gamma_P(w,w')
	:= \frac{1}{2}(0,\omega(w, w') - \omega(Pw,Pw'))
	= \frac{1}{2}\left([g,g']-[\pi_Pg,\pi_Pg']\right).
\end{equation}
So unless $\omega$ is ``supported'' on the range of $P$,  $\pi_P$
is not a group homomorphism.  Note that the case where $\omega$ {\em is}
supported on a finite-dimensional space is exactly the trivial case where $L$
is ``finitely many steps from being elliptic,'' and the proof of several of
the other results included here would be greatly simplified.

The proof of the following proposition is similar to Proposition \ref{p.lder}
and is left to the reader.

\begin{prop}
\label{p.hder}
For any $P\in\mathrm{Proj}(W)$,
$g=(w,c)\in G$, $h_i=(A_i,a_i)\in\mathfrak{g}$, and
$f:G\rightarrow\mathbb{C}$ a smooth function,
\begin{equation}
\label{e.hder}
\tilde{h}_n\cdots\tilde{h}_1(f\circ\pi_P)(g)
	= \sum_{k=\lceil n/2\rceil}^n f^{(k)}(\pi_P g)
		\sum_{\theta\in\Lambda_{n-k}^n}
		(h_n,\ldots,h_1)^{\otimes\theta}_P(g),
\end{equation}
where,
for $\theta = \{ \{i_1,i_2\},\ldots,\{i_{2k-1},
i_{2k}\},\{i_{2k+1}\},\ldots,\{i_n\}\}\in\Lambda^n_k$
a partition of $\{1,\cdots,n\}$ as defined in Notation \ref{n.part},
\[
(h_n,\ldots,h_1)_P^{\otimes\theta}(g)
	:= [h_{i_1},h_{i_2}]\otimes \cdots\otimes
		[h_{i_{2k-1}},h_{i_{2k}}]
	\otimes h_{i_{2k+1}}^P(g)\otimes\cdots \otimes h_{i_n}^P(g),
\]
with
\[ h^P(g) := \left(PA, a + \frac{1}{2}\omega(w,A)\right).  \]
\end{prop}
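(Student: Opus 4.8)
The plan is to argue by induction on $n$, following the proof of Proposition~\ref{p.lder} almost verbatim; the only genuinely new ingredient is tracking how the projection $P$ enters the single-factor terms. For the base case $n=1$ I would compute directly: writing $g=(w,c)$ and $h=(A,a)$, one has $g\cdot th=(w+tA,\,c+ta+\tfrac{t}{2}\omega(w,A))$, so that
\[
\frac{d}{dt}\bigg|_0 \pi_P(g\cdot th)
	= \frac{d}{dt}\bigg|_0\left(Pw+tPA,\, c+ta+\tfrac{t}{2}\omega(w,A)\right)
	= \left(PA,\,a+\tfrac{1}{2}\omega(w,A)\right) = h^P(g).
\]
The chain rule then gives $\tilde{h}(f\circ\pi_P)(g)=f'(\pi_P g)\,h^P(g)$, which is \eqref{e.hder} for $n=1$.

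The inductive step will rest on two derivative identities, exactly parallel to those used in Proposition~\ref{p.lder}. First, differentiating the outer function along the flow $g\mapsto g\cdot th_{n+1}$ and using the computation above, the chain rule shows that $\tfrac{d}{dt}|_0 f^{(k)}(\pi_P(g\cdot th_{n+1}))$ raises the order to $f^{(k+1)}(\pi_P g)$ and adjoins the new front slot $h_{n+1}^P(g)$. Second, and crucially, for $h=(A,a)$ and $k=(B,b)$ one has
\[
\frac{d}{dt}\bigg|_0 h^P(g\cdot tk)
	= \frac{d}{dt}\bigg|_0\left(PA,\,a+\tfrac{1}{2}\omega(w+tB,A)\right)
	= \left(0,\tfrac{1}{2}\omega(B,A)\right) = \tfrac{1}{2}[k,h],
\]
which is independent of $g$ and of $P$. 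The point worth emphasizing is that the projection modifies only the $g$-independent first component ($A\mapsto PA$), whereas the $g$-dependent central drift $a+\tfrac{1}{2}\omega(w,A)$ is unchanged; consequently this second identity is identical to the one in Proposition~\ref{p.lder}, and the bracket terms carry no $P$.

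With these two facts in hand, applying the product rule to $\tilde{h}_{n+1}\tilde{h}_n\cdots\tilde{h}_1(f\circ\pi_P)$ splits each summand of the inductive hypothesis into two contributions: the derivative may hit the factor $f^{(k)}(\pi_P g)$, raising its order and adjoining a singleton $h_{n+1}^P(g)$, or it may hit one of the single factors $h_i^P(g)$, replacing it by $\tfrac{1}{2}[h_{n+1},h_i]$. In the partition bookkeeping of Notation~\ref{n.part}, these correspond precisely to the two ways of inserting the index $n+1$ into a partition $\theta\in\Lambda^n_k$: as a new singleton $\{n+1\}$ (leaving the number of pairs fixed) or by merging it with an existing singleton $\{i\}$ to form a block (increasing the pair count by one). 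Summing over all $\theta\in\Lambda^{n+1}_\ell$ then reassembles exactly formula~\eqref{e.hder} with $n+1$ in place of $n$.

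I expect the main obstacle to be purely bookkeeping, as it is for Proposition~\ref{p.lder}: one must reindex the tensor slots so that the newly created bracket $\tfrac{1}{2}[h_{n+1},h_i]$ and the adjoined singleton $h_{n+1}^P(g)$ occupy positions consistent with the ordering convention on $\theta$ (recall that $i>j$ is assumed for each pair $\{i,j\}$), and then verify that the resulting multiplicities agree with those counted by $\Lambda^{n+1}_\ell$. No new analytic input is needed beyond the two elementary derivative identities above.
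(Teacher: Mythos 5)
Your proof is correct and is exactly the argument the paper intends: the paper's own ``proof'' of Proposition \ref{p.hder} is simply the remark that it is similar to Proposition \ref{p.lder} and is left to the reader, and your induction, resting on the two identities $\frac{d}{dt}\big|_0\pi_P(g\cdot th)=h^P(g)$ and $\frac{d}{dt}\big|_0 h^P(g\cdot tk)=\frac{1}{2}[k,h]$ (the latter independent of $g$ and of $P$, since $P$ only alters the $g$-independent first component), is precisely that adaptation. One caveat: your computation correctly produces a prefactor $\frac{1}{2^{\ell}}$ on each term containing $\ell$ brackets, matching Proposition \ref{p.lder} and equations \eqref{e.hder2}--\eqref{e.hder3}, whereas the definition of $(h_n,\ldots,h_1)^{\otimes\theta}_P(g)$ in the statement of Proposition \ref{p.hder} omits this factor; that omission is a typo in the statement, not a defect in your argument.
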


Again as we did for Proposition \ref{p.lder}, let us write out
(\ref{e.hder}) for the first few $n$:
\begin{align*}
\tilde{h}_1(f\circ \pi)(g)
	&= f'(\pi g) h_1^P(g) \\
\tilde{h}_2\tilde{h}_1(f\circ \pi)(g)
	&= f''(\pi g) \left( h_2^P(g)\otimes h_1^P(g)\right)
		+ f'(\pi g) [h_2,h_1] \\
\tilde{h}_3\tilde{h}_2\tilde{h}_1(f\circ \pi)(g)
	&=f'''(\pi g) \left(h_3^P(g) \otimes h_2^P(g)\otimes
		h_1^P(g)\right) \\
	&\hspace{-2ex}+ \frac{1}{2}f''(\pi g) \left(
		[h_3,h_2]\otimes h_1^P(g)
		+  [h_3,h_1]\otimes h_2^P(g) + [h_2,h_1]\otimes h_3^P(g)\right)
\end{align*}
In particular, when $g=e$ and $h_i=(A_i,0)$, we have
$h_i^P(e)=(PA_i,0)=\pi h_i$, and the above formulae become
\begin{align}
\label{e.hder1}
\tilde{h}_1(f\circ \pi)(e) &= f'(e)\pi h_1 \\
\label{e.hder2}
\tilde{h}_2\tilde{h}_1(f\circ \pi)(e)
	&= f''(e)(\pi h_2\otimes \pi h_1)+ f'(e)\frac{1}{2}[h_2,h_1] \\
\label{e.hder3}
\tilde{h}_3\tilde{h}_2\tilde{h}_1(f\circ \pi)(e)
	&= f'''(e)(\pi h_3 \otimes \pi h_2 \otimes \pi h_1) \\
	&\notag \hspace{-2ex}+ \frac{1}{2}
		f''(e)\big([h_3,h_2]\otimes \pi h_1 + \pi h_2\otimes [h_3,h_1]
		+ \pi h_3\otimes [h_2,h_1]\big).
\end{align}

Now using Propositions \ref{p.lder} and \ref{p.hder} we may prove the
following.

\begin{prop}
\label{p.h7.11}
Fix $k\in\mathbb{N}$ and suppose that $f\in\mathcal{H}(G_{CM})$
satisfies $\|\hat{f}_k(e)\|_k<\infty$.  Then
\[
\lim_{n\rightarrow\infty}
	\left\| \hat{f}_k(e) - \left(\widehat{f\circ\pi_n}\right)_k(e)\right\|_k
	= 0.
\]
\end{prop}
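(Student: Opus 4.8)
The plan is to read $\|\cdot\|_k^2$ as a squared $\ell^2$-norm over multi-indices and to apply dominated convergence to that sum. Writing $\vec\jmath=(j_1,\dots,j_k)$,
\[
\left\|\hat f_k(e)-\left(\widehat{f\circ\pi_n}\right)_k(e)\right\|_k^2=\sum_{\vec\jmath}|D_n(\vec\jmath)|^2,\qquad
D_n(\vec\jmath):=\left\langle \hat f_k(e)-\left(\widehat{f\circ\pi_n}\right)_k(e),\ \eta_{j_1}\otimes\cdots\otimes\eta_{j_k}\right\rangle .
\]
To understand $D_n(\vec\jmath)$ I would expand both tensors using \eqref{e.lder} and \eqref{e.hder} at $g=e$. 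For a partition $\theta$ of $\{1,\dots,k\}$ into $\ell=\ell(\theta)$ pairs and $k-2\ell$ singletons, set
\[
U_\theta(\vec\jmath):=\frac{1}{2^{\ell}}\, f^{(k-\ell)}(e)\Big(\textstyle\bigotimes_{\{i,i'\}\in\theta}[\eta_{j_i},\eta_{j_{i'}}]\otimes\bigotimes_{\{s\}\in\theta}\eta_{j_s}\Big),
\]
the slots ordered as dictated by $\theta$. Then $\langle\hat f_k(e),\eta_{j_1}\otimes\cdots\otimes\eta_{j_k}\rangle=\sum_\theta U_\theta(\vec\jmath)$, while $\langle(\widehat{f\circ\pi_n})_k(e),\eta_{j_1}\otimes\cdots\otimes\eta_{j_k}\rangle$ is the same sum with each singleton factor $\eta_{j_s}$ replaced by $\pi_n\eta_{j_s}$; the central factors $[\eta_{j_i},\eta_{j_{i'}}]$ are untouched by $\pi_n$ and so agree in the two expansions.

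Two observations drive the argument. First, $D_n(\vec\jmath)=0$ as soon as $n\ge\max_i j_i$, since then every $\pi_n\eta_{j_s}=\eta_{j_s}$ and the two expansions coincide; hence $D_n(\vec\jmath)\to0$ for each fixed $\vec\jmath$. Second, because each $\pi_n\eta_{j_s}$ equals $\eta_{j_s}$ or $0$, multilinearity shows the projected version of each $U_\theta(\vec\jmath)$ is bounded in absolute value by $|U_\theta(\vec\jmath)|$, so that
\[
|D_n(\vec\jmath)|\le |\langle\hat f_k(e),\eta_{j_1}\otimes\cdots\otimes\eta_{j_k}\rangle|+\sum_\theta|U_\theta(\vec\jmath)| =: G(\vec\jmath)^{1/2}
\]
uniformly in $n$. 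By dominated convergence for the counting measure on multi-indices it then suffices to verify $\sum_{\vec\jmath}G(\vec\jmath)<\infty$; the first term contributes $\|\hat f_k(e)\|_k^2<\infty$ by hypothesis, and as there are only finitely many partitions $\theta$, everything reduces to proving $\|U_\theta\|_k^2=\sum_{\vec\jmath}|U_\theta(\vec\jmath)|^2<\infty$ for each $\theta$.

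This last finiteness is the step I expect to be the main obstacle, since $\|\cdot\|_k$ controls only horizontal directions whereas $U_\theta$ feeds $\ell$ central vectors $[\eta_{j_i},\eta_{j_{i'}}]=(0,\omega(\xi_{j_i},\xi_{j_{i'}}))$ into $f^{(k-\ell)}(e)$. I would handle it by trading central inputs for horizontal ones and pushing everything up to rank $k$. Since $\dim\mathbf C=N<\infty$ and $\{\omega(A,B):A,B\in H\}=\mathbf C$, fix $A_r,B_r\in H$ with $\{\omega(A_r,B_r)\}_{r=1}^N$ a basis of $\mathbf C$ and expand each central factor in this basis; the resulting coefficients are the numbers $\langle\omega(\xi_{j_i},\xi_{j_{i'}}),\cdot\rangle_{\mathbf C}$, whose squared sums over $\vec\jmath$ are controlled by powers of $\|\omega\|_{HS}^2<\infty$ (Proposition \ref{p.norm}). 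Next I would pass from $f^{(k-\ell)}(e)$ to left-invariant derivatives via \eqref{e.lder} — a finite, rank-triangular inversion whose bracket corrections only introduce further central factors, again bounded by $\|\omega\|_{HS}$ — and then use $\hat f(e)\in J^0$ to replace each central argument $(0,\omega(A_r,B_r))=[(A_r,0),(B_r,0)]$ by $(A_r,0)\otimes(B_r,0)-(B_r,0)\otimes(A_r,0)$, raising the rank by one each time. Each replacement turns one central slot into two horizontal slots, so the fully converted terms land exactly in $\hat f_k(e)$ evaluated on horizontal tensors, where Cauchy--Schwarz bounds their $\ell^2$-sums by a constant multiple of $\|A_r\|_H^2\|B_r\|_H^2\cdots\|\hat f_k(e)\|_k^2$; the remaining corrections are purely central pairings of the fixed bounded forms $f^{(m)}(e)$ on the finite-dimensional center, whose $\ell^2$-sums are bounded by powers of $\|\omega\|_{HS}$ alone. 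Summing the finitely many contributions yields $\|U_\theta\|_k<\infty$, which justifies the domination and completes the proof by dominated convergence. The delicate part is precisely this rank-raising reduction of the mixed central--horizontal sums to the single hypothesis $\|\hat f_k(e)\|_k<\infty$, with $\dim\mathbf C<\infty$ and the Hilbert--Schmidt bound on $\omega$ ensuring that all central contributions are summable.
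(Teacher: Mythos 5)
Your proposal is correct in outline and shares the paper's skeleton: both expand $\hat f_k(e)$ and $(\widehat{f\circ\pi_n})_k(e)$ via \eqref{e.lder} and \eqref{e.hder}, observe that each coefficient vanishes once $n\ge\max_i j_i$ (since the bracket factors are untouched by $\pi_n$), and pass to the limit by an $n$-uniform summable bound --- the paper phrases this as a tail estimate on series known to converge, which is the same mechanism as your dominated convergence over the counting measure. Where you genuinely diverge is in the summability step. The paper never proves your per-partition claim $\sum_{\vec\jmath}|U_\theta(\vec\jmath)|^2<\infty$; instead it recombines the surviving terms into full entries $\langle\hat f_k(e),\eta_{j_1}\otimes\cdots\otimes\eta_{j_k}\rangle$ (controlled by the hypothesis) plus \emph{purely central} terms, and controls the latter by operator boundedness of the Fr\'echet derivatives (Proposition \ref{t.h5.2}) together with $\|\omega\|_{HS}<\infty$ (Proposition \ref{p.norm}). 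Your route demands the strictly stronger per-$\theta$ statement, and you correctly identify that neither the hypothesis (which controls only the sum $\sum_\theta U_\theta$) nor boundedness of $f^{(k-\ell)}(e)$ (which gives no $\ell^2$-summability over free horizontal indices) yields it directly; your rank-raising argument --- expand central factors in a basis of $\mathbf{C}$, invert \eqref{e.lder} at $e$ (where brackets with central vectors vanish, so only further central factors appear), and use $\hat f(e)\in J^0$ to trade each central slot for a wedge of horizontal slots --- is sound, and the bookkeeping works: a term $\hat f_{k-\ell-q}(e)$ with $\ell+q$ central slots rises to rank exactly $k$, where Cauchy--Schwarz against $\|\hat f_k(e)\|_k$ applies. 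Two remarks on the comparison. First, your approach buys a uniform treatment of the mixed central--horizontal terms, which the paper's displayed bounds for $k\ge3$ silently absorb into permuted full entries. Second, your construction can be substantially streamlined: by \eqref{e.hk} and step-2 nilpotency, any entry with a bracket slot $[\eta_{j_a},\eta_{j_b}]$ equals a difference of two full entries at permuted multi-indices, so each $U_\theta(\vec\jmath)$ is a finite signed combination of entries $\langle\hat f_k(e),\eta_{\sigma(j_1)}\otimes\cdots\otimes\eta_{\sigma(j_k)}\rangle$; per-$\theta$ summability then follows from $\|\hat f_k(e)\|_k<\infty$ alone, with no need for the central basis $\{\omega(A_r,B_r)\}$, the finite-dimensionality of $\mathbf{C}$, or the Hilbert--Schmidt bound on $\omega$ at this point in the argument.
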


\begin{proof}
We will write out the first few cases for small $k$ before proving the
convergence for arbitrary $k$.  Consider first $k=1$.  Then Propositions \ref{p.lder} and
\ref{p.hder}, (more particularly, equations (\ref{e.lder1}) and
(\ref{e.hder1})) imply that
\begin{align*}
\|\hat{f}_1(e) - (\widehat{f\circ\pi})_1(e)\|_1^2
	&= \sum_{j=1}^\infty \left|\tilde{\eta}_jf(e) -
		\tilde{\eta}_j(f\circ\pi)(e)\right|^2 \\
	&= \sum_{j=1}^\infty \left|f'(e)\eta_j-
		f'(e) \pi \eta_j\right|^2
	= \sum_{j=n+1}^\infty \left|f'(e)\eta_j\right|^2
	\rightarrow 0
\end{align*}
as $n\rightarrow\infty$, since by hypothesis
\[\|\hat{f}_1(e)\|_1^2
	= \sum_{j=1}^\infty \left|\tilde{\eta}_jf(e)\right|^2
	= \sum_{j=1}^\infty \left|f'(e)\eta_j\right|^2 <\infty.
\]
Now, for $k=2$, equations (\ref{e.lder2}) and (\ref{e.hder2})) give
\begin{align*}
\|\hat{f}_2(e) &- (\widehat{f\circ\pi})_2(e)\|_2^2
	= \sum_{j_1,j_2=1}^\infty \left|\tilde{\eta}_{j_2}\tilde{\eta}_{j_1}f(e) -
		\tilde{\eta}_{j_2}\tilde{\eta}_{j_1}(f\circ\pi)(e)\right|^2 \\
	&= \sum_{j_1,j_2=1}^\infty \bigg|\left\{f''(e)
		(\eta_{j_1}\otimes \eta_{j_2}) +
		\frac{1}{2}f'(e)[\eta_{j_1},\eta_{j_2}] \right\}\\
	&\qquad \qquad - \left\{f''(e)( \pi \eta_{j_1}\otimes \pi \eta_{j_2}) +
		\frac{1}{2}f'(e)[\eta_{j_1},\eta_{j_2}]\right\} \bigg|^2 \\
	&= \sum_{j_1,j_2=1}^\infty \left|f''(e)
		(\eta_{j_1}\otimes \eta_{j_2} - \pi\eta_{j_1}\otimes \pi\eta_{j_2})
		\right|^2 \\
%	&\le \sum_{j_1=1}^\infty\sum_{j_2=n+1}^\infty \left|f''(e)(
%		(h_{j_1},0)\otimes (h_{j_2},0)) \right|^2
%		+ \sum_{j_1=n+1}^\infty\sum_{j_2=1}^\infty \left|f''(e)(
%		(h_{j_1},0)\otimes (h_{j_2},0)) \right|^2 \\
	&\le \sum_{j_1=1}^\infty\sum_{j_2=n+1}^\infty \left|f''(e)
		(\eta_ {j_1}\otimes \eta_{j_2}) +
		\frac{1}{2}f'(e)[\eta_{j_1},\eta_{j_2}]\right|^2 \\
	&\qquad + \sum_{j_1=n+1}^\infty\sum_{j_2=1}^\infty \left|f''(e)
		(\eta_{j_1}\otimes \eta_{j_2}) +
		\frac{1}{2}f'(e)[\eta_{j_1},\eta_{j_2}]\right|^2 \\
	&\qquad + \frac{1}{2}\sum_{j_1=1}^\infty\sum_{j_2=n+1}^\infty
		|f'(e)[\eta_{j_1},\eta_{j_2}]|^2
		+ \frac{1}{2}\sum_{j_1=n+1}^\infty\sum_{j_2=1}^\infty
		|f'(e)[\eta_{j_1},\eta_{j_2}]|^2
	\rightarrow 0,
\end{align*}
as $n\rightarrow\infty$, since
\[
\|\hat{f}_2(e)\|_2^2 = \sum_{j_1,j_2=1}^\infty \left|f''(e)
		(\eta_{j_1}\otimes \eta_{j_2}) +
		\frac{1}{2}f'(e)[\eta_{j_1},\eta_{j_2}]\right|^2 <\infty,
\]
by hypothesis, and
\[ \sum_{j_1,j_2=1}^\infty |f'(e)[\eta_{j_1},\eta_{j_2}]|^2
	\le |f'(e)|^2 \sum_{j_1,j_2=1}^\infty \|\omega(\xi_{j_1},
		\xi_{j_2})\|_\mathbf{C}^2
	= |f'(e)|^2 \|\omega\|_{HS}^2 <\infty, \]
by Proposition \ref{t.h5.2} which states that $f'(e)$ is a bounded operator
on $G_{CM}$ and Proposition \ref{p.norm} which implies that $\omega$ is
Hilbert-Schmidt.

For $k=3$, equations (\ref{e.lder3}) and (\ref{e.hder3})) give
\begin{align*}
\|&\hat{f}_3(e) - (\widehat{f\circ\pi})_3(e)\|_2^2
	= \sum_{j_1,j_2,j_3=1}^\infty
		\left|\tilde{\eta}_{j_3}\tilde{\eta}_{j_2}\tilde{\eta}_{j_1}f(e)
		- \tilde{\eta}_{j_3}\tilde{\eta}_{j_2}\tilde{\eta}_{j_1}
		(f\circ\pi)(e)\right|^2 \\
	&= \sum_{j_1,j_2,j_3=1}^\infty
		\bigg|f'''(e)(\eta_{j_3}\otimes \eta_{j_2}\otimes \eta_{j_1}
		- \pi \eta_{j_3}\otimes \pi \eta_{j_2}\otimes \pi \eta_{j_1}) \\
	&\qquad + \frac{1}{2}f''(e)([\eta_{j_3},\eta_{j_2}]\otimes \eta_{j_1}
		+ [\eta_{j_3},\eta_{j_1}] \otimes \eta_{j_2}
		+ [\eta_{j_2},\eta_{j_1}]\otimes \eta_{j_3} \\
	&\qquad\qquad - [\eta_{j_3},\eta_{j_2}]\otimes \pi \eta_{j_1} -
		[\eta_{j_3},\eta_{j_1}] \otimes \pi \eta_{j_2}-
		[\eta_{j_2},\eta_{j_1}] \otimes \pi \eta_{j_3})\bigg|^2 \\
	&\le \sum_{\ell=1}^3 \sum_{j_\ell=n+1}^\infty
		\sum_{\tiny\begin{array}{cc}j_i=1\\i\ne\ell\end{array}}^\infty
		\bigg|f'''(e)(\eta_{j_3}\otimes \eta_{j_2}\otimes \eta_{j_1}) \\
	&\qquad + \frac{1}{2}f''(e)([\eta_{j_3},\eta_{j_2}]\otimes \eta_{j_1}
		+ [\eta_{j_3},\eta_{j_1}] \otimes \eta_{j_2}
		+ [\eta_{j_2},\eta_{j_1}]\otimes \eta_{j_3}) \bigg|^2
	\rightarrow 0
\end{align*}
as $n\rightarrow\infty$, since
\begin{align*}
\|\hat{f}_3(e)\|_2^2
	&= \sum_{j_1,j_2,j_3=1}^\infty
			\bigg|f'''(e)(\eta_{j_3}\otimes \eta_{j_2}\otimes \eta_{j_1}) \\
	&\qquad + \frac{1}{2}f''(e)([\eta_{j_3},\eta_{j_2}]\otimes \eta_{j_1}
		+ [\eta_{j_3},\eta_{j_1}] \otimes \eta_{j_2}
		+ [\eta_{j_2},\eta_{j_1}]\otimes \eta_{j_3}) \bigg|^2
	< \infty,
\end{align*}
again by hypothesis.

More generally, using equations (\ref{e.lder}) and (\ref{e.hder}) with $g=e$ and
$\eta_j = (\xi_j,0)$ for $k$ odd shows that
\begin{align*}
\|&\hat{f}_k(e) - (\widehat{f\circ\pi})_k(e)\|_k^2
	\le \sum_{\ell=1}^k \sum_{j_\ell=n+1}^\infty
		\sum_{\tiny\begin{array}{cc}j_i=1\\i\ne\ell\end{array}}^\infty
		\left|\langle \hat{f}_k(e),\eta_{j_k}\otimes\cdots\otimes
		\eta_{j_1}\rangle\right|^2
	\rightarrow 0
\end{align*}
as $n\rightarrow\infty$.  Similarly, for $k$ even,
\begin{align*}
\|\hat{f}_k(e) - (\widehat{f\circ\pi})_k(e)\|_k^2
	&\le \sum_{\ell=1}^k \sum_{j_\ell=n+1}^\infty
		\sum_{\tiny\begin{array}{cc}j_i=1\\i\ne\ell\end{array}}^\infty
		\bigg\{\left|\langle \hat{f}_k(e),\eta_{j_k}\otimes\cdots\otimes
		\eta_{j_1}\rangle\right|^2 \\
	&\qquad + \frac{1}{2} \sum_{\theta\in\Lambda_{k/2}^k}
		\left|f^{(k/2)}(e) (\eta_{j_k},\ldots,
		\eta_{j_1})^{\otimes \theta} \right|^2\bigg\}
	\rightarrow 0,
\end{align*}
as $n\rightarrow\infty$,
since for $\theta= \{\{i_1,i_2\},\ldots,\{i_{k-1},i_k\}\}
\in\Lambda_{k/2}^k$, we have
\[ (\eta_{j_k},\ldots,\eta_{j_1})^{\otimes \theta}
	= [\eta_{j_{i_1}},\eta_{j_{i_2}}]\otimes\cdots\otimes
		[\eta_{j_{i_{k-1}}},\eta_{j_{i_k}}],
\]
which implies that
\begin{align*}
\sum_{j_1,\ldots,j_k=1}^\infty &
		\left|f^{(k/2)}(e) (\eta_{j_k},\ldots,\eta_{j_1})^{\otimes \theta}
		\right|^2 \\
	&\le \left|f^{(k/2)}(e)\right|^2 \sum_{j_1,\ldots,j_k=1}^\infty
		\|[\eta_{j_{i_1}},\eta_{j_{i_2}}]\|_\mathbf{C}^2\cdots
		\|[\eta_{j_{i_{k-1}}},\eta_{j_{i_k}}]\|_\mathbf{C}^2  \\
	&= \left|f^{(k/2)}(e)\right|^2 \|\omega\|_{HS}^{n}
	<\infty,
\end{align*}
again by Propositions \ref{t.h5.2} and \ref{p.norm}.
\end{proof}

The following proposition completes the proof of Theorem \ref{t.pcm}.

\begin{prop}
\label{p.h7.12}
If $f\in\mathcal{H}_{t,\rm{fin}}^2(G_{CM})$ as defined in Corollary
\ref{c.h7.4},
then $f\circ\pi_n \in \mathcal{P}$ for all $n\in\mathbb{N}$ and
$f\circ\pi_n|_{G_{CM}}\rightarrow f$ in $\mathcal{H}_t^2(G_{CM})$.
\end{prop}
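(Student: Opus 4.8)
The plan is to leverage the finite-rank hypothesis to collapse everything to the rank-by-rank convergence already secured in Proposition \ref{p.h7.11}. Write $\alpha=\hat{f}(e)$ and fix $K$ with $\alpha_k=0$ for all $k>K$. First I would dispose of the membership $f\circ\pi_n\in\mathcal{P}$. Since $\alpha$ is finite rank, Lemma \ref{l.holo} represents $f$ as the finite sum
\[ f(g)=\sum_{k=0}^K \frac{1}{k!}\langle \hat{f}_k(e),g^{\otimes k}\rangle, \]
so $f$ is a holomorphic polynomial in $g\in G_{CM}$. Composing with the continuous linear projection $\pi_n$ yields $f\circ\pi_n=(f|_{G_n})\circ\pi_n$, where $f|_{G_n}$ is a holomorphic polynomial on the finite-dimensional group $G_n$; hence $f\circ\pi_n$ is a holomorphic cylinder polynomial, i.e.\ $f\circ\pi_n\in\mathcal{P}$.

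For the convergence statement, the isometry of Proposition \ref{p.Tiso} applied to the holomorphic function $f\circ\pi_n|_{G_{CM}}-f$ reduces the claim to showing $\|\widehat{f\circ\pi_n}(e)-\hat{f}(e)\|_t\to 0$. The key observation is that these Taylor images have rank bounded \emph{uniformly in $n$}. Indeed, because $f$ is a polynomial of degree $K$, its linear derivatives satisfy $f^{(k)}\equiv 0$ for $k>K$; inspecting formula (\ref{e.hder}) at $g=e$, every summand of $\tilde{h}_m\cdots\tilde{h}_1(f\circ\pi_n)(e)$ carries a factor $f^{(k)}(e)$ with $k\ge\lceil m/2\rceil$, so the entire expression vanishes once $\lceil m/2\rceil>K$, that is, once $m>2K$. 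Thus $(\widehat{f\circ\pi_n})_m(e)=0$ for all $m>2K$ and all $n$, while $\hat{f}_m(e)=0$ for $m>K$. Consequently the Fock norm is a finite sum whose number of terms does not depend on $n$:
\[ \|\widehat{f\circ\pi_n}(e)-\hat{f}(e)\|_t^2 = \sum_{m=0}^{2K}\frac{t^m}{m!}\bigl\|(\widehat{f\circ\pi_n})_m(e)-\hat{f}_m(e)\bigr\|_m^2. \]

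Since $f\in\mathcal{H}_t^2(G_{CM})$ forces $\|\hat{f}_m(e)\|_m<\infty$ for every $m$, Proposition \ref{p.h7.11} applies to each fixed rank and gives $\|(\widehat{f\circ\pi_n})_m(e)-\hat{f}_m(e)\|_m\to 0$ for each of the finitely many $m\le 2K$ (including the ranks $K<m\le 2K$, where $\hat{f}_m(e)=0$). A finite sum of vanishing terms vanishes, so $\|\widehat{f\circ\pi_n}(e)-\hat{f}(e)\|_t\to 0$, and hence $f\circ\pi_n|_{G_{CM}}\to f$ in $\mathcal{H}_t^2(G_{CM})$. I expect the only genuinely delicate point to be the uniform rank bound via (\ref{e.hder}): one must verify that no left-invariant derivative of order exceeding $2K$ survives, for otherwise the Fock norm would involve an infinite sum and the rank-wise convergence of Proposition \ref{p.h7.11} would not by itself yield norm convergence. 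Everything else is a direct assembly of Lemma \ref{l.holo}, Proposition \ref{p.Tiso}, and Proposition \ref{p.h7.11}.
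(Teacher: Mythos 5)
Your proof is correct and follows essentially the same path as the paper's: both rest on a uniform-in-$n$ rank bound ($\le 2K$) for $\widehat{f\circ\pi_n}(e)$, which reduces the Fock norm difference to a finite sum, handled termwise by Proposition \ref{p.h7.11} and converted to $\mathcal{H}_t^2(G_{CM})$-convergence by the isometry of Proposition \ref{p.Tiso}. The only cosmetic difference is how the rank bound and the membership $f\circ\pi_n\in\mathcal{P}$ are obtained: you use Lemma \ref{l.holo} to write $f$ as a degree-$K$ polynomial and read the bound off (\ref{e.hder}), while the paper packages the same computation as the identity $\langle(\widehat{f\circ\pi_n})(e),h_k\otimes\cdots\otimes h_1\rangle=\langle\hat{f}(e),\kappa_k^n(h_k,\ldots,h_1)\rangle$ with $\kappa_k^n$ valued in $\bigoplus_{j=\lceil k/2\rceil}^k\mathfrak{g}_{CM}^{\otimes j}$.
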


\begin{proof}
Suppose $m\in\mathbb{N}$ is chosen so that $\hat{f}_k(e) = 0$
if $k>m$. Comparing equations (\ref{e.lder}) and (\ref{e.hder}), one may
determine that, for $h_1,\ldots,h_k\in\mathfrak{g}_{CM}$,
\begin{equation}
\label{e.kap}
\left\langle\left(\widehat{f\circ\pi_n}\right)(e),
		h_k\otimes\dots\otimes h_1\right\rangle
	= \left\langle \hat{f}(e),\kappa^n_k(h_k,\ldots,h_1)\right\rangle,
\end{equation}
where $\kappa_k^n$ is defined as follows: for $h_i=(A_i,a_i)$,
\[ \kappa_k^n(h_k,\ldots,h_1)
	:= \sum_{j=\lfloor k/2\rfloor}^k \sum_{\theta\in\Lambda_{k-j}^k}
		\Gamma_{P_n}^{\otimes\theta}(h_k,\cdots,h_1), \]
where, for $\theta =\{ \{i_1,i_2\},\ldots,\{i_{2\ell-1},
i_{2\ell}\},\{i_{2\ell+1}\},\ldots,\{i_k\}\}\in\Lambda^k_\ell$,
\[ \Gamma_{P_n}^{\otimes\theta} (h_k,\ldots,h_1)
	:= \Gamma_{P_n}(A_{i_1},A_{i_2})\otimes\cdots\otimes
	\Gamma_{P_n}(A_{i_{2\ell-1}},A_{i_{2\ell}})\otimes \pi h_{i_{2\ell+1}}\otimes
	\cdots\otimes\pi h_{i_k}, \]
and $\Gamma_P(A_i,A_j) = \frac{1}{2}([h_i,h_j]-[\pi h_i,\pi h_j])$
as in equation (\ref{e.GammaP}).  Alternatively, one may consult
Section 7.2 of
\cite{DG08-3} for a direct derivation of $\kappa_k^n$ and equation (\ref{e.kap})
(in this reference, our $\kappa_k^n(h_k,\ldots,h_1)$ is just $\kappa_k(e)$).

By definition,
$\kappa^n_k(h_k,\ldots,h_1)\in\bigoplus_{j=\lceil k/2\rceil}^k
\mathfrak{g}_{CM}^{\otimes j}$ and so (\ref{e.kap}) implies that
$\left\langle (\widehat{f\circ\pi_n})(e),h_k\otimes\dots\otimes
h_1\right\rangle =0$ when $k\ge2m+2$.  Therefore, $f\circ\pi_n$ restricted to
$G_n=P_nH\times\mathbf{C}$ is a holomorphic polynomial, and, since
$f\circ\pi_n=(f\circ\pi_n)|_{G_n}\circ\pi_n$,  it follows
that $f\circ\pi_n\in\mathcal{P}$.

Moreover,
\[
\lim_{n\rightarrow\infty}
		\left\| \hat{f}(e)-\left(\widehat {f\circ\pi_n}\right)(e)
		\right\|_t^2
	= \lim_{n\rightarrow\infty}\sum_{k=0}
		^{2m+2}\frac{t^k}{k!}\left\| \hat{f}_k(e)
		- \left(\widehat{f\circ\pi_n}\right)_k(e)\right\|_k^2 = 0,
\]
since Proposition \ref{p.h7.11} implies that
$\lim_{n\rightarrow\infty}
	\left\| \hat{f}_k(e)-\left(\widehat{f\circ\pi_n}\right)_k(e)\right\|_k=0$
for each $k$.  Thus, by Proposition \ref{p.Tiso},
\[ \lim_{n\rightarrow\infty}
	 \left\| f-f\circ\pi_n\right\|_{\mathcal{H}_t^2(G_{CM})}
	=\lim_{n\rightarrow\infty}
		\left\| \hat{f}(e)-\left(\widehat {f\circ\pi_n}\right)(e)
		\right\|_t=0.  \]
\end{proof}

\subsection{Construction and proof of restriction isomorphism}
\label{s.res2}

Before we construct the restriction map, we require some preliminary
estimates.  Again, we let
$\{\eta_j\}_{j=1}^\infty=\{(\xi_j,0)\}_{j=1}^\infty\subset
H_*\times\{0\}$, $\{P_n\}_{n=1}^\infty\subset\mathrm{Proj}(W)$, and
$\pi_n:G\rightarrow G_n$ be as in Notation \ref{n.pn}.  Also, for
$f:G\rightarrow\mathbb{C}$ or $f:G_{CM}\rightarrow\mathbb{C}$, let
\[ \|f\|^2_{L^2(\nu_t^n)} := \|f|_{G_n}\|^2_{L^2(\nu_t^n)} = \mathbb{E}|f(g_t^n)|^2, \]
where $\{g_t^n\}_{t\ge0}\subset G_n\subset G_{CM}\subset G$
is a Brownian motion on $G_n$ as in Proposition \ref{p.bmapprox}.

First we show that these norms are increasing in $n$ (for sufficiently large
$n$).  A similar result was proved in \cite[Lemma 4.1]{Gordina00-2}.

\begin{lem}
\label{l.inc}
Suppose $f:G\rightarrow\mathbb{C}$ is a continuous function such that
$f|_{G_n}\in\mathcal{H}(G_n)$ for all $n\in\mathbb{N}$.  Then
$\|f\|_{L^2(\nu_t^n)}\le \|f\|_{L^2(\nu_t^{n+1})}$ for all  large enough $n\in\mathbb{N}$.
\end{lem}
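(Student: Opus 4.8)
The plan is to realize both $\|f\|_{L^2(\nu_t^n)}^2$ and $\|f\|_{L^2(\nu_t^{n+1})}^2$ as values at the identity of heat semigroups applied to $|f|^2$, and to compare the two semigroups by a Duhamel (interpolation) argument, with the positivity of the difference coming from the plurisubharmonicity of $|f|^2$ in the single extra complex direction $\eta_{n+1}$.

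Fix $n$ large enough that $P_n,P_{n+1}\in\mathrm{Proj}(W)$, so that by Lemma \ref{l.4.8} the measures $\nu_t^n$ and $\nu_t^{n+1}$ have smooth densities and the associated diffusions are genuine subelliptic diffusions on the finite-dimensional groups $G_n\subset G_{n+1}$. For $\tau\ge0$ and $F$ a suitable function on $G_{n+1}$, set $P_\tau^n F(g):=\mathbb{E}[F(g\,g_\tau^n)]$ and $P_\tau^{n+1}F(g):=\mathbb{E}[F(g\,g_\tau^{n+1})]$, so that $\|f\|_{L^2(\nu_t^n)}^2=(P_t^n|f|^2)(e)$ and $\|f\|_{L^2(\nu_t^{n+1})}^2=(P_t^{n+1}|f|^2)(e)$. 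By Proposition \ref{p.L} applied on $G_{n+1}$, these are the heat semigroups with generators $\tfrac14 L^n$ and $\tfrac14 L^{n+1}$, where $L^{n+1}=L^n+L_{n+1}$ and $L_{n+1}:=\tilde\eta_{n+1}^2+\widetilde{i\eta}_{n+1}^2$; both are positivity preserving, being integrals against the probability measures $\nu_\tau^n,\nu_\tau^{n+1}$. Differentiating the interpolation $s\mapsto P_{t-s}^n P_s^{n+1}F$ in $s$ and integrating gives the Duhamel identity
\[ P_t^{n+1}F - P_t^n F = \frac14\int_0^t P_{t-s}^n\big(L_{n+1}P_s^{n+1}F\big)\,ds. \]

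The crux is to show $L_{n+1}(P_s^{n+1}|f|^2)\ge0$ pointwise. For a fixed realization of the path, right translation $R_{g_s^{n+1}}$ is holomorphic on $G_{n+1}$ (this follows from the $\mathbb{C}$-bilinearity of $\omega$ exactly as in Proposition \ref{l.h5.4} for left translations), so $g\mapsto f(g\,g_s^{n+1})$ is holomorphic and $|f(\cdot\,g_s^{n+1})|^2=|f\circ R_{g_s^{n+1}}|^2$. The finite-dimensional analogue of Proposition \ref{p.h5.7} then yields, for each realization,
\[ L_{n+1}\,\big|f\circ R_{g_s^{n+1}}\big|^2 = 4\,\big|\tilde\eta_{n+1}\big(f\circ R_{g_s^{n+1}}\big)\big|^2 \ge 0. \]
Interchanging the differential operator $L_{n+1}$ (in $g$) with the expectation gives $L_{n+1}(P_s^{n+1}|f|^2)(g)=4\,\mathbb{E}\big|\tilde\eta_{n+1}(f\circ R_{g_s^{n+1}})(g)\big|^2\ge0$. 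Since $P_{t-s}^n$ is positivity preserving, the integrand in the Duhamel identity is nonnegative, so $P_t^{n+1}|f|^2\ge P_t^n|f|^2$ pointwise; evaluating at $g=e$ yields $\|f\|_{L^2(\nu_t^n)}\le\|f\|_{L^2(\nu_t^{n+1})}$.

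The main obstacle is purely technical: justifying the differentiation of the semigroups under the integral in the Duhamel step and the interchange of $L_{n+1}$ with $\mathbb{E}$, given that $f$ is only assumed continuous on $G$ with holomorphic restrictions. Both are handled by working entirely within the finite-dimensional groups $G_{n+1}$, where the heat kernel is smooth by Lemma \ref{l.4.8} and where the holomorphy of $f|_{G_{n+1}}$ together with the pointwise bound of Theorem \ref{t.fdt} and the subelliptic Fernique estimate of Proposition \ref{p.rho} provide the local uniform integrability of $|f|^2$ and its left-invariant derivatives needed to differentiate under the expectation. If $\|f\|_{L^2(\nu_t^{n+1})}=\infty$ there is nothing to prove, so one may assume the relevant quantities are finite.
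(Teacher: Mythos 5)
Your argument is correct in substance, but it takes a genuinely different route from the paper's. The paper's proof is essentially one line given the finite-dimensional Taylor isometry of Theorem \ref{t.fdt}: for $n$ large enough that $[P_nW,P_nW]=\mathbf{C}$, it writes
\[
\|f\|_{L^2(\nu_t^n)}^2=\|\hat f(e)\|_{n,t}^2
=\sum_{k=0}^\infty\frac{t^k}{k!}\sum_{j_1,\dots,j_k=1}^n
\left|\tilde\eta_{j_1}\cdots\tilde\eta_{j_k}f(e)\right|^2,
\]
and monotonicity in $n$ is then immediate, since passing from $n$ to $n+1$ only adds nonnegative terms to the inner sums. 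Your Duhamel interpolation $s\mapsto P^n_{t-s}P^{n+1}_s|f|^2$, combined with the identity $L_{n+1}|u|^2=4|\tilde\eta_{n+1}u|^2$ for holomorphic $u$ (the finite-dimensional analogue of Proposition \ref{p.h5.7}, applied to $u=f\circ R_{g_s^{n+1}}$, which is indeed holomorphic because right translation is complex-affine) and the positivity of $P^n_{t-s}$, is a valid alternative: it exhibits the difference of the two squared norms as an explicit nonnegative integral and isolates exactly which hypothesis drives the inequality, namely holomorphy in the single added complex direction. What it costs is the analytic overhead you relegate to the final paragraph: justifying $\partial_s P_s^{n+1}|f|^2=\tfrac14 L^{n+1}P_s^{n+1}|f|^2$, the commutation of $L^n$ with $P^n_{t-s}$, and the interchange of $L_{n+1}$ with $\mathbb{E}$, for the unbounded and merely $\nu_t^{n+1}$-integrable function $|f|^2$, requires Gaussian-type upper bounds on the subelliptic heat kernels $p_s^{n+1}$ and their derivatives (or the derivative estimates of \cite{DGSC09-2}); this is heavier than your sketch acknowledges, although it is standard on finite-dimensional stratified groups, so the gaps are fillable rather than fatal. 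Finally, note that Theorem \ref{t.fdt}, which you invoke only for auxiliary pointwise bounds, already contains the whole lemma: once $\|f\|_{L^2(\nu_t^n)}$ is identified with the Fock-space norm $\|\hat f(e)\|_{n,t}$, no semigroup comparison is needed at all.
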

\begin{proof}
For each $n\in\mathbb{N}$, let $D_n=D_{P_n}^k$ where $D_{P_n}^k$ is as defined
in Notation \ref{n.ffhat}.  By the Taylor isomorphism for subelliptic
heat kernels on finite dimension Lie groups stated in Theorem \ref{t.fdt},
\[ \|f\|_{L^2(\nu_t^n)} = \|\hat{f}(e)\|_{n,t}, \]
where we recall that
\[ \|\hat{f}(e)\|_{n,t}^2
    = \sum_{k=0}^\infty \frac{t^k}{k!} \|(D_n^kf(e)\|^2_{n,k}, \]
for all $n$ sufficiently large that $[P_nW,P_nW]=\mathbf{C}$.
Observing that, for each such $n\in\mathbb{N}$ and $k\in\{0\}\cup\mathbb{N}$,
\begin{align*}
\|(D_n^kf)(e)\|^2_{n,k}
    &= \sum_{j_1,\dots,j_k=1}^n |\langle
        (D_n^kf)(e), \eta_{j_1}\otimes\cdots\otimes\eta_{j_k}\rangle|^2 \\
    &= \sum_{j_1,\dots,j_k=1}^n
        |\tilde{\eta}_{j_1}\cdots\tilde{\eta}_{j_k}f(e)|^2
    \le \sum_{j_1,\dots,j_k=1}^{n+1}
        |\tilde{\eta}_{j_1}\cdots\tilde{\eta}_{j_k}f(e)|^2 \\
    &= \sum_{j_1,\dots,j_k=1}^{n+1} |\langle
        (D_{n+1}^kf)(e), \eta_{j_1}\otimes\cdots\otimes\eta_{j_k}\rangle|^2
    = \|(D_{n+1}^kf)(e)\|^2_{n+1,k},
\end{align*}
completes the proof.
\end{proof}

\begin{lem}
\label{l.2}
For any continuous function $f:G\rightarrow\mathbb{C}$
such that $f|_{G_{CM}}\in\mathcal{H}(G_{CM})$,
\[ \|f\|_{L^2(\nu_t)} \le \|f|_{G_{CM}}\|_{\mathcal{H}_t^2(G_{CM})}. \]
\end{lem}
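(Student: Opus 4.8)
The plan is to dominate $\|f\|_{L^2(\nu_t)}$ by the finite-dimensional norms $\|f\|_{L^2(\nu_t^n)}$, each of which is at most the right-hand side directly from the definition of $\|\cdot\|_{\mathcal{H}_t^2(G_{CM})}$ as a supremum, and then to pass to the limit using Fatou's lemma, which conveniently produces the inequality in exactly the one-sided direction we need. If the right-hand side is infinite there is nothing to prove, so I would assume it finite throughout.

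First I would recall that, for the increasing sequence of projections $\{P_n\}$ of Notation \ref{n.pn}, Proposition \ref{p.bmapprox} gives
\[ \lim_{n\rightarrow\infty} \mathbb{E}\left[\sup_{\tau\le t}\|g_\tau^n - g_\tau\|_{\mathfrak{g}}^2\right] = 0, \]
so in particular $g_t^n\rightarrow g_t$ in $L^2$ with respect to $\|\cdot\|_{\mathfrak{g}}$, and hence in probability. Passing to a subsequence $\{n_k\}$, I may assume that $g_t^{n_k}\rightarrow g_t$ almost surely in the homogeneous norm. Since $f:G\rightarrow\mathbb{C}$ is continuous in the $\|\cdot\|_{\mathfrak{g}}$-topology, it follows that $f(g_t^{n_k})\rightarrow f(g_t)$ almost surely, and therefore $|f(g_t^{n_k})|^2\rightarrow|f(g_t)|^2$ almost surely.

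Next I would apply Fatou's lemma to the nonnegative random variables $|f(g_t^{n_k})|^2$ to obtain
\[ \|f\|_{L^2(\nu_t)}^2 = \mathbb{E}|f(g_t)|^2 \le \liminf_{k\rightarrow\infty}\mathbb{E}|f(g_t^{n_k})|^2 = \liminf_{k\rightarrow\infty}\|f\|_{L^2(\nu_t^{n_k})}^2. \]
Since each $g_t^{n_k}$ takes values in $G_{P_{n_k}}\subset G_{CM}$ with $P_{n_k}\in\mathrm{Proj}(W)$ for all large $k$, the definition of $\|\cdot\|_{\mathcal{H}_t^2(G_{CM})}$ as a supremum over $P\in\mathrm{Proj}(W)$ yields
\[ \|f\|_{L^2(\nu_t^{n_k})} = \|f|_{G_{P_{n_k}}}\|_{L^2(\nu_t^{P_{n_k}})} \le \|f|_{G_{CM}}\|_{\mathcal{H}_t^2(G_{CM})} \]
for all large $k$. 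Combining the two displays and taking square roots completes the proof.

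The argument is essentially routine; the only point requiring care is the passage to a subsequence along which $g_t^{n_k}\rightarrow g_t$ almost surely, which is what lets me invoke the pointwise continuity of $f$ rather than any quantitative control. I note that Lemma \ref{l.inc} could be used to replace the $\liminf$ by a genuine increasing limit, but it is not needed here, since the uniform bound $\|f\|_{L^2(\nu_t^{n})}\le\|f|_{G_{CM}}\|_{\mathcal{H}_t^2(G_{CM})}$ already holds for each large $n$ and Fatou supplies the correct direction of the inequality.
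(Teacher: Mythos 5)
Your proof is correct and follows essentially the same route as the paper's: Proposition \ref{p.bmapprox} gives $g_t^n\rightarrow g_t$ almost surely along a subsequence, continuity of $f$ plus Fatou's lemma yields $\|f\|_{L^2(\nu_t)}\le\liminf_k\|f\|_{L^2(\nu_t^{n_k})}$, and each finite-dimensional norm is dominated by the supremum defining $\|f|_{G_{CM}}\|_{\mathcal{H}_t^2(G_{CM})}$. The paper states this argument in compressed form; you have merely spelled out the intermediate steps (convergence in probability, subsequence extraction, pointwise convergence of $|f(g_t^{n_k})|^2$) explicitly.
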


\begin{proof}
First, note that, if
$\{P_n\}_{n=1}^\infty\subset\mathrm{Proj}(W)$ such that $P_n|_H\uparrow I_H$,
then Proposition \ref{p.bmapprox} implies that (passing to a subsequence if
necessary) $g_t^n\rightarrow g_t$ almost surely. Thus,
\[ \|f\|_{L^2(\nu_t)} \le \sup_n \|f\|_{L^2(\nu_t^n)}
    \le \|f|_{G_{CM}}\|_{\mathcal{H}_t^2(G_{CM})}, \]
where the first inequality holds by Fatou's lemma and the second by the
definition of $\|\cdot\|_{\mathcal{H}_t^2(G_{CM})}$.
\end{proof}

\begin{remark}
Of course this lemma holds for any $p\in[1,\infty)$, for
$\mathcal{H}^p_t(G)$ defined analogously to $\mathcal{H}^2_t(G)$ in
Definition \ref{d.htp}.
\end{remark}

\begin{cor}
\label{c.siso}
Let $\delta>0$ be
as in Proposition \ref{p.rho}, and suppose that
$f:G\rightarrow\mathbb{C}$ is a continuous function such that, for some
$\varepsilon\in(0,\delta)$,
\[ |f(g)|\le C e^{\varepsilon\|g\|_\mathfrak{g}^2/2t}, \]
for all $g\in G$.  Then
\[ \|f\|_{L^2(\nu_t^n)}\uparrow \|f\|_{L^2(\nu_t)}. \]
(In particular, this implies that
$\|f\|_{L^2(\nu_t^P)} \le \|f\|_{L^2(\nu_t)}$
for any $P\in\rm Proj(W)$.)
Also, if $f|_{G_{CM}}\in\mathcal{H}(G_{CM})$, then
\begin{equation}
\label{e.siso}
\|f\|_{L^2(\nu_t)} = \|f|_{G_{CM}}\|_{\mathcal{H}_t^2(G_{CM})}.
\end{equation}
\end{cor}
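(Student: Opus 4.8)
The plan is to separate the statement into three pieces: the convergence $\|f\|_{L^2(\nu_t^n)}\to\|f\|_{L^2(\nu_t)}$, its upgrade to the monotone form together with the pointwise bound $\|f\|_{L^2(\nu_t^P)}\le\|f\|_{L^2(\nu_t)}$ for an arbitrary $P\in\mathrm{Proj}(W)$, and finally the identity \eqref{e.siso}.

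First I would note that the hypothesis $|f(g)|\le Ce^{\varepsilon\|g\|_\mathfrak{g}^2/2t}$ is exactly the $p=2$ instance of the growth condition in Proposition \ref{p.rho2}. Applying that proposition with $p=2$ gives $f\in L^2(\nu_t)$ and, taking $h=e$ in \eqref{e.h2}, that $\lim_{n\to\infty}\mathbb{E}|f(g_t^n)-f(g_t)|^2=0$. Since convergence in $L^2(\mathbb{P})$ forces convergence of the $L^2$-norms, and $\|f\|_{L^2(\nu_t^n)}^2=\mathbb{E}|f(g_t^n)|^2$ while $\|f\|_{L^2(\nu_t)}^2=\mathbb{E}|f(g_t)|^2$, this already yields $\|f\|_{L^2(\nu_t^n)}\to\|f\|_{L^2(\nu_t)}$; finiteness of all the norms involved is guaranteed by the subelliptic Fernique estimate of Proposition \ref{p.rho}. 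This step uses only continuity and the growth bound.

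To obtain the monotone character of the convergence and to extend the bound to an arbitrary $P$, I would invoke Lemma \ref{l.inc}. Since each $G_n=P_nW\times\mathbf{C}$ lies inside $G_{CM}=H\times\mathbf{C}$, the assumption $f|_{G_{CM}}\in\mathcal{H}(G_{CM})$ entails $f|_{G_n}\in\mathcal{H}(G_n)$ for every $n$, so Lemma \ref{l.inc} applies and gives $\|f\|_{L^2(\nu_t^n)}\le\|f\|_{L^2(\nu_t^{n+1})}$ for large $n$. Together with the limit from the previous step this produces $\|f\|_{L^2(\nu_t^n)}\uparrow\|f\|_{L^2(\nu_t)}$. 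For a general $P\in\mathrm{Proj}(W)$, I would extend an orthonormal basis of $PH$ (chosen inside $H_*$) to one of $H$, thereby embedding $P$ as the first term $P_1=P$ of an increasing sequence $\{P_n\}$ with $P_nH\supseteq PH$ and $P_n|_H\uparrow I_H$; every such $P_n$ then satisfies H\"ormander's condition, Lemma \ref{l.inc} gives $\|f\|_{L^2(\nu_t^P)}\le\|f\|_{L^2(\nu_t^{P_n})}$, and letting $n\to\infty$ yields $\|f\|_{L^2(\nu_t^P)}\le\|f\|_{L^2(\nu_t)}$.

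Finally, \eqref{e.siso} splits into two inequalities. The bound $\|f\|_{L^2(\nu_t)}\le\|f|_{G_{CM}}\|_{\mathcal{H}_t^2(G_{CM})}$ is precisely Lemma \ref{l.2}. The reverse uses the definition $\|f|_{G_{CM}}\|_{\mathcal{H}_t^2(G_{CM})}=\sup_{P\in\mathrm{Proj}(W)}\|f\|_{L^2(\nu_t^P)}$ together with the pointwise bound just established: each term in the supremum is dominated by $\|f\|_{L^2(\nu_t)}$, so the supremum is as well. I expect the main obstacle to be the monotonicity and per-$P$ bound, since these genuinely depend on the holomorphic structure through Lemma \ref{l.inc} — pure continuity together with the growth bound delivers only the convergence, not the ordering, so the monotone claim is really available only once $f|_{G_{CM}}\in\mathcal{H}(G_{CM})$ — and one must take care to dominate an arbitrary $P$ by an increasing sequence whose projected groups all satisfy H\"ormander's condition, so that the finite-dimensional Taylor isometry underlying Lemma \ref{l.inc} is in force.
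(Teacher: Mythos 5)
Your proof is correct and takes essentially the same route as the paper: Proposition \ref{p.rho2} with $h=e$ gives the convergence $\|f\|_{L^2(\nu_t^n)}\to\|f\|_{L^2(\nu_t)}$, Lemma \ref{l.inc} gives the monotonicity, and Lemma \ref{l.2} together with the per-$P$ bound yields \eqref{e.siso}. The extra details you supply — embedding an arbitrary $P\in\mathrm{Proj}(W)$ as the first term of an increasing sequence all of whose members satisfy H\"ormander's condition, and observing that the increasing character (unlike the bare convergence) genuinely requires the holomorphy hypothesis via Lemma \ref{l.inc} — are points the paper leaves implicit, and you handle them correctly.
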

\begin{proof}
First, Lemma \ref{l.inc} implies that
$\{\|f\|_{L^2(\nu_t^n)}\}_{n=1}^\infty$ is an increasing sequence.
Proposition \ref{p.rho2} implies that $f\in L^2(\nu_t)$, and
taking $h=e$ in equation (\ref{e.h1}) or equation (\ref{e.h2}) shows
that the sequence must be increasing to $\|f\|_{L^2(\nu_t)}$.
This combined with Lemma \ref{l.2} gives (\ref{e.siso}).
\end{proof}

\begin{lem}
\label{l.dP}
Suppose $f:G\rightarrow\mathbb{C}$ is a continuous function such that
$f|_{G_n}\in\mathcal{H}L^2(\nu_t^n)$ for all $n\in\mathbb{N}$.  Then, for all
$g\in G_{CM}$,
\[ |f(g)| \le \|f\|_{L^2(\nu_t)}e^{d_h(e,g)^2/2t}. \]
\end{lem}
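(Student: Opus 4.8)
The plan is to reduce to the finite-dimensional pointwise bound of Theorem \ref{t.fdt} on the approximating subgroups $G_n$ and then pass to the limit. Fix $g=(A,a)\in G_{CM}$; we may assume $\|f\|_{L^2(\nu_t)}<\infty$, as otherwise the asserted inequality is trivial. Let $\{P_n\}$ be the increasing projections of Notation \ref{n.pn}, so that $\pi_n g=(P_nA,a)\in G_n$ and $\pi_n g\to g$ in $\|\cdot\|_{\mathfrak{g}_{CM}}$ (hence in $\|\cdot\|_{\mathfrak{g}}$ by \eqref{e.wh}). For every $n$ large enough that $[P_nW,P_nW]=\mathbf{C}$, the hypothesis $f|_{G_n}\in\mathcal{H}L^2(\nu_t^n)$ lets me apply Theorem \ref{t.fdt} at the point $\pi_n g\in G_n$:
\[
|f(\pi_n g)|\le \|f\|_{L^2(\nu_t^n)}\,e^{d_h^{G_n}(e,\pi_n g)^2/2t},
\]
where $d_h^{G_n}$ denotes the horizontal distance on $G_n$. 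Since $f$ is continuous and $\pi_n g\to g$, the left-hand side converges to $|f(g)|$, so it remains to control the two factors on the right.

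For the norm factor I would use that $\{\|f\|_{L^2(\nu_t^n)}\}$ is nondecreasing (Lemma \ref{l.inc}) with supremum $\|f\|_{L^2(\nu_t)}$; in particular $\|f\|_{L^2(\nu_t^n)}\le\|f\|_{L^2(\nu_t)}$ for every $n$. This is precisely the ``in particular'' consequence of Corollary \ref{c.siso}, and it is what makes the increasing norms and the decreasing distances balance in the limit.

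The substantive point is the distance factor, where I must show
\[
\limsup_{n\to\infty} d_h^{G_n}(e,\pi_n g)\le d_h(e,g).
\]
The opposite bound is automatic ($G_n\subset G_{CM}$ forces $d_h^{G_n}\ge d_h$), so only the upper estimate is needed, and it is exactly the delicate ``projection of horizontal paths'' issue flagged in the introduction. Given $\varepsilon>0$, choose a horizontal path $\sigma=(A(\cdot),a(\cdot))$ in $G_{CM}$ from $e$ to $g$ with $\ell(\sigma)=\int_0^1\|\dot A(s)\|_H\,ds\le d_h(e,g)+\varepsilon$. Replace the horizontal component by $P_nA(\cdot)$ and take its horizontal lift $\tilde\sigma$ in $G_n$; by Remark \ref{r.horiz} this is horizontal with $\ell(\tilde\sigma)=\int_0^1\|P_n\dot A(s)\|_H\,ds\le\ell(\sigma)$ and terminal point $(P_nA,c_n)$, where $c_n=\tfrac12\int_0^1\omega(P_nA(s),P_n\dot A(s))\,ds$. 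Since $P_nA\to A$ and $P_n\dot A\to\dot A$ in $L^2([0,1];H)$ and $\omega$ is bounded on $H\times H$, one has $c_n\to\tfrac12\int_0^1\omega(A,\dot A)=a$, so $\|a-c_n\|_\mathbf{C}\to0$. I then append a ``vertical'' correction path in $G_n$ from $(P_nA,c_n)$ to $\pi_n g=(P_nA,a)$; by left-invariance of $d_h^{G_n}$ and the estimate \eqref{e.12.6} applied inside $G_n$ (whose constant may be taken independent of $n$ once the fixed pairs $A_\ell,B_\ell$ realizing a basis of $\mathbf{C}$ are chosen in $\bigcup_m P_mH$, hence lie in $P_nH$ for all large $n$), this correction has length at most $C\sqrt{\|a-c_n\|_\mathbf{C}}\to0$. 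Hence $d_h^{G_n}(e,\pi_n g)\le\ell(\sigma)+C\sqrt{\|a-c_n\|_\mathbf{C}}$, and letting $n\to\infty$ and then $\varepsilon\to0$ yields the claimed limsup bound.

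Combining the three ingredients gives
\[
|f(g)|=\lim_{n\to\infty}|f(\pi_n g)|\le \|f\|_{L^2(\nu_t)}\,e^{d_h(e,g)^2/2t}.
\]
The main obstacle is the distance step: producing horizontal approximants in $G_n$ to $\pi_n g$ whose lengths converge to $d_h(e,g)$ requires both the horizontal lift of the projected path and a center-correction loop of vanishing length carrying a constant uniform in $n$. The uniformity of that constant is exactly where $N=\dim(\mathbf{C})<\infty$ enters, through the construction behind \eqref{e.12.6}.
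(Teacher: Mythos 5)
Your proposal is correct and takes essentially the same approach as the paper: both rest on the finite-dimensional pointwise bound of Theorem \ref{t.fdt}, the norm comparison $\|f\|_{L^2(\nu_t^n)}\le\|f\|_{L^2(\nu_t)}$ via Corollary \ref{c.siso}, and the projection of a (near-)optimal horizontal path with a vertical center-correction of cost $C\sqrt{\|\cdot\|_{\mathbf{C}}}$, where the constant in \eqref{e.12.6} is taken uniform in $n$ exactly as the paper itself remarks. The only difference is organizational: the paper applies the finite-dimensional bound at $g$ itself for $g\in G_m\subset G_n$ and then extends to all of $G_{CM}$ at the end by density of $\cup_P G_P$ and continuity of both sides (Proposition \ref{p.equivtop}), whereas you apply it at $\pi_n g$ for arbitrary $g\in G_{CM}$ and invoke continuity of $f$ along $\pi_n g\to g$ --- an equivalent rearrangement of the same argument.
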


\begin{proof}
Let $g=(w,c)\in G_m$, and consider an arbitrary horizontal path
$\sigma:[0,1]\rightarrow G_{CM}$ such that $\sigma(0)=e$ and
$\sigma(1)=g$.  Recall that, by Remark \ref{r.horiz}, $\sigma$ must have the
form
\[ \sigma(t) = \left(A(t),\frac{1}{2}\int_0^t \omega(A(s),\dot{A}(s))\,ds
	\right). \]
For $n\ge m$, consider the ``projected'' horizontal paths
$\sigma_n:[0,1]\rightarrow G_n$ given by
\[ \sigma_n(t)
	= (A_n(t),a_n(t))
	:= \left(P_nA(t),\frac{1}{2}\int_0^t \omega(P_nA(s),P_n\dot{A}(s))
		\,ds\right). \]
Note that $A_n(1)=P_nA(1)=P_nw=w$, and let
\[ \varepsilon_n
	:= c- a_n(1)
	= c-\frac{1}{2}\int_0^1 \omega(P_nA(s),P_n\dot{A}(s))\,ds\in\mathbf{C}. \]
Then, for $d_n$ the horizontal distance in $G_n$,
\begin{align}
\notag
d_n(e,g) &= d_n(e,(w,c))
	= d_n(e,(w,a_n(1)+\varepsilon_n))
	= d_n(e,(w,a_n(1))\cdot(0,\varepsilon_n)) \\
	&\notag
	\le d_n(e,(w,a_n(1))) + d_n(e,(0,\varepsilon_n)) \\
	&\label{e.bb}
\le \ell(\sigma_n) + C\sqrt{\|\varepsilon_n\|_\mathbf{C}},
\end{align}
where the first inequality holds by (\ref{e.12.2}) and the second inequality
holds by (\ref{e.12.6}), with constant $C=C(N,\omega)$. Note that \eqref{e.12.5} technically gives only a bound for $d_{h}$ on $G_{CM}$; however, it is clear from the proof of this bound that one may find a constant $C$ so that \eqref{e.12.6} holds for all sufficiently large $n$ with the constant $C$ not depending on $n$.

Now consider a continuous function $f:G\rightarrow\mathbb{C}$ such that
$f|_{G_n}\in\mathcal{H}L^2(\nu_t^n)$ for all $n\in\mathbb{N}$.
For $n\ge m$ , $g\in G_m\subset G_n$.  Then, for $n$ sufficiently large that
$[P_nW,P_nW]=\mathbf{C}$,
Theorem \ref{t.fdt} (in particular (\ref{e.ptwise})), Corollary \ref{c.siso},
and (\ref{e.bb}) imply that
\begin{equation}
\label{e.aa}
|f(g)|\le \|f\|_{L^2(\nu_t^n)}e^{d_n(e,g)^2/2t}
	\le \|f\|_{L^2(\nu_t)}
		e^{(\ell(\sigma_n)+C\sqrt{\|\varepsilon_n\|_\mathbf{C}})^2/2t}.
\end{equation}
One may then show via dominated convergence that
\[ \lim_{n\rightarrow\infty} \ell(\sigma_n)
	= \lim_{n\rightarrow\infty} \int_0^1 \|P_n\dot{A}(s)\|\,ds
	= \int_0^1 \|\dot{A}(s)\|\,ds
	= \ell(\sigma), \]
and that
\[ \lim_{n\rightarrow\infty} \|\varepsilon_n\|_\mathbf{C}
	= \lim_{n\rightarrow\infty}
		\left\|\frac{1}{2}\int_0^1 \omega(A(s),\dot{A}(s)) -
		\omega(P_nA(s),P_n\dot{A}(s))\,ds \right\|_\mathbf{C}
	= 0.
\]
Thus, passing to the limit in (\ref{e.aa}) as $n\rightarrow\infty$ gives
\[ |f(g)|	\le \|f\|_{L^2(\nu_t)}e^{\ell(\sigma)^2/2t}, \]
and taking the infimum over all horizontal paths $\sigma$ such that
$\sigma(0)=e$ and $\sigma(1)=g$ completes the proof for all $g\in\cup_P
G_P$.  Since both sides of the inequality are continuous in $g\in
G_{CM}$
% (of course, $f$ continuous on $G$ implies $f$ is continuous on $G_{CM}$)
and $\cup_P G_P$ is dense in $G_{CM}$ by Proposition
\ref{p.equivtop}, this is sufficient to prove the bound for all $g\in
G_{CM}$.
\end{proof}

\begin{notation}
For $g\in G_{CM}$, define the linear map $R_g:\mathcal{P}
\rightarrow\mathbb{C}$  by
\[ R_g f := f(g). \]
\end{notation}

\begin{prop}
\label{p.Rg}
For all $g\in G_{CM}$, $R_g$ can be extended uniquely to a continuous linear
functional on all of $\mathcal{H}_t^2(G)$ satisfying
\begin{equation}
\label{e.Rg}
|R_gf| \le \|f\|_{L^2(\nu_t)}e^{d_h(e,g)^2/2t}.
\end{equation}
\end{prop}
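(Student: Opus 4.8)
The plan is to reduce the entire proposition to the pointwise bound already established in Lemma \ref{l.dP} together with the definition of $\mathcal{H}_t^2(G)$ as the $L^2(\nu_t)$-closure of $\mathcal{P}$. Once the inequality \eqref{e.Rg} is verified on the dense subspace $\mathcal{P}$, both the existence and uniqueness of the continuous extension follow from a routine extension-by-density argument for bounded linear functionals. Thus essentially all of the analytic content has been front-loaded into Lemma \ref{l.dP}, and the work here is organizational.

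First I would check that Lemma \ref{l.dP} applies to every $p\in\mathcal{P}$. By definition a holomorphic cylinder polynomial has the form $p = F\circ\pi_Q$ for some $Q\in\mathrm{Proj}(W)$ and $F$ a holomorphic polynomial on $G_Q$. Since $\pi_Q(w,c)=(Qw,c)$ is continuous and linear, each restriction $p|_{G_n}$ is again a holomorphic polynomial on $G_n$, hence holomorphic; moreover $p$ has at most polynomial growth in $\|\cdot\|_{\mathfrak{g}}$, so Propositions \ref{p.rho} and \ref{p.rho2} give $p|_{G_n}\in L^2(\nu_t^n)$. Therefore $p|_{G_n}\in\mathcal{H}L^2(\nu_t^n)$ for every $n$, and Lemma \ref{l.dP} yields
\[ |R_g p| = |p(g)| \le \|p\|_{L^2(\nu_t)}\, e^{d_h(e,g)^2/2t}, \qquad g\in G_{CM}, \]
which is exactly \eqref{e.Rg} on $\mathcal{P}$.

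Next I would carry out the density extension. Fix $g\in G_{CM}$. The displayed bound says $R_g$ is a bounded linear functional on $(\mathcal{P},\|\cdot\|_{L^2(\nu_t)})$ with operator norm at most $e^{d_h(e,g)^2/2t}$. Given $f\in\mathcal{H}_t^2(G)$, choose $p_k\in\mathcal{P}$ with $p_k\to f$ in $L^2(\nu_t)$. Then
\[ |R_g p_k - R_g p_j| = |R_g(p_k - p_j)| \le \|p_k - p_j\|_{L^2(\nu_t)}\, e^{d_h(e,g)^2/2t} \]
shows $\{R_g p_k\}$ is Cauchy in $\mathbb{C}$, so I set $R_g f := \lim_k R_g p_k$. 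The limit is independent of the approximating sequence by the same estimate, the resulting map is linear, and the bound passes to the limit to give \eqref{e.Rg} for all $f\in\mathcal{H}_t^2(G)$. Uniqueness of the extension is immediate from the density of $\mathcal{P}$ and continuity of $R_g$.

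I do not expect a genuine obstacle beyond this bookkeeping, since the exponential pointwise estimate—the only nontrivial ingredient—is supplied by Lemma \ref{l.dP}. The one place warranting a moment's care is the verification of the hypotheses of that lemma for polynomials, namely that restriction to each $G_n$ preserves holomorphy and square integrability; but this is guaranteed by the linearity of $\pi_n$ and the subelliptic Fernique bound of Proposition \ref{p.rho}.
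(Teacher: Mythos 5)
Your proposal is correct and follows essentially the same route as the paper: apply Lemma \ref{l.dP} to get the bound \eqref{e.Rg} on $\mathcal{P}$, then extend $R_g$ by density of $\mathcal{P}$ in $\mathcal{H}_t^2(G)$. Your extra verification that each $p\in\mathcal{P}$ satisfies the hypotheses of Lemma \ref{l.dP} (holomorphy of restrictions and square integrability via Propositions \ref{p.rho} and \ref{p.rho2}) is a detail the paper leaves implicit, and it checks out.
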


\begin{proof}
Lemma \ref{l.dP} implies that (\ref{e.Rg}) holds for $f\in \mathcal{P}$
and $g\in G_{CM}$.  Thus, $\|R_g\|_{op}\le e^{d_h(e,g)^2/2t}$ as an operator
on $\mathcal{P}\subset L^2(\nu_t)$, and $R_g$ is continuous and defined
on a dense subset of $\mathcal{H}_t^2(G)$.  Thus, there exists a unique
extension of $R_g$ to $\mathcal{H}_t^2(G)$ (still denoted by $R_g$) so that
(\ref{e.Rg}) is satisfied for all $f\in\mathcal{H}_t^2(G)$.
To define $R_g$ for an arbitrary $f\in\mathcal{H}_t^2(G)$, let
$\{f_j\}_{j=1}^\infty\subset\mathcal{P}$ such that
$f_j\rightarrow f$ in $L^2(\nu_t)$
and define $R_gf := \lim_{j\rightarrow\infty} R_gf_j$.
\end{proof}

\begin{remark}
The estimate in (\ref{e.Rg}) implies that, if $f_j\rightarrow f$ in
$L^2(\nu_t)$, then, for any $g\in G_{CM}$,
$R_gf_j\rightarrow R_gf$ and the convergence is locally uniform.
\end{remark}

%The following result is analogous to \cite[Theorem 29]{Cecil08} and
%\cite[Theorem 5.12]{DG08-3}.
\begin{thm}
\label{t.Rlin}
There exists a linear map $R:\mathcal{H}_t^2(G)\rightarrow\mathcal{H}(G_{CM})$
with the following properties:
\begin{enumerate}
\item For any $f\in\mathcal{P}$, $Rf=f|_{G_{CM}}$.
\item For $g\in G_{CM}$, $|(Rf)(g)|\le \|f\|_{L^2(\nu_t)}e^{d_h^2(e,g)/2t}$.
\end{enumerate}
\end{thm}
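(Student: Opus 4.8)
The plan is to define $R$ pointwise using the family of continuous linear functionals $R_g$ already constructed in Proposition \ref{p.Rg}. For $f\in\mathcal{H}_t^2(G)$ and $g\in G_{CM}$, set $(Rf)(g):=R_gf$. Linearity of $R$ in $f$ is immediate, since each $R_g$ is linear, and property (2) is exactly the estimate \eqref{e.Rg}. Property (1) is also immediate: for $f\in\mathcal{P}$ the definition of $R_g$ gives $R_gf=f(g)$, so $(Rf)(g)=f(g)$ for all $g\in G_{CM}$, i.e.\ $Rf=f|_{G_{CM}}$. Thus the only substantive point is to verify that $Rf$ actually lies in $\mathcal{H}(G_{CM})$, that is, that it is holomorphic in the $\|\cdot\|_{\mathfrak{g}_{CM}}$-topology.

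To prove holomorphicity I would realize $Rf$ as a locally uniform limit of holomorphic functions. Choose $\{f_j\}_{j=1}^\infty\subset\mathcal{P}$ with $f_j\to f$ in $L^2(\nu_t)$. Each restriction $f_j|_{G_{CM}}=Rf_j$ is holomorphic on $G_{CM}$, being the composition of the holomorphic polynomial defining $f_j$ with the continuous linear projection $\pi_{P_j}$ (cf.\ Proposition \ref{l.h5.4}). The key estimate comes from applying \eqref{e.Rg} to $f_j-f$ together with the upper bound $d_h(e,g)\le K_2\|g\|_{\mathfrak{g}_{CM}}$ of Proposition \ref{p.length}: on any ball $\{g\in G_{CM}:\|g\|_{\mathfrak{g}_{CM}}\le R\}$ one obtains
\[
\sup_{\|g\|_{\mathfrak{g}_{CM}}\le R} |(Rf_j)(g)-(Rf)(g)|
    = \sup_{\|g\|_{\mathfrak{g}_{CM}}\le R}|R_g(f_j-f)|
    \le \|f_j-f\|_{L^2(\nu_t)}\, e^{(K_2 R)^2/2t}
    \to 0.
\]
Hence $Rf_j\to Rf$ uniformly on bounded sets, and in particular locally uniformly in the $\|\cdot\|_{\mathfrak{g}_{CM}}$-topology (this is the content of the remark following Proposition \ref{p.Rg}).

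It then remains to invoke the standard fact that a locally uniform limit of holomorphic functions, in the sense of Definition \ref{d.h5.1}, is again holomorphic. Local boundedness of $Rf$ is inherited directly from the uniform convergence on balls. For the G\^ateaux condition, fix $a\in G_{CM}$ and $h\in\mathfrak{g}_{CM}$ and consider the one-variable functions $\lambda\mapsto(Rf_j)(a+\lambda h)$; these are holomorphic on a disc in $\mathbb{C}$ and converge locally uniformly (since $\lambda\mapsto a+\lambda h$ sweeps out a bounded set for $\lambda$ in a disc) to $\lambda\mapsto(Rf)(a+\lambda h)$, so by the classical Weierstrass convergence theorem the limit is complex differentiable at $\lambda=0$. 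This yields complex G\^ateaux differentiability of $Rf$, which together with local boundedness gives $Rf\in\mathcal{H}(G_{CM})$.

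I expect the main obstacle to be precisely this holomorphicity verification, and within it the need to ensure the local uniform convergence is measured in the correct ($\|\cdot\|_{\mathfrak{g}_{CM}}$) topology. This is exactly where Proposition \ref{p.length}, and hence the standing hypothesis $N=\mathrm{dim}(\mathbf{C})<\infty$ underlying the finiteness of its constant $K_2$, becomes essential. The remaining assertions of the theorem are then formal consequences of the construction of the functionals $R_g$ in Proposition \ref{p.Rg}.
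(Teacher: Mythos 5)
Your proposal is correct and follows essentially the same route as the paper: define $(Rf)(g):=R_gf$, observe that properties (1) and (2) are immediate from Proposition \ref{p.Rg}, and establish holomorphicity by approximating $f$ by cylinder polynomials whose restrictions converge uniformly on norm-bounded subsets of $G_{CM}$. The only cosmetic difference is that the paper invokes Theorem 3.18.1 of \cite{HP74} for the limit step, whereas you unpack that citation into an explicit Weierstrass-type argument (and you make explicit the use of Proposition \ref{p.length} to pass from $d_h$-bounds to $\|\cdot\|_{\mathfrak{g}_{CM}}$-bounded sets, a point the paper leaves implicit).
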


\begin{proof}
Given $f\in\mathcal{H}_t^2(G)$, we define $Rf$ by $(Rf)(g):=R_gf$ for all $g\in
G_{CM}$.  Items (1) and (2) are satisfied by definition of $R_g$ and Proposition
\ref{p.Rg}.

To see that $Rf\in\mathcal{H}(G_{CM})$, first consider $f\in\mathcal{P}$.
Then $f=F\circ\pi_P$ for some $P\in\mathrm{Proj}(W)$ and
polynomial $F\in\mathcal{H}(G_P)$.
By Proposition \ref{l.h5.4}, $h\mapsto f(g\cdot e^h)$ is
Frech\'et differentiable at $h=0$
and this derivative is continuous with respect to $g$.

For general $f\in\mathcal{H}_t^2(G)$, fix $g\in G_{CM}$ and choose
$\{f_j\}_{j=1}^\infty\subset\mathcal{P}$ such that
$f_j\rightarrow f$ in $L^2(\nu_t)$.  Then
\[ |(Rf_j)(g)-(Rf)(g)| = |R_g(f_j-f)|
    \le \|f_j-f\|_{L^2(\nu_t)}e^{d_h^2(e,g)/2t}, \]
and so $Rf$ is the pointwise limit of
$Rf_j=f_j|_{G_{CM}}\in\mathcal{H}(G_{CM})$ with the limit being
uniform over any bounded subset of $g$'s contained in $G_{CM}$. By
Theorem 3.18.1 of \cite{HP74},
%$h\mapsto (Rf)(g\cdot e^h)$ is holomorphic and
%jointly continuous in $g,h$.
this is sufficient to imply that $Rf\in\mathcal{H}(G_{CM})$.
\end{proof}

\begin{thm}
\label{t.Rsurj}
The map $R:\mathcal{H}_t^2(G)\rightarrow\mathcal{H}_t^2(G_{CM})$ is unitary.
\end{thm}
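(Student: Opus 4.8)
**

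The plan is to prove that $R$ is an isometric isomorphism onto $\mathcal{H}_t^2(G_{CM})$ by combining the pieces already assembled. The strategy separates cleanly into (i) isometry, (ii) that $R$ maps into the target space, and (iii) surjectivity; density is the engine throughout.

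\textbf{Isometry and mapping into the target.} First I would verify the isometry on the dense subspace $\mathcal{P}\subset\mathcal{H}_t^2(G)$. For $f\in\mathcal{P}$ we have $Rf=f|_{G_{CM}}$ by Theorem \ref{t.Rlin}(1), and since any $f\in\mathcal{P}$ is a holomorphic cylinder polynomial it satisfies a pointwise bound of the form $|f(g)|\le Ce^{\varepsilon\|g\|_\mathfrak{g}^2/2t}$; hence Corollary \ref{c.siso} applies and gives exactly
\[ \|f\|_{L^2(\nu_t)} = \|f|_{G_{CM}}\|_{\mathcal{H}_t^2(G_{CM})} = \|Rf\|_{\mathcal{H}_t^2(G_{CM})}. \]
Thus $R$ is isometric on $\mathcal{P}$. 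Because $\mathcal{P}$ is dense in $\mathcal{H}_t^2(G)$ by definition and $R$ is a bounded linear map (the pointwise bound of Theorem \ref{t.Rlin}(2) controls it), the isometry extends by continuity to all of $\mathcal{H}_t^2(G)$, and in particular $Rf\in\mathcal{H}_t^2(G_{CM})$ for every $f$: the image of a Cauchy sequence $\{f_j\}\subset\mathcal{P}$ is Cauchy in $\mathcal{H}_t^2(G_{CM})$, so its limit lies in that Hilbert space. One should check that the two possible meanings of $Rf$—the pointwise limit from Theorem \ref{t.Rlin} and the $\mathcal{H}_t^2(G_{CM})$-norm limit—agree, which follows since norm convergence in $\mathcal{H}_t^2(G_{CM})$ forces pointwise convergence via the pointwise bound built into that space.

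\textbf{Surjectivity.} For surjectivity I would show the range of $R$ is both dense and closed in $\mathcal{H}_t^2(G_{CM})$. The range is closed because $R$ is an isometry with complete domain $\mathcal{H}_t^2(G)$. For density, the crucial input is Theorem \ref{t.pcm}, which asserts precisely that $\mathcal{P}_{CM}=\{p|_{G_{CM}}:p\in\mathcal{P}\}$ is dense in $\mathcal{H}_t^2(G_{CM})$. Since $R(\mathcal{P})=\mathcal{P}_{CM}$ by Theorem \ref{t.Rlin}(1), the range of $R$ contains a dense subspace, and being closed it must equal all of $\mathcal{H}_t^2(G_{CM})$. Therefore $R$ is a surjective isometry, i.e.\ unitary.

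\textbf{Main obstacle.} The genuinely substantive content is not in this final assembly—which is short—but in the two results it invokes: the equality \eqref{e.siso} in Corollary \ref{c.siso} (that the increasing finite-dimensional $L^2(\nu_t^n)$-norms converge up to the full $\|\cdot\|_{\mathcal{H}_t^2(G_{CM})}$-norm, so that restriction is genuinely isometric) and the density Theorem \ref{t.pcm}. The point most likely to require care in writing the present proof is confirming that the extended map $R$ really lands in $\mathcal{H}_t^2(G_{CM})$ and that its extension agrees with the pointwise-defined $R$ of Theorem \ref{t.Rlin}; this reconciliation of the two constructions, rather than any hard estimate, is where I would be most careful. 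Everything else is a formal density-plus-isometry argument.
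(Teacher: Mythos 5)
Your proposal is correct and follows essentially the same route as the paper: isometry on $\mathcal{P}$ via Corollary \ref{c.siso}, extension by density, and surjectivity from the density of $\mathcal{P}_{CM}$ (Theorem \ref{t.pcm}) combined with the fact that an isometry with complete domain has closed range. Your extra step reconciling the pointwise-defined $R$ of Theorem \ref{t.Rlin} with the isometric extension is a point the paper passes over silently ("still denoted by $R$"), and your justification of it is sound.
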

\begin{proof}
Given $f\in\mathcal{P}$,
Corollary \ref{c.siso} implies that $\|Rf\|_{\mathcal{H}_t^2(G_{CM})} =
\|f\|_{L^2(\nu_t)}$.  Therefore, $R|_\mathcal{P}$ extends to an isometry,
still denoted by $R$, from $\mathcal{H}_t^2(G)$ to $\mathcal{H}_t^2(G_{CM})$
such that $R(\mathcal{P})=\mathcal{P}_{CM}$.  Since $R$ is isometric and
$\mathcal{P}_{CM}$ is dense in $\mathcal{H}_t^2(G_{CM})$ by Theorem
\ref{t.pcm}, it follows that $R$ is surjective.
\end{proof}

\begin{cor}
\label{c.dd}
Suppose $f:G\rightarrow\mathbb{C}$ is a continuous function such that
$f|_{G_{CM}}\in\mathcal{H}_t^2(G_{CM})$.  Then $f\in\mathcal{H}_t^2(G)$ and
$\|f\|_{L^2(\nu_t)}=\|f|_{G_{CM}}\|_{\mathcal{H}_t^2(G_{CM})}$.
\end{cor}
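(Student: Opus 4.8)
The plan is to realize the given continuous $f$ as an $L^2(\nu_t)$-limit of holomorphic cylinder polynomials whose $G_{CM}$-restrictions approximate $f|_{G_{CM}}$, and then to transfer the norm identity from $G_{CM}$ back to $G$ using the comparison inequality of Lemma \ref{l.2}. First I would set $\phi := f|_{G_{CM}} \in \mathcal{H}_t^2(G_{CM})$ and invoke the density theorem, Theorem \ref{t.pcm}, to choose a sequence $p_j \in \mathcal{P}$ with $p_j|_{G_{CM}} \to \phi$ in $\mathcal{H}_t^2(G_{CM})$.

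The decisive observation is that each difference $f - p_j$ is again a continuous function on $G$ whose restriction to $G_{CM}$, namely $\phi - p_j|_{G_{CM}}$, is holomorphic (a restriction of a holomorphic polynomial is holomorphic, and $\phi \in \mathcal{H}(G_{CM})$). Thus Lemma \ref{l.2} applies to $f - p_j$ and yields
\[
\|f - p_j\|_{L^2(\nu_t)} \le \|\phi - p_j|_{G_{CM}}\|_{\mathcal{H}_t^2(G_{CM})} \longrightarrow 0.
\]
Since $p_j \in L^2(\nu_t)$ by Propositions \ref{p.rho} and \ref{p.rho2}, this shows $f \in L^2(\nu_t)$, and being an $L^2(\nu_t)$-limit of elements of $\mathcal{P}$ it lies in $\mathcal{H}_t^2(G)$ by Definition \ref{d.htp}. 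For the norm identity I would pass to the limit in the polynomial equality $\|p_j\|_{L^2(\nu_t)} = \|p_j|_{G_{CM}}\|_{\mathcal{H}_t^2(G_{CM})}$, which holds by Corollary \ref{c.siso}, as every $p \in \mathcal{P}$ has at most polynomial growth and hence satisfies the required exponential bound there for any $\varepsilon \in (0,\delta)$. Because $p_j \to f$ in $L^2(\nu_t)$ and $p_j|_{G_{CM}} \to \phi$ in $\mathcal{H}_t^2(G_{CM})$, continuity of the two norms gives $\|f\|_{L^2(\nu_t)} = \|\phi\|_{\mathcal{H}_t^2(G_{CM})} = \|f|_{G_{CM}}\|_{\mathcal{H}_t^2(G_{CM})}$.

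The conceptual obstacle to keep in mind is that $\nu_t(G_{CM}) = 0$ by Proposition \ref{p.CM0}, so the values of $f$ on the null set $G_{CM}$ cannot by themselves pin down the $L^2(\nu_t)$-class of $f$; knowing $f|_{G_{CM}}$ directly controls only the finite-dimensional quantities $\|f\|_{L^2(\nu_t^P)}$. The whole point is therefore to route the argument through Lemma \ref{l.2}, whose proof (via the almost-sure approximation $g_t^n \to g_t$ and Fatou's lemma) is precisely what converts control of the finite-dimensional norms into control of the genuine $L^2(\nu_t)$-norm on $G$, and applying it to the differences $f - p_j$ rather than to $f$ itself is what makes the approximation quantitative. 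The remaining verifications — that $f - p_j$ is continuous and that its restriction to $G_{CM}$ is holomorphic — are immediate, so I expect no further difficulty beyond correctly assembling these ingredients.
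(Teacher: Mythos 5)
Your proof is correct, and its engine is the same as the paper's: both arguments turn on applying Lemma \ref{l.2} to the differences $f-p_j$, where $p_j\in\mathcal{P}$ are cylinder polynomials whose restrictions converge to $f|_{G_{CM}}$ in $\mathcal{H}_t^2(G_{CM})$. The difference is in how the sequence is produced and how the isometry is concluded. You get the $p_j$ directly from the density theorem (Theorem \ref{t.pcm}) and recover the norm identity by passing to the limit in the polynomial isometry $\|p_j\|_{L^2(\nu_t)}=\|p_j|_{G_{CM}}\|_{\mathcal{H}_t^2(G_{CM})}$ of Corollary \ref{c.siso}; your observation that cylinder polynomials have polynomial growth, hence satisfy the Gaussian bound hypothesis of that corollary, is exactly the implicit step the paper takes when it applies Corollary \ref{c.siso} to $\mathcal{P}$ in proving Theorem \ref{t.Rsurj}. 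The paper instead starts from Theorem \ref{t.Rsurj}: it takes $u\in\mathcal{H}_t^2(G)$ with $Ru=f|_{G_{CM}}$, chooses $p_n\to u$ in $L^2(\nu_t)$, uses the isometry of $R$ to get $p_n|_{G_{CM}}\to f|_{G_{CM}}$, and after the same application of Lemma \ref{l.2} concludes $f=u$, the norm identity then coming from unitarity of $R$. Since Theorem \ref{t.Rsurj} is itself assembled from precisely your two ingredients (density of $\mathcal{P}_{CM}$ plus Corollary \ref{c.siso}), the mathematical content is identical; what your route buys is logical independence from the restriction map --- it would go through before $R$ is even constructed --- while the paper's version is shorter given that the unitarity of $R$ is already available at that point in the text. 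No gaps.
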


\begin{proof}
By Theorem \ref{t.Rsurj}, there exists $u\in\mathcal{H}_t^2(G)$ such that
$Ru=f|_{G_{CM}}$.  Let $p_n\in\mathcal{P}$ be chosen so that $p_n\rightarrow
u$ in $L^2(\nu_t)$.  Then $p_n|_{G_{CM}}=Rp_n\rightarrow Ru=f|_{G_{CM}}$ in
$\mathcal{H}_t^2(G_{CM})$, and, by Lemma \ref{l.2},
\[ \|f - p_n\|_{L^2(\nu_t)}
	\le \|(f-p_n)|_{G_{CM}}\|_{\mathcal{H}_t^2(G_{CM})}. \]
Thus, $p_n\rightarrow f$ in $L^2(\nu_t)$, and since $p_n\rightarrow u$ in
$L^2(\nu_t)$ also, it must be that $f=u\in\mathcal{H}_t^2(G)$.
\end{proof}

Corollary \ref{c.dd} along with Corollary \ref{c.siso} immediately give
the following.  In particular, this result states that, under the
assumptions of Corollary \ref{c.siso},
$f\in \mathcal{H}_t^2(G)$.

\begin{cor}
\label{c.cc}
Let $\delta>0$ be as in Proposition
\ref{p.rho}, and suppose that $f:G\rightarrow\mathbb{C}$ is a
continuous function such that $f|_{G_{CM}}\in\mathcal{H}(G_{CM})$ and,
for some $\varepsilon\in(0,\delta)$,
\[ |f(g)|\le C e^{\varepsilon\|g\|_\mathfrak{g}/2t}, \]
for all $g\in G$.  Then $f\in\mathcal{H}_t^2(G)$
and $\|f\|_{L^2(\nu_t)}=\|f|_{G_{CM}}\|_{\mathcal{H}_t^2(G_{CM})}$.
\end{cor}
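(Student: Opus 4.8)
The plan is simply to chain together the two immediately preceding corollaries, the only point requiring a moment's thought being the verification that the hypothesis of Corollary~\ref{c.dd} --- namely $f|_{G_{CM}}\in\mathcal{H}_t^2(G_{CM})$ --- is actually met under the present weaker assumptions.

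First I would observe that the hypotheses assumed here are precisely those of Corollary~\ref{c.siso}: $f$ is continuous, $f|_{G_{CM}}\in\mathcal{H}(G_{CM})$, and $|f(g)|\le C e^{\varepsilon\|g\|_\mathfrak{g}^2/2t}$ for some $\varepsilon\in(0,\delta)$. Applying the final assertion of Corollary~\ref{c.siso} therefore yields the norm identity
\[
\|f\|_{L^2(\nu_t)} = \|f|_{G_{CM}}\|_{\mathcal{H}_t^2(G_{CM})}.
\]
On the other hand, Proposition~\ref{p.rho2} (with $p=2$) guarantees that $f\in L^2(\nu_t)$, so the left-hand side is finite. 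Hence $\|f|_{G_{CM}}\|_{\mathcal{H}_t^2(G_{CM})}<\infty$, which is exactly the statement that $f|_{G_{CM}}\in\mathcal{H}_t^2(G_{CM})$.

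With this in hand, $f$ is a continuous function whose restriction to $G_{CM}$ lies in $\mathcal{H}_t^2(G_{CM})$, so Corollary~\ref{c.dd} applies verbatim and delivers both $f\in\mathcal{H}_t^2(G)$ and the equality $\|f\|_{L^2(\nu_t)}=\|f|_{G_{CM}}\|_{\mathcal{H}_t^2(G_{CM})}$, which completes the proof. I do not expect any genuine obstacle here: the entire content is the logical composition of Corollaries~\ref{c.siso} and~\ref{c.dd}, and the single step that is not purely automatic is the finiteness of the Cameron--Martin norm, which is supplied by the $L^2(\nu_t)$-integrability coming from the subelliptic Fernique estimate (Proposition~\ref{p.rho}) via Proposition~\ref{p.rho2}.
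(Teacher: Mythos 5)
Your proof is correct and takes essentially the same route as the paper, which derives this corollary immediately by combining Corollaries \ref{c.siso} and \ref{c.dd}; your explicit verification that $f|_{G_{CM}}\in\mathcal{H}_t^2(G_{CM})$ (finiteness of the Cameron--Martin norm via Proposition \ref{p.rho2} and the identity from Corollary \ref{c.siso}) is exactly the glue the paper leaves implicit. One minor remark: the stated hypothesis has exponent $\varepsilon\|g\|_{\mathfrak{g}}/2t$ rather than $\varepsilon\|g\|_{\mathfrak{g}}^2/2t$ (almost certainly a typo, since the paper says the assumptions are those of Corollary \ref{c.siso}), but this is harmless because the linear-exponent bound implies the quadratic one, so your application of Corollary \ref{c.siso} is valid either way.
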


\bibliographystyle{amsplain}
\providecommand{\bysame}{\leavevmode\hbox to3em{\hrulefill}\thinspace}
\providecommand{\MR}{\relax\ifhmode\unskip\space\fi MR }
% \MRhref is called by the amsart/book/proc definition of \MR.
\providecommand{\MRhref}[2]{%
  \href{http://www.ams.org/mathscinet-getitem?mr=#1}{#2}
}
\providecommand{\href}[2]{#2}

\end{document}